\numberwithin{equation}{section}
\theoremstyle{plain}
\newtheorem{theorem}{Theorem}[section]
\newtheorem{corollary}[theorem]{Corollary}
\newtheorem{prop}[theorem]{Proposition}
\newtheorem{lemma}[theorem]{Lemma}
\theoremstyle{remark}
\newtheorem{remark}[theorem]{Remark}
\theoremstyle{definition}
\newtheorem{definition}[theorem]{Definition}
\newcommand{\e}{\varepsilon}
\newcommand{\N}{\mathbb{N}}
\newcommand{\R}{\mathbb{R}}
\newcommand{\Z}{\mathbb{Z}}
\newcommand{\dist}{\mathrm{dist}}
\newcommand{\cP}{\mathcal{P}}
\newcommand{\cN}{\mathcal{N}}
\newcommand{\cD}{\mathcal{D}}
\newcommand{\cE}{\mathcal{E}}
\newcommand{\al}{\alpha}
\newcommand{\de}{\delta}
\newcommand{\eps}{\varepsilon}
\newcommand{\su}{\subset}
\DeclareMathOperator{\hdim}{dim_H}
\DeclareMathOperator{\pdim}{dim_P}
\DeclareMathOperator{\ubdim}{\overline{\dim}_B}
\DeclareMathOperator{\supp}{supp}
\DeclareMathOperator{\bad}{\mathbf{Bad}}
\DeclareMathOperator{\error}{Error}
\newcommand{\wh}{\widehat}
\newcommand{\wt}{\widetilde}
\newcommand{\M}{\mathbf{M}}
\newcommand{\T}{\mathbf{T}}
\newcommand{\D}{\mathbf{D}}
\newcommand{\Arrow}[1]{%
\parbox{#1}{\tikz{\draw[->](0,0)--(#1,0);}}
}
\newcommand{\sto}{\Arrow{.25cm}}
\newcommand{\ssto}{\Arrow{.2cm}}
\title{New bounds on the dimensions of planar distance sets}
\author{Tam\'as Keleti and Pablo Shmerkin}
\address
{Institute of Mathematics, E\"otv\"os Lor\'and University,
P\'az\-m\'any P\'e\-ter s\'et\'any 1/c, H-1117 Budapest, Hungary}
\email{tamas.keleti@gmail.com}
\urladdr{http://web.cs.elte.hu/analysis/keleti}
\thanks{T.K is grateful to the Alfr\'ed R\'enyi Institute of Mathematics
  of the Hungarian Academy of Sciences, where he was a visiting researcher
  during this project.
  He was partially supported by the Hungarian National Research, Development and Innovation Office –- NKFIH, 104178 and 124749}
\address{Department of Mathematics and Statistics, Torcuato Di Tella University, and CONICET, Buenos Aires, Argentina}
\email{pshmerkin@utdt.edu}
\urladdr{http://www.utdt.edu/profesores/pshmerkin}
\thanks{P.S. was partially supported by Projects CONICET-PIP 11220150100355 and PICT 2014-1480 (ANPCyT). Both authors thank Institut Mittag Leffler for hospitality and financial support.}
\subjclass[2010]{Primary: 28A75, 28A80; Secondary: 26A16, 49Q15}
\keywords{distance sets, pinned distance sets, Hausdorff dimension, packing dimension, Falconer's problem, Lipschitz functions}
\begin{document}

\begin{abstract}
We prove new bounds on the dimensions of distance sets and pinned distance sets of planar sets. Among other results, we show that if $A\subset\R^2$ is a Borel set of Hausdorff dimension $s>1$, then its distance set has Hausdorff dimension at least $37/54\approx 0.685$. Moreover, if $s\in (1,3/2]$, then outside of a set of exceptional $y$ of Hausdorff dimension at most $1$, the pinned distance set $\{ |x-y|:x\in A\}$ has Hausdorff dimension $\ge \tfrac{2}{3}s$ and packing dimension at least $ \tfrac{1}{4}(1+s+\sqrt{3s(2-s)}) \ge 0.933$. These estimates improve upon the existing ones by Bourgain, Wolff, Peres-Schlag and Iosevich-Liu for sets of Hausdorff dimension $>1$. Our proof uses a multi-scale decomposition of measures in which, unlike previous works,  we are able to choose the scales subject to certain constrains. This leads to a combinatorial problem,  which is a key new ingredient of our approach, and which we solve completely by optimizing certain variation of Lipschitz functions.
\end{abstract}

\maketitle

\tableofcontents

\section{Introduction and statement of results}

\subsection{Introduction}

Given $A\subset\R^d$, its \emph{distance set} is $\Delta(A)=\{|x-y|:x,y\in A\}$. K. Falconer \cite{Falconer85} pioneered the study of the relationship between the Hausdorff dimensions of $A$ and $\Delta(A)$. He proved that if $d\ge 2$ and $A\subset\R^d$ is a Borel (or even analytic) set then $\hdim(\Delta(A)) \ge \min(\hdim(A)-\tfrac{1}{2}(d-1),1)$, where $\hdim$ stands for Hausdorff dimension. Falconer also constructed compact sets $A\subset\R^d$ (based on lattices) of any Hausdorff dimension such that $\hdim(\Delta(A)) \le \min(2\hdim(A)/d,1)$. Although it is not explicitly stated in \cite{Falconer85}, the conjecture that these lattice constructions are extremal, in the sense that one should have $\hdim(\Delta(A))=1$ if $\hdim(A)\ge d/2$, has become known as the Falconer distance set problem.

Falconer's problem is a continuous version of the celebrated P. Erd\H{o}s distinct distances problem \cite{Erdos46}, asserting (in the plane) that if $|A|=N$, $A\subset\R^2$, then $|\Delta(A)|\ge c N/\sqrt{\log N}$. L. Guth and N. Katz \cite{GuthKatz15} (building up on work of Gy. Elekes and M. Sharir \cite{ElekesSharir11}) famously solved this problem, up to logarithmic factors, by showing that $|\Delta(A)| \ge c N/\log N$. However, the approach of Guth and Katz and, indeed, all previous methods developed to tackle Erd\H{o}s' problem, do not appear to be able to yield progress on Falconer's problem.

From now on, we focus on the case $d=2$, which is the first non-trivial case,  the best understood, and the focus of this article. T. Wolff \cite{Wolff99}, based on a method of P. Mattila \cite{Mattila87} and extending ideas of J. Bourgain \cite{Bourgain94}, proved that if $A\subset\R^2$ is a Borel set with $\hdim(A)\ge 4/3$, then $\hdim(\Delta(A))=1$. In fact, he proved that $\hdim(A)>4/3$ ensures that $\Delta(A)$ has positive length, and established the more general dimension formula
\begin{equation} \label{eq:bound-wolff}
\hdim(\Delta(A)) \ge\min\left( \frac{3}{2}\hdim(A)-1,1\right),
\end{equation}
whenever $\hdim(A)>1$. The method developed by Mattila and Wolff is strongly Fourier-analytic, depending on difficult estimates for the decay of circular averages of the Fourier transform of measures.

Later Bourgain \cite{Bourgain03}, crucially relying on earlier work of N. Katz and T. Tao \cite{KatzTao01}, proved that if $A\subset\R^2$ satisfies $\hdim(A)\ge 1$, then
\begin{equation} \label{eq:Bourgain-bound}
\hdim(\Delta(A)) > \frac{1}{2} + \delta,
\end{equation}
where $\delta>0$ is a universal constant. Although non-explicit, it is clear from the proof that the value of $\delta$ one would get is extremely small. The method of Katz-Tao and Bourgain is based on additive combinatorics, and it seems difficult for this type of arguments to yield reasonable values of $\delta$.

A related problem concerns the dimensions of \emph{pinned} distance sets
\[
\Delta_y(A) = \{ |x-y|:x\in A\}.
\]
Y. Peres and W. Schlag \cite[Theorem 8.3]{PeresSchlag00} proved that if $A\subset\R^2$ is a Borel set with $\hdim(A)=s$, then for all $0<t\le \min(s,1)$,
\begin{equation} \label{eq:PeresSchlag}
\hdim\{ y\in\R^2: \hdim(\Delta_y(A)) < t \} \le 2+t-\max(s,1).
\end{equation}
Recently, A. Iosevich and B. Liu \cite{IosevichLiu17} proved that \eqref{eq:PeresSchlag} remains true with $3+3t-3s$ in the right-hand side. This is an improvement in some parts of the parameter region. Both results imply that if $\hdim(A)>3/2$, then there is $y\in A$ such that $\hdim(\Delta_y A)=1$, and it is unknown whether $3/2$ can be replaced by a smaller number. We remark that the results of both \cite{PeresSchlag00} and \cite{IosevichLiu17} extend to higher dimensions.

These were the best known results towards Falconer's problem in the plane for general sets prior to this article. For some special classes of sets, better results are known. In particular, the second author proved in \cite{Shmerkin17} that if $A\subset\R^2$ is a Borel set of equal Hausdorff and packing dimension, and this value is $>1$, then $\hdim(\Delta_y(A))=1$ for all $y$ outside of a set of exceptions of Hausdorff dimension at most $1$, and in particular for many $y\in A$. This verifies Falconer's conjecture for this type of sets, outside of the endpoint. We remark that T. Orponen \cite{Orponen17b} and the second author \cite{Shmerkin17b} had previously proved weaker results of the same kind. See also \cite{Mattila87, IosevichLiu16} for other results on the distance sets of special classes of sets.

\subsection{Main results}

In this article we prove new lower bounds on the dimensions of (pinned) distance sets, which in particular greatly improve the best previously known estimates when $\hdim(A)=1+\delta$, $\delta>0$ small.

\begin{theorem} \label{thm:main-simple}
If $A$ is a Borel subset of $\R^2$ with $\hdim A=s$, then
\begin{equation} \label{eq:claim-main-thm}
\hdim\left\{ y\in\R^2: \hdim(\Delta_y(A)) <  \min\left(\frac{2}{3}s,1\right) \right\} \le \max(1,2-s).
\end{equation}
In particular, if $s>1$, then one can find many $y\in A$ such that
\[
 \hdim(\Delta_y(A)) \ge  \min\left(\frac{2}{3}s,1\right).
\]
\end{theorem}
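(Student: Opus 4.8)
The plan is to set up a multi-scale analysis of a suitably regular measure $\mu$ supported on $A$, and to reduce the distance-set estimate to a combinatorial optimization over the scales at which $\mu$ "looks one-dimensional" versus "looks two-dimensional". First I would fix $s<\hdim A$, take a Frostman measure $\mu$ on $A$ with $\mu(B(x,r))\lesssim r^s$, and restrict to a compact piece on which, after passing to a subsequence of dyadic scales $2^{-n}$, the measure is (Ahlfors-type) regular in a coarse sense: at each scale the $\mu$-mass of a typical ball of radius $2^{-n}$ contained in a typical ball of radius $2^{-m}$ ($m<n$) decays like $2^{-\sigma_{m,n}(n-m)}$ for exponents $\sigma_{m,n}\in[0,2]$ whose average is $\ge s$. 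The pinned distance measure $\nu_y = (x\mapsto|x-y|)_*\mu$ can then be analyzed scale by scale: at scales where $\mu$ is close to one-dimensional the pushforward behaves like a projection (and projections preserve dimension for a.e.\ direction, hence for most pins $y$, by a Marstrand–Mattila type argument), whereas at scales where $\mu$ is close to two-dimensional the distance map is a submersion and spreads mass efficiently. The exceptional set of $y$ for which this fails at too many scales is controlled by an energy/Fourier argument of Mattila–Wolff–Peres–Schlag type, giving the bound $\max(1,2-s)$ on its Hausdorff dimension.

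The heart of the argument, and the genuinely new ingredient, is to show that no matter how the "type" exponents $\sigma_{m,n}$ are distributed across scales — subject only to the Frostman constraint that they average to at least $s$ — one can \emph{choose} a distinguished sequence of scales along which to run the incidence/projection estimates so that the resulting lower bound on $\hdim(\Delta_y(A))$ is at least $\tfrac{2}{3}s$ (truncated at $1$). This is where the freedom to pick scales "subject to certain constraints", advertised in the abstract, is used: rather than summing a fixed per-scale contribution, one gets to select a subsequence that is as favorable as possible, and the worst case over all admissible $\sigma$-profiles is exactly a variational problem for a certain oscillation-type functional of Lipschitz functions. I would phrase it as: given a $1$-Lipschitz function $f$ on $[0,1]$ with $f(0)=0$ and $f(1)\ge s$ (encoding the cumulative "dimension budget" $\int_0^t \sigma \, $ up to log-scale $t$), bound below $\max$ over choices of subintervals of a weighted combination of increments, and show the extremal value is $\tfrac{2}{3}s$; the lattice-type near-extremizers correspond to $f$ alternating between slope $0$ and slope $2$.

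The remaining steps are then: (i) a pigeonholing/regularization lemma producing the uniform multi-scale structure on a large subset of $A$ while losing only an arbitrarily small amount of dimension; (ii) a single-scale estimate quantifying how much the distance map increases dimension in terms of the local type $\sigma$ and the local structure of $\mu$ in the relevant direction — this combines the trivial bound from $\mu$ being $\sigma$-dimensional with a projection estimate capturing the "generic direction" gain; (iii) assembling these single-scale gains along the chosen subsequence and invoking the combinatorial lemma; and (iv) the exceptional-set bound, which should follow from the classical circular-average decay estimates since we only need to exclude $y$ in a set of dimension $\le \max(1,2-s)$, matching the Peres–Schlag threshold. I expect step (ii), the single-scale interface between the combinatorics and the geometry — precisely identifying the correct per-scale gain function so that its cumulative optimum over admissible profiles is $\tfrac{2}{3}s$ rather than something smaller — to be the main obstacle, since one must ensure the combinatorial lemma is stated with exactly the weights that the geometric estimate actually delivers, and the packing-dimension refinement in the abstract suggests the bookkeeping is delicate.
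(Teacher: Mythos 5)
Your overall architecture---Frostman measure, pigeonholing to a multi-scale regular structure, scale-by-scale analysis of the pinned distance map, and a combinatorial optimization over Lipschitz functions with freedom to choose the scale sequence---is the right skeleton and matches the paper's strategy. But two of the load-bearing ingredients in your outline are wrong or missing, and both are essential to reach the stated exponents.

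First, the bound $\max(1,2-s)$ on the exceptional set of pins $y$ does \emph{not} come from a Mattila--Wolff circular-average decay estimate nor from a Peres--Schlag-type argument (the Peres--Schlag formula $2+t-\max(s,1)$ gives a strictly larger exceptional set for the relevant $t=\tfrac{2}{3}s$, $s\in(1,3/2]$). The key device in the paper is Orponen's radial projection theorem: if $\cE_s(\mu)<\infty$ and $\cE_u(\nu)<\infty$ with $u>\max(1,2-s)$, then $P_x\nu$ has an $L^p$ density, integrably in $x$. Combined with a ``small set of bad directions'' construction (Marstrand projection applied to all the rescaled local measures $\mu(x;j\to j+k)$ simultaneously), this produces, for any Frostman measure $\nu$ of exponent $>\max(1,2-s)$, a single pin $y\in\supp\nu$ for which $\theta(x,y)$ is a good projection direction for $\mu$-a.e.\ $x$ \emph{at every scale at once}. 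That is an existence argument, not a measure bound on bad $y$, and it is precisely why the threshold $\max(1,2-s)$ appears. Without this ingredient you cannot coordinate the per-scale projection estimates into a single uniform statement about a fixed pin. Second, the constraint on the Lipschitz function in your variational problem is too weak. You state $f(0)=0$, $f(1)\ge s$, but the finiteness of the $s$-energy of the Bourgain-regularized measure forces the cumulative branching to satisfy $\sigma_1+\cdots+\sigma_j\ge (s-1)j-o(\ell)$ for \emph{every} initial segment $j$, i.e.\ $f(x)\ge (s-1)x$ pointwise (after the normalization that sends branching exponents in $[0,2]$ to slopes in $[-1,1]$). With only an endpoint constraint, a $1$-Lipschitz function with $f(0)=f(1)=0$ can dip to $-1/2$ and climb back, giving total drop $1/2>1/3=1-\tfrac{2}{3}\cdot 1$, so your extremal value fails already at $s=1$. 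Relatedly, you should make explicit the constraint $a_{k-1}/a_k\le 2$ on admissible scale sequences, coming from the linearization error of the distance map: it is the entire reason the combinatorial problem has a nontrivial minimum, and the optimization is to \emph{minimize} the total drop over all such good partitions, not to maximize a weighted sum of increments.
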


We remark that we get better bounds for the dimension of the full distance set, see Theorem \ref{thm:full-distance-set} below.

The last claim in Theorem \ref{thm:main-simple} improves the previously known bounds for the dimensions of pinned distance sets $\Delta_y(A)$ with $y\in A$ for all $s\in (1,3/2]$.  The bound \eqref{eq:claim-main-thm}  also improves upon \eqref{eq:PeresSchlag} (and the variant of Iosevich and Liu) in large regions of parameter space, and in particular for   $t=\min(\tfrac{2}{3}s,1)$ and all $s\in (3/5,5/3)$.

Theorem \ref{thm:main-simple} is a special case of a more general result that takes into account the Hausdorff and also the packing dimension of $A$. We refer to \cite[\S 3.5]{Falconer14} for the definition and main properties of packing dimension $\pdim$, and simply note that it satisfies $\hdim(A)\le\pdim(A)\le \ubdim(A)$, where $\ubdim$ denotes the upper box-counting (or Minkowski) dimension.  For our method, the worst case is that in which $A$ has maximal packing dimension $2$, and we get better bounds for the distance set under the assumption that the packing dimension is smaller:

\begin{theorem} \label{thm:main}
Let
\[
\chi(s,u) = \begin{dcases}
\frac{s(2+u-2s)}{2+2u-3s}  &\text{ if } u>2s-1\\
1 & \text{ if } u\le 2s-1
\end{dcases}.
\]

Given $0< s\le u \le 2$, the following holds: if $A$ is a Borel subset of $\R^2$ with $\hdim A\ge s$ and $\pdim A \le u$, then
\[
\hdim\left\{ y\in\R^2: \hdim(\Delta_y(A)) <  \chi(s,u)  \right\} \le \max(1,2-s).
\]
In particular, if $s>1$ then there are many $y\in A$ such that
\[
\hdim(\Delta_y(A)) \ge  \chi(s,u),
\]
and hence if $\hdim(A)>1$ and $\pdim A\le 2\hdim A-1$, then $\hdim(\Delta_y(A))=1$ for many $y\in A$.
\end{theorem}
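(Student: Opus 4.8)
The plan is to run the classical Mattila-type strategy --- bound $\hdim(\Delta_y A)$ from below by the (lower Hausdorff) dimension $\dim(\pi_y\mu)$ of the pushforward $\pi_y\mu$, where $\pi_y(x)=|x-y|$ and $\mu$ is a Frostman measure on a suitable subset of $A$ --- but to combine it with a multi-scale decomposition of $\mu$ in which, unlike in the fixed-scale arguments of Mattila and Wolff, the scales at which the key ``distance increment'' step is applied are chosen adapted to $\mu$. By countable stability of Hausdorff dimension we may fix a piece $A_i$ of a packing decomposition of $A$ with $\hdim A_i>s-\e$ and $\ubdim A_i\le u$, and then, by Frostman's lemma, a compactly supported probability measure $\mu$ on a compact subset of $A_i$ with $\mu(B(x,r))\le C_\e\,r^{s-\e}$ for all $x,r$, so that also $N_\rho(\supp\mu)\le\rho^{-(u+\e)}$ for all small $\rho$. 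Since $\Delta_y A\supseteq\pi_y(\supp\mu)$, it suffices to produce, for each $\e>0$, a set $E_\e\subset\R^2$ with $\hdim E_\e\le\max(1,2-s)$ off which $\dim(\pi_y\mu)\ge\chi(s,u)-O(\e)$, and then intersect over $\e\downarrow0$; the final assertion about sets with $\pdim A\le 2\hdim A-1$ is the regime $u\le 2s-1$, where $\chi(s,u)=1$, and ``many $y\in A$'' uses that $\max(1,2-s)<s$ when $s>1$. Both the bound on $E_\e$ and a first, weak lower bound for $\dim(\pi_y\mu)$ come from the observation that a point $x$ is determined up to one reflection by the pair $(|x-y|,|x-y'|)$: putting a Frostman measure $\nu$ on the putative bad set, integrating over pairs of pins and invoking an $L^2$/Mattila-integral bound for spherical averages yields simultaneously $\hdim E_\e\le\max(1,2-s)$ and a baseline $\dim(\pi_y\mu)\gtrsim s/2$ for $y\notin E_\e$. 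The whole difficulty is to improve $s/2$ to $\chi(s,u)$.

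\textbf{The multi-scale decomposition.} Fix a small $\delta$ and scales $1=t_0>t_1>\dots>t_n=0$, \emph{to be chosen later}, with $\Delta_k=\delta^{t_k}$. Using the $\delta$-adic filtration and dyadic pigeonholing, realize $\mu$ (up to a negligible error and renormalization) as an average of measures $\mu_\omega$ whose restriction to each $\Delta_k$-square of the supporting chain, rescaled to the unit square, is comparable to a measure of exponent $\beta_k$ at scale $\Delta_{k+1}/\Delta_k$, with moreover controlled non-concentration near lines at that scale. The sequence $(\beta_k)$ --- the \emph{branching profile} --- is, for a typical $\omega$, constrained both by the Frostman bound, which applied at each scale $\Delta_k$ forces $\sum_{j<k}\beta_j(t_j-t_{j+1})\ge t_k(s-\e)$, and by the box bound, which forces the same partial sums to be $\le t_k(u+\e)$; in other words, the profile's primitive $\phi(t)=\int_0^t\beta$ is squeezed between the lines of slope $s$ and of slope $u$ (and is $2$-Lipschitz). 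Since $s>1$, $\mu$ cannot carry too much mass near any line, which is what makes the non-concentration hypothesis available at a typical square and scale.

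\textbf{The distance increment and the combinatorial optimization.} The engine is a single-scale estimate: when $\mu_\omega$ is restricted to a $\Delta_k$-square $Q$ far from $y$, the map $x\mapsto|x-y|$ on $Q$ is a small perturbation, after rescaling, of the orthogonal projection in the direction $Q-y$, and because the two-pins map $(\pi_y,\pi_{y'})$ is, by the non-concentration of $\mu_\omega|_Q$, far from degenerate for most $(y,y')$, a discretized projection/sum--product estimate in the spirit of Katz--Tao, Bourgain and Orponen gives, at that scale and off a quantitatively small set of exceptional pins, a definite gain over the trivial Fubini bound, of a size determined by $\beta_k$. Summing these gains over the chosen scales and then minimizing over all branching profiles consistent with the two constraints above is a combinatorial problem, which --- this is the new ingredient --- can be solved completely: written in terms of the Lipschitz function $\phi$, it is a problem about a certain variation of $\phi$, whose optimal value is exactly $\chi(s,u)$; for $u=2$ the upper constraint is vacuous and the value is $\tfrac23 s$, while for $u\le 2s-1$ the upper constraint by itself already forces the value up to $1$. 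Since we are free to place the increment step at the scales where the profile makes the total gain largest, this yields $\dim(\pi_y\mu)\ge\chi(s,u)-O(\e)$ off $E_\e$ after letting $\delta\to0$.

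\textbf{The main obstacle.} I expect the crux to be the single-scale increment estimate, in two respects. First, it must be a genuine, quantitative, discretized improvement over the trivial bound that survives the many rescalings, and it must come with control of the exceptional pins that is \emph{uniform across all scales and all decompositions}, so that a single exceptional set $E_\e$ of Hausdorff dimension $\le\max(1,2-s)$ absorbs every failure; the interplay between how strong this discretized estimate is and how large $E_\e$ may be is precisely what fixes the exponent. Second, the multi-scale decomposition must be set up --- a renormalization-and-pigeonholing argument producing a generic chain that is simultaneously near-uniform, with controlled line-concentration, at all the chosen scales while still carrying a definite proportion of the Frostman mass --- so that the induced constrained optimization faithfully reflects the geometry; once that is in place, solving the optimization (verifying that its value is $\chi(s,u)$ and that no admissible profile beats it) is the clean, self-contained Lipschitz-function computation advertised in the abstract.
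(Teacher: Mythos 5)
Your high-level skeleton is right and matches the paper's: Frostman measure, a scale-adapted multi-scale decomposition into near-uniform (regular) pieces by a Bourgain-type pigeonholing, linearization of $x\mapsto|x-y|$ around a good pin, a lower bound for the pinned distance set obtained by summing per-scale gains, and a closing combinatorial optimization over branching profiles $\phi$ squeezed between the lines of slope $s$ and $u$ (giving $\chi(s,u)$). But the two analytic ingredients you propose for the "engine" are not what makes the paper's argument work, and as stated I do not see how they would deliver the claimed numbers.

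\textbf{Single-scale estimate.} You invoke a ``discretized projection/sum--product estimate in the spirit of Katz--Tao, Bourgain and Orponen'' together with ``non-concentration near lines.'' The paper explicitly does \emph{not} use discretized incidence/sum--product machinery at all (it states its approach is completely different from Bourgain's, and the $\hdim>1$ setting is precisely where one can avoid it). The per-scale gain comes instead from the energy form of Marstrand's projection theorem (Lemma \ref{lem:marstrand}): for all directions $\theta$ outside a set of measure $\lesssim R^{-1}$, $\|\Pi_\theta\rho\|_2^2\le R\,\cE_1(\rho)$; this is plugged into an entropy lower bound for $\Delta_y\mu$ via $H(\nu,\cD_k)\ge Tk-\log\|R_k\nu\|_2^2$ (Propositions \ref{prop:entropy-of-pinned-dist-measures}--\ref{prop:lower-bound-box-counting}). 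The quantity that a regular piece contributes at each chosen scale block $(N_i,N_{i+1}]$ is $\mathcal{S}(\sigma|(N_i,N_{i+1}])$, the $1$-energy deficit of that block (Lemma \ref{lem:energy-regular}), not a sum--product gain; non-concentration on lines is never used. Your engine and the paper's are not interchangeable: the Marstrand bound is a per-direction $L^2$ bound uniform over all blocks, with exceptional directions controlled by a geometric series in the block sizes (hence the $\tau$-good partition constraint $N_{i+1}\ge(1+\tau)N_i$), and this is exactly what makes the exceptional-pin bookkeeping close up.

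\textbf{Exceptional set.} You attribute the bound $\hdim E_\e\le\max(1,2-s)$ (and a baseline $\gtrsim s/2$) to a two-pins argument: putting a Frostman measure on the bad set and integrating spherical-average/$L^2$ bounds over pairs $(y,y')$. That is the Mattila/Peres--Schlag route, and it yields the weaker bound $2+t-\max(s,1)$; for $s\in(1,2)$ and $t$ near $1$ this is $3-s$, not $1=\max(1,2-s)$. The paper's mechanism is Orponen's radial projection theorem (Propositions \ref{prop:orponen}--\ref{prop:radial}): if $\nu$ lives on the candidate bad pins with $\cE_{u'}(\nu)<\infty$ for $u'>\max(1,2-s)$, then $P_x\nu$ has an $L^p$ density for $\mu$-a.e.\ $x$, and a single Fubini/H\"older step produces one good pin $y$ avoiding the bad directions for $\mu$-most $x$. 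This is the step that both pins down the exceptional exponent $\max(1,2-s)$ and crucially exploits $\hdim(A)>1$ via direction sets; without it I do not see how your outline reaches the stated exceptional-set dimension. Everything else in your plan (the Lipschitz optimization, the $u\le 2s-1$ endpoint, the derivation of Theorem \ref{thm:main} from a measure-level statement via countable stability and packing/box reduction) is consistent with the paper.
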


Note that Theorem \ref{thm:main-simple} follows immediately by taking $u=2$.  A simple calculation shows that if $0\le s\le u \le 2$ and $s<2$, then
\[
\chi(s,u) = \min\left(\frac{s(2+u-2s)}{2+2u-3s},1\right).
\]
We remark that, taking $u=s$, this theorem recovers the main result of \cite{Shmerkin17} mentioned above, namely that if $\hdim(A)=\pdim(A)>1$, then $\hdim(\Delta_y A)=1$ for many $y\in A$. On the other hand, it was known from \eqref{eq:PeresSchlag} that if $\hdim(A)>3/2$ then there is $y\in A$ such that $\hdim(\Delta_y A)=1$. The last claim in Theorem \ref{thm:main} can be seen as interpolating between these two situations, and hence provides a new, more general, geometric condition under which Falconer's conjecture is known to hold.

When $\hdim(A)>1$, we are able to get much better lower bounds for the \emph{packing} dimension of the pinned distance sets:
\begin{theorem} \label{thm:packing}
Let $A$ be a Borel subset of $\R^2$ with $s=\hdim(A)\in (1,3/2)$. Then
\[
\hdim\left\{ y\in \R^2: \pdim(\Delta_y(A)) < \frac{1+s+\sqrt{3s(2-s)}}{4} \right\} \le 1.
\]
In particular, there is $y\in A$ such that
\[
\pdim(\Delta_y(A)) \ge \frac{1+s+\sqrt{3s(2-s)}}{4} > \frac{2+\sqrt{3}}{4} =  0.933013\ldots.
\]
\end{theorem}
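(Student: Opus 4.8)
The plan is to deploy the same multi-scale machinery that underlies Theorems~\ref{thm:main-simple} and~\ref{thm:main}, but, since we now only want a lower bound on the \emph{packing} dimension of $\Delta_y(A)$, to exploit the fact that packing dimension of the image is controlled by the behaviour of the pinned distance measure along a \emph{single} rapidly increasing sequence of scales rather than along \emph{all} scales. This freedom — we may choose \emph{which} scales to cash in — is what makes the bound genuinely larger than $\tfrac23 s$; it is not a formal consequence of Theorem~\ref{thm:main}, but leads instead to a different, more favourable, instance of the combinatorial optimization at the heart of the paper, whose value is the exponent $\alpha(s):=\tfrac14\bigl(1+s+\sqrt{3s(2-s)}\bigr)$. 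Concretely, fix $1<s'<s$. By Frostman's lemma there is a compact $A'\subset A$, inside a ball of small radius, and a Borel probability measure $\mu$ on $A'$ with $\mu(B(x,r))\le r^{s'}$; write $\pi_y(x)=|x-y|$ and $\nu_y=(\pi_y)_*\mu$, a compactly supported measure on $\Delta_y(A)$. Using the standard fact that $\pdim E\ge\beta$ whenever a measure on $E$ has upper local dimension $\ge\beta$ almost everywhere, and the elementary Borel--Cantelli argument showing that an $L^2$ bound
\[
\sum_{Q}\nu_y(Q)^2\le 2^{-n_k\beta}\qquad(Q\text{ ranging over dyadic intervals of length }2^{-n_k})
\]
along a sufficiently rapidly increasing sequence $n_k\to\infty$ forces $\overline{\dim}_{\mathrm{loc}}\nu_y\ge\beta$ for $\nu_y$-a.e.\ point, it suffices to prove: for every $\e>0$ there is a set $\cE\subset\R^2$ with $\hdim\cE\le1$ such that for all $y\notin\cE$ one can find $n_k\to\infty$ with $\sum_Q\nu_y(Q)^2\le 2^{-n_k(\alpha(s')-\e)}$. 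Letting $\e\downarrow0$ and $s'\uparrow s$, with $\alpha$ continuous, then gives the theorem, and since $\hdim\cE\le1<\hdim A$, many $y\in A$ are good.

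Next, one decomposes $[1,n]$ into blocks of scales and, after pigeonholing on each block as in~\cite{Shmerkin17}, replaces $\mu$ by a component that is roughly uniform at the scales of that block; each block is then either \emph{regular} (the component does not concentrate near any line at those scales) or concentrated near a line. On a regular block between scales $2^{-a}$ and $2^{-b}$ on which the component behaves like a $\beta$-dimensional set, a discretized radial-projection estimate — built from Mattila's integral and estimates in the spirit of Wolff's circular/$X$-ray bounds and Orponen's radial projection theorem~\cite{Orponen17b} — yields a trade-off of the following shape: the $L^2$-spread of $\nu_y$ increases by a factor $2^{(b-a)g}$ over the block, for every $y$ outside an exceptional set whose contribution to $\hdim\cE$ is an increasing function of the chosen gain $g$; concretely $g$ may be taken as large as a concave increasing function $G(\beta,\tau)$ of $\beta$ and of the ``budget'' $\tau$ one spends on exceptions, always capped at $1$, with blocks near a line contributing a comparably small gain and cost. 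Tracking the accumulated $L^2$-spread of $\nu_y$ down to scale $2^{-n}$, together with the running constraint $\tfrac1n\sum_{i\le n}c_i\ge s'$ from the Frostman bound (where $c_i\in[0,2]$ encodes the local dimension of $\mu$ at scale $2^{-i}$) and the requirement that the total budget keep $\hdim\cE\le1$, reduces everything to a purely combinatorial optimization.

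That optimization is a max--min: among all admissible ``profiles'' — essentially $1$-Lipschitz functions recording the cumulative local dimension of $\mu$, subject to the Frostman lower bound and to a fixed total exception budget — maximize over the choice of a stopping scale $n_k$ the guaranteed $L^2$-dimension $\tfrac1{n_k}\sum_{i\le n_k}g_i$ of $\nu_y$ at that scale, and then minimize over adversarial profiles; one must show the value is exactly $\alpha(s)$. This is carried out by a direct variational analysis: the worst profile and the optimal allocation of budget are piecewise linear, the optimal stopping scale is the one maximizing the ratio of accumulated gain to $\log(1/\text{scale})$, and balancing the two competing linear quantities — the averaged Frostman constraint against the capped, budget-limited gain $G$ — leads to the quadratic relation $16\alpha^2-8(1+s)\alpha+(2s-1)^2=0$, whose relevant root is $\alpha(s)=\tfrac14(1+s+\sqrt{3s(2-s)})$; the hypothesis $s<3/2$ is exactly the range where this root is $<1$ (at $s=3/2$ it equals $1$), so the optimization is the binding constraint. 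Finally, collecting the $\dim\le1$ exceptional sets of all blocks into a single $\cE$ with $\hdim\cE\le1$ — this is where the total-budget bookkeeping is used — and using the scales produced by the optimization yields, for $y\notin\cE$, the bound $\sum_Q\nu_y(Q)^2\le 2^{-n_k(\alpha(s')-\e)}$ required above.

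The main obstacle — and the genuinely new ingredient — is the optimization step: one must isolate the correct class of admissible profiles and the correct ``exception budget'' functional so that this finite combinatorial model faithfully captures what the multi-scale argument can actually prove (in particular how many scales, and with how much gain, one may exploit while keeping $\hdim\cE\le1$), and then solve the resulting max--min in closed form — this is the ``variation of Lipschitz functions'' alluded to in the abstract. A second, substantial technical difficulty is the multi-scale bookkeeping itself: making the single-scale radial-projection estimate quantitative and uniform enough, controlling the rotation of the direction $(x-y)/|x-y|$ across each block, and propagating error terms between scales, so that the heuristic ``profile'' picture becomes a rigorous argument.
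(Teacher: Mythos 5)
Your high-level strategy is the paper's: since packing dimension only requires box-counting estimates along a subsequence of scales, one may choose a favourable stopping scale $L\le\ell$ and minimize the normalized drop $\M_\tau(\sigma|(0,L])/L$; the underlying Lipschitz-function optimization (the paper's Proposition~\ref{p:initial}, discretized in Proposition~\ref{prop:combinatorial-packing}) does produce your quadratic $16\alpha^2-8(1+s)\alpha+(2s-1)^2=0$ and explains the range $s\in(1,3/2)$, via the identity $1-\psi(s)=\Phi(s-1)$. However, two steps of your sketch would not go through as written.

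First, you claim the machinery yields an $L^2$ bound $\sum_Q\nu_y(Q)^2\le2^{-n_k(\alpha-\e)}$ on $\nu_y=\Delta_y\mu$ along a sequence of scales, and then upgrade to $\pdim$ via Borel--Cantelli and upper local dimension. But the multi-scale decomposition (Proposition~\ref{prop:entropy-of-pinned-dist-measures}) controls the \emph{entropy} $H(\Delta_y\mu,\cD_\ell)$, hence box-counting numbers; it does not give an $L^2$ bound on $\Delta_y\mu$, which is strictly stronger. Entropy sums additively along the dyadic tree, but the local pieces $\Delta_y\mu_Q$ for different $Q$ at a given scale have overlapping images on the line, so their $L^2$ norms do not multiply; an $L^2$ bound on the global pinned-distance measure would be a much stronger output than the method yields. (The Mattila--Wolff Fourier argument does produce $L^2$ bounds on distance measures, but that is the separate ingredient used only for Theorem~\ref{thm:full-distance-set}.) Consequently the multi-scale argument directly gives an $\ubdim$ bound, and the paper then upgrades to $\pdim$ by a Baire-category argument over a countable basis of relatively open subsets of $\supp\nu$ — a step that also handles the genuine non-Borelness of $y\mapsto\pdim\Delta_y(U)$ \cite{MattilaMauldin97}. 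Your Borel--Cantelli step would have to be replaced by something of this kind. Second, the ``budget'' trade-off you posit — that the per-block gain $g$ can be traded against the contribution to $\hdim\cE$ through a concave $G(\beta,\tau)$ — is not how the radial-projection input works here. The gain over a block is forced by the regularity profile $\sigma$ via $\log\cE_1$ of the rescaled local measure (Lemma~\ref{lem:energy-regular}), not a parameter one tunes; and the exceptional set of $y$ has $\hdim\le1$ outright, by Orponen's radial projection theorem (Propositions~\ref{prop:orponen}--\ref{prop:radial}) combined with the smallness of the bad-direction sets around each $x$ (Lemma~\ref{lem:marstrand}). There is no continuous trade-off — that shape belongs to the Peres--Schlag/Wolff Fourier-analytic estimates, which are not used for this theorem. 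That you nonetheless reach the correct quadratic is encouraging, but the max--min you describe is not the optimization the argument actually solves (a pure minimization over stopping scales under a fixed Frostman constraint); as written, the budget bookkeeping has no counterpart in the proof and would need to be either discarded or shown to collapse to the paper's simpler problem.
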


We recall that since upper box-counting dimension is at least as large as packing dimension, the above theorem also holds for upper box-counting dimension. Even though Falconer's conjecture is about the Hausdorff dimension of the distance set, this result presents further evidence towards its validity.

\begin{figure}
\centering
\includegraphics[width=0.9\textwidth]{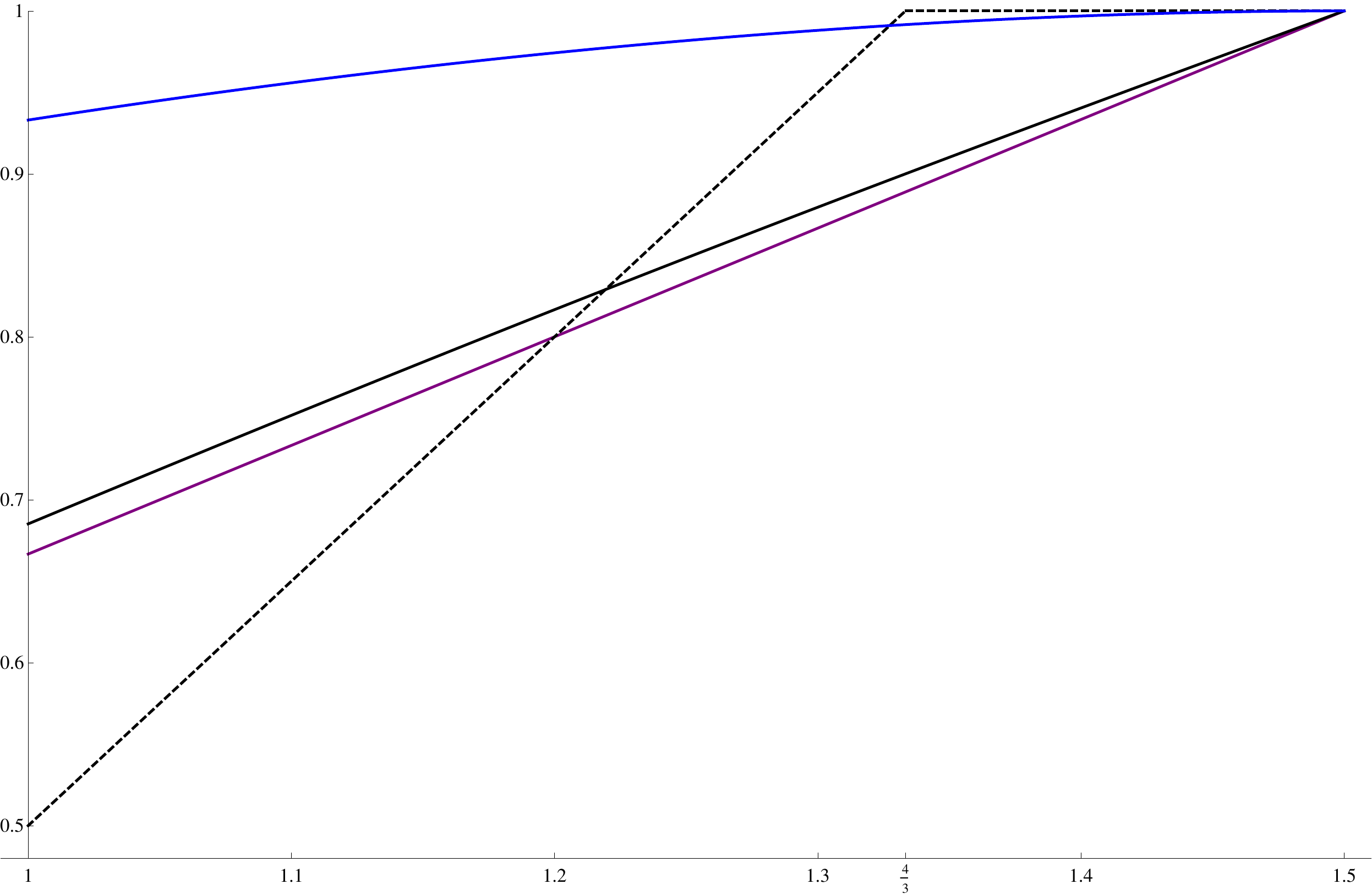}
\caption{The three solid graphs show, from top to bottom: (1) the lower bound given by Theorem \ref{thm:packing} for $\pdim(\Delta_y(A))$ for $y$ outside of a one dimensional set of $y$, (2) the lower bound for $\hdim(\Delta(A))$ given by Theorem \ref{thm:full-distance-set}, (3) the lower bound given by Theorem \ref{thm:main-simple} for $\hdim(\Delta_y(A))$ outside of a one dimensional set of $y$. The dashed line is Wolff's lower bound for $\hdim(\Delta(A))$ (which was the previously known best bound, outside of a tiny interval to the right of $1$). In all cases the variable is $\hdim(A)$.}
\label{fig:graphs}
\end{figure}

Finally, as anticipated above, we get a better bound for the dimension of the full distance set when $\hdim(A)$ is slightly larger than $1$:
\begin{theorem}  \label{thm:full-distance-set}
If $A\su\R^2$ is a Borel set with $\hdim(A)=s\in (1,4/3)$, then
\[
\hdim(\Delta(A))\ge \frac{s(147-170s+60s^2)}{18(12-14s+5s^2)}
\ge  \frac{37}{54}=0.6851851\ldots.
\]
\end{theorem}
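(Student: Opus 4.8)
The plan is to run the same multi-scale decomposition of the pinned distance measures that underlies Theorems~\ref{thm:main} and~\ref{thm:packing}, but to feed into it a second ``gain mechanism'' that is available only for the full distance set, and then to re-solve the resulting combinatorial optimization. Concretely, fix a compact $A'\subseteq A$ carrying a Frostman measure $\mu$ of exponent arbitrarily close to $s$, set $\pi_y(x)=|x-y|$, and decompose the logarithmic range of scales into consecutive blocks on each of which $\mu$ has an approximately constant local-dimension profile $t$. On a block where $\mu$ is ``thin'' ($t$ small) the pinned distance measure $\pi_{y\ast}\mu$ gains dimension at essentially the radial-projection rate, exactly as in the proofs of the pinned theorems; on a block where $\mu$ is ``thick'' ($t$ close to $2$) I would instead invoke Wolff's circular-average estimate --- the same input that yields \eqref{eq:bound-wolff} --- which is efficient precisely in the high-dimensional regime and which, crucially, is a statement about the full distance set rather than a single pin. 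Gluing the per-block gains across the whole scale range, and using that the profile must average to at least $s$, produces a lower bound for $\hdim(\Delta(A'))$ in terms of the (adversarially chosen) profile.

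Two further sources of slack should be exploited. First, because we only need a \emph{single} good $y\in A$, the free parameter governing the exceptional set of pins may be pushed so that this set has dimension approaching $s$ rather than $\max(1,2-s)$; a Frostman measure of exponent slightly below $s$ then still charges the good pins, so at the cost of a worse exceptional set we obtain a better bound, and for $s>1$ this is a genuine gain of roughly $s-1$ in exceptional-dimension budget. Second, since the conclusion is a single numerical bound and not a pointwise dimension estimate, one can interpolate between the liminf-type (Hausdorff) and limsup-type (packing) accounting of the gains, that is, between the mechanisms of Theorems~\ref{thm:main} and~\ref{thm:packing}. Incorporating all of this, the worst-case profile is the solution of an extremal problem for a certain variation of a Lipschitz function (as in the pinned case, but with an enlarged constraint set), which the method solves exactly; the solution is the rational function $s\mapsto\tfrac{s(147-170s+60s^2)}{18(12-14s+5s^2)}$, and a short calculus exercise shows it has infimum $37/54$ as $s\to 1^+$.

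The restriction $s\in(1,4/3)$ is built in: the whole method requires the surplus $\hdim(A)-1>0$, so $s>1$; and for $s\ge 4/3$ Wolff's bound \eqref{eq:bound-wolff} alone already gives $1$, so our blended estimate --- which stays strictly below $1$ throughout $(1,4/3)$ --- is only meaningful, and only an improvement over prior bounds, on that interval (indeed only on a short sub-interval to the right of $1$). I expect the main obstacle to be the combinatorial optimization itself: one must determine exactly which constraints on the scale profile persist once the Wolff-type gain, the enlarged pin-exceptional budget, and the Hausdorff/packing interpolation are all in play, rephrase the resulting problem as a tractable optimization over Lipschitz functions (or their derivatives), and carry that optimization through to the closed form above --- the algebraic form of the answer signals that the extremal profile is piecewise-defined with an $s$-dependent breakpoint, so the bookkeeping is delicate. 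A secondary, routine-but-technical obstacle is the usual one for this circle of ideas: making the ``constant profile on a block'' decomposition rigorous while retaining enough freedom to choose the block endpoints, and discarding the scales at which $\mu$ concentrates or $\pi_{y\ast}\mu$ degenerates.
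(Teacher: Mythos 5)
Your core insight---that one must combine the pinned multi-scale machinery with Wolff's circular-average estimate, dichotomizing on the local-dimension profile of a Frostman measure---is correct. But the block-by-block gluing you propose is not how the proof goes and would not work: the quantitative Wolff input (Theorem~\ref{thm:mattila-wolff}, $\|\Delta(\mu_1\times\mu_2)\|_2^2\lesssim_\e\cE_{4/3}(\mu_1)\cE_{4/3+\e}(\mu_2)$) is a single global $L^2$ bound, not a per-scale contribution that can be summed over blocks. What actually happens is a global dichotomy on each $\sigma$-regular piece produced by Corollary~\ref{cor:bourgain}. Either $\M_\tau(\sigma)/\ell$ is small enough that Proposition~\ref{prop:lower-bound-box-counting} applied to a pinned fiber already gives the target box-count, or $\M_\tau(\sigma)/\ell$ is large, in which case the key new ingredient that your sketch never identifies takes over: the \emph{stability} result (Proposition~\ref{p:stability}, quantified in Corollary~\ref{c:37/54}, discretized as Proposition~\ref{prop:combinatorial-wolff}). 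It says that any $1$-Lipschitz $f$ whose total drop $\T(f)$ is close to the maximum $(1-2D)/3$ must satisfy $f(x)\ge x/3-\eta\xi$ on an initial interval $[0,\xi]$. Translated back to the sequence $\sigma$, the large-drop case forces the profile to be at least roughly $1/3$ in averaged form on $(0,\xi\ell]$, i.e.\ the measure is at least $4/3$-dimensional on that initial range of scales. This bounds $\cE_{4/3}$ of the measure truncated at scale $\lfloor\xi\ell\rfloor$, so Wolff applies at that single scale, and Lemma~\ref{lem:box-counting-from-L2-norm} converts the $L^2$ bound into a box count. The explicit constant $\Lambda(s-1)=\xi(1-2\eta)$ is the result of optimizing $\xi$ against $\eta$ in the stability inequality; without a structural characterization of the near-extremizers of the Lipschitz variation problem you cannot reach the stated rational function, and your proposal contains no such characterization.

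Your two proposed ``further sources of slack'' are not used and would not help. For $s>1$ the pin-exceptional budget is $\max(1,2-s)=1$, which is already below $s$, so there is nothing to gain by pushing it toward $s$; moreover the full-distance-set proof does not run an exceptional-set argument for the pin at all, but instead takes two Frostman measures $\mu_1,\mu_2$ on $A$ with separated supports and applies Proposition~\ref{prop:radial} to both ordered pairs, producing a large set $G$ of pairs $(x,y)$ with two-sided good directions, before running the dichotomy described above. And there is no Hausdorff/packing interpolation anywhere in this argument: Theorem~\ref{thm:full-distance-set} is a pure Hausdorff-dimension bound, and the packing-oriented combinatorics of \S\ref{subsec:initial} (Proposition~\ref{p:initial}) serves only Theorem~\ref{thm:packing}.
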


A calculation shows that this indeed improves upon Wolff's bound \eqref{eq:bound-wolff} for the dimension of the full distance set for $s\in (1,1.21931\ldots)$ (and upon Bourgain's bound \eqref{eq:Bourgain-bound}  for all $s>1$). We remark that this theorem is obtained by combining the idea of the proof of Theorem \ref{thm:main} with a known effective variant of Wolff's bound \eqref{eq:bound-wolff}. Although achieving this combination takes quite a bit of work, Theorem \ref{thm:main} should perhaps be considered the most basic result, since its proof is shorter and already contains most of the main ideas, and the improvement given by Theorem \ref{thm:full-distance-set} is relatively modest. Note also that already applying Theorem \ref{thm:main-simple} for the full distance set improves upon \eqref{eq:bound-wolff} for $s\in (1,6/5)$. See Figure \ref{fig:graphs} for a comparison of the lower bounds from Theorems \ref{thm:main-simple}, \ref{thm:packing} and \ref{thm:full-distance-set} and Wolff's lower bound.

After this paper was made public, B. Liu \cite{Liu18} posted a preprint extending Wolff's result to pinned distance sets. In particular, he shows that if $A\subset\R^2$ is a Borel set with $\hdim(A)>4/3$, then $\Delta_x(A)$ has positive Lebesgue measure for some $x\in A$ (with bounds on the dimension of the exceptional set). This is stronger than our Theorem \ref{thm:main-simple} for $s>4/3$ (other than the exceptional set being larger).

\subsection{Strategy of proof}

Our approach is completely different to those of Wolff, Bourgain, Peres and Schlag and Iosevich and Liu. Rather, it can be seen as a continuation of the ideas successively developed in \cite{Orponen17b, Shmerkin17b, Shmerkin17} to attack the distance set problems for sets with certain regularity. Thus, one of the main points of this paper is extending the strategy of these papers so that it can be applied to general sets.

At the core of our method is a lower box-counting estimate for pinned distance sets $\Delta_y A$ in terms of a multi-scale decomposition of $A$ or, rather, a Frostman measure $\mu$ supported on $A$. See Section \ref{sec:box-counting} for precise statements. A key aspect of these estimates is that they recover a \emph{global} lower box-counting estimate for $\Delta_y A$ from bounds on \emph{local}, discretized and linearized estimates for the pinned distance measures $\Delta_y \mu$.

The general philosophy of obtaining lower bounds for the dimension of projected sets and measures, in terms of multi-scale averages of local projections is behind a large number of results in fractal geometry in the last few years, see e.g. \cite{HochmanShmerkin12, Hochman14} and references there. The insight that this approach can be used also to study distance sets is due to Orponen \cite{Orponen12, Orponen17b}.

Up until the paper \cite{Shmerkin17}, the scales in the multi-scale decomposition behind all the variants of the method described above were of the form $2^{-N j}$ for some fixed $N$. One of the innovations of \cite{Shmerkin17} was to modify the method so that it could handle also scales of the form $2^{-(1+\e)^j}$ (the point being that $(1+\e)^j$ is exponential in $j$, rather than linear). Although this was flexible enough to handle sets of equal Hausdorff and packing dimension (as opposed to Ahlfors-regular sets as in \cite{Orponen17b, Shmerkin17b}), it was still too restrictive for dealing with general sets.

One of the main innovations of this paper is that we are able to work with scales $2^{-M_j}$ where the $M_j$ only need to satisfy $\tau M_j \le M_{j+1}-M_j\le M_j+T$ (where $\tau>0, T\in\N$ are fixed parameters). This provides a major degree of flexibility. In particular, a crucial point is that we are able to pick the sequence $(M_j)$ depending on the set $A$ (or the Frostman measure $\mu$), while in all previous works the scales in the multi-scale decomposition were basically fixed. See Proposition \ref{prop:lower-bound-box-counting}. This leads us to the combinatorial problem of optimizing the choice of $(M_j)$ for each measure $\mu$. We solve this problem completely, up to negligible error terms, in Section \ref{sec:combinatorial}.

In fact, we deduce the combinatorial statements we need from several statements about the variation of Lipschitz functions, which might be of independent interest. More precisely, given a $1$-Lipschitz function $f:[0,a]\to\R$ satisfying certain additional assumptions, we seek to minimize
\[
 \sum_{n=1}^\infty f(a_n)-\min_{[a_n,a_{n-1}]} f,
\]
where $(a_n)_{n=0}^\infty$ is a strictly decreasing sequence tending to $0$ with $a=a_0$ and $a_{n} \le 2 a_{n+1}$. Conversely, we also study the structure of functions $f$ for which these sums are (for some sequence $(a_i)$) close to the minimum possible value. We underline that this part of the method is completely new as the combinatorial problem does not arise for fixed multi-scale decompositions.

Another obstacle to dealing with arbitrary sets and measures is that energies of measures (which play a key role throughout) do not have a nice multi-scale decomposition in general. We deal with this by decomposing a general measure supported on $[0,1)^2$ as a superposition of measures with a regular Cantor structure, plus a small error term: see Corollary \ref{cor:bourgain}. This step is an adaptation of some ideas of Bourgain we learned from \cite{Bourgain10}. After some technical difficulties, this reduces our study to those regular measures for which a suitable multi-scale expression of the energy does exist, see Lemma \ref{lem:energy-regular}.

The strategy just discussed is behind the proofs of Theorems \ref{thm:main}, \ref{thm:packing} and \ref{thm:full-distance-set}. However (as briefly indicated above), the proof of Theorem \ref{thm:full-distance-set} is based on merging these ideas with a more quantitative version of Wolff's result that if $\hdim(A)\ge 4/3$ then $\hdim(\Delta(A))=1$, see Theorem \ref{thm:mattila-wolff} below. The fact that one can improve upon Theorem \ref{thm:main-simple} (for the full distance set) is based on the observation that for some sets $A\subset\R^2$ of Hausdorff dimension $s>1$ for which the method of the proof of Theorem \ref{thm:main-simple} cannot give anything better than $\hdim(\Delta(A))\ge 2s/3$, the quantitative version of Wolff's Theorem can give a much better bound. The fact that these two methods are based on totally different techniques and also have different ``enemies'' that one must overcome, suggests that neither of them (or even in combination as we do here) provides a definitive line of attack on Falconer's problem.

\subsection{Sets of directions, and the case of dimension $1$}

Although Theorem \ref{thm:main-simple} does provide new information on the pinned distance sets $\Delta_y A$ when $\hdim A=1$, it gives no information whatsoever on $\hdim(\Delta(A))$ in this case. There are some well-known ``enemies'' that one must handle in order to improve upon the easy bound $\hdim(\Delta(A))\ge 1/2$ when $\hdim A=1$. One is that the corresponding fact is false over the complex numbers: $\R^2$ is a subset of $\mathbb{C}^2$ of half the dimension of the ambient space for which the (squared) distance set
\[
\Delta^2(\R^2) = \{ (x_1-y_1)^2 + (x_2-y_2)^2 : (x_1,x_2), (y_1,y_2)\in\R^2\}
\]
also has half the dimension of the ambient space. Hence any improvements over $1/2$ in the real case must take into account the order structure of $\R$. The other obstacle is a well-known counterexample to a naive discretization of the problem: see \cite[Eq. (2) and Figure 1]{KatzTao01}. These enemies do not arise when $\hdim(A)>1$. Despite these conceptual differences, we underline that, with the exception of the work of Katz and Tao \cite{KatzTao01} underpinning Bourgain's bound \eqref{eq:Bourgain-bound}, none of the other methods developed so far make any distinction between the cases $\hdim(A)=1$ and $\hdim(A)=1+\delta$.

From the point of view of our strategy, the key significance of the assumption $\hdim(A)>1$ is that in this case the sets of directions determined by points in $A$ has positive Lebesgue measure. In fact, we need a far more quantitative ``pinned'' version of this fact, which is due to Orponen \cite{Orponen17}, improving upon a related result by Mattila and Orponen \cite{MattilaOrponen16} (see Proposition \ref{prop:orponen} below). However, even the fact that the direction set has positive measure clearly fails if $\hdim A=1$ when $A$ is contained in a line. Since $\hdim(\Delta_y A) = \hdim(A)$ trivially when $A$ is contained in a line, this does not rule out an extension of our approach to the case $\hdim(A)=1$. However, this would require some variant of Proposition \ref{prop:orponen} when both $s,u$ are slightly less than $1$, under a suitable hypothesis of non-concentration on lines, and this appears to be very hard. In \cite[Corollary 1.8]{Orponen17}, Orponen also proved that the direction set of a planar set of Hausdorff dimension $1$ which is not contained in a line has Hausdorff dimension $\ge 1/2$, but this is very far from positive measure, let alone from anything resembling Proposition \ref{prop:orponen}.

To understand why directions arise naturally, we recall that our whole approach is based on bounding the size of pinned distance sets in terms of a multi-scale average of local \emph{linearized} pinned distance measures. The derivative of the distance function $x\mapsto |x-y|$ is precisely the direction spanned by $x$ and $y$. Thus we are led to study orthogonal projections of certain measures localized around $x$, where the angle is given by the direction determined by $x$ and $y$. The fact that these directions are ``well distributed'' in a suitable sense can then be used in conjunction with a finitary version of Marstrand's projection theorem (see Lemma \ref{lem:marstrand}) and several applications of Fubini to conclude that one can choose $y$ such that for ``many'' $x$ the direction determined by $x$ and $y$ is good in the sense that the $L^2$ norm of the projection is controlled by the $1$-energy of the measure being projected.

\subsection{Structure of the paper} In Section \ref{sec:notation} we introduce notation to be used in the rest of the paper. Section \ref{sec:preliminary} contains some preliminary definitions and results that will be repeatedly used in the later proofs. In Section \ref{sec:box-counting} we establish a lower bound for the box-counting numbers of pinned distance sets that will be at the heart of the proofs of all main theorems. Section \ref{sec:combinatorial} contains a number of optimization results about Lipschitz functions on the line, as well as corollaries of these results for discrete $[-1,1]$-sequences; these corollaries play a key role in the proofs of the main theorems. Theorems \ref{thm:main}, \ref{thm:packing} and \ref{thm:full-distance-set} are proved in Section \ref{sec:proofs}. We conclude with some remarks on the sharpness of our results in Section \ref{s:sharpness}.

We remark that \S\ref{subsec:initial} and \S\ref{subsec:stability} are not needed for the proof of Theorem \ref{thm:main} (the results from \S\ref{subsec:initial} are required only in the proof of Theorem \ref{thm:packing}, and \S\ref{subsec:stability}  is needed only for the proof of Theorem \ref{thm:full-distance-set}).

\subsection{Acknowledgments} This project was started while the authors were staying at Institut Mittag-Leffler as part of the program \emph{Fractal Geometry and Dynamics}. We are grateful to the organizers for the opportunity to take part, and to the organizers, staff, and fellow participants for the pleasant stay.

We also wish to thank for T. Orponen for many useful discussions at the early stage of this project, and an anonymous referee for several suggestions that improved the paper, and in particular for suggesting a simplification of the statement and proof of Proposition \ref{prop:radial}.

\section{Notation}
\label{sec:notation}

We use Landau's $O(\cdot)$ notation: given $X>0$, $O(X)$ denotes a positive quantity bounded above by $C X$ for some constant $C>0$. If $C$ is allowed to depend on some other parameters, these are denoted by subscripts. We sometimes write $X\lesssim Y$ in place of $X=O(Y)$ and likewise with subscripts. We write $X\gtrsim Y$,  $X\approx Y$ to denote $Y\lesssim X$,  $X\lesssim Y\lesssim X$ respectively.

Throughout the rest of the paper, we work with three parameters that we assume fixed: a large integer $T$ and small positive numbers $\e,\tau$. We briefly indicate their meaning:
\begin{enumerate}
\item We will decompose sets and measures in the base $2^T$. In particular, we will work with sets and measures that have a regular tree (or Cantor) structure when represented in this base: see Definition \ref{def:regular}.
\item The parameter $\tau$ will be used to define sets of bad projections: see Definition \ref{def:bad-projections}. The fact that $\tau>0$ is required to ensure that these sets have small measure. It also keeps some error terms negligible, see Proposition \ref{prop:lower-bound-box-counting}.
\item Finally, $\e$ will denote a generic small parameter; it can play different roles at different places.
\end{enumerate}

We will use the notation  $o_{T,\e,\tau}(1)=o_{T\to\infty,\e\to 0^+,\tau\to 0^+}(1)$ to denote any function $f(T,\e,\tau)$ such that
\[
f(T,\e,\tau)\ge 0 \quad\text{and}\quad  \lim_{T\to\infty,\e\to 0^+,\tau\to 0^+} f(T,\e,\tau)=0.
\]
If a particular instance of $o(1)$ is independent of some of the variables, we drop these variables from the notation.  Different instances of the $o(1)$ notation may refer to different functions of $T,\e,\tau$, and they may depend on each other, so long as they can always be made arbitrarily small.

Note that e.g. $O_\e(1)$ denotes any (finite) function of $\e$, while $o_\e(1)$ denotes a function of $\e$ that tends to $0$ as $\e\to 0^+$.

We will often work at a scale $2^{-T\ell}$;  it is useful to think that $\ell\to\infty$ while $T,\e,\tau$ remain fixed.

The family of Borel probability measures on a metric space $X$ is denoted by $\cP(X)$. If $\mu(A)>0$, then $\mu_A$ denotes the normalized restriction $\mu(A)^{-1}\mu|_A$. If $f:X\to Y$ is a Borel map, then by $f\mu$ we denote the push-forward measure, i.e. $f\mu(A)= \mu(f^{-1}A)$.

We let $\cD_j$ be the half-open $2^{-jT}$-dyadic cubes in $\R^d$ (where $d$ is understood from context), and let $\cD_j(x)$ be the only cube in $\cD_j$ containing $x\in \R^d$. Given a measure $\mu\in\cP(\R^d)$, we also let $\cD_j(\mu)$ be the cubes in $\cD_j$ with positive $\mu$-measure. Note that these families depend on $T$. Given $A\subset \R^d$, we also denote by $\cN(A,j)$ the number of cubes in $\cD_j$ that intersect $A$.

A $2^{-m}$-measure is a measure in $\cP([0,1)^d)$ such that the restriction to any $2^{-m}$-dyadic cube $Q$ is a multiple of Lebesgue measure on $Q$, i.e. a measure defined down to resolution $2^{-m}$. Likewise, a $2^{-m}$-set is a union of $2^{-m}$ dyadic cubes. If $\mu\in\cP(\R^d)$ is an arbitrary measure, then we denote
\[
R_\ell(\mu) = \sum_{Q\in\cD_\ell} \mu(Q) \text{Leb}_Q,
\]
that is $R_\ell(\mu)$ is the $2^{-T\ell}$-measure that agrees with $\mu$ on all dyadic cubes of side length $2^{-T\ell}$. We also define the corresponding analog for sets:  given $A\subset\R^d$, $R_\ell(A)$ denotes the union of all cubes in $\cD_\ell$ that intersect $A$.

Due to our use of dyadic cubes,  it will often  be convenient to deal with supports in the dyadic metric, i.e. given $\mu\in\cP([0,1)^d)$ we let
\[
\supp_{\mathsf{d}}(\mu) = \{ x: \mu(\cD_j(x))>0 \text{ for all } j\in\N\}.
\]
Note that $\mu(\supp_{\mathsf{d}}(\mu))=1$ and that $\supp_{\mathsf{d}}(\mu)\subset \supp(\mu)$.

If a measure $\mu\in\cP(\R^d)$ has a density in $L^p$, then its density is sometimes also denoted by $\mu$, and in particular  $\|\mu\|_p$ stands for the $L^p$ norm of its density.

We make some further definitions. Let $\mu\in\cP([0,1)^d)$. If $Q$ is a dyadic cube and $\mu(Q)>0$, then we denote $\mu^Q = \text{Hom}_Q\mu_Q$, where  $\text{Hom}_Q$ is the homothety renormalizing $Q$ to $[0,1)^d$. If $M<N$ be integers, then for $x\in\supp_{\mathsf{d}}(\mu)$, we define
\[
\mu(x;M \sto N) = R_{N-M} \mu^{\cD_M(x)}.
\]
In other words, $\mu(x;M\sto N)$ is the conditional measure on $\cD_M(x)$, rescaled back to the unit cube, and then stopped at resolution $2^{-(N-M)T}$.  Likewise, for $Q\in\cD_M$ with $\mu(Q)>0$ we define
\[
\mu(Q;N) = R_{N-M} \mu^Q.
\]
Note that $\mu(x;M\sto N)$ and $\mu(Q;N)$ are $2^{-(N-M)T}$-measures.

Logarithms are always to base $2$.

\section{Preliminary results}
\label{sec:preliminary}

\subsection{Regular measures and energy}

In this section we define some important notions and prove some preliminary results.

Recall that the $s$-energy of $\mu\in\cP(\R^d)$ is
\[
\cE_s(\mu) = \iint \frac{d\mu(x)d\mu(y)}{|x-y|^s}.
\]

\begin{lemma} \label{lem:energy}
For any Borel probability measure $\mu$ on $[0,1]^d$, if $s>0$ then
\[
\cE_s(\mu) \approx_{s,d,T} \sum_{j=1}^\infty 2^{s T j} \sum_{Q\in\cD_j} \mu(Q)^2.
\]
If $\mu$ is a $2^{-{T\ell}}$-measure and $0<s<d$, then the sum runs up to $\ell$ (in particular, the $s$-energy is finite).
\end{lemma}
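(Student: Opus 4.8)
The plan is to reduce the continuous $s$-energy to a sum over dyadic scales by exploiting that the kernel $|x-y|^{-s}$ is essentially constant (up to multiplicative constants depending on $d$) on each dyadic annulus, and that dyadic cubes provide a convenient bookkeeping device for these annuli. First I would write
\[
\cE_s(\mu) = \iint \frac{d\mu(x)\,d\mu(y)}{|x-y|^s}
= \sum_{j=0}^\infty \iint_{2^{-(j+1)T} \le |x-y| < 2^{-jT}} \frac{d\mu(x)\,d\mu(y)}{|x-y|^s} + \iint_{|x-y|\ge 1}\frac{d\mu(x)\,d\mu(y)}{|x-y|^s},
\]
where the last term is bounded by $1$ and can be absorbed into the constants (or into the $j=0$ term), using that $\mu$ is a probability measure on $[0,1]^d$ so $|x-y|\le\sqrt d$. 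On the $j$-th annulus the kernel satisfies $|x-y|^{-s} \approx_{s,T} 2^{sTj}$, so the $j$-th term is comparable to $2^{sTj}\,(\mu\times\mu)\{(x,y): |x-y|<2^{-jT}\}$.

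The next step is to compare $(\mu\times\mu)\{|x-y|<2^{-jT}\}$ with $\sum_{Q\in\cD_j}\mu(Q)^2$. For the lower bound on the energy this is immediate: if $x,y$ lie in the same cube $Q\in\cD_j$ then $|x-y|<\sqrt d\,2^{-jT}$, hence $(\mu\times\mu)\{|x-y|<\sqrt d\,2^{-jT}\} \ge \sum_{Q\in\cD_j}\mu(Q)^2$, and after adjusting the annulus boundaries by the fixed factor $\sqrt d$ (which only changes constants depending on $d$ and $T$) one gets $\cE_s(\mu)\gtrsim_{s,d,T}\sum_j 2^{sTj}\sum_{Q}\mu(Q)^2$. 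For the reverse inequality, if $|x-y|<2^{-jT}$ then $x$ and $y$ lie in the same or in neighbouring cubes of $\cD_j$; bounding $\mu(Q)\mu(Q')\le \tfrac12(\mu(Q)^2+\mu(Q')^2)$ over the boundedly many (depending on $d$) neighbour pairs $Q'$ of each $Q$ shows $(\mu\times\mu)\{|x-y|<2^{-jT}\}\lesssim_{d}\sum_{Q\in\cD_j}\mu(Q)^2$, giving the matching upper bound $\cE_s(\mu)\lesssim_{s,d,T}\sum_j 2^{sTj}\sum_Q\mu(Q)^2$.

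For the final assertion, suppose $\mu$ is a $2^{-T\ell}$-measure and $0<s<d$. For $j>\ell$ each cube $Q\in\cD_j$ is contained in a cube $Q'\in\cD_\ell$ on which $\mu$ is a multiple of Lebesgue measure, so $\mu(Q)=2^{-(j-\ell)Td}\mu(Q')$ and $\sum_{Q\in\cD_j, Q\subset Q'}\mu(Q)^2 = 2^{-(j-\ell)Td}\mu(Q')^2$; summing over $Q'\in\cD_\ell$ gives $\sum_{Q\in\cD_j}\mu(Q)^2 = 2^{-(j-\ell)Td}\sum_{Q'\in\cD_\ell}\mu(Q')^2$. Hence the tail of the series is
\[
\sum_{j>\ell} 2^{sTj}\sum_{Q\in\cD_j}\mu(Q)^2 = \Big(\sum_{Q'\in\cD_\ell}\mu(Q')^2\Big)\,2^{(d-s)T\ell}\sum_{k\ge 1} 2^{-(d-s)Tk} \approx_{s,d,T} 2^{sT\ell}\sum_{Q'\in\cD_\ell}\mu(Q')^2,
\]
a geometric series that converges precisely because $s<d$; thus the full sum is comparable to its partial sum up to $j=\ell$, and in particular $\cE_s(\mu)<\infty$. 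The only mildly delicate point, and the one I would be most careful about, is the handling of neighbouring dyadic cubes in the upper bound — dyadic cubes are not balls, so two points at distance $<2^{-jT}$ need not share a cube in $\cD_j$, but they always lie within a bounded ($d$-dependent) number of cubes of each other, which is all that is needed; everything else is a routine splitting of the integral into dyadic shells.
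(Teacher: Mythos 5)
Your proof is correct and takes a genuinely different route from the paper's. The paper establishes the lemma by citation: it first invokes a theorem of Pemantle and Peres to replace $\cE_s(\mu)$ by the energy on the $2^T$-ary tree, namely $\iint 2^{sT|x\wedge y|}\,d\mu(x)\,d\mu(y)$ where $|x\wedge y|=\max\{j:y\in\cD_j(x)\}$, and then refers to a computation in an earlier paper of Shmerkin to rewrite this tree energy in the stated form. You instead give a fully self-contained argument: split the integral into dyadic annuli, freeze the kernel at $2^{sTj}$ on the $j$-th annulus, and compare $(\mu\times\mu)\{|x-y|<2^{-jT}\}$ with $\sum_{Q\in\cD_j}\mu(Q)^2$ in both directions. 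The upper bound is the only non-automatic direction, and you correctly identify the neighbour-cube step (each $Q\in\cD_j$ has at most $3^d$ neighbours, and the inequality $\mu(Q)\mu(Q')\le\tfrac12(\mu(Q)^2+\mu(Q')^2)$ closes the double sum) as the crucial point. This is precisely the phenomenon (Euclidean-close points may sit on opposite sides of a dyadic hyperplane, so the pointwise comparison of the Euclidean and tree kernels fails) that the Pemantle--Peres theorem is designed to absorb, and your explicit treatment is a legitimate, more elementary substitute for that citation; what the paper's route buys is brevity and reliance on a result that is reused elsewhere in the literature. One small slip: in your tail computation the factor $2^{(d-s)T\ell}$ should read $2^{sT\ell}$, since $\sum_{j>\ell}2^{sTj}\cdot 2^{-(j-\ell)Td}=2^{sT\ell}\sum_{k\ge 1}2^{-(d-s)Tk}$; the composite conclusion that the tail is $\approx_{s,d,T}2^{sT\ell}\sum_{Q'\in\cD_\ell}\mu(Q')^2$ is nevertheless correct, so this is only a typo in the intermediate display.
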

\begin{proof}
First of all, by \cite[Theorem 3.1]{PemantlePeres95}, we can replace $\cE_s(\mu)$ by the $s$-energy on the $2^T$-ary tree, i.e. by
\[
\iint 2^{s T|x\wedge y|} \,d\mu(x)d\mu(y),
\]
where $|x\wedge y| = \max\{ j: y\in \cD_j(x)\}$ (both energies are comparable up to a $O_{T,d}(1)$ factor). The formula for $\cE_s(\mu)$ now follows from a standard calculation, see e.g. \cite[Lemma 3.1]{Shmerkin17} for the case $T=1$ (the proof of the general case is identical).

Finally, the case in which $\mu$ is a $2^{-{T\ell}}$-measure follows again from another simple calculation, see e.g. \cite[Lemma 3.2]{Shmerkin17} for the case $T=1$.
\end{proof}

One of the key steps in the proof of the main theorems is to decompose an arbitrary $2^{-T\ell}$-measure in terms of measures which have a uniform tree structure when represented in base $2^T$. This notion (which is inspired by some constructions of Bourgain \cite{Bourgain10}) is made precise in the next definition.

\begin{definition} \label{def:regular}
Given a sequence $\sigma=(\sigma_1,\ldots,\sigma_{\ell})\in [-1,d-1]^\ell$, we say that $\mu\in \cP([0,1)^d)$ is \emph{$\sigma$-regular} if it is a $2^{-{T\ell}}$-measure, and for any $Q\in \cD_{j}(\mu)$, $1\le j\le\ell$, we have
\[
\mu(Q) \le 2^{-T(\sigma_j+1)} \mu(\wh{Q}) \le 2\mu(Q),
\]
where $\wh{Q}$ is the only cube in $\cD_{j-1}$ containing $Q$.
\end{definition}
The expression $2^{-T(\sigma_j+1)}$ in the definition may appear strange, but it turns out to be a convenient normalization. The key point in this definition is that a measure is $\sigma$-regular if all cubes of positive mass have roughly the same mass, and the sequence $(\sigma_j)$ helps quantify this common mass.

\begin{lemma} \label{lem:energy-regular}
If $\nu\in\cP([0,1)^d)$ is $\sigma$-regular for some $\sigma\in\R^\ell$ and $s\in (0,d)$, then
\[
\left|\log \cE_s(\nu)-\left( T \max_{j=0}^{\ell} \sum_{i=1}^j (s-1)-\sigma_i \right)\right|\le O(\ell) + O_{d,s,T}(1).
\]
\end{lemma}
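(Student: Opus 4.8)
The strategy is to start from the energy formula of Lemma~\ref{lem:energy}, namely $\cE_s(\nu) \approx_{s,d,T} \sum_{j=1}^\ell 2^{sTj}\sum_{Q\in\cD_j}\nu(Q)^2$ (the sum runs up to $\ell$ since $\nu$ is a $2^{-T\ell}$-measure), and to compute the inner sums using $\sigma$-regularity. For a $\sigma$-regular measure, telescoping the two-sided inequality $\nu(Q)\le 2^{-T(\sigma_j+1)}\nu(\wh Q)\le 2\nu(Q)$ down from the root gives, for any $Q\in\cD_j(\nu)$,
\[
\nu(Q) = 2^{-T\sum_{i=1}^j(\sigma_i+1)}\cdot 2^{O(j)},
\]
and consequently the number of cubes in $\cD_j(\nu)$ is $2^{T\sum_{i=1}^j(\sigma_i+1)}\cdot 2^{O(j)}$ (using that $\nu$ is a probability measure, so the masses sum to $1$). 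Therefore
\[
\sum_{Q\in\cD_j}\nu(Q)^2 = \#\cD_j(\nu)\cdot \nu(Q)^2 = 2^{-T\sum_{i=1}^j(\sigma_i+1)}\cdot 2^{O(j)}.
\]

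\textbf{Main computation.} Plugging this in,
\[
\cE_s(\nu) \approx_{s,d,T} \sum_{j=1}^\ell 2^{sTj}\, 2^{-T\sum_{i=1}^j(\sigma_i+1)}\, 2^{O(j)} = \sum_{j=0}^\ell 2^{T\sum_{i=1}^j (s-1-\sigma_i)}\,2^{O(j)},
\]
where I have absorbed $sTj - T\sum_{i=1}^j 1 = T\sum_{i=1}^j(s-1)$ into the exponent and extended the sum to include $j=0$ (the $j=0$ term is a harmless $O(1)$). Now I would take logarithms. A finite sum of positive terms is comparable, up to a factor equal to the number of terms, to its largest term; here there are $\ell+1$ terms, so
\[
\log\cE_s(\nu) = \max_{0\le j\le\ell}\left( T\sum_{i=1}^j(s-1-\sigma_i) + O(j)\right) + O(\ell) + O_{d,s,T}(1).
\]
Since each $O(j)$ error is $O(\ell)$ and the implicit constant in $2^{O(j)}$ from the regularity telescoping is absorbed into an $O(\ell)$ term, this collapses to
\[
\left|\log\cE_s(\nu) - T\max_{0\le j\le\ell}\sum_{i=1}^j\big((s-1)-\sigma_i\big)\right| \le O(\ell) + O_{d,s,T}(1),
\]
which is exactly the claim. (Note the sign: the stated formula writes $\sum_{i=1}^j (s-1)-\sigma_i$, meaning $\sum_{i=1}^j ((s-1)-\sigma_i)$, consistent with the above.)

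\textbf{Where the care is needed.} There is no deep obstacle here; the result is essentially bookkeeping. The points requiring attention are: (i) justifying the telescoped estimate $\nu(Q)=2^{-T\sum_{i=1}^j(\sigma_i+1)}2^{O(j)}$ uniformly over $Q\in\cD_j(\nu)$, which is immediate from multiplying $j$ copies of the defining inequality, each contributing a factor in $[\tfrac12,1]$, hence a cumulative factor in $[2^{-j},1]=2^{O(j)}$; (ii) converting this into the count $\#\cD_j(\nu)=2^{T\sum_{i=1}^j(\sigma_i+1)}2^{O(j)}$ via $\sum_{Q\in\cD_j(\nu)}\nu(Q)=1$; and (iii) the elementary fact that $\log\big(\sum_{j=0}^\ell x_j\big) = \max_j\log x_j + O(\log(\ell+1)) = \max_j \log x_j + O(\ell)$ for positive $x_j$. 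One should also double-check that the hypothesis $s\in(0,d)$ is what guarantees (via Lemma~\ref{lem:energy}) that the energy sum genuinely truncates at $j=\ell$ and is finite, so that the $\max$ over a finite range is legitimate. All error terms of size $O(j)\le O(\ell)$ and the constant from Lemma~\ref{lem:energy} of size $O_{s,d,T}(1)$ combine into the stated $O(\ell)+O_{d,s,T}(1)$.
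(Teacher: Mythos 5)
Your proof is correct and follows essentially the same route as the paper: telescope the $\sigma$-regularity condition to bound $\nu(Q)$ and $|\cD_j(\nu)|$ up to $2^{O(j)}$ factors, plug into the dyadic energy formula of Lemma~\ref{lem:energy}, and bound the resulting sum of $\ell$ terms by $\ell$ times its largest term before taking logarithms. The only cosmetic point is that you use $2^{O(j)}$ to denote a two-sided multiplicative error factor (the paper keeps the two-sided inequalities explicit), but your parenthetical explanation makes the intended meaning clear and the $O(\ell)$ bookkeeping is the same.
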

\begin{proof}
We use crude bounds which are enough for our purposes. From the definition it is clear that if $Q\in\cD_j(\nu)$ then
\[
2^{-\ell} 2^{-T(\sigma_1+1)}  \cdots 2^{-T(\sigma_j+1)}   \le 2^{-j} 2^{-T(\sigma_1+1)}  \cdots 2^{-T(\sigma_j+1)}  \le \nu(Q) \le 2^{-T(\sigma_1+1)}  \cdots 2^{-T(\sigma_j+1)} .
\]
This implies, in particular, that
\begin{equation} \label{eq:box-counting-regular}
2^{T(\sigma_1+1)}  \cdots 2^{T(\sigma_j+1)}  \le |\cD_j(\nu)|\le 2^\ell 2^{T(\sigma_1+1)}  \cdots 2^{T(\sigma_j+1)} .
\end{equation}
From the two displayed equations and  Lemma \ref{lem:energy} it follows that
\[
2^{-2\ell} \sum_{j=1}^{\ell} 2^{-T(\sigma_1+\ldots+\sigma_j+j)} \cdot  2^{s T j}  \lesssim_{d,s,T} \cE_s(\nu) \lesssim_{d,s,T} 2^\ell \sum_{j=1}^{\ell}  2^{-T(\sigma_1+\ldots+\sigma_j+j)}\cdot 2^{sTj}.
\]
Write $\mathcal{M}_s(\sigma) := T \max_{j=1}^{\ell} \sum_{i=1}^j (s-1)-\sigma_j$. Bounding  $\sum_{j=1}^\ell$ by $\ell$ times the maximal term in the right-hand side, we deduce that
\[
\mathcal{M}_s(\sigma) -2\ell-O_{d,s,T}(1) \le \log \cE_s(\nu) \le  \mathcal{M}_s(\sigma) + \ell +\log\ell +O_{d,s,T}(1).
\]
This yields the claim.
\end{proof}
Heuristically, the previous lemma says that for $\log\cE_s(\nu)$ to be small, it must hold that
\[
\sum_{i=1}^j \sigma_i \ge (s-1)j, \quad j=0,\ldots,\ell.
\]
Recalling the connection of $\sigma_i$ to branching numbers, this means that the average branching number over any initial set of scales has to be sufficiently large, in a manner depending on $s$.

The following is a variant of Bourgain's regularization argument (see e.g. \cite[Section 2]{Bourgain10} for a clean example). Recall that  $\supp_{\mathsf{d}}(\mu)$ denotes the dyadic support of $\mu$.
\begin{lemma} \label{lem:bourgain}
Let $\mu$ be a $2^{-{T\ell}}$-measure on $[0,1)^d$ for some $\ell\ge 1$. There exists a $2^{-{T\ell}}$-set $X$, contained in $\supp_{\mathsf{d}}(\mu)$ and satisfying $\mu(X) \ge (2Td+2)^{-\ell}$, such that $\mu_X$ is $\sigma$-regular for some sequence $\sigma\in [-1,d-1]^\ell$.
\end{lemma}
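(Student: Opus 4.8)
The plan is to build the regular set $X$ greedily, scale by scale, always retaining the bulk of the mass. At each scale $j$ I will sort the cubes of the current set according to the (dyadic) logarithm of their mass relative to their parent, and then pigeonhole: since these relative masses lie in $[2^{-Td}, 1]$, their base-$2^T$-logarithms (shifted by $1$) lie in $[-1, d-1]$; partitioning this interval of length $d$ into $O(Td)$ subintervals of length (say) $1/T$ and selecting the subinterval carrying the largest share of the parent's mass loses at most a factor $1/(2Td)$ of the mass at that step — this is where the factor $(2Td+2)^{-\ell}$ (or something of that shape) comes from. Doing this simultaneously over all parents at scale $j-1$, and at every scale $j=1,\dots,\ell$, produces the desired $2^{-T\ell}$-set.

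More precisely, I would proceed by downward induction on $j$ or, more naturally, forward induction constructing a decreasing sequence of $2^{-Tj}$-sets. Start with $X_0 = [0,1)^d$. Given the $2^{-T(j-1)}$-set $X_{j-1}$ (a union of cubes in $\cD_{j-1}$), for each cube $\wh Q \in \cD_{j-1}$ with $\mu(\wh Q \cap X_{j-1})>0$ look at its children $Q\in\cD_j$; each child with positive mass satisfies $2^{-Td}\le \mu(Q)/\mu(\wh Q) \le 1$, so $\log(\mu(Q)/\mu(\wh Q)) + Td \cdot(\text{something})$... cleaner: write $\mu(Q) = 2^{-T(\sigma+1)}\mu(\wh Q)$ with $\sigma = \sigma(Q)\in[-1,d-1]$. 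Cover $[-1,d-1]$ by $\lceil Td\rceil$ intervals $I_1,\dots$ of length $1/T$ each, and for the fixed parent $\wh Q$ pick the interval $I_{k(\wh Q)}$ maximizing $\sum_{Q: \sigma(Q)\in I_k} \mu(Q)$; this sum is $\ge \mu(\wh Q\cap X_{j-1})/\lceil Td\rceil$. Let $X_j$ be the union, over all such $\wh Q$, of the children $Q$ with $\sigma(Q)\in I_{k(\wh Q)}$. Then $\mu(X_j) \ge \mu(X_{j-1})/\lceil Td\rceil$, so $\mu(X_\ell) \ge \lceil Td\rceil^{-\ell} \ge (2Td+2)^{-\ell}$.

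Set $X = X_\ell$ and $\nu = \mu_X$. It remains to check that $\nu$ is $\sigma$-regular for a suitable single sequence $\sigma\in[-1,d-1]^\ell$. The subtlety here — and the one genuine thing to be careful about — is that the interval $I_{k(\wh Q)}$ chosen at scale $j$ may depend on the parent $\wh Q$, whereas the definition of $\sigma$-regular requires a single $\sigma_j$ working for \emph{all} cubes $Q\in\cD_j(\nu)$ at that scale. To fix this I would make the selection uniform across all parents at scale $j$ at once: instead of choosing $I_{k(\wh Q)}$ separately, choose a single interval $I_{k(j)}$ maximizing $\sum_{Q\in\cD_j,\ \wh Q\subset X_{j-1},\ \sigma(Q)\in I_k}\mu(Q)$, which again carries at least a $1/\lceil Td\rceil$ fraction of $\mu(X_{j-1})$. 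Then every child cube surviving at scale $j$ has $\sigma(Q)$ in the fixed interval $I_{k(j)}$; taking $\sigma_j$ to be, say, the left endpoint of $I_{k(j)}$, one gets $2^{-T(\sigma_j+1)}\le \mu(Q)/\mu(\wh Q)$ and $\mu(Q)/\mu(\wh Q) \le 2^{-T(\sigma_j+1)}\cdot 2^{T/T} = 2\cdot 2^{-T(\sigma_j+1)}$ since $I_{k(j)}$ has length $1/T$. Because passing from $\mu$ to $\nu=\mu_X$ only multiplies all masses by the constant $1/\mu(X)$, these relative inequalities are unchanged, and $\supp_{\mathsf d}(\nu)\subset\supp_{\mathsf d}(\mu)$ holds by construction. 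So the only real ``obstacle'' is the bookkeeping to make the pigeonhole choice scale-uniform rather than cube-by-cube; everything else is the elementary greedy argument sketched above, and the constant $2Td+2$ is comfortably larger than the $\lceil Td\rceil$ that the argument actually produces.
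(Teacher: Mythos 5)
Your construction goes coarse-to-fine, whereas the paper's goes fine-to-coarse, and this is not a cosmetic difference: it is precisely the point at which your argument breaks down. The step you dismiss as harmless — ``passing from $\mu$ to $\nu=\mu_X$ only multiplies all masses by the constant $1/\mu(X)$, [so] these relative inequalities are unchanged'' — is false. One has $\nu(Q)=\mu(Q\cap X)/\mu(X)$, not $\mu(Q)/\mu(X)$, and for $Q\in\cD_j$ with $j<\ell$ the set $Q\cap X$ is typically a \emph{proper} subset of $Q$, since $X=X_\ell$ is a $2^{-T\ell}$-set and your selections at scales $j+1,\ldots,\ell$ have removed mass from inside $Q$. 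Those removals are not proportional across $Q$ and its parent $\wh{Q}$ (some siblings of $Q$ may lose a far larger share of their descendants than $Q$ does), so $\nu(Q)/\nu(\wh{Q})$ can differ by an uncontrolled factor from $\mu(Q)/\mu(\wh{Q})$, and the regularity estimate you extracted from the level-$j$ pigeonhole does not transfer to $\nu$. The paper avoids exactly this by running the induction from the finest scale $\ell$ downward to $1$, always re-classifying with the measure already restricted at the finer scales. The crucial observation — which has no counterpart in your forward construction — is that once $\mu_j$ is formed, the ratios $\mu_j(Q)/\mu_j(\wh{Q})$ for $Q\in\cD_j$ are invariant under all subsequent restrictions at coarser scales $i<j$, because any cube at scale $\ge j-1$ lies entirely inside or entirely outside a cube at scale $i-1$. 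No analogous invariance holds going forward, and your proof supplies none.

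A smaller, separate issue: you assert that the relative masses $\mu(Q)/\mu(\wh{Q})$ lie in $[2^{-Td},1]$, but a child can carry an arbitrarily small fraction of its parent's mass, so this range is not automatic. The paper handles this by placing such cubes in a separate tail class $X_\ell^{(>Td)}$ and noting that its total mass is at most $1/2$ (at most $2^{Td}$ children, each with relative mass at most $2^{-Td-1}$); this is the source of the factor $2$ and the $+2$ in $(2Td+2)^{-\ell}$. Your bucketing as written leaves these cubes unaccounted for, though this part is easily patched.
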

\begin{proof}
Recall that $\wh{Q}$ is the only cube in $\cD_{j-1}$ containing $Q\in\cD_j$. For each $k\in [0,Td]\cap\Z$, let
\[
X_{\ell}^{(k)} = \bigcup \{ Q\in \cD_{\ell}:   \mu(Q) \le  2^{-k} \mu(\wh{Q}) < 2\mu(Q) \},
\]
and set
\[
X_{\ell}^{(>Td)} = \bigcup \{ Q\in \cD_{\ell}:   \mu(Q) \le  2^{-(Td+1)} \mu(\wh{Q})  \}.
\]
Note that
\[
\mu\left(X_{\ell}^{(>Td)}\right) \le 2^{-(Td+1)} 2^{Td} = \frac12,
\]
and that $\supp_{\mathsf{d}}(\mu)$ is the union of the $X_\ell^{(k)}$ together with $X_{\ell}^{(>Td)}$. Pick the smallest $k=k(\ell)\in [0,Td]$ which maximizes $\mu(X_{\ell}^{(k)})$ and set $\sigma_\ell = k/T-1\in [-1,d-1]$. Then
\[
\mu\left(X_{\ell}^{(k)} \right) \ge \frac{1}{2(Td+1)} .
\]
Set $X_{\ell}:= X_{\ell}^{(k)}$ and $\mu_{\ell}=\mu_{X_{\ell}}$.

Now continue inductively, replacing $\ell$ by $\ell-1$ and $\mu$ by $\mu_{\ell}$, until we eventually get a set $X_1$ and a sequence $(\sigma_1,\ldots,\sigma_\ell)\in [-1,d-1]^\ell$. Note that for $Q\in\cD_j(\mu_i)$ the value of $\mu_i(Q)/\mu_i(\wh{Q})$ remains constant for $i\le j$ and, in particular, for $i=1$. Hence $X=X_1$ has the desired properties.
\end{proof}

The set $X$ given by the lemma will have far too little measure for our purposes: later we will need $\mu_X(A)$ to be large (in particular nonzero) for certain sets $A$ of mass roughly $\ell^{-2}$. By iterating the construction, we are able to get a moderately long sequence of sets $X_i$ such that $\mu(\R^d\setminus\cup_i X_i)\ll \ell^{-2}$; by pigeonholing we will then be able to select some $X_i$ with $\mu_{X_i}(A)$ suitably large.
\begin{corollary} \label{cor:bourgain}
Fix $\ell\ge 1$, write $m=T\ell$, and let $\mu$ be a $2^{-m}$-measure on $[0,1)^d$.  There exists a family of pairwise disjoint $2^{-m}$-sets $X_1,\ldots, X_N$  with $X_i\subset\supp_{\mathsf{d}}(\mu)$, and such that:
\begin{enumerate}[(\rm i)]
\item $\mu\left(\bigcup_{i=1}^N X_i\right) \ge 1-2^{-\e m}$. In particular, if $\mu(A)> 2^{-\e m}$, then there exists $i$ such that $\mu_{X_i}(A)\ge \mu(A)-2^{-\e m}$.
\item $\mu(X_i) \ge 2^{-(\e+(1/T)\log(2d T+2)) m} \ge 2^{-o_{T,\e}(1) m}$,
\item Each $\mu_{X_i}$ is $\sigma(i)$-regular for some $\sigma(i)\in  [-1,d-1]^\ell$.
\end{enumerate}
Moreover, the family $(X_i)_{i=1}^N$ may be constructed so that it is determined by $d, T,\e,\ell$ and $\mu$ (even though there may be other families satisfying the above properties).
\end{corollary}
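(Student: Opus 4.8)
The plan is to iterate Lemma~\ref{lem:bourgain} a controlled number of times, peeling off one regular piece at each stage and renormalizing on the complement. First I would set up the iteration: let $Y_0 = \supp_{\mathsf{d}}(\mu)$ and $\mu_0 = \mu$. Having defined a $2^{-m}$-set $Y_{k}$ with $\mu(Y_k)>0$, apply Lemma~\ref{lem:bourgain} to the $2^{-m}$-measure $\mu_{Y_k} := \mu(Y_k)^{-1}\mu|_{Y_k}$ (note this is still a $2^{-m}$-measure) to obtain a $2^{-m}$-set $X_{k+1}\subset Y_k$ with $\mu_{Y_k}(X_{k+1}) \ge (2Td+2)^{-\ell}$ such that $(\mu_{Y_k})_{X_{k+1}}$ is $\sigma(k{+}1)$-regular for some $\sigma(k{+}1)\in[-1,d-1]^\ell$. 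Crucially, $(\mu_{Y_k})_{X_{k+1}} = \mu_{X_{k+1}}$, since normalized restriction is transitive, so property (iii) holds for each $X_i$ with respect to the original $\mu$. Then set $Y_{k+1} = Y_k \setminus X_{k+1}$, which is again a $2^{-m}$-set.

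The key estimate is that the mass decays geometrically: $\mu(Y_{k+1}) = \mu(Y_k)(1 - \mu_{Y_k}(X_{k+1})) \le \mu(Y_k)(1 - (2Td+2)^{-\ell})$, so $\mu(Y_k) \le (1-(2Td+2)^{-\ell})^k$. To make this $\le 2^{-\e m} = 2^{-\e T\ell}$, using $1-x \le e^{-x}$ it suffices to take $k \ge \e T\ell \cdot \ln 2 \cdot (2Td+2)^\ell$; call the first such value $N$ and stop the construction there, so that $\mu(Y_N) < 2^{-\e m}$ and hence $\mu(\bigcup_{i=1}^N X_i) = 1 - \mu(Y_N) \ge 1 - 2^{-\e m}$, giving (i). The "in particular" clause of (i) follows by pigeonholing: if $\mu(A) > 2^{-\e m}$ then $\sum_i \mu(X_i \cap A) = \mu(A) - \mu(Y_N\cap A) \ge \mu(A) - 2^{-\e m}$ while $\sum_i \mu(X_i) \le 1$, so some $i$ has $\mu(X_i\cap A)/\mu(X_i) \ge \mu(A) - 2^{-\e m}$, i.e. $\mu_{X_i}(A)\ge \mu(A)-2^{-\e m}$. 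The $X_i$ are pairwise disjoint by construction and contained in $\supp_{\mathsf{d}}(\mu)$ since $X_1\subset Y_0 = \supp_{\mathsf{d}}(\mu)$ and the $Y_k$ are nested.

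For (ii), the bound $\mu_{Y_k}(X_{k+1}) \ge (2Td+2)^{-\ell}$ only controls the relative mass, so I need the absolute mass $\mu(X_{k+1}) = \mu(Y_k)\,\mu_{Y_k}(X_{k+1})$, and $\mu(Y_k)$ can be as small as $\approx 2^{-\e m}$. Hence $\mu(X_i) \ge 2^{-\e m}(2Td+2)^{-\ell} = 2^{-\e m - \ell\log(2Td+2)} = 2^{-(\e + (1/T)\log(2dT+2))m}$, which is $2^{-o_{T,\e}(1)m}$ since $(1/T)\log(2dT+2)\to 0$ as $T\to\infty$. (One should be slightly careful: for the last few indices $i$ near $N$, $\mu(Y_{i-1})$ could in principle already be below $2^{-\e m}$; but in fact the stopping rule guarantees $\mu(Y_{i-1}) \ge 2^{-\e m}$ for all $i \le N$ because $N$ is the \emph{first} index where the threshold is crossed, so this bound is uniform over $i=1,\dots,N$.) Finally, determinacy (the "moreover" clause): at each stage Lemma~\ref{lem:bourgain} makes canonical choices — in the proof of that lemma one "picks the smallest $k$ maximizing $\mu(X_\ell^{(k)})$", a deterministic rule — so the entire construction is a deterministic function of $d, T, \e, \ell, \mu$.

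I do not expect a serious obstacle here; this is essentially a clean iteration of Lemma~\ref{lem:bourgain} with a geometric-decay bookkeeping argument. The one point requiring a little care is (ii): one must resist the temptation to bound $\mu(X_i)$ below by its relative mass and instead track that $\mu(Y_{i-1})$ stays above $2^{-\e m}$ throughout the run $i \le N$, which is exactly what the choice of stopping time $N$ as the \emph{first} crossing time provides. The other mild subtlety is simply checking that normalized restrictions compose correctly so that $\sigma$-regularity of $(\mu_{Y_k})_{X_{k+1}}$ transfers to $\mu_{X_{k+1}}$ — immediate from the definitions, since both equal $\mu(X_{k+1})^{-1}\mu|_{X_{k+1}}$.
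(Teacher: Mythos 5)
Your proof takes essentially the same route as the paper: iterate Lemma~\ref{lem:bourgain} on the renormalized complement, stop once the remaining mass drops below $2^{-\e m}$, and read off (i)--(iii). The transitivity observation $(\mu_{Y_k})_{X_{k+1}}=\mu_{X_{k+1}}$ and the pigeonholing argument for the ``in particular'' clause of (i) are both correct.

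There is, however, an internal inconsistency in your definition of $N$, and taken at face value it makes (ii) fail. You first set $N=\lceil \e T\ell\cdot\ln 2\cdot(2Td+2)^\ell\rceil$, a quantity determined purely by $d,T,\e,\ell$, and observe that this forces $\mu(Y_N)<2^{-\e m}$. But your parenthetical then justifies (ii) by asserting that ``$N$ is the first index where the threshold is crossed,'' meaning the first $k$ with $\mu(Y_k)<2^{-\e m}$ --- which is a \emph{different} stopping rule. With the deterministic $N$, nothing prevents $\mu(Y_{i-1})$ from falling below $2^{-\e m}$ well before step $N$ (the mass can decay much faster than the worst-case rate $(1-(2Td+2)^{-\ell})^k$), in which case $\mu(X_i)=\mu(Y_{i-1})\,\mu_{Y_{i-1}}(X_i)$ would be smaller than the claimed lower bound, so (ii) would be false for those $i$. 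The fix is exactly what the paper does and what your parenthetical already says: define $N$ as the smallest $k$ with $\mu(Y_k)\le 2^{-\e m}$; your geometric-decay computation then shows that such an $N$ exists (and bounds it), and (ii) follows because $\mu(Y_{i-1})>2^{-\e m}$ for every $i\le N$. With that one change the rest of your argument goes through unchanged and matches the paper's proof.
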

\begin{proof}
Let $X_1$ be the set given by Lemma \ref{lem:bourgain}, and put $B_1=[0,1]^d\setminus X_1$. Continue inductively: once $X_j,B_j$ are defined, let $X_{j+1}$ be the set given by  Lemma \ref{lem:bourgain} applied to $\mu_{B_j}$, and set $B_{j+1}=B_j\setminus X_{j+1}$. Then (setting $B_0=[0,1)^d$)
\begin{equation} \label{eq:X-has-large-measure}
\mu(B_j) \ge 2^{-\e m} \quad \Longrightarrow \quad \mu(X_{j+1}) \ge 2^{-\e m} (2d T+2)^{-\ell}.
\end{equation}
Let $N$ be the smallest integer such that $\mu(B_N) \le  2^{-\e m}$; such $N$ exists thanks to \eqref{eq:X-has-large-measure}.

It is clear that in this construction the family $X_1,\ldots,X_N$ is determined by $d,T,\e,\ell,\mu$ since the set $X$ constructed in the proof of Lemma \ref{lem:bourgain} is determined by $d,T,\ell,\mu$.

The first part of claim (i) is immediate. Then note that
\[
\mu(A)-2^{-\e m} \le \sum_{i=1}^N \mu(X_i \cap A) = \sum_{i=1}^N \mu(X_i) \mu_{X_i}(A),
\]
so there must be $i$ such that $\mu_{X_i}(A) \ge \mu(A)-2^{-\e m}$, as claimed.

Finally, (ii) is immediate from \eqref{eq:X-has-large-measure} and the definition of $N$, and (iii) is clear since the sets $X_i$ were provided by Lemma \ref{lem:bourgain}.
\end{proof}

\subsection{Sets of bad projections}
\label{subsec:badprojections}

In this subsection, we introduce sets of ``bad'' multi-scale projections for a measure $\mu$ around a point $x$. The simple fact that these sets can be taken to have small measure (independently of $\mu$ and $x$) will play a crucial role later. Although a similar notion was introduced in \cite{Shmerkin17}, the sets of bad projections we use here are far more flexible and also more involved, depending on the decomposition into regular measures provided by Corollary \ref{cor:bourgain}.

Given $\theta\in S^1$, we denote the orthogonal projection $x\mapsto x\cdot \theta$ by $\Pi_\theta$.  Normalized Lebesgue measure on $S^1$ will be denoted by $|\cdot|$. We recall the following consequence of the energy version of Marstrand's projection theorem.
\begin{lemma} \label{lem:marstrand}
Let $\mu\in\cP([0,1)^2)$ have finite $1$-energy. Then, for any $R>0$,
\[
|\{ \theta\in S^1: \|\Pi_\theta\mu \|_2^2 \ge R  \cE_1(\mu) \}| \lesssim R^{-1}.
\]
\end{lemma}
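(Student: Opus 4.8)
The plan is to exploit the classical identity relating the $L^2$ norm of a projection to a restricted version of the energy, namely the energy computed only over pairs of points whose difference points in (roughly) a fixed direction. Concretely, for a compactly supported measure $\mu$ on $\R^2$ one has the Fourier-analytic identity
\[
\int_{S^1} \|\Pi_\theta \mu\|_2^2 \, d\theta \;\approx\; \int_{\R^2} \frac{|\wh{\mu}(\xi)|^2}{|\xi|} \, d\xi \;\approx\; \cE_1(\mu),
\]
where the first $\approx$ comes from writing $\|\Pi_\theta\mu\|_2^2 = \int |\wh{\mu}(r\theta)|^2 \, dr$ (Plancherel in one variable applied to the projection) and integrating in polar coordinates, and the second $\approx$ is the standard Fourier representation of the $1$-energy (valid since the Riesz kernel $|x|^{-1}$ in $\R^2$ has Fourier transform a constant times $|\xi|^{-1}$). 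All implied constants here are absolute.

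Granting this, the lemma is immediate from Markov's inequality: the function $\theta \mapsto \|\Pi_\theta\mu\|_2^2 \in [0,\infty]$ has integral over $S^1$ bounded by $C\,\cE_1(\mu)$, so for any $R>0$,
\[
\bigl|\{ \theta\in S^1 : \|\Pi_\theta\mu\|_2^2 \ge R\,\cE_1(\mu)\}\bigr|
\;\le\; \frac{1}{R\,\cE_1(\mu)} \int_{S^1} \|\Pi_\theta\mu\|_2^2 \, d\theta
\;\le\; \frac{C}{R}.
\]
The hypothesis $\cE_1(\mu)<\infty$ guarantees the right-hand side makes sense and in particular that $\|\Pi_\theta\mu\|_2^2<\infty$ for a.e.\ $\theta$, so the statement is non-vacuous.

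The one point requiring a little care — and the place I would be most careful — is justifying the first $\approx$, i.e.\ that $\int_{S^1}\|\Pi_\theta\mu\|_2^2\,d\theta$ is comparable to $\int |\wh\mu(\xi)|^2|\xi|^{-1}d\xi$ rather than merely bounded by it. The upper bound (which is all that is actually needed for the lemma as stated) is the easy direction: one covers $\{|\xi|\le 1\}$ trivially using $|\wh\mu|\le 1$ and $\mu([0,1)^2)=1$, and on $\{|\xi|\ge 1\}$ passes to polar coordinates $\xi = r\theta$, $dr\,r\,d\theta$, absorbing the extra factor of $r$ against the $r^{-1}$ from the energy kernel to get $\int_{S^1}\int_1^\infty |\wh\mu(r\theta)|^2\,dr\,d\theta \le \int_{S^1}\|\Pi_\theta\mu\|_2^2\,d\theta$ after recognizing $\int_{\R}|\wh\mu(r\theta)|^2\,dr = \|\Pi_\theta\mu\|_2^2$ via Plancherel applied to the one-dimensional Fourier transform of $\Pi_\theta\mu$ (noting $\wh{\Pi_\theta\mu}(r) = \wh\mu(r\theta)$). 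So in fact only the inequality $\cE_1(\mu) \gtrsim \int_{S^1}\|\Pi_\theta\mu\|_2^2\,d\theta$ is used, and this is exactly the standard statement of the energy form of Marstrand's projection theorem, which I would simply cite (e.g.\ from Mattila's book) rather than reprove. Alternatively, one can avoid Fourier analysis entirely and argue directly: $\|\Pi_\theta\mu\|_2^2$ is, after a harmless smoothing at a scale, comparable to $\iint \mathbf 1\{|\Pi_\theta(x-y)|\le \delta\}\,d\mu(x)d\mu(y)/\delta$, and integrating this in $\theta$ converts the indicator into a factor $\approx \min(1,\delta/|x-y|)$, yielding $\lesssim \cE_1(\mu)$ after letting $\delta\to 0$; this is the route I would take if I wanted the excerpt to be self-contained.
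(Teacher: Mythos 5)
Your proof is correct and takes exactly the same route as the paper: cite the standard inequality $\int_{S^1}\|\Pi_\theta\mu\|_2^2\,d\theta \lesssim \cE_1(\mu)$ (which the paper references to Mattila) and then apply Markov's inequality. The extra discussion of the Fourier-analytic justification is more detail than the paper gives, but it is the same standard argument behind the cited inequality.
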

\begin{proof}
This is just a consequence of Markov's inequality and the identity
\[
\int_{S^1} \|\Pi_\theta\mu \|_2^2\,d\theta \lesssim \cE_1(\mu).
\]
see e.g. \cite[Equation 1.7]{Mattila2004}.
\end{proof}

We restate  \cite[Lemma 3.7]{Shmerkin17} using our notation, for later reference.
\begin{lemma} \label{lem:discretize-project}
For any $\nu\in\cP(\R^2)$, $k\in\N$ and $\theta\in S^1$,
\[
\| R_k \Pi_\theta \nu \|_2^2 \approx \| \Pi_\theta R_k \nu \|_2^2.
\]
\end{lemma}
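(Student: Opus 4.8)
The plan is to prove the two-sided estimate by inserting the intermediate quantity $\|R_k\Pi_\theta R_k\nu\|_2$ and establishing the two comparisons
\[
\|R_k\Pi_\theta\nu\|_2 \;\approx\; \|R_k\Pi_\theta R_k\nu\|_2 \qquad\text{and}\qquad \|R_k\Pi_\theta R_k\nu\|_2 \;\approx\; \|\Pi_\theta R_k\nu\|_2
\]
separately, with absolute implied constants. Write $\delta = 2^{-Tk}$, let $I$ run over the $\delta$-dyadic intervals of $\R$ and $Q$ over the cubes in $\cD_k\subset\R^2$. I would first record the identity $\|R_k\rho\|_2^2 = \delta^{-d}\sum_{Q\in\cD_k}\rho(Q)^2$, valid for any finite measure $\rho$ on $\R^d$, which is immediate from the definition of $R_k$; in particular $\|R_k\Pi_\theta\nu\|_2^2 = \delta^{-1}\sum_I \nu(\Pi_\theta^{-1}I)^2 \le \delta^{-1}$, so all three quantities are finite (even though $\Pi_\theta\nu$ need not be absolutely continuous) and the comparisons are meaningful.

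For the first comparison, I would fix $I$ and compare $(\Pi_\theta\nu)(I) = \nu(\Pi_\theta^{-1}I)$ with $(\Pi_\theta R_k\nu)(I) = (R_k\nu)(\Pi_\theta^{-1}I)$. Because $\nu$ and $R_k\nu$ assign the same mass to every $Q\in\cD_k$, and a cube lying entirely inside or entirely outside the strip $\Pi_\theta^{-1}I$ contributes identically to both quantities, the two values differ by at most $\sum_Q\nu(Q)$ taken over cubes $Q$ that meet $\Pi_\theta^{-1}I$ without being contained in it. Since $\operatorname{diam}\Pi_\theta(Q)\le\sqrt2\,\delta$, any $Q$ meeting $\Pi_\theta^{-1}I$ satisfies $\Pi_\theta(Q)\subset\widetilde I$, where $\widetilde I$ denotes the union of the (boundedly many) $\delta$-dyadic intervals within distance $2\delta$ of $I$; hence that sum is at most $\nu(\Pi_\theta^{-1}\widetilde I) = \sum_{I'\subset\widetilde I}(\Pi_\theta\nu)(I')$, and the same bound holds with the roles of $\nu$ and $R_k\nu$ interchanged. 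Squaring, applying Cauchy--Schwarz to the bounded sum, summing over $I$, and using that each $I'$ lies in only boundedly many of the sets $\widetilde I$, I would obtain $\sum_I(\Pi_\theta R_k\nu)(I)^2\approx\sum_I(\Pi_\theta\nu)(I)^2$, which after multiplication by $\delta^{-1}$ is the first comparison.

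For the second comparison, the inequality $\|R_k\Pi_\theta R_k\nu\|_2\le\|\Pi_\theta R_k\nu\|_2$ is immediate from Jensen's inequality, since the density of $R_k\rho$ is the average of that of $\rho$ over each cube of $\cD_k$. For the reverse inequality I would use the explicit form of the density $g$ of $\Pi_\theta R_k\nu$: since $R_k\nu = \sum_Q\delta^{-2}\nu(Q)\operatorname{Leb}_Q$, one has $g = \delta^{-2}\sum_Q\nu(Q)\,w_Q$, where $w_Q(t)$ is the length of the slice of $Q$ by the line $\Pi_\theta^{-1}(t)$; each $w_Q\ge0$ is supported on $\Pi_\theta(Q)$ (an interval of length $\le\sqrt2\,\delta$), with $\int w_Q = \delta^2$ and $\|w_Q\|_\infty\le\sqrt2\,\delta$. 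The crucial point is that $g$ is slowly varying at scale $\delta$: if $t_0$ lies in a $\delta$-dyadic interval $I$, then every $Q$ with $w_Q(t_0)>0$ has $\Pi_\theta(Q)$ contained in the concentric $5\delta$-interval $\widehat I$, so $w_Q(t_0)\le\sqrt2\,\delta = \sqrt2\,\delta^{-1}\int_{\widehat I}w_Q$, and therefore $g(t_0)\le\sqrt2\,\delta^{-1}\int_{\widehat I}g$. Consequently $\int_I g^2\le\delta(\sup_I g)^2\lesssim\delta^{-1}\big(\int_{\widehat I}g\big)^2\lesssim\delta^{-1}\sum_{I'\subset\widehat I}\big(\int_{I'}g\big)^2$, and summing over $I$ with bounded overlap yields $\|\Pi_\theta R_k\nu\|_2^2 = \int g^2\lesssim\delta^{-1}\sum_{I'}\big(\int_{I'}g\big)^2 = \|R_k\Pi_\theta R_k\nu\|_2^2$.

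Chaining the two comparisons gives the statement. I expect the (mild) main obstacle to be the second comparison, specifically recovering $\|\Pi_\theta R_k\nu\|_2$ from its $\delta$-dyadic average: this is the only step that genuinely uses the regularity of $R_k\nu$ — its constancy on $\delta$-cubes — through the slow variation of its projected density, whereas the first comparison and the remaining estimates are routine bounded-overlap and Cauchy--Schwarz bookkeeping with absolute constants.
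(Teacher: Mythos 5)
The paper does not prove this lemma itself: it restates it as a quotation of Lemma 3.7 of \cite{Shmerkin17}, so there is no in-paper proof to compare against. Your argument, however, is a correct self-contained proof. Both comparisons check out: in the first, the key observation that $\nu$ and $R_k\nu$ assign identical mass to every $Q\in\cD_k$ shows that $(\Pi_\theta\nu)(I)$ and $(\Pi_\theta R_k\nu)(I)$ can only differ through the boundary cubes of the slab $\Pi_\theta^{-1}I$, and those project into a bounded neighborhood of $I$; the bound is symmetric in $\nu$ and $R_k\nu$ because $\nu(Q)=R_k\nu(Q)$ for those cubes. In the second, Jensen gives one inequality for free, and the reverse uses exactly the right structural fact — that the density $g=\delta^{-2}\sum_Q\nu(Q)w_Q$ of $\Pi_\theta R_k\nu$ satisfies $g(t_0)\lesssim\delta^{-1}\int_{\widehat I}g$ because each slice function $w_Q$ has $\|w_Q\|_\infty\le\sqrt2\,\delta=\sqrt2\,\delta^{-1}\int w_Q$ — so that pointwise values are controlled by dyadic averages on a bounded enlargement. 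The bounded-overlap and Cauchy--Schwarz bookkeeping, and the identity $\|R_k\rho\|_2^2=\delta^{-d}\sum_{Q}\rho(Q)^2$, are all used correctly, and the implied constants are absolute, as required by the unadorned $\approx$ in the statement. Splitting through the intermediate quantity $\|R_k\Pi_\theta R_k\nu\|_2$ is a clean organizing device; any reasonable proof of this lemma is some variant of these finite-overlap estimates, so your route is essentially canonical.
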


Next, we define the various sets of ``bad projections''.
\begin{definition} \label{def:bad-projections}
Given $\mu\in\cP([0,1)^2)$,  $x\in\supp_{\mathsf{d}}(\mu)$ and non-negative integers $j,k, j_0, \ell$, we let
\begin{align*}
\bad(\mu,x,j, k) &= \left\{ \theta\in S^1: \|\Pi_\theta \mu(x;j\sto j+k) \|_2^2 \ge 2^{\e T k} \cE_1(\mu(x;j\sto j+k)) \right\}, \\
\bad_{j_0\ssto\ell}(\mu,x) &= \bigcup \big\{ \bad(\mu,x,j,k) : k \ge \tau j, \,\, j_0\le j \le j+k\le \ell  \big\}.
\end{align*}
\end{definition}
We underline that the definition of $\bad_{j_0\ssto\ell}(\mu,x)$ depends on the parameters $T, \e$ and $\tau$. Note that, since $\mu(x;j\sto j+k)$ has a bounded density by definition, both quantities in the definition of $\bad(\mu,x,j, k)$ are finite.

Our next goal is to combine Lemma \ref{lem:marstrand} with the decomposition given by Corollary \ref{cor:bourgain}. Starting with a $2^{-T\ell}$-measure $\mu\in\cP([0,1)^2)$ and $x\in\supp_{\mathsf{d}}(\mu)$, we define
\begin{equation} \label{eq:def-badprime}
\bad'_{j_0\ssto\ell}(\mu,x) = \begin{cases}
 \bad_{j_0\ssto\ell}(\mu_{X_j},x)  & \text{ if }x\in X_j\\
 \varnothing & \text{ if } x\in \supp_{\mathsf{d}}(\mu)\setminus \bigcup_i X_i
\end{cases},
\end{equation}
where $(X_i)_{i=1}^N$ are the sets given by Corollary \ref{cor:bourgain}. Note that $\supp_{\mathsf{d}}(\mu_{X_j})=X_j$.

\begin{lemma} \label{lem:badx1}
There exists a further constant $\e'=\e'(T,\e,\tau)>0$ such that, for any $2^{-T\ell}$-measure $\mu\in\cP([0,1)^2)$,
\[
|\bad'_{\e \ell\ssto \ell}(\mu,x)|  \lesssim_{T,\e,\tau} 2^{-\e' \ell}  \quad\text{for all }x\in\supp_{\mathsf{d}}(\mu).
\]
\end{lemma}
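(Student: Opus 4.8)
The plan is to reduce the bound on $|\bad'_{\e\ell\ssto\ell}(\mu,x)|$ to a bound on $|\bad_{\e\ell\ssto\ell}(\nu,x)|$ for a single $\sigma$-regular measure $\nu$, and then estimate the latter directly via Lemma \ref{lem:marstrand}. By the definition \eqref{eq:def-badprime}, if $x\notin\bigcup_i X_i$ the set is empty and there is nothing to prove, so we may assume $x\in X_j$ and $\nu=\mu_{X_j}$, which is $\sigma(j)$-regular for some $\sigma(j)\in[-1,1]^\ell$ by Corollary \ref{cor:bourgain}(iii). Thus it suffices to prove: there is $\e'=\e'(T,\e,\tau)>0$ such that $|\bad_{\e\ell\ssto\ell}(\nu,x)|\lesssim_{T,\e,\tau}2^{-\e'\ell}$ for every $\sigma$-regular $\nu\in\cP([0,1)^2)$ and every $x\in\supp_{\mathsf d}(\nu)=X_j$.

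For a single pair $(j,k)$ with $\e\ell\le j\le j+k\le\ell$ and $k\ge\tau j$, apply Lemma \ref{lem:marstrand} to the measure $\mu(x;j\sto j+k)$ (which has finite $1$-energy since it is a $2^{-kT}$-measure with bounded density) with $R=2^{\e Tk}$: this gives $|\bad(\nu,x,j,k)|\lesssim 2^{-\e Tk}$. Since $k\ge\tau j\ge\tau\e\ell$, each such set has measure $\lesssim 2^{-\e T\tau\e\ell}=2^{-\e^2\tau T\ell}$. The set $\bad_{\e\ell\ssto\ell}(\nu,x)$ is the union of these over the pairs $(j,k)$, and there are at most $O(\ell^2)$ such pairs, so
\[
|\bad_{\e\ell\ssto\ell}(\nu,x)| \lesssim \ell^2\, 2^{-\e^2\tau T\ell}.
\]
Choosing, say, $\e'=\tfrac12\e^2\tau T$ absorbs the polynomial factor $\ell^2$ for all large $\ell$ (and for small $\ell$ the bound is trivial after adjusting the implicit constant), which gives the claim. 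Note that the regularity of $\nu$ is not actually used in this argument — only that $\nu$ is a $2^{-T\ell}$-measure so that the conditional measures $\mu(x;j\sto j+k)$ are defined and have bounded density; the decomposition into regular pieces is needed elsewhere, but for this particular bound the uniform estimate from Marstrand's theorem is all that is required.

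The only mild subtlety, and the place I would be most careful, is the bookkeeping of constants: one must check that the implicit constant in Lemma \ref{lem:marstrand} is genuinely universal (it is, being Markov's inequality plus the projection identity), that the count of pairs $(j,k)$ is polynomial in $\ell$ and independent of $\mu$ and $x$, and that $\e'$ can be taken to depend only on $T,\e,\tau$. Since $j$ ranges over at most $\ell$ values and, for each $j$, $k$ ranges over at most $\ell$ values, the factor $\ell^2$ is harmless against the exponential gain $2^{-\e^2\tau T\ell}$; there is no real obstacle here, just a routine verification that nothing depends on the measure beyond the stated parameters.
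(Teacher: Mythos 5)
Your proof is correct and follows essentially the same route as the paper: reduce to the $\sigma$-regular piece $\nu=\mu_{X_j}$ via the definition of $\bad'$, apply Lemma \ref{lem:marstrand} with $R=2^{\e Tk}$ to each $\bad(\nu,x,j,k)$, and sum over the admissible pairs $(j,k)$. The paper sums the geometric series directly to obtain $\e'=\e^2 T\tau$, whereas you use the cruder $O(\ell^2)$ count of pairs and halve the exponent to absorb it; both are valid, and note a small notational slip where you write $\mu(x;j\sto j+k)$ for what should be $\nu(x;j\sto j+k)$.
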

\begin{proof}
According to the definitions and Lemma \ref{lem:marstrand}, for any $\nu\in\cP([0,1)^2)$ and $x\in\supp_{\mathsf{d}}(\nu)$,
\[
\left|\bad_{j_0\ssto\ell}(\nu,x)\right| \lesssim \sum_{j=j_0}^\infty \sum_{k=\lfloor \tau j\rfloor}^\infty 2^{-\e T k} \lesssim_{T,\e,\tau}  2^{-\e T \tau j_0} .
\]
The point here is that the bound does not depend on $\nu$ or $x$. Hence the claim follows with $\e'  = \e^2 T\tau$.
\end{proof}

Finally, if $\mu\in\cP([0,1)^2)$ and $x\in\supp_{\mathsf{d}}(\mu)$, we let
\begin{equation} \label{eq:def-badprimeprime}
\bad''_{\ell_0}(\mu,x) =\bigcup_{\ell=\ell_0}^\infty \bad'_{\e \ell\ssto\ell}(R_\ell\mu,x).
\end{equation}
We record the following immediate consequence of Lemma \ref{lem:badx1} for later use.
\begin{lemma} \label{lem:badx}
\[
|\bad''_{\ell_0}(\mu,x)| \lesssim_{T,\e,\tau} 2^{-\e' \ell_0},
\]
for all $x\in\supp_{\mathsf{d}}(\mu)$, where $\e'=\e'(T,\e,\tau)>0$ is the constant from Lemma \ref{lem:badx1}.
\end{lemma}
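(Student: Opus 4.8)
The statement to prove is Lemma \ref{lem:badx}, which asserts the bound $|\bad''_{\ell_0}(\mu,x)| \lesssim_{T,\e,\tau} 2^{-\e'\ell_0}$ for all $x \in \supp_{\mathsf{d}}(\mu)$, where $\bad''_{\ell_0}(\mu,x) = \bigcup_{\ell=\ell_0}^\infty \bad'_{\e\ell\ssto\ell}(R_\ell\mu,x)$.

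The plan is essentially immediate: this is, as the paper says, a consequence of Lemma \ref{lem:badx1}. First I would note that for each fixed $\ell \ge \ell_0$, the measure $R_\ell\mu$ is a $2^{-T\ell}$-measure in $\cP([0,1)^2)$, and $x \in \supp_{\mathsf{d}}(\mu) \subseteq \supp_{\mathsf{d}}(R_\ell\mu)$, so Lemma \ref{lem:badx1} applies and gives $|\bad'_{\e\ell\ssto\ell}(R_\ell\mu,x)| \lesssim_{T,\e,\tau} 2^{-\e'\ell}$, with the constant $\e' = \e^2 T\tau$ uniform in $\ell$, $\mu$, and $x$. Then by subadditivity of Lebesgue measure on $S^1$ and summing the geometric series,
\[
|\bad''_{\ell_0}(\mu,x)| \le \sum_{\ell=\ell_0}^\infty |\bad'_{\e\ell\ssto\ell}(R_\ell\mu,x)| \lesssim_{T,\e,\tau} \sum_{\ell=\ell_0}^\infty 2^{-\e'\ell} \lesssim_{T,\e,\tau} 2^{-\e'\ell_0},
\]
which is the claim. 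There is no real obstacle here — the only thing to be slightly careful about is that one should verify $x\in\supp_{\mathsf{d}}(R_\ell\mu)$ so that the inner "bad" sets are well-defined, which is clear since $\mu(\cD_j(x))>0$ for all $j$ forces $R_\ell\mu(\cD_j(x))>0$ for all $j$ (the values agree for $j\le\ell$ and $\cD_j(x)$ is a full dyadic cube receiving positive mass for $j>\ell$). Everything else is bookkeeping, and the geometric summation absorbs any implied constant into the $\lesssim_{T,\e,\tau}$.
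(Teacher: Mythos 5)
Your proof is correct and is exactly the ``immediate consequence'' argument the paper has in mind (the paper does not write out a proof, only announces the lemma as an immediate consequence of Lemma~\ref{lem:badx1}). The extra check that $x\in\supp_{\mathsf{d}}(R_\ell\mu)$, while not written in the paper, is a reasonable piece of bookkeeping and is verified correctly.
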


\subsection{Radial projections}
\label{subsec:radial-projections}

The following result was recently established by T. Orponen \cite{Orponen17}. We state it only in the plane. We denote the radial projection with center $y$ by $P_y$, i.e. $P_y(x)=(y-x)/|y-x|\in S^1$ is the (oriented) direction determined by $x$ and $y$.
\begin{prop} \label{prop:orponen}
Let $\mu,\nu\in\cP([0,1)^2)$ be measures with disjoint supports, such that $\cE_s(\mu)<\infty$, $\cE_u(\nu)<\infty$ for some $u>1$, $2-u<s<1$. Then there is $p=p(s,u)>1$ such that $P_x\nu$ is absolutely continuous with a density in $L^p(S^1)$ for $\mu$ almost all $x$. Moreover,
\[
\int \|P_x\nu\|_p^p \,d\mu(x) <\infty.
\]
\end{prop}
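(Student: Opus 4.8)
\textbf{Plan of proof for Proposition \ref{prop:orponen}.}

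The plan is to reduce the statement to a pair of known results due to Orponen \cite{Orponen17}: one is the fact that, under the stated energy hypotheses, the radial projections $P_x\nu$ are absolutely continuous with $L^p$ density for $\mu$-a.e.\ $x$, and the other is the corresponding quantitative integral bound. Since the Proposition is stated exactly as an import from \cite{Orponen17}, the ``proof'' here is really a matter of translating Orponen's formulation into our normalization: in \cite{Orponen17} the hypotheses are phrased in terms of Hausdorff dimension and Frostman measures, whereas we want them phrased in terms of finiteness of energies $\cE_s(\mu)$ and $\cE_u(\nu)$. First I would recall that $\cE_s(\mu)<\infty$ implies that $\mu$ gives finite mass, up to a multiplicative constant, to balls of radius $r$ at scale $r^s$ after a harmless restriction (a standard consequence of the energy integral, e.g.\ via \cite[\S8]{Falconer14} or Lemma \ref{lem:energy}), and similarly for $\nu$ with exponent $u$; this puts us squarely in the setting of \cite{Orponen17}.

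The key steps, in order, would be: (1) From $\cE_s(\mu)<\infty$, extract a subset $\mu'=\mu_E$ of positive mass which is $s$-Frostman (i.e.\ $\mu'(B(x,r))\lesssim r^s$), using the standard pigeonholing on the energy; do the same for $\nu$ with exponent $u$, obtaining $\nu'$. (2) Apply Orponen's theorem \cite[Theorem 1.1 or 1.3]{Orponen17} to the pair $(\mu',\nu')$: since $2-u<s<1<u$, the hypotheses are met, and we get $p=p(s,u)>1$ and the conclusion that $P_x\nu'$ is absolutely continuous with density in $L^p$ for $\mu'$-a.e.\ $x$, together with the integral bound $\int \|P_x\nu'\|_p^p\,d\mu'(x)<\infty$. (3) Upgrade from $\nu'$ back to $\nu$: write $\nu$ as a countable sum of pieces each comparable to a Frostman measure (or run the argument of step (1)–(2) on a covering of $\supp\nu$ by finitely many Frostman pieces plus an exceptional set), use subadditivity of the $L^p$ norm raised to the power $p$ only after noting that we may instead split and use Minkowski; more cleanly, decompose $\nu=\sum_k c_k \nu_k$ with each $\nu_k$ being $u$-Frostman (after truncating the density level sets of the energy), apply step (2) to each $(\mu,\nu_k)$, and sum. (4) Upgrade from $\mu'$ to $\mu$: the a.e.\ statement is about $x$ in the support of $\mu$; cover $\supp\mu$ by countably many Frostman pieces of $\mu$ (again truncating energy level sets) and take a union, noting a countable union of $\mu$-null sets is $\mu$-null.

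The main obstacle I anticipate is purely bookkeeping rather than conceptual: the passage from ``energy finite'' to ``Frostman'' loses a set of small but positive measure at each truncation level, so to recover the \emph{full} measures $\mu$ and $\nu$ (not just positive-measure subsets) one must organize a countable decomposition and check that the constants in Orponen's bound, which depend on the Frostman constants, are summable against the coefficients $c_k$ of the decomposition. Concretely, if $\nu = \sum_k c_k \nu_k$ where $\nu_k$ is $u$-Frostman with constant $C_k$, Orponen's bound gives $\int\|P_x\nu_k\|_p^p\,d\mu(x)\lesssim C_k^{\,p}$ or similar, and one needs $\sum_k c_k C_k^{\,p}<\infty$; this has to be arranged by choosing the decomposition via the dyadic level sets of the local density, where the standard estimate $\sum 2^{jt}\mu(Q_j)^2 = \cE_t(\mu)<\infty$ (Lemma \ref{lem:energy}) forces exactly the summability needed. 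Once this is set up, steps (2)–(4) are routine. I would therefore present the proof as: reduce to Frostman measures, invoke \cite[Theorem~1.1]{Orponen17}, and reassemble via a dyadic energy decomposition, flagging that all the real content is in Orponen's theorem and that what remains is the standard ``energy $\Rightarrow$ superposition of Frostman measures'' device already used elsewhere in the paper (cf.\ Corollary \ref{cor:bourgain} in spirit, though here in the simpler single-measure setting).
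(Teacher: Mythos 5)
Your proposal is built on a misreading of what needs to be done. You assume Orponen's result is stated only for Frostman measures and therefore propose a "decompose into Frostman pieces, apply, reassemble" argument. In fact, \cite[Equation (3.5)]{Orponen17} (which is the paper's reference) is already stated with energy hypotheses $\cE_s,\cE_u<\infty$, so the entire Frostman reduction and recombination machinery you outline in steps (1), (3) and (4) is unnecessary. The paper's proof is essentially a one-line citation plus a single observation you did not anticipate: Orponen's estimate is stated not for $\mu$ but for the weighted measure $\mu_y = |x-y|^{-1}\,d\mu$ (and with the roles of $\mu,\nu$ swapped). The point of the proof is that the weight $|x-y|^{-1}$ is bounded above and below because $\supp\mu$ and $\supp\nu$ are both inside $[0,1)^2$ and disjoint (hence at positive distance), so the weighted and unweighted statements are equivalent. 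That observation is the entire content of the proof, and it is missing from your plan.

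Separately, even granting your premise, the reassembly step is not tight as written. If $\nu=\sum_k c_k\nu_k$ and you apply the hypothetical Frostman version of Orponen to each $\nu_k$, the quantity $\int\|P_x\nu\|_p^p\,d\mu$ does not decompose additively over $k$; you need the integral Minkowski inequality, which gives $\bigl(\int\|P_x\nu\|_p^p\,d\mu\bigr)^{1/p}\le\sum_k c_k\bigl(\int\|P_x\nu_k\|_p^p\,d\mu\bigr)^{1/p}$, so the required summability is $\sum_k c_k C_k<\infty$, not $\sum_k c_k C_k^{\,p}<\infty$. Likewise, to go from ``$\mu$-a.e.\ on a positive-measure subset'' to ``$\mu$-a.e.'' you need the truncations $E_k$ to exhaust $\supp\mu$ up to $\mu$-null, which you assert but do not arrange. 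None of this is needed once you use the right form of Orponen's statement, but as presented your plan has real gaps at exactly the places you flag as "bookkeeping."
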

\begin{proof}
This is stated in \cite[Equation (3.5)]{Orponen17}, except that Orponen deals with weighted measures $\mu_y=|x-y|^{-1}d\mu$ instead of $\mu$ (note that the roles of $\mu$ and $\nu$ are interchanged in \cite{Orponen17}). Since the weight $|x-y|^{-1}$ is bounded away from $0$ and $\infty$ by the assumption that the supports of $\mu$ and $\nu$ are bounded and disjoint, the claim also holds for $\mu$.
\end{proof}

We point out that Proposition \ref{prop:orponen} uses the Fourier transform, and is the only point in the proofs of Theorems \ref{thm:main} and \ref{thm:packing} that does (on the other hand, the proof of Theorem \ref{thm:full-distance-set} relies heavily on the strongly Fourier-analytic approach of Mattila-Wolff).

Proposition \ref{prop:orponen} has the following key consequence. A similar statement was obtained in \cite{Shmerkin17} using a slightly more involved argument. We recall that $|\cdot|$ stands for normalized Lebesgue measure on the circle.

\begin{prop} \label{prop:radial}
Let $\mu,\nu\in\cP([0,1)^2)$ have disjoint supports and satisfy $\cE_s(\mu), \cE_u(\nu)<\infty$ for some $s\in (0,2), u>\max(1,2-s)$. Then there exists $\kappa=\kappa(\mu,\nu)>0$ such that the following holds:

Suppose that $\Theta\subset [0,1)^2\times S^1$ is a Borel set such that
\[
(\mu\times\mathcal{H}^1)(\Theta) \le \kappa.
\]
Then
\[
(\mu\times\nu)\{ (x,y): P_y(x) \not\in\Theta_x \} > \frac{2}{3}.
\]
\end{prop}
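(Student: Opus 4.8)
The plan is to deduce Proposition \ref{prop:radial} from Proposition \ref{prop:orponen} by a Markov-type argument. First I would apply Proposition \ref{prop:orponen}: since $u>\max(1,2-s)$, we may pick $s'\in(2-u,\min(s,1))$ (note $\cE_{s'}(\mu)\le\cE_s(\mu)<\infty$ since $s'<s$ and $\mu$ is supported on a bounded set), so that the hypotheses $2-u<s'<1$, $u>1$ hold. Proposition \ref{prop:orponen} then gives $p=p(s',u)>1$ and the finiteness
\[
C := \int \|P_x\nu\|_p^p\, d\mu(x) < \infty.
\]
Here the asymmetry in notation should be watched: Proposition \ref{prop:orponen} as stated produces densities for $P_x\nu$ for $\mu$-a.e.\ $x$, which is exactly the direction $P_y(x)$ with $y$ playing the role of the base point — so I'll need to be slightly careful about which measure is which, but after possibly relabelling the statement gives that for $\nu$-a.e.\ $y$, $P_y\mu$ has an $L^p(S^1)$ density, with $\int \|P_y\mu\|_p^p\,d\nu(y)<\infty$. (If the roles genuinely need swapping, I would simply invoke the proposition with $\mu,\nu$ interchanged, using $\cE_s(\nu)\le\cE_u(\nu)<\infty$ and the symmetric hypothesis; the point is that one of the two measures gets an $L^p$ radial projection density from the other's viewpoint, and the bounded-disjoint-supports assumption makes both directions available.)

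Next I would set $\kappa := (6C)^{-p/(p-1)}$, or more precisely choose $\kappa$ small enough that the estimate below closes; $\kappa$ depends only on $C$ and $p$, hence only on $\mu,\nu$ (and $s,u$), as required. Given a Borel set $\Theta\subset [0,1)^2\times S^1$ with $(\mu\times\mathcal{H}^1)(\Theta)\le\kappa$, write $\Theta_x=\{\theta: (x,\theta)\in\Theta\}$ for its $x$-section, so $\int \mathcal{H}^1(\Theta_x)\,d\mu(x)\le\kappa$. For a fixed $y$ for which $P_y\mu$ has density $h_y\in L^p(S^1)$, Hölder's inequality gives, for every $x$,
\[
(P_y\mu)(\Theta_x) = \int_{\Theta_x} h_y \, d\mathcal{H}^1 \le \|h_y\|_p \, \mathcal{H}^1(\Theta_x)^{1-1/p}.
\]
But $(P_y\mu)(\Theta_x)$ is not quite what I want; rather I want to bound $\mu\{x: P_y(x)\in\Theta_x\}$, where the section varies with $x$. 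This is the genuine subtlety: the "target" set $\Theta_x$ depends on the same point $x$ being projected. So instead I would integrate directly. Consider the quantity
\[
I := (\mu\times\nu)\{(x,y): P_y(x)\in\Theta_x\} = \int\!\!\int \mathbf{1}_{\Theta_x}(P_y(x))\, d\nu(y)\, d\mu(x).
\]
Fixing $x$ first, the inner integral is $\nu\{y: P_y(x)\in\Theta_x\}$. By symmetry $P_y(x)=-P_x(y)$, so $P_y(x)\in\Theta_x \iff P_x(y)\in-\Theta_x$, and $\nu\{y: P_x(y)\in -\Theta_x\} = (P_x\nu)(-\Theta_x)$. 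Now invoke Proposition \ref{prop:orponen} in the form giving $P_x\nu$ an $L^p$ density $g_x$ for $\mu$-a.e.\ $x$ with $\int\|g_x\|_p^p\,d\mu(x)=C<\infty$; then by Hölder,
\[
(P_x\nu)(-\Theta_x) \le \|g_x\|_p\,\mathcal{H}^1(\Theta_x)^{1-1/p}.
\]
Integrating in $x$ and applying Hölder once more (with exponents $p$ and $p/(p-1)$),
\[
I \le \int \|g_x\|_p\,\mathcal{H}^1(\Theta_x)^{1-1/p}\,d\mu(x) \le C^{1/p}\left(\int \mathcal{H}^1(\Theta_x)\,d\mu(x)\right)^{1-1/p} \le C^{1/p}\kappa^{1-1/p}.
\]
Choosing $\kappa$ so that $C^{1/p}\kappa^{1-1/p} < 1/3$ — e.g.\ $\kappa < (3C^{1/p})^{-p/(p-1)}$ — gives $I<1/3$, hence $(\mu\times\nu)\{(x,y): P_y(x)\notin\Theta_x\} = 1-I > 2/3$, which is the claim.

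The main obstacle I anticipate is purely bookkeeping: making sure Proposition \ref{prop:orponen} is applied in the correct direction so that it is $P_x\nu$ (projection of $\nu$ from a $\mu$-typical center) whose $L^p$ density is controlled, since this is what naturally appears after using the antipodal identity $P_y(x)=-P_x(y)$ and Fubini; and checking that the hypotheses $2-u<s'<1$, $u>1$ of Proposition \ref{prop:orponen} can be met, which requires $u>\max(1,2-s)$ exactly. One must also verify that $\cE_{s'}(\mu)<\infty$ for the chosen $s'<s$ — immediate since $\mu$ is compactly supported — and that the disjoint-support hypothesis of Proposition \ref{prop:orponen} is inherited. After that the argument is two applications of Hölder's inequality and Fubini, with $\kappa$ read off explicitly in terms of the constant $C=C(\mu,\nu)$ from Proposition \ref{prop:orponen}. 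The constant $2/3$ plays no special role — any constant $<1$ would be obtained by shrinking $\kappa$ — but $2/3$ is what is convenient downstream.
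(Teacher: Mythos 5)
Your proof is correct and is essentially identical to the paper's: both reduce to Proposition \ref{prop:orponen} (picking $s'<\min(s,1)$ so the hypotheses are met), rewrite $(\mu\times\nu)\{(x,y):P_y(x)\in\Theta_x\}$ as $\int (P_x\nu)(-\Theta_x)\,d\mu(x)$ via the antipodal identity and Fubini, apply H\"older twice to bound this by $C^{1/p}\kappa^{1/p'}$, and then choose $\kappa$ small. The intermediate hedging about ``which direction'' to apply Proposition \ref{prop:orponen} is unnecessary (the statement already gives control on $P_x\nu$ against $d\mu(x)$, which is exactly what the computation needs), but you resolve it correctly and the final argument matches the paper's.
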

\begin{proof}
Since $\cE_s(\mu)<\infty$ implies that $\cE_{s'}(\mu)<\infty$ for all $s'<s$, we may assume that $s<1$. By Proposition \ref{prop:orponen}, there is $p>1$ such that
\[
\int \|P_x\nu\|_p^p \,d\mu(x) =: C <\infty.
\]
Denote $\Theta_x = \{ \theta\in S^1: (x,\theta)\in\Theta\}$
and $-\Theta_x=\{ -\theta:\theta\in\Theta_x\}$. Using Fubini and H\"{o}lder, each twice, we estimate
\begin{align*}
(\mu\times\nu)\{(x,y): P_y(x)\in\Theta_x\} &= \int P_x\nu(-\Theta_x)\,d\mu(x)\\
&\le \int \mathcal{H}^1(\Theta_x)^{1/p'} \|P_x\nu\|_p \,d\mu(x)\\
&\le \left(\int \mathcal{H}^1(\Theta_x) d\mu(x)\right)^{1/p'} \left(\int \|P_x\nu\|_p^p d\mu(x)\right)^{1/p}\\
&\le \kappa^{1/p'}  C^{1/p}.
\end{align*}
The claim follows by choosing $\kappa$ so that $\kappa^{1/p'} C^{1/p}\le 1/3$.
\end{proof}

\section{Box-counting estimates for pinned distance sets}
\label{sec:box-counting}

In this section we derive a lower bound on box-counting numbers of pinned distance sets that will be crucial in the proofs of Theorems \ref{thm:main} ,\ref{thm:packing} and \ref{thm:full-distance-set}. Our estimate will be in terms of a multiscale decomposition where, unlike previous works in the literature, we are allowed to choose the sequence of scales (depending on the set or measure for which we are seeking estimates). This additional flexibility will ultimately allow us to improve upon the easy bounds on the dimensions of distance sets.

To begin, we recall some basic facts about entropy. If $\nu\in\cP(\R^d$) and $\mathcal{A}$ is a finite partition of $\R^d$ (or of a set of full $\nu$-measure), then the entropy of $\nu$ with respect to $\mathcal{A}$ is given by
\[
H(\nu,\mathcal{A}) = -\sum_{A\in\mathcal{A}} \nu(A) \log(\nu(A)),
\]
with the usual convention $0\cdot \log 0=0$. It follows from the concavity of the logarithm that one always has
\[
H(\nu,\mathcal{A}) \le \log|\mathcal{A}|.
\]
Hence, a lower bound for $H(\nu,\cD_j)$ provides a lower bound for $\cN(A,j)$ if $A$ is a Borel set of full measure (recall that $\cN(A,j)$ denotes the number of elements in $\cD_j$ that intersect $A$). We will apply this when $\nu$ is supported on a pinned distance set. Although box-counting numbers in principle give bounds only for box dimension, together with standard mass pigeonholing arguments we will be able to get bounds also for Hausdorff and packing dimension.

The following proposition is the key device that will allow us to bound from below the entropy of pinned distance measures (and hence also the box-counting numbers of pinned distance sets). Roughly speaking, we bound the entropy of the projection of a measure $\mu$ under the pinned distance map by an average over both scales and space (the latter, weighted by $\mu$) of a quantity involving the $L^2$ norms of projected \emph{local} pinned distance measures. We emphasize that this method to bound the dimension of (linear or nonlinear) projections from below goes back in various forms to \cite{HochmanShmerkin12, Hochman14, Orponen17b}, although the use of projected $L^2$ norms (rather than projected entropies) was first used in \cite{Shmerkin17}.

Before stating the proposition we introduce some definitions. Given $L\in\N$, a \emph{good partition} of $(0,L]$ is an integer sequence $0=N_0<\ldots<N_q=L$ such that $N_{j+1}-N_j\le N_j+1$. We write $\Delta_y(x)=|x-y|$ for the pinned distance map, and $\theta(x,y)=P_y(x)=(x-y)/|x-y|$.

\begin{prop} \label{prop:entropy-of-pinned-dist-measures}
Let $\mu\in\cP([0,1)^d)$, let $y\in \R^d$ be at distance $\ge \e$ from $\supp(\mu)$, and fix a good partition $(N_i)_{i=0}^q$ of $(0,\ell]$. Then
\begin{equation} \label{eq:lower-bound-entropy}
 T\ell -  H(\Delta_y\mu,\cD_\ell) \le O_{T,\e}(q)  + \sum_{i=0}^{q-1} \sum_{Q\in\cD_{N_i}} \mu(Q)  \log\| \Pi_{\theta(x_Q,y)}\mu(Q;N_{i+1}) \|_2^2,
\end{equation}
where  $x_Q$ is an arbitrary point in $Q$.
\end{prop}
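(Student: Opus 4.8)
The plan is to bound the entropy of $\Delta_y\mu$ at resolution $2^{-T\ell}$ from below by a telescoping argument over the scales $N_0<N_1<\cdots<N_q$, comparing the entropy gained at consecutive scales to the entropy of a \emph{linearized} projection. First I would use the standard conditional entropy decomposition: writing $\cD_{N_i}$ for the relevant dyadic partitions of the image line $\R$ (under $\Delta_y$, which maps into an interval of bounded length), one has
\[
H(\Delta_y\mu,\cD_\ell) = \sum_{i=0}^{q-1} \Big( H(\Delta_y\mu,\cD_{N_{i+1}}) - H(\Delta_y\mu,\cD_{N_i}) \Big) + O(1),
\]
and each increment can be rewritten, via the chain rule for entropy, as a $\mu$-average over cubes $Q\in\cD_{N_i}$ of the conditional entropy $H\big((\Delta_y\mu)_{\cD_{N_i}(\Delta_y x)},\cD_{N_{i+1}}\big)$. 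The key geometric input is that on a cube $Q$ of side $2^{-T N_i}$ at distance $\gtrsim\e$ from $y$, the distance map $x\mapsto|x-y|$ is, after rescaling $Q$ to the unit cube, within $O_\e(2^{-TN_i})$ in $C^1$ of the linear map $x\mapsto \Pi_{\theta(x_Q,y)}(x)$ plus a constant; this is where the hypothesis $\dist(y,\supp\mu)\ge\e$ enters and produces the $O_{T,\e}(q)$ term (one such error per scale, since the good-partition condition $N_{i+1}\le 2N_i+1$ ensures the window of scales $[N_i,N_{i+1}]$ we look at inside $Q$ is comparable to the scale of $Q$ itself, so the nonlinearity is genuinely a lower-order perturbation of the linear projection over exactly that window).

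Having linearized, the increment of entropy on $Q$ is comparable, up to $O_{T,\e}(1)$, to $H\big(\Pi_{\theta(x_Q,y)}\mu(Q;N_{i+1}),\cD_{N_{i+1}-N_i}\big)$ — that is, the entropy at resolution $2^{-T(N_{i+1}-N_i)}$ of the projection of the renormalized local piece $\mu(Q;N_{i+1})$ in direction $\theta(x_Q,y)$. Now I would invoke the elementary inequality relating entropy and $L^2$ norm for a $2^{-Tk}$-measure $\lambda$ on the line: since $\lambda$ has a density constant on dyadic intervals of length $2^{-Tk}$,
\[
H(\lambda,\cD_k) \ge Tk - \log\|\lambda\|_2^2 ,
\]
which follows from Jensen/concavity of the logarithm applied to $\sum_I \lambda(I)^2 = 2^{-Tk}\|\lambda\|_2^2$ together with $H(\lambda,\cD_k)=-\sum_I\lambda(I)\log\lambda(I)$. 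Applying this with $k=N_{i+1}-N_i$ and $\lambda=R_{N_{i+1}-N_i}\Pi_{\theta(x_Q,y)}\mu(Q;N_{i+1})$ (using Lemma~\ref{lem:discretize-project} to pass freely between $\|R_k\Pi_\theta\nu\|_2$ and $\|\Pi_\theta R_k\nu\|_2$) gives exactly the quantity $T(N_{i+1}-N_i) - \log\|\Pi_{\theta(x_Q,y)}\mu(Q;N_{i+1})\|_2^2$. Summing over $i$ and using $\sum_i (N_{i+1}-N_i)=\ell$, and absorbing all the per-scale $O_{T,\e}(1)$ errors into $O_{T,\e}(q)$, yields
\[
H(\Delta_y\mu,\cD_\ell) \ge T\ell - O_{T,\e}(q) - \sum_{i=0}^{q-1}\sum_{Q\in\cD_{N_i}}\mu(Q)\log\|\Pi_{\theta(x_Q,y)}\mu(Q;N_{i+1})\|_2^2,
\]
which is the claimed inequality after rearranging.

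The main obstacle I anticipate is making the linearization step rigorous at the level of entropy rather than just sets: one must check that replacing the nonlinear map $\Delta_y$ by its linearization $\Pi_{\theta(x_Q,y)}$ on each cube $Q$ changes the conditional entropy by only $O_{T,\e}(1)$, uniformly in $Q$. The subtlety is that two measures that are close in a weak sense can have very different entropies, so the comparison has to be done through a bi-Lipschitz-type estimate: the nonlinear and linear images of $\mu^Q$ agree up to a map that is bi-Lipschitz with constant $1+O_\e(2^{-TN_i})$ on the relevant window, hence distort dyadic intervals of length $2^{-T(N_{i+1}-N_i)}$ into sets covered by $O_\e(1)$ such intervals (again using $N_{i+1}-N_i\le N_i+1$ so that $2^{-T(N_{i+1}-N_i)}\gtrsim 2^{-TN_i}$ dominates the distortion), which changes entropy by at most an additive constant. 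A secondary technical point is handling the boundary/rounding issues when $\Delta_y\mu$ is not itself a $2^{-T\ell}$-measure, which is dealt with by replacing $\mu$ by $R_\ell\mu$ at the cost of $O(1)$ in entropy and noting $\Delta_y R_\ell\mu$ and $R_\ell\Delta_y\mu$ are comparable at resolution $\ell$ up to the same kind of $O_{T,\e}(q)$ error.
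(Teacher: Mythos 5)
Your argument is correct and follows essentially the same route as the paper: it reconstructs the entropy chain-rule plus linearization step (which the paper outsources to \cite[Proposition 3.8 and Remark 3.10]{Shmerkin17}), and then applies the elementary bound $H(\lambda,\cD_k)\ge Tk-\log\|\lambda\|_2^2$ together with Lemma~\ref{lem:discretize-project} exactly as the paper does. The only soft spot is the passage from conditioning on image intervals to averaging over domain cubes $Q\in\cD_{N_i}$, which you attribute to ``the chain rule'' but which in fact requires a separate geometric comparison argument; you do, however, correctly identify that this is where the $\e$-separation and the good-partition constraint $N_{i+1}-N_i\le N_i+1$ enter, so the idea is there even if that detail is compressed.
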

\begin{proof}
Write $D_i=N_{i+1}-N_i$. Note that our $\cD_i$ correspond to $\cD_{T i}$  and our $T N_i$ to $m_i$ in \cite{Shmerkin17}. Recall also that $\mu^Q$ denotes the magnification of $\mu_Q$ to the unit cube. It is shown in \cite[Proposition 3.8 and Remark 3.10]{Shmerkin17} that
\begin{equation} \label{eq:lower-bound-entropy-from-entropy}
H(\Delta_y\mu,\cD_\ell) \ge -O_{T,\e}(q) +\sum_{i=0}^{q-1} \sum_{Q\in\cD_{N_i}} \mu(Q) H\left(\Pi_{\theta(y,x_Q)} \mu^Q ,\cD_{D_i}\right).
\end{equation}

Applying Lemma \ref{lem:discretize-project} to $\nu = \mu^Q$ for some $Q\in\cD_{N_i}$ and $k=D_i$, we get that
\begin{equation} \label{eq:discretize-project}
\| R_{D_i} \Pi_{\theta(y,x_Q)} \mu^Q \|_2^2 \approx  \|\Pi_{\theta(y,x_Q)}  \mu(Q;N_{i+1}) \|_2^2.
\end{equation}
On the other hand, a simple convexity argument (see \cite[Lemma 3.6]{Shmerkin17}) yields that, for any $\nu\in\cP(\R)$ and $k\in\N$,
\[
H(\nu,\cD_k) \ge  Tk - \log\|R_k \nu\|_2^2.
\]
Applying this with $k=D_i$ and $\nu = \Pi_{\theta(y,x_Q)} \mu^Q$, and recalling \eqref{eq:discretize-project}, we deduce that
\[
  H\left(\Pi_{\theta(y,x_Q)} \mu^Q ,\cD_{D_i}\right) \ge   T D_i - \log\|\Pi_{\theta(y,x_Q)} \mu(Q;N_{i+1}) \|_2^2 - O(1).
\]
Using this bound in each term in the right-hand side of \eqref{eq:lower-bound-entropy-from-entropy}, and absorbing the sum of the $q$ $O(1)$ terms into $O_{T,\e}(q)$, we get the claim.
\end{proof}

We remark that the assumption that $N_{j+1}-N_j\le N_j+1$ in the definition of good partition (which will play a crucial role later) arises from the linearization of the distance function, and cannot be substantially weakened. The key advantage of having $L^2$ norms instead of entropies in this proposition is that the estimate one gets is robust under passing to subsets of moderately large measure:
\begin{prop} \label{prop:good-bound-from-below}
With the assumptions and notation from Proposition \ref{prop:entropy-of-pinned-dist-measures}, let us write $\mathcal{F}(\mu)$ for the right-hand side of \eqref{eq:lower-bound-entropy} (we assume $y$ and the partition $(N_i)$ are fixed). If $\mu\in\cP([0,1)^2)$, $\nu=\mu_A$ where $A$ is Borel and $\mu(A)>0$,  then
\[
\mathcal{F}(\nu) \le O_{T,\e}(q) + 2q\log\left(\tfrac{T\ell}{\mu(A)}\right)  + \sum_{i=0}^{q-1} \sum_{Q\in\cD_{N_i}} \nu(Q)  \log\|\Pi_{\theta(y,x_Q)}\mu(Q;N_{i+1}) \|_2^2.
\]
\end{prop}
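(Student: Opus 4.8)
The plan is to compare $\mathcal{F}(\nu)$ with the desired right-hand side term by term, the only change being that inside the logarithms the local measures $\mu(Q;N_{i+1})$ get replaced by $\nu(Q;N_{i+1})$; everything reduces to controlling this replacement. By definition, $\mathcal{F}(\nu)=O_{T,\e}(q)+\sum_{i=0}^{q-1}\sum_{Q\in\cD_{N_i}(\nu)}\nu(Q)\log\|\Pi_{\theta(x_Q,y)}\nu(Q;N_{i+1})\|_2^2$, where I have dropped from the sum those $Q$ with $\nu(Q)=0$ (they contribute nothing). Since $\Pi_{-\theta}\mu$ is the reflection of $\Pi_\theta\mu$, replacing $\theta(x_Q,y)$ by $\theta(y,x_Q)$ does not affect $L^2$ norms, so it suffices to bound the above sum by $2q\log(T\ell/\mu(A))+O(q)$ plus the analogous sum with $\mu(Q;N_{i+1})$.

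First I would establish the basic inequality at a single cube. Fix $i$ and $Q\in\cD_{N_i}$ with $\nu(Q)>0$, and set $c_Q=\mu(A\cap Q)/\mu(Q)\in(0,1]$. Since $\nu=\mu_A$, a direct computation gives $\nu_Q=\mu_{A\cap Q}$, so $\nu^Q$ is the normalised restriction of $\mu^Q$ to $\mathrm{Hom}_Q(A\cap Q)$, a set of $\mu^Q$-measure $c_Q$; hence the density of $\nu^Q$ is at most $c_Q^{-1}$ times that of $\mu^Q$, i.e.\ $\nu^Q\le c_Q^{-1}\mu^Q$ as measures. The operator $R_{N_{i+1}-N_i}$ and the push-forward under $\Pi_{\theta(x_Q,y)}$ are both monotone on nonnegative measures, so $\Pi_{\theta(x_Q,y)}\nu(Q;N_{i+1})\le c_Q^{-1}\,\Pi_{\theta(x_Q,y)}\mu(Q;N_{i+1})$ as densities on $\R$. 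Writing $f\le c_Q^{-1}g$ for this pointwise bound, Cauchy--Schwarz gives $\|f\|_2^2\le c_Q^{-1}\int fg\le c_Q^{-1}\|f\|_2\|g\|_2$, hence $\|f\|_2^2\le c_Q^{-2}\|g\|_2^2$, that is
\[
\log\|\Pi_{\theta(x_Q,y)}\nu(Q;N_{i+1})\|_2^2-\log\|\Pi_{\theta(x_Q,y)}\mu(Q;N_{i+1})\|_2^2\ \le\ -2\log c_Q .
\]
I would also record a crude universal bound: as $N_{i+1}-N_i\le\ell$ for a good partition, $\nu(Q;N_{i+1})$ and $\mu(Q;N_{i+1})$ are $2^{-(N_{i+1}-N_i)T}$-measures on $[0,1)^2$, so their $\Pi_{\theta}$-projections are probability densities on an interval of length $\le\sqrt 2$ with sup norm $\lesssim 2^{T\ell}$; thus the left-hand side above is also $\le T\ell+O(1)$, however small $c_Q$ may be.

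The key step is then a dyadic split on the size of $c_Q$. Fix a scale $i$ and set $\delta=\mu(A)/(T\ell)$. For the cubes $Q\in\cD_{N_i}(\nu)$ with $c_Q\ge\delta$ I use $-2\log c_Q\le 2\log(1/\delta)=2\log(T\ell/\mu(A))$ together with $\sum_Q\nu(Q)\le1$ to bound their total contribution by $2\log(T\ell/\mu(A))$. For the cubes with $c_Q<\delta$, the point is that their total $\nu$-mass is $\sum_{c_Q<\delta}\nu(Q)=\mu(A)^{-1}\sum_{c_Q<\delta}c_Q\,\mu(Q)<\delta/\mu(A)=1/(T\ell)$, so applying the crude bound $T\ell+O(1)$ on each term they contribute only $O(1)$. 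Hence each scale contributes at most $2\log(T\ell/\mu(A))+O(1)$; summing over $i=0,\dots,q-1$ and absorbing the resulting $O(q)$ into the $O_{T,\e}(q)$ term of $\mathcal{F}(\nu)$ gives the claim.

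I expect the only genuine obstacle to be this last split: on cubes where $A$ occupies a tiny fraction $c_Q$ of $Q$ the factor $c_Q^{-2}$ blows up and a naive term-by-term estimate fails, so one must exploit that such cubes carry little $\nu$-mass --- precisely $c_Q\mu(Q)/\mu(A)$ each --- which is exactly what makes the very crude bound $T\ell$ affordable there. All the other ingredients (the identity $\nu^Q=(\mu^Q)_{\mathrm{Hom}_Q(A\cap Q)}$, monotonicity of $R_k$ and of push-forwards under $\Pi_\theta$, the one-line Cauchy--Schwarz estimate, and the crude $L^2$ bounds for projections of $2^{-kT}$-measures) are routine.
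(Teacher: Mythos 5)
Your proof is correct and follows essentially the same route as the paper's: the domination $\nu_Q \le c_Q^{-1}\mu_Q$, its preservation under $R_k$ and $\Pi_\theta$, the resulting factor $c_Q^{-2}$ on $L^2$ norms, and a split at the threshold $c_Q \ge \mu(A)/(T\ell)$ (which is exactly the paper's condition $\nu(Q)\ge \zeta\mu(Q)$ with $\zeta = 1/(T\ell)$), using the crude $O(T\ell)$ bound together with the small total $\nu$-mass of the low-density cubes. The only differences are presentational — you phrase the threshold in terms of $c_Q$ rather than $\nu(Q)/\mu(Q)$, and you directly estimate the difference of the two sums rather than first restricting the sum and then adding back the bad cubes via the lower bound $\|\Pi_\theta\mu(Q;N_{i+1})\|_2^2\ge 2^{-1/2}$ — but both of those lower/upper bounds appear in your argument as well, just grouped differently.
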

\begin{proof}
We start with the trivial observation that if $\rho,\rho'\in\cP(\R^d)$ have an $L^2$ density and $\rho'(S)\le K\rho(S)$ for all Borel sets $S$, then the same bound transfers over to the densities for a.e. point, and so $\|\rho'\|_2^2 \le K^2 \|\rho\|_2^2$.

Let $\zeta=1/(T\ell)\in (0,1)$. Fix $i\in \{0,\ldots,q-1\}$, and note that
\begin{equation} \label{eq:low-density-has-small-measure}
\sum\{  \nu(Q): Q\in\cD_{N_i}, \nu(Q) < \zeta \mu(Q)\} < \zeta.
\end{equation}
Suppose $\nu(Q) = \mu(A\cap Q)/\mu(A) \ge \zeta\mu(Q)>0$ for a given $Q\in\cD_{N_i}$. Then
\[
\nu_Q(S) = \frac{\mu(A\cap Q\cap S)}{\mu(A\cap Q)}  \le \frac{\mu(Q\cap S)}{\zeta\mu(A)\mu(Q)} = \frac{1}{\zeta\mu(A)} \mu_Q(S)
\]
for any Borel set $S\subset [0,1)^2$. This domination is preserved under push-forwards and the action of $R_{D_i}$ (where as before $D_i = N_{i+1}-N_i$), so in light of our initial observation we get
\[
\| \Pi_{\theta(y,x_Q)} \nu(Q;N_{i+1})  \|_2^2 \le \frac{1}{(\zeta\mu(A))^2} \| \Pi_{\theta(y,x_Q)} \mu(Q;N_{i+1}) \|_2^2 ,
\]
always assuming that $\nu(Q)\ge \zeta\mu(Q)>0$ and $Q\in \cD_{N_i}$. Also, since the measure $\Pi_{\theta(y,x_Q)} \mu(Q;N_{i+1})$ is supported on an interval of length $\sqrt{2}$, it follows from Cauchy-Schwarz that
\begin{equation} \label{eq:L2-norm-not-too-small}
\| \Pi_{\theta(y,x_Q)} \mu(Q;N_{i+1}) \|_2^2 \ge 2^{-1/2}.
\end{equation}

On the other hand, for any $2^{-T D}$-measure $\rho$ on $\R$ one has $\|\rho\|_2^2 \le 2^{T D}$. In light of Lemma \ref{lem:discretize-project}, this implies that
\begin{equation} \label{eq:trivial-bound-L2-norm}
\|\Pi_{\theta(y,x_Q)}\nu(Q;N_{i+1}) \|_2^2 \lesssim 2^{T D_i}.
\end{equation}
Splitting (for each $i$) the sum $\sum_{Q\in\cD_{N_i}}$ in Proposition \ref{prop:entropy-of-pinned-dist-measures} into the cubes with $\nu(Q) \ge \zeta\mu(Q)$ and $\nu(Q)< \zeta\mu(Q)$, and recalling \eqref{eq:low-density-has-small-measure}, we arrive at the estimate
\[
\mathcal{F}(\nu) \le O_{T,\e}(q) + \zeta T\ell - 2q\log(\zeta\mu(A))  + \sum_{i=0}^{q-1} \sum_{\substack{Q\in\cD_{N_i} \\ \nu(Q)\ge \zeta\mu(Q)}}
  \nu(Q)   \log\|\Pi_{\theta(y,x_Q)}\mu(Q;N_{i+1}) \|_2^2,
\]
where we merged the sum of the ($\log$ of the) implicit constants in \eqref{eq:trivial-bound-L2-norm} into $O_{T,\e}(q)$. Recalling that $\zeta=1/(T\ell)$  and using  \eqref{eq:L2-norm-not-too-small} we get the desired result.
\end{proof}

Our next goal is to get a simpler lower bound in the context of Proposition \ref{prop:good-bound-from-below} when $\mu$ is $\sigma$-regular (recall Definition \ref{def:regular}), and $\nu$ is the restriction of $\mu$ to the set of points which are not bad in the sense of \S\ref{subsec:badprojections}. Combining the results of \S\ref{subsec:badprojections} and \S\ref{subsec:radial-projections}, we will later be able to deal with general measures via a reduction to this special case.

We require some additional definitions:
\begin{definition} \label{def:integer-partition}
We say that $0=N_0<N_1<\ldots<N_q=L$ is a \emph{$\tau$-good partition} of $(0,L]$ if
\begin{equation}\label{e:integertau}
\tau N_j \le N_{j+1}-N_j \le N_j+1
\end{equation}
for every $0\le j< q$. In other words $(N_j)$ is a good partition and additionally $N_{j+1}\ge (1+\tau) N_j$.

Given a finite sequence $(\sigma_1,\ldots,\sigma_L)\in \R^L$, let
\[
\mathcal{S}(\sigma) = -\min_{j=0}^L \sigma_1+\cdots+\sigma_j \ge 0.
\]

For any good partition $\mathcal{P}=(N_j)_{j=0}^q$ of $(0,L]$ and any $\sigma \in \R^L$, we denote
\[
\M(\sigma,\mathcal{P}) = \sum_{j=0}^{q-1} \mathcal{S}(\sigma|(N_j,N_{j+1}]),
\]
where $\sigma|I$ denotes the restriction of the sequence $\sigma$ to the interval $I$.

Finally, given $\sigma\in \R^L$ and $\tau\in (0,1)$, we let
\[
\M_\tau(\sigma) = \min \{ \M(\sigma,\mathcal{P}) : \mathcal{P} \text{ is a $\tau$-good partition of } (0,L]\}.
\]
\end{definition}

Recall that $o_{T,\e}(1)$ denotes a function of $T$ and $\e$ which tends to $0$ as $T\to\infty,\e\to 0^+$.
\begin{prop} \label{prop:lower-bound-box-counting}
Suppose that $\rho\in\cP([0,1)^2)$ is a $(\sigma_1,\ldots,\sigma_\ell)$-regular measure. Assume that there are a Borel set $A\subset [0,1)^2$, a point $y\in\R^2$ and a number $\beta\in (0,1)$ satisfying that $\rho(A)>0$, $\dist(y,\supp(\rho))\ge \e$, and for all $x\in A\cap \supp_{\mathsf{d}}(\rho)$ there is $\wt{x} \in\cD_\ell(x)$ such that
\[
\theta(\wt{x},y) \notin \bad_{\beta\ell\ssto\ell}(\rho,\wt{x}).
\]
Then
\[
\frac{\log \cN(\Delta_y A,\ell)}{T\ell} \ge 1 - \frac{\M_\tau(\sigma)}{\ell} -  \error.
\]
where
\[
\error = 2\beta + o_{T,\e}(1) +  O_{T,\e,\tau}\left(\frac{\log^2\ell}{\ell}\right) + \frac{O_\tau(\log\ell)\log(1/\rho(A))}{\ell}.
\]
\end{prop}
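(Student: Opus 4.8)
The plan is to combine Proposition~\ref{prop:good-bound-from-below} (applied with $\mu=\rho$ and $\nu=\rho_A$) with the entropy-to-box-counting inequality $\log\cN(\Delta_y A,\ell)\ge H(\Delta_y\rho_A,\cD_\ell)$, the control on bad projections from \S\ref{subsec:badprojections}, and the energy formula for $\sigma$-regular measures from Lemma~\ref{lem:energy-regular}. Concretely, fix an arbitrary $\tau$-good partition $\mathcal{P}=(N_i)_{i=0}^q$ of $(0,\ell]$ (such a partition exists with $q=O_\tau(\log\ell)$ since each step is at least multiplicative by $1+\tau$). Since $\Delta_y\rho_A$ is supported on $\Delta_y A$, we have $T\ell-\log\cN(\Delta_y A,\ell)\le T\ell - H(\Delta_y\rho_A,\cD_\ell)\le \mathcal{F}(\rho_A)$, and then Proposition~\ref{prop:good-bound-from-below} bounds $\mathcal{F}(\rho_A)$ by
\[
O_{T,\e}(q) + 2q\log\!\left(\tfrac{T\ell}{\rho(A)}\right) + \sum_{i=0}^{q-1}\sum_{Q\in\cD_{N_i}} \rho_A(Q)\,\log\|\Pi_{\theta(y,x_Q)}\rho(Q;N_{i+1})\|_2^2 .
\]
With $q=O_\tau(\log\ell)$, the first two terms contribute $o_{T,\e}(1)\,T\ell + O_\tau(\log\ell)\log(1/\rho(A)) + O_{T,\e,\tau}(\log^2\ell)$, matching the claimed error (after dividing by $T\ell$).

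The heart of the argument is estimating the main double sum, i.e.\ replacing the projected $L^2$ norms $\|\Pi_{\theta(y,x_Q)}\rho(Q;N_{i+1})\|_2^2$ by energies. Here I would exploit the hypothesis: for $x\in A\cap\supp_{\mathsf{d}}(\rho)$ with witness $\wt x\in\cD_\ell(x)$, the direction $\theta(\wt x,y)$ avoids $\bad_{\beta\ell\ssto\ell}(\rho,\wt x)$, and for $N_i\ge\beta\ell$ the pairs $(j,k)=(N_i,N_{i+1}-N_i)$ are among those appearing in the union defining $\bad_{\beta\ell\ssto\ell}$ (one must check $k\ge\tau j$, which holds since $\mathcal{P}$ is $\tau$-good, and $j_0\le j\le j+k\le\ell$, which holds once $N_i\ge\beta\ell$). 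Since $\theta(\wt x,y)$ and $\theta(x_Q,y)$ differ by at most a comparable angle once $Q=\cD_{N_i}(x)$ and $\wt x\in\cD_\ell(x)\subset Q$ — and $\rho(Q;N_{i+1})=\rho(\cD_{N_i}(\wt x);N_{i+1})=R_{N_{i+1}-N_i}\rho^{\cD_{N_i}(\wt x)}$ depends only on $Q$ — not being bad gives
\[
\|\Pi_{\theta(x_Q,y)}\rho(Q;N_{i+1})\|_2^2 \le 2^{\e T(N_{i+1}-N_i)}\,\cE_1(\rho(Q;N_{i+1})).
\]
(A small technical point I'd have to address: the definition of $\bad$ uses $\mu(x;j\sto j+k)$ based at the base-point $x$, whereas here the cube $Q$ may not contain the nominal base-point $x_Q$ of Proposition~\ref{prop:entropy-of-pinned-dist-measures}; I would simply choose $x_Q$ inside $Q$ and note $\cD_{N_i}(\wt x)=Q$, so the measures coincide, and absorb the $\log 2^{\e T(N_{i+1}-N_i)}$ into $\e T\ell$, contributing $o_{T,\e}(1)$ after normalization.) For the finitely many scales $N_i<\beta\ell$ I'd use instead the trivial bound $\|\cdot\|_2^2\lesssim 2^{T(N_{i+1}-N_i)}$; since $N_{i+1}-N_i\le N_i+1\le\beta\ell+1$, summing over those $i$ costs at most $\approx 2\beta\ell\cdot T$ (two scales of total length $\le\beta\ell$ each), giving the $2\beta$ term in the error.

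It remains to pass from the energies $\cE_1(\rho(Q;N_{i+1}))$ to the arithmetic expression $\M_\tau(\sigma)$. Because $\rho$ is $(\sigma_1,\dots,\sigma_\ell)$-regular, each conditional measure $\rho(Q;N_{i+1})$ (for $Q\in\cD_{N_i}(\rho)$) is $(\sigma|(N_i,N_{i+1}])$-regular up to the same $O(1)$ normalization constants, so Lemma~\ref{lem:energy-regular} gives $\log\cE_1(\rho(Q;N_{i+1})) = T\,\mathcal{S}(\sigma|(N_i,N_{i+1}]) + O(\ell) + O_T(1)$, uniformly in $Q$ (the $O(\ell)$ here is really $O(N_{i+1}-N_i)$ but summing these still yields $O(\ell)$, i.e.\ $o(T\ell)$ once $T\to\infty$). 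Summing over $i$ and over $Q$ with weights $\rho_A(Q)$, which sum to $1$ for each fixed $i$, the $Q$-independence makes the inner sum collapse to $\sum_{i=0}^{q-1} T\,\mathcal{S}(\sigma|(N_i,N_{i+1}]) = T\,\M(\sigma,\mathcal{P})$. Putting everything together and dividing by $T\ell$:
\[
\frac{\log\cN(\Delta_y A,\ell)}{T\ell} \ge 1 - \frac{\M(\sigma,\mathcal{P})}{\ell} - \error,
\]
and since $\mathcal{P}$ was an arbitrary $\tau$-good partition we may take the infimum over all of them, replacing $\M(\sigma,\mathcal{P})$ by $\M_\tau(\sigma)$ and completing the proof. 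The main obstacle I anticipate is bookkeeping rather than conceptual: verifying carefully that the "bad direction" indices line up with the good-partition steps (the constraints $k\ge\tau j$ and $j\ge\beta\ell$), that the $O(1)$ normalization in regularity is truly uniform over cubes $Q$ so the inner sum telescopes cleanly, and that every error genuinely fits into the stated $\error$ — in particular, keeping track of which errors are $o_{T,\e}(1)$ (scale-linear losses divided by $T\ell$) versus the $O_{T,\e,\tau}(\log^2\ell/\ell)$ and $O_\tau(\log\ell)\log(1/\rho(A))/\ell$ terms coming from the $q=O_\tau(\log\ell)$ summands of Proposition~\ref{prop:good-bound-from-below}.
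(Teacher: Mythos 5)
Your proposal follows essentially the same route as the paper's proof: apply Proposition~\ref{prop:good-bound-from-below} to $\rho$ and $\rho_A$ with a $\tau$-good partition, split the main sum at the first index $i_0$ with $N_{i_0}\ge\beta\ell$ (trivial $L^2$ bound on the small scales, giving the $2\beta$ term), use non-badness together with Lemma~\ref{lem:energy-regular} to replace projected $L^2$ norms by the regularity sums $\mathcal{S}(\sigma|(N_i,N_{i+1}])$ for $i\ge i_0$, and finally minimize over $\tau$-good partitions. Your parenthetical worry about the angles $\theta(\wt x,y)$ versus $\theta(x_Q,y)$ is a non-issue that you resolve correctly: since $x_Q$ is an arbitrary point of $Q$ in Proposition~\ref{prop:entropy-of-pinned-dist-measures}, one simply \emph{takes} $x_Q=\wt x$ (which exists because $\rho(A\cap Q)>0$ forces $A\cap Q\cap\supp_{\mathsf{d}}(\rho)\ne\varnothing$), and then $\rho(\wt x;N_i\sto N_{i+1})=\rho(Q;N_{i+1})$ by definition — exactly as in the paper.
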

\begin{proof}
Let $\cP=(N_i)_{i=0}^q$ be a $\tau$-good partition of $(0,\ell]$. We have to show that
\[
\frac{\log \cN(\Delta_y A,\ell)}{T\ell} \ge 1 - \frac{\M(\sigma,\mathcal{P})}{\ell}  - \error .
\]

Fix $i_0$ as the smallest value of $i$ such that $N_i\ge \beta\ell$, and note that $N_{i_0}<  2\beta\ell+1$.

Let us rewrite the inequality from Proposition \ref{prop:good-bound-from-below} applied to $\rho$ and $\rho_A$ in the form
\[
\mathcal{F}(\rho_A) \le E + \Sigma_{\text{I}} + \Sigma_{\text{II}},
\]
where
\begin{align*}
E &=  O_{T,\e}(q) + 2q\log\left(\tfrac{T\ell}{\rho(A)}\right),\\
\Sigma_{\text{I}} &=  \sum_{i=0}^{i_0-1} \sum_{Q\in\cD_{N_i}: \rho(A\cap Q)>0} \rho_A(Q)  \log\| \Pi_{\theta(y,x_Q)}\rho(Q;N_{i+1}) \|_2^2,\\
\Sigma_{\text{II}} &=  \sum_{i=i_0}^{q-1} \sum_{Q\in\cD_{N_i}: \rho(A\cap Q)>0} \rho_A(Q)  \log\| \Pi_{\theta(y,x_Q)}\rho(Q;N_{i+1}) \|_2^2,
\end{align*}
where $x_Q$ are arbitrary points in $Q$. By assumption, we may choose these points so that
\begin{equation} \label{eq:x_Q-good-set}
\theta(x_Q,y)\notin \bad_{\beta\ell\ssto\ell}(\rho,x_Q).
\end{equation}
Using that  $(1+\tau)^q\le \ell$, we bound
\begin{equation} \label{eq:bound-E}
E \le O_{T,\e,\tau}(\log^2\ell)+ O_\tau(\log\ell) \log(1/\rho(A)).
\end{equation}
Write $D_i=N_{i+1}-N_i$. To estimate $\Sigma_{\text{I}}$, we use the trivial bound $\|R_{D_i}(\cdot)\|_2^2 \le 2^{D_i T}$ together with Lemma \ref{lem:discretize-project} and the bounds $N_{i_0}<2\beta\ell +1$, $(1+\tau)^q \le \ell$, so that
\begin{align}
\label{eq:bound-Sigma-I}
\begin{split}
\Sigma_{\text{I}} &\le \sum_{i=0}^{i_0-1} \sum_{Q\in\cD_{N_i}} \rho_A(Q) (D_i T + O(1)) \\
&\le N_{i_0} T + O(i_0) \\
&\le 2\beta T\ell  + O_{T,\tau}(\log\ell).
\end{split}
\end{align}
Now, to estimate the main term $\Sigma_{\text{II}}$, we need to go back to Definition \ref{def:bad-projections}. By \eqref{eq:x_Q-good-set}, and using that $\cP$ is a $\tau$-good partition of $(0,\ell]$, we have $\theta(x_Q,y)\notin \bad(\rho,x_Q,N_i,D_i)$ for $i_0 \le i < q$. We deduce that
\[
\log\| \Pi_{\theta(x_Q,y)}\rho(Q;N_{i+1}) \|_2^2 \le \e T D_i +\log \cE_1(\rho(Q;N_{i+1})),
\]
for $i_0 \le i < q$. On the other hand,  by the assumption that $\rho$ is $(\sigma_1,\ldots,\sigma_\ell)$-regular, and since $\cP$ is a good partition of $(0,\ell]$, the measure  $\rho(Q;N_{i+1})$ is $(\sigma_{N_i+1},\ldots,\sigma_{N_{i+1}})$-regular. Hence, using Lemma \ref{lem:energy-regular}, we obtain
\[
\log \cE_1(\rho(Q;N_{i+1})) \le O(D_i) + O_T(1)  + T \mathcal{S}(\sigma|(N_i,N_{i+1}]).
\]
Combining the last two displayed formulas, we deduce that
\[
\log \|\Pi_{\theta(x_Q,y)} \rho(Q;N_{i+1}) \|_2^2 \le o_{T,\e}(1) T D_i+ O_T(1) +  T \mathcal{S}(\sigma|(N_i,N_{i+1}]).
\]
Adding up from $i=i_0$ to $q-1$ and again using $q= O_{\tau}(\log\ell)$, we get
\begin{equation}  \label{eq:bound-Sigma-II}
\Sigma_{\text{II}} \le o_{T,\e}(1) T \ell + O_{T,\tau}(\log\ell)+ T \M(\sigma,\cP).
\end{equation}
Combining \eqref{eq:bound-E}, \eqref{eq:bound-Sigma-I} and \eqref{eq:bound-Sigma-II}, we conclude that
\[
\frac{1}{T\ell}\mathcal{F}(\rho_A) \le  \frac{1}{\ell}\M(\sigma,\cP) + \error,
\]
where $\error$ is as in the statement. Recall that $\mathcal{F}(\mu)$ denotes the right-hand side of \eqref{eq:lower-bound-entropy} in Proposition \ref{prop:entropy-of-pinned-dist-measures}. Now Proposition \ref{prop:entropy-of-pinned-dist-measures} guarantees that
\[
\frac{1}{T\ell} H(\Delta_y\rho_A,\cD_\ell) \ge 1 -  \frac{1}{\ell}\M(\sigma,\cP) - \error.
\]
Since $H(\mu,\mathcal{A}) \le \log|\mathcal{A}|$ for any finite Borel partition $\mathcal{A}$ of a set of full $\mu$-measure, this finishes the proof.
\end{proof}

Note that in this proposition, the sequence $\sigma$ depends on the measure $\rho$ and the bound is in terms of $M_\tau(\sigma)$ (we will be able to make the error term arbitrarily small). Thus we are led to the combinatorial problem of minimizing $\M(\sigma,(N_i))$ over all $\tau$-good partitions for a given $\sigma\in [-1,1]^\ell$. This problem will be tackled in the next section: see Proposition \ref{prop:combinatorial}, and also Proposition \ref{prop:combinatorial-packing} for the case in which we are allowed to restrict $\sigma$ to $(0,L]$ for some large $L$.

\section{Finding good scale decompositions: combinatorial estimates}
\label{sec:combinatorial}

\subsection{An optimization problem for Lipschitz functions}

We begin by defining suitable analogs of the concepts from Definition \ref{def:integer-partition} for Lipschitz functions, instead of $[-1,1]$-sequences.

\begin{definition}\label{d:many}
A sequence $(a_n)_{n=0}^\infty$ is a \emph{partition} of
the interval $[0,a]$ if $a=a_0>a_1>\ldots>0$ and $a_n\to 0$;
it is a \emph{good partition} if
we also have $a_{k-1} / a_{k} \le 2$ for every $k\ge 1$.

A sequence $(a_n)_{n=0}^{\infty}$ is a \emph{$\tau$-good partition} for a given $0< \tau <1$
if it is a good partition and we also have $a_{k-1} / a_{k} \ge 1+\tau$ for every $k\ge 1$.

  Let $f:[0,a]\to\R$ be continuous and $(a_n)$ be a  partition of $[0,a]$. By the
  \emph{total drop of} $f$ \emph{according to} $(a_n)$ we mean
$$
\T(f,(a_n))=\sum_{n=1}^\infty f(a_n)-\min_{[a_n,a_{n-1}]} f,
$$
and we also introduce the notation
$$
\T(f)=\inf\{\ \T(f,(a_n))\ :\ (a_n) \textrm{ is a good partition of } [0,a]\ \},
$$
$$
\T_\tau(f)=\inf\{\ \T(f,(a_n))\ :\ (a_n)
\textrm{ is a $\tau$-good partition of } [0,a]\ \}.
$$

We call the interval $[a_{n},a_{n-1}]$ \emph{increasing}
if $\min_{[a_n,a_{n-1}]} f=f(a_n)$ and \emph{decreasing} if
$\min_{[a_n,a_{n-1}]} f=f(a_{n-1})$. (Note that $f$ needs not be increasing or decreasing on $[a_n,a_{n-1}]$.)
\end{definition}

In this section we investigate the following question: given a $1$-Lipschitz function $f:[0,a]\to\R$ satisfying certain bounds,
how large can $\T(f)$ and $\T_\tau(f)$ be?

First we study $\T(f)$.
Later we show (see Corollary~\ref{c:tau})
that for small $\tau$ the quantities $\T(f)$ and
$\T_\tau(f)$ are close.
Finally, from the bounds on $T_\tau(f)$ we deduce corresponding bounds  on $\M_\tau(\sigma)$: see for example Proposition \ref{prop:combinatorial}. Hence this problem is closely related to that of minimizing the dimension loss when estimating the dimension of the pinned distance set via
 Proposition \ref{prop:lower-bound-box-counting}. Dealing first with Lipschitz functions rather than $[-1,1]$-sequences allows us to avoid certain technicalities and make the arguments more transparent.

The basic result is the following.

\begin{prop}\label{p:basic}
  Let $a>0$,
$-1\le D < C \le 1$
be given parameters such that $C\ge 2D$.
  Let $f:[0,a]\to\R$ be a $1$-Lipschitz function such that
  $Dx\le f(x)\le Cx$ for every $x\in[0,a]$.
Then
\begin{equation}\label{e:basict}
    \T(f)\le \frac{(a-f(a))(C-2D)}{1+2C-3D}
    \le  a\cdot\frac{(1-D)(C-2D)}{1+2C-3D}.
    \end{equation}

\end{prop}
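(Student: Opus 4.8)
The plan is to construct an explicit good partition $(a_n)$ of $[0,a]$ for which $\T(f,(a_n))$ is at most the claimed bound, and to do so by a greedy construction driven by the constraint $a_{n-1}/a_n \le 2$. The natural choice, given that we only know $D x \le f(x) \le C x$, is to take $a_n = a/2^n$, i.e. the dyadic partition. On the interval $[a_n, a_{n-1}] = [a/2^n, a/2^{n-1}]$, the quantity $f(a_n) - \min_{[a_n,a_{n-1}]} f$ can be estimated using the upper and lower linear bounds on $f$ together with the $1$-Lipschitz property: we have $f(a_n) \le C a_n$, and for any $x$ in the interval $f(x) \ge D x \ge D a_n$ (if $D \ge 0$) — but one must be careful about signs, so more robustly $\min_{[a_n,a_{n-1}]} f \ge \min(D a_n, D a_{n-1}) $, and also $\min_{[a_n,a_{n-1}]} f \ge f(a_n) - (a_{n-1}-a_n) = f(a_n) - a_n$ by Lipschitz. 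So each term is bounded by something like $\min(a_n, (C-D)a_n \text{ or } (C - 2D) a_n)$, and summing the geometric series $\sum_n a_n = 2a$ (or $\sum_{n\ge 1} a_n = a$) gives a bound of the right shape, but with the wrong constant — the crude dyadic partition will not by itself give the sharp denominator $1+2C-3D$.

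To get the sharp constant I would instead optimize the \emph{ratios} of the partition: allow $a_{n}/a_{n-1} = \lambda_n \in [1/2,1)$ and choose the $\lambda_n$ (perhaps all equal to a single $\lambda$, or alternating) to balance the contributions. The key local estimate is: on a decreasing interval $[a_n,a_{n-1}]$ (where $\min f = f(a_{n-1})$) the drop is $f(a_n) - f(a_{n-1}) \le a_{n-1} - a_n$ by Lipschitz, but also $\le C a_n - D a_{n-1}$; on an increasing interval the drop is $f(a_n) - \min_{[a_n,a_{n-1}]}f$, and using Lipschitz from the minimizing point one bounds this in terms of $(C - D) a_n$-type quantities and the lengths. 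The factor $C - 2D$ in the numerator strongly suggests that the extremal behaviour is a function that decreases with slope $-1$ somewhere and then is forced back up to meet the upper constraint $Cx$, with the relation $C = 2D$ being exactly the degenerate case where $\T(f)=0$ (indeed if $C \le 2D$ one can show the dyadic partition already gives zero total drop, consistent with the hypothesis $C \ge 2D$). So I would guess the extremal $f$ on each dyadic-type block looks like: go down from $(a_{n-1}, f(a_{n-1}))$ with slope $-1$ as far as the lower constraint $Dx$ permits, then the value at $a_n$ is on the line $Dx$ and the next block repeats with scaling. Setting up the recursion $f(a_{n-1}) = C a_{n-1}$ forced, $f$ drops with slope $-1$ to the point where it hits $y = Dx$, that determines $a_n$ and the drop, and self-similarity of this picture under $x \mapsto (a_n/a_{n-1})x$ gives a geometric series whose sum is exactly $\tfrac{(a-f(a))(C-2D)}{1+2C-3D}$.

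Concretely the steps are: (1) reduce to bounding $\T(f,(a_n))$ for one well-chosen partition by definition of the infimum; (2) write down, for each interval, the two competing bounds on $f(a_n)-\min_{[a_n,a_{n-1}]}f$ coming from the $1$-Lipschitz property and from the linear envelope $[Dx, Cx]$, distinguishing increasing vs.\ decreasing intervals; (3) choose the ratios $a_n/a_{n-1}$ to make the worst case as small as possible — this is a one-parameter (or finitely-many-parameter) optimization which should produce the ratio yielding denominator $1+2C-3D$; (4) sum the resulting geometric series, using $\sum (a_{n-1}-a_n) = a - 0 = a$ and keeping track of $f(a)$ to get the sharper bound $(a-f(a))(C-2D)/(1+2C-3D)$; (5) finish the second inequality in \eqref{e:basict} by noting $f(a) \ge D a$, so $a - f(a) \le (1-D)a$. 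The main obstacle I anticipate is step (3)–(4): handling the interplay between increasing and decreasing intervals. On an increasing interval the minimum is at the left endpoint and the drop is small; a naive partition might have many such intervals "wasting" nothing, but to certify sharpness one must check that choosing the partition adaptively (rather than dyadically) never does worse than the claimed bound regardless of how $f$ oscillates within its envelope — in particular one must rule out $f$ being cleverly wiggly so that \emph{every} dyadic block contributes a near-maximal drop. I expect the resolution is a telescoping/amortization argument: bound $\sum_n (f(a_n) - \min_{[a_n,a_{n-1}]} f)$ by relating it to $\sum_n (f(a_n) - f(a_{n-1}))^+$ plus controlled corrections, then use that $f$ lies between two lines through the origin to cap the total positive variation. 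That amortized bookkeeping, rather than any single inequality, will be the heart of the proof.
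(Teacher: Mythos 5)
Your high-level strategy is right — an adaptive partition (not a fixed dyadic one), a distinction between increasing and decreasing intervals, a telescoping argument, and $f(a)\ge Da$ for the last inequality — but the heart of the argument is missing, and the amortization you sketch would not close the gap.

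The paper's proof constructs, by a greedy three-case algorithm, a good partition with two extra properties: (*) each interval is either increasing or decreasing, and (**) on every \emph{maximal block} of consecutive decreasing intervals $[a_k,a_l]$ one has $\bigl(f(a_k)-f(a_l)\bigr)/(a_l-a_k)\le h$, where $h=(C-2D)/(1+C-D)$. With (**) in hand, the single inequality
\[
f(a'_k)-\min_{[a'_k,a'_{k-1}]}f \;\le\; \rho\Bigl( \bigl(a'_{k-1}-f(a'_{k-1})\bigr)-\bigl(a'_k-f(a'_k)\bigr)\Bigr),
\qquad \rho=\frac{C-2D}{1+2C-3D},
\]
holds for every (merged) interval, and summing telescopes exactly to $\rho\,(a-f(a))$. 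The telescoping quantity is $a_n - f(a_n)$, not the positive variation $\sum (f(a_n)-f(a_{n-1}))^+$ that you propose; the latter can be as large as a constant times $a$ for oscillating $f$ inside the envelope $[Dx,Cx]$, and the envelope alone does not cap it — what caps it is the adaptive choice of scales ensuring (**). You flag this as "the main obstacle" you "expect" to resolve by amortization, but you do not name $h$, do not identify the monotone quantity $a_n-f(a_n)$, and do not produce the construction. The construction itself is genuinely delicate: after an increasing step (Case 1) or an easy halving step (Case 2), the hard case (Case 3) requires identifying the point $b$ where $f(b)-f(a_n)=h(a_n-b)$ and then building an intermediate chain $b=b_0<\dots<b_M=a_n$ so that each new interval is admissible and the drop ratio stays below $h$; showing $b_{m+1}>b_m$ uses the algebraic identities $D+h=(C-D)(1-h)$ and $C+h=(C-D)(2-h)$ together with the Lipschitz bound. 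Your "optimize a single ratio $\lambda$" idea cannot work uniformly over all admissible $f$; the partition must respond to $f$, not be chosen in advance. So the plan is in the right spirit, but the key lemma — the existence of a partition satisfying (**) — is exactly what is missing, and without it the geometric-series / amortization bookkeeping you describe does not yield the constant $1+2C-3D$.
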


\begin{proof}
Since $f(a)\ge Da$ and $a>0$,
the second inequality of
\eqref{e:basict} is clear, so it enough to
prove the first inequality.

Let
$$
h=\frac{C-2D}{1+C-D} \quad \textrm{and} \quad
\rho=\frac{C-2D}{1+2C-3D}.
$$
Note that
\begin{equation}\label{e:hrho}
h=\frac{\rho}{1-\rho} \quad \textrm{and} \quad \rho=\frac{h}{h+1}
\end{equation}
and $h,\rho\ge 0$ since we assumed $C\ge 2D$ and $C\ge D$, so $2C\ge 3D$.

We will construct a good partition $(a_n)$ with the following two extra properties:

(*) every interval $[a_n,a_{n-1}]$ ($n=1,2,\ldots$) is either increasing or decreasing (recall Definition~\ref{d:many}), and

(**) if $[a_k,a_{k-1}],\ldots,[a_{l+1},a_l]$ ($ k \ge l+1 \ge 1$)
is a maximal block of consecutive decreasing intervals, then
\[
\frac{f(a_k)-f(a_l)}{a_l-a_k} \le h=\frac{C-2D}{1+C-D}.
\]

First we show that this is enough to prove
our claim.
Let $a=a'_0>a'_1>\ldots$ be the endpoints of the union of each maximal block of consecutive intervals of the same type (increasing or decreasing).
It easily follows from the definitions and telescoping that
$\T(f,(a_n))=\T(f,(a'_k))$.
Hence to obtain \eqref{e:basict} it is enough to
prove
\begin{equation}\label{e:Tona'_k}
\T(f,(a'_k)) \le \rho\cdot(a-f(a)).
\end{equation}

We claim that
\begin{equation}\label{e:f - min}
f(a'_k)-\min_{[a'_k,a'_{k-1}]} f \le
\rho((a'_{k-1}-f(a'_{k-1})) - (a'_k-f(a'_k))) \qquad (k=1,2,\ldots).
\end{equation}

Indeed, by construction, the interval $[a'_{k},a'_{k-1}]$ is either increasing or decreasing.
If it is increasing then
$$
f(a'_k)-\min_{[a'_k,a'_{k-1}]} f = 0 \le \rho((a'_{k-1}-f(a'_{k-1})) - (a'_k-f(a'_k)))
$$
since $f$ is $1$-Lipschitz and $a'_k<a'_{k-1}$.

If  $[a'_{k+1},a'_{k}]$ is decreasing then, using first (**) and the fact that $\rho<1$, and then \eqref{e:hrho}, we get
\begin{align*}
f(a'_k)-\min_{[a'_k,a'_{k-1}]} f &=
f(a'_{k})-f(a'_{k-1}) \\
&\le
\rho(f(a'_{k})-f(a'_{k-1})) + (1-\rho)h(a'_{k-1}-a'_{k})
\\
&= \rho(f(a'_{k})-f(a'_{k-1})) + \rho (a'_{k-1}-a'_{k}),
\end{align*}
which completes the proof of \eqref{e:f - min}.

By adding up \eqref{e:f - min} for $k=1,2,\ldots$ and
using that $a'_0=a$, $a'_k\to 0$ and $f(a'_k)\to 0$
we get \eqref{e:Tona'_k}, which implies \eqref{e:basict}.

Therefore it is enough to construct a good 
partition $(a_n)$ with properties (*) and (**).
Let $a_0=a$ and suppose that $a_0>\ldots>a_{n}>0$ are already constructed
with properties (*) and (**) (up to $n$).

We distinguish three cases.

\noindent\emph{Case 1.} $\min_{[a_n/2,a_n]} f < f(a_n)$.

In this case let $a_{n+1}\in [a_n/2,a_n]$ be the smallest
number such that
$f(a_{n+1})=\min_{[a_n/2,a_n]}f$. Then $[a_{n+1},a_n]$
is an increasing interval and so (*) and (**) still hold and we can continue the procedure.

\noindent\emph{Case 2.} $\min_{[a_n/2,a_n]} f = f(a_n)$ and
$f(a_n/2)-f(a_n)\le h\cdot (a_n - a_n/2)$.

In this case let $a_{n+1}=a_n/2$, and again (*), (**) hold for the extended sequence and we can continue the procedure.

\noindent\emph{Case 3.} $\min_{[a_n/2,a_n]} f = f(a_n)$ and
$f(a_n/2)-f(a_n)> h\cdot (a_n - a_n/2)$.

First we claim that $h\ge -D$. Indeed, since $-1\le D\le C$ we have
$$
0\le (C-D)(D+1)=-D+CD-D^2+C,
$$
which  implies that
$$
-D(1+C-D) \le C-2D ,
$$
and this implies $h\ge -D$.

Since $h\ge -D$ and $f(x)\ge Dx$ we have
$f(a_n)\ge Da_n\ge -ha_n$ and so
$$
f(0)-f(a_n)=-f(a_n)\le h a_n = h (a_n-0).
$$

This and the assumption
$f(a_n/2)-f(a_n)> h\cdot (a_n - a_n/2)$
implies that there exists
a largest $b\in[0,a_n/2)$ be such that
\begin{equation}\label{e:b}
f(b)-f(a_n)=h(a_n-b).
\end{equation}

Now our goal is to find a sequence $b=b_0<b_1<\ldots<b_M=a_n$ with $M\ge 2$
such that
\begin{equation}\label{e:b_i}
\min_{[b_{i-1},b_i]}f=f(b_i),\ b_i/b_{i-1}\le 2\ (i=1,\ldots, M),
\quad b_i/b_{i-2}\ge 2\ (i=2,\ldots,M).
\end{equation}

The sequence $(b_i)$ is constructed by induction.
Let $b_0=b$.
Suppose that $m\ge 0$, $b=b_0<\ldots<b_m< a_n$
are already constructed and \eqref{e:b_i} holds for $M=m$.
If $b_m\ge a_n/2$ then we can take $b_{m+1}=a_n$ and $M=m+1$.
Then the construction is completed and \eqref{e:b_i} holds.

Now consider the case $b_m< a_n/2$.
Let $b_{m+1}\in[b_m,2b_m]$ be maximal
such that $f(b_{m+1})=\min_{[b_m,2b_m]}f$.
Our goal is to show that $b_{m+1}>b_m$.
For this it is enough to show that $f(2b_m)\le f(b_m)$.

Using that $b$ is the largest number in $[0,a_n/2]$
for which \eqref{e:b} holds,
$b_m\ge b$ and  $f(a_n/2)-f(a_n)> h\cdot (a_n - a_n/2)$,
we get
\begin{equation}\label{e:b_m}
f(b_m)-f(a_n)\ge h(a_n-b_m).
\end{equation}

Hence to get
$f(2b_m)\le f(b_m)$ it is enough to show that
\begin{equation}\label{e:2b_m}
f(2b_m)-f(a_n)\le h(a_n-b_m).
\end{equation}
Using \eqref{e:b_m} and $Dx\le f(x)\le Cx$ we get
$$
h(a_n-b_m)\le f(b_m)-f(a_n)\le Cb_m-Da_n,
$$
which implies that
$$
(D+h)a_n\le (C+h)b_m.
$$
Direct calculation shows that $D+h=(C-D)(1-h)$ and $C+h=(C-D)(2-h)$.
Thus the last inequality
 and $D<C$ imply
that
$$
(1-h)a_n\le (2-h)b_m.
$$
Hence, using also that $f$ is $1$-Lipschitz
and $b_m<a_n/2$, we obtain
$$
h(a_n-b_m)\ge a_n - 2b_m \ge f(2b_m)-f(a_n).
$$
This completes the proof of \eqref{e:2b_m} and so also the proof
of $b_{m+1}>b_m$.
It is easy to see that \eqref{e:b_i} holds for $M=m+1$.
Note also that the property $b_i/b_{i-2}\ge 2$ implies that
the construction of the sequence $(b_i)$ is completed after
finitely many steps.

Now, to finish Case 3 we take $a_{n+j}=b_{M-j}$ for $j=1,\ldots,M$.
Then (*) and (**) hold (up to $n+m$) and so the procedure can be continued.

This way we obtain a sequence
$a=a_0>a_1>\ldots>0$ that forms
a good partition with (*) and (**), provided $a_n\to 0$.
Therefore it remains to prove that $a_n\to 0$.

Since $a_{n+1}=a_{n}/2$ when Case 2 is applied and
$a_{n+M}=b_0=b\le a_n/2$ in Case 3, we are done if Case 2 or Case 3
is applied infinitely many times.
It is easy
to see that if both $a_{n+1}$ and $a_{n+2}$ were
obtained from Case 1, then we have $a_{n}/a_{n+2}\ge 2$.
Thus $a_n\to 0$, which completes the proof.
\end{proof}

\subsection{Small drop on initial segments}
\label{subsec:initial}
The results in this subsection are required in the proof of Theorem \ref{thm:packing}. We  aim to minimize $\T(f|[0,u])/u$, where $u>0$ is a new parameter that we are allowed to choose, subject to not being too small. The analysis will be strongly based on the study of hard points which we now define:
\begin{definition} \label{def-hardpoint}
If $f:[0,a]\to\R$ is a function, we say that $p\in[0,a]$ is a \emph{hard point of} $f$ if $\min_{[p/2,p]} f=f(p)$.
\end{definition}

We will say that a function $f$ defined on an interval $I$ is \emph{piecewise linear} if $I$ can be decomposed
into finitely many intervals such that $f$ is linear on each of them.
\begin{lemma}\label{l:hardpoints}
Let $f:[0,a]\to\R$ be a $1$-Lipschitz function,
which is piecewise linear on every closed subinterval of $(0,a]$. Then:

(i) The set of hard points of $f$ can be written as a
(possibly empty) finite or infinite union of closed
(possibly degenerate) intervals
$H = \cup_j [u_j,v_j]$
such that
$v_1\ge u_1>v_2\ge u_2>\ldots$ and
every closed subinterval of $(0,a]$
intersects only finitely many $[u_j,v_j]$.

(ii) We have
\begin{equation}\label{e:formula}
\T(f)=\sum_{j} f(u_j)-f(v_j),
\end{equation}
where the empty sum is meant to be zero.
\end{lemma}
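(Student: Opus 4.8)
The plan is to prove the structural statement (i) first, and then obtain the formula (ii) as a consequence of the two matching bounds $\T(f)\ge\sum_j f(u_j)-f(v_j)$ and $\T(f)\le\sum_j f(u_j)-f(v_j)$. For (i), I would work with the auxiliary function $g(x)=f(x)-\min_{[x/2,x]}f$, which is continuous and non‑negative on $(0,a]$ and whose zero set is exactly the set $H$ of hard points. Since $f$ is piecewise linear on every $[\delta,a]$, the sliding minimum $x\mapsto\min_{[x/2,x]}f$ is piecewise linear there as well — on each of finitely many $x$‑intervals its value is $f(x)$, or $f(x/2)$, or the (constant) value of $f$ at a fixed interior breakpoint — and hence so is $g$. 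The zero set of a piecewise linear function on a compact interval is a finite union of (possibly degenerate) closed intervals, so $H\cap[\delta,a]$ is such a union for every $\delta>0$; thus $H$ is a locally finite union of closed intervals in $(0,a]$, which after merging components sharing an endpoint yields the enumeration $v_1\ge u_1>v_2\ge u_2>\dots$ claimed in (i). (Closedness of $H$ can also be seen directly: if $p_n\to p>0$ and $f(p_n)=\min_{[p_n/2,p_n]}f$, continuity of $f$ and of the interval‑minimum gives $f(p)=\min_{[p/2,p]}f$.)

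Before proving (ii) I would record that $f$ is non‑increasing on each $[u_j,v_j]$: given $u_j\le p<q\le v_j$, connect $p$ to $q$ by finitely many hard points with consecutive ratios in $(1,2]$ and telescope the inequalities $f(r_{i+1})=\min_{[r_{i+1}/2,r_{i+1}]}f\le f(r_i)$. For the lower bound, introduce the potential $F(x)=$ (total decrease of $f$ on $[x,a]\cap H)$, so $F(a)=0$, $F(0^{+})=\sum_j f(u_j)-f(v_j)$ (this sum is at most $a$ since the $[u_j,v_j]$ are disjoint and $f$ is $1$‑Lipschitz), and $F(a_n)-F(a_{n-1})=\sum_k f(p_k)-f(q_k)$ where $[p_1,q_1],\dots,[p_K,q_K]$ are the components of $[a_n,a_{n-1}]\cap H$ listed left to right. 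For an arbitrary good partition $(a_n)$ I would show the termwise estimate
\[
f(a_n)-\min_{[a_n,a_{n-1}]}f\;\ge\;F(a_n)-F(a_{n-1}),
\]
from which summing and letting $n\to\infty$ gives $\T(f,(a_n))\ge F(0^{+})$. To prove the display, use $a_{n-1}\le 2a_n$: then each $p_k$, $q_k$ sits in the right position relative to the relevant hard point, so the hard‑point property forces $f(p_1)\le f(a_n)$, $f(p_{k+1})\le f(q_k)$ and $f(q_{k+1})\le f(q_k)$; a short telescoping gives $\sum_k f(p_k)-f(q_k)\le f(a_n)-f(q_K)\le f(a_n)-\min_{[a_n,a_{n-1}]}f$, which is the claim.

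For the upper bound I would construct, for the given $f$, one good partition in which every interval is either \emph{increasing} or contained in some $[u_j,v_j]$; on such a partition the drop vanishes on the increasing intervals and, inside each $[u_j,v_j]$, telescopes to at most $f(u_j)-f(v_j)$ (by monotonicity of $f$ there), so $\T(f,(a_n))\le\sum_j f(u_j)-f(v_j)$. Descending from $a$: inside a hard interval $[u_j,v_j]$ step $p\mapsto\max(p/2,u_j)$ until reaching $u_j$; from a non‑hard point $p$ step to the smallest minimiser of $f$ on $[p/2,p]$, which is strictly below $p$ and gives an increasing interval; and the crucial move is to step \emph{off} $u_j$ into the region below. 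The key point — and the one place where piecewise linearity is essential — is that $\min_{[u_j/2,u_j]}f$ is attained at some $r_j^{*}<u_j$, so $[r_j^{*},u_j]$ is increasing. I would prove this by contradiction: if $f>f(u_j)$ on all of $[u_j/2,u_j)$, take the last linear piece $f(x)=f(u_j)+m(u_j-x)$, $m\in(0,1]$, near $u_j$; for $p_0<u_j$ very close to $u_j$ one has $f(x)>f(p_0)$ on the whole of that linear piece below $p_0$, while on the complementary compact set $[u_j/2,u_j-\eta_0]$ one has $f\ge f(u_j)+\delta_0>f(p_0)$ once $p_0$ is close enough; hence $\min_{[p_0/2,p_0]}f=f(p_0)$, i.e. $p_0$ is hard, contradicting that $p_0$ lies strictly below the maximal hard interval $[u_j,v_j]$. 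Finally, the constructed sequence tends to $0$: consecutive non‑hard steps at least halve the current point (as in the proof of Proposition~\ref{p:basic}), each hard‑interval traversal takes finitely many steps, and by (i) the hard intervals accumulate only at $0$. Combining with the lower bound gives $\T(f)=\sum_j f(u_j)-f(v_j)$; the edge cases (whether $a\in H$, whether $0$ is isolated in $H$, infinitely many hard intervals) change nothing since a degenerate component contributes $f(u_j)-f(v_j)=0$.

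The main obstacle is precisely the key claim in the last paragraph: that from the left endpoint of a maximal hard interval one can always take a ``free'' (increasing) step downward. This fails for a general $1$‑Lipschitz $f$ (which could oscillate arbitrarily close to $u_j$), so the piecewise‑linearity hypothesis has to be used in an essential way, and making the limiting argument clean is the delicate part; a secondary nuisance is the bookkeeping needed to confirm that the greedy construction really produces a good partition converging to $0$.
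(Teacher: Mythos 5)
Your overall strategy mirrors the paper's: establish (i) from piecewise linearity, then prove (ii) by matching lower and upper bounds, with the upper bound coming from a greedy construction of a good partition in which every interval is increasing or lies in some $[u_j,v_j]$. Two remarks. For the lower bound you use a potential $F$ and a termwise estimate $f(a_n)-\min_{[a_n,a_{n-1}]}f\ge F(a_n)-F(a_{n-1})$, whereas the paper simply inserts all $u_j,v_j$ into the given good partition, notes this leaves $\T(f,(a_n))$ unchanged, and telescopes $f(u_j)-f(v_j)$ over the resulting subintervals of each $[u_j,v_j]$. Your telescoping chain $f(p_1)\le f(a_n)$, $f(p_{k+1})\le f(q_k)$, $f(q_K)\ge\min_{[a_n,a_{n-1}]}f$ is correct and yields the same conclusion; this is a genuine but minor variation.

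There is, however, a real gap in your key step that the greedy construction can step strictly below $u_j$. You argue by contradiction that $p_0$ close to $u_j$ is a hard point, but you only verify $f(x)>f(p_0)$ on $[u_j/2,p_0]$ (the final linear piece plus the complementary compact subinterval of $[u_j/2,u_j]$). Hardness of $p_0$ requires $f\ge f(p_0)$ on all of $[p_0/2,p_0]$, and the piece $[p_0/2,u_j/2)$ is not addressed. The fix is short — your contradiction hypothesis gives the strict inequality $\gamma:=f(u_j/2)-f(u_j)>0$, and for $u_j-p_0<\gamma/2$ the $1$-Lipschitz bound gives $f(x)\ge f(u_j/2)-(u_j/2-x)\ge f(u_j/2)-(u_j-p_0)/2>f(u_j)+\gamma/2>f(p_0)$ on $[p_0/2,u_j/2]$ — but as written the step is incomplete. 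The paper avoids this entirely by arguing directly rather than by contradiction: it picks $w\in(u_j/2,u_j)$ with $f$ linear on $[w,u_j]$ and $w>v_{j+1}$, disposes of the nondecreasing/constant cases, and then uses that every $x\in[w,u_j)$ is non-hard to produce $x'\in[x/2,x)$ with $f(x')<f(x)$; since $f$ decreases on $[w,u_j]$ this forces $x'<w$, and a limiting argument as $x\to u_j^-$ yields $x_0\in[u_j/2,w]$ with $f(x_0)\le f(u_j)$, so the smallest minimiser of $f$ on $[u_j/2,u_j]$ lies strictly left of $u_j$. This never requires verifying hardness of an auxiliary point and is therefore immune to the issue above.
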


\begin{proof}
The first statement is easy,
using that $f$ is piecewise linear.

First we prove $\ge$ in \eqref{e:formula}.
Let
 $(a_n)$ be a good partition of $[0,a]$ and let
$a=a'_0>a'_1>\ldots$ be an ordered enumeration
of the set $\{a_n\} \cup \{u_j\}
\cup \{v_j\}$. It is easy to check that $(a'_n)$
is also a good partition of $[0,a]$, and that by inserting a hard point of
$f$ into a good partition $(a_n)$, the value of $\T(f,(a_n))$ is
not changed. Thus $\T(f,(a'_n))=\T(f,(a_n))$.
Now every $[u_j,v_j]$ is of the form
$[u_j,v_j]=\cup_{n=n_j}^{m_j} [a'_{n},a'_{n-1}]$.
Since $f$ must be nonincreasing on
any interval $[u_j,v_j]$ we obtain
$$
f(u_j)-f(v_j)=
\sum_{n=n_j}^{m_j} f(a'_n)-\min_{[a'_n,a'_{n-1}]} f
$$
for every $j$.
Adding up, and using that
$f(a'_n)-\min_{[a'_n,a'_{n-1}]} f\ge 0$ and
$\T(f,(a'_n))=\T(f,(a_n))$ we get the claim.

To prove the other inequality we
construct by induction a good partition of $[0,a]$
such that $\T(f,(a_n))\le  \sum_{j} f(u_j)-f(v_j)$.
Let $a_0=a$.
Suppose that $a_0,\ldots,a_n$ are already defined.

\emph{Case 1.} If $a_n\in (u_j,v_j]$ for some $j$ then choose $k\ge 1$ and
$a_n>a_{n+1}>\ldots>a_{n+k}=u_j$ so that
$a_{n+i}/a_{n+i-1}\le 2$ for $i=1,\ldots,k$.

\emph{Case 2.} Otherwise let $a_{n+1}\in[a_n/2,a_n]$ be the
smallest number for which
$f(a_{n+1})=\min_{[a_n/2,a_n]}f$.
We claim that $a_{n+1}<a_n$.
If $a_n\not\in H$ then this is clear from the definition.
Since the only points of $H$ that are not handled
in the previous case are the left endpoints of the
intervals $[u_j,v_j]$ we can suppose that $a_n=u_j$
for some $j$.
By the piecewise linearity of $f$, there exists  $w\in (u_j/2, u_j)$
such that $f$ is linear on $[w,u_j]$ and
$w>v_{j+1}$.
Since $u_j$ is a hard point, $f$ cannot be increasing
on $[w,u_j]$. If $f$ is constant on $[w,u_j]$
then $a_{n+1}\le w<u_j=a_n$, so we are done.
So we can suppose that $f$ is decreasing on $[w,u_j]$.
Since $w>v_{j+1}$, every  $x\in [w,u_j)$ is
not hard, so there exists an $x'\in[x/2,x)$ such that
$f(x')<f(x)$.
Since $f$ is decreasing on $[w,u_j]$, $x'<w$.
By the continuity of $f$, this implies that there exists
$x_0\in[u_j/2,w]$ such that $f(x_0)\le f(u_j)$. Thus indeed $a_{n+1}<u_j=a_n$.

Note that if Case 2 was applied to obtain both $a_{n+1}$ and
$a_{n+2}$ then $a_n/a_{n+2}\ge 2$.
This implies that $a_n\to 0$, so $(a_n)$ is a good
partition of $[0,a]$. It remains to show that
$\T(f,(a_n))\le \sum_j f(u_j)-f(v_j)$.

If $a_n$ was obtained in Case 1 then
$[a_n,a_{n-1}]$ is a subinterval of some $[u_j,v_j]$ and
$f(a_n)-\min_{[a_n,a_{n-1}]} f=f(a_n)-f(a_{n-1})$.
If $a_n$ was obtained in Case 2 then
$f(a_n)-\min_{[a_n,a_{n-1}]} f=0$.
Note also that $f$ is nonincreasing on each $[u_j,v_j]$
since all points of $[u_j,v_j]$ are hard points of $f$.
These show that indeed $\T(f,(a_n))\le \sum_j f(u_j)-f(v_j)$,
which completes the proof.
\end{proof}

The next proposition (or rather, the discrete corollary given in Proposition \ref{prop:combinatorial-packing} below) will be crucial to get estimates on the packing dimension of the pinned distance sets.

\begin{prop}\label{p:initial}
  Let $a>0$ and  $0\le D < 1/2$ be given parameters.
  Let $f:[0,a]\to\R$ be a $1$-Lipschitz function,
  which is piecewise linear on every closed subinterval of $(0,a]$,
  and suppose that
 $f(0)=0$ and
  $Dx\le f(x)$ for every $x\in[0,a]$. Let
$$
\Phi(D)=\frac{2-D-\sqrt{3-3D^2}}{4}.
$$

Then for every $\delta\in(0,1/2)$
there exists  $u\in[3a\Phi(D)2^{-1/\delta},a]$
such that
   \begin{equation}\label{e:initiale}
   \T(f|[0,u]) < u\cdot (\Phi(D) +
    \delta(2-4\log\delta)). 
    \end{equation}

\end{prop}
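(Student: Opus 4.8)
The plan is to combine the exact hard‑point formula of Lemma~\ref{l:hardpoints} with a multi‑scale ``greedy descent'' over dyadic scales, and to identify $\Phi(D)$ as the relevant root of the quadratic $16\phi^2-8(2-D)\phi+(1-2D)^2=0$, which turns out to be the fixed point of a one‑scale recursion. (One checks by squaring that $\phi=\Phi(D)$ is exactly the smaller root of this quadratic, since $(2-D)^2-(2D-1)^2=3(1-D^2)$.)

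First, reductions. Since $\T$ and the claimed inequalities are homogeneous, I normalize $a=1$; and since we only need the existence of one $u$, and both sides depend continuously on $f$ (uniform convergence on compact subintervals of $(0,1]$ suffices), I may assume $f$ is piecewise linear on all of $(0,1]$ and, after a further approximation, near $0$ as well, so that $x\mapsto f(x)/x$ is piecewise monotone. By Lemma~\ref{l:hardpoints} the hard‑point set is $H=\bigcup_j[u_j,v_j]$ with $f$ nonincreasing on each $[u_j,v_j]$, and for $u\notin H$ one has $\T(f|[0,u])=\sum_{[u_j,v_j]\subseteq[0,u]}(f(u_j)-f(v_j))$. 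Writing $\Psi(u):=\T(f|[0,u])$, the function $\Psi$ is nondecreasing, is constant on the gaps of $H$, and increases like $-f$ on each hard interval; in particular $f+\Psi$ is constant on every $[u_j,v_j]$. This is the structure I will exploit.

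Now the core argument, by contradiction. Put $\phi:=\Phi(D)+\delta(2-4\log\delta)$ and $c:=3\Phi(D)\,2^{-1/\delta}$, and suppose $\Psi(u)\ge\phi\,u$ for all $u\in[c,1]$. Build scales $1=r_0>r_1>\cdots$ with $r_{k+1}\in[r_k/2,r_k)$ as follows. At scale $r_k$ apply Proposition~\ref{p:basic} to $f|[0,r_k]$ with $C=C_k:=\sup_{x\in(0,r_k]}f(x)/x\le 1$ (if $C_k<2D$ then $f$ admits no drop at all on $[0,r_k]$, contradicting $\Psi(r_k)\ge\phi r_k>0$, so $C_k\ge 2D$). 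This yields
\[
\phi\,r_k\ \le\ \Psi(r_k)\ \le\ \frac{(r_k-f(r_k))(C_k-2D)}{1+2C_k-3D}\,,
\]
which forces $f(r_k)/r_k$ away from $1$ and forces $C_k$ to be large; since $f(0)=0$ and $f$ is $1$‑Lipschitz, a large $C_k$ means there is a scale $r_{k+1}<r_k$ with $f(r_{k+1})/r_{k+1}$ within $O(1)$ of $C_k$, hence also large. Feeding this back through the one‑scale inequality $\Psi(r_k)\le\Psi(r_{k+1})+\bigl(f(r_{k+1})-\min_{[r_{k+1},r_k]}f\bigr)\le\Psi(r_{k+1})+f(r_{k+1})-D r_{k+1}$, together with $f\ge Dx$ and the $1$‑Lipschitz bound, produces a two‑term recursion in the quantities $f(r_k)/r_k$ and the deficit $\Psi(r_k)/r_k-\phi$. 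Iterating it, either the scale drops by a definite factor at each step — so after at most $\sim 1/\delta$ steps we reach $r_k<c$, the desired contradiction — or the recursion stabilizes at its fixed point, which (balancing the drop $f$ accumulates at rate $\approx 1$ on hard intervals against the rise, at rate $\le 1$ on the gaps, needed to keep $f\ge Dx$, under the constraint that a hard point $p$ forbids $f$ from dipping on $[p/2,p]$) is exactly the smaller root of $16\phi^2-8(2-D)\phi+(1-2D)^2=0$, i.e.\ $\phi=\Phi(D)$, contradicting $\phi>\Phi(D)$. Since the $\sim 1/\delta$ dyadic steps each carry an error $O(\delta)$ and summing the resulting harmonic‑type series over the steps contributes a $\delta\log(1/\delta)$ term, this is exactly where the slack $\delta(2-4\log\delta)$ and the lower endpoint $3\Phi(D)2^{-1/\delta}$ for $u$ appear.

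The main obstacle is precisely this recursion: one must choose the auxiliary scales $r_k$ so that the ``no good $u$'' hypothesis feeds cleanly into Proposition~\ref{p:basic} at the next scale, isolate the correct Lyapunov‑type quantity that makes the two‑scale estimate tight, and verify that its fixed point is $\Phi(D)$ — equivalently, that the quadratic above is the exact scale‑invariant balance in the extremal configuration (hard intervals of a fixed density on which $f$ has slope $\approx-1$, interleaved with gaps on which $f$ has slope $\approx+1$, tuned so that every point of a hard interval remains a hard point). Once that is in place, the remaining pieces — the reduction to piecewise‑linear $f$, the bookkeeping of $O(\delta)$-type errors over $\sim1/\delta$ scales, and the final extraction of the scale $u\notin H$ realizing $\Psi(u)/u<\phi$ — are routine.
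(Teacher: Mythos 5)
There is a genuine gap. Your proposal correctly identifies $\Phi(D)$ as the smaller root of $16\phi^2-8(2-D)\phi+(1-2D)^2=0$ (which is indeed the discriminant condition for the Proposition~\ref{p:basic} bound $\frac{(1-C)(C/2-D)}{1+2C-3D}$, viewed as a function of $C$, to have maximum value $\phi$), and you correctly observe that Lemma~\ref{l:hardpoints} lets one work with the hard‑point set. But the central step of your argument is never carried out: you assert that the iteration over dyadic scales $r_k$ ``produces a two‑term recursion'' whose ``fixed point'' is $\Phi(D)$, and that the only alternative is that the scale drops by a definite factor; you neither write the recursion down nor justify the dichotomy nor the fixed‑point claim. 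You explicitly acknowledge this (``one must \ldots isolate the correct Lyapunov‑type quantity \ldots and verify that its fixed point is $\Phi(D)$''), which is exactly the content that would need to be proved.

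There are also concrete reasons to doubt the route as stated. Applying Proposition~\ref{p:basic} with $C_k=\sup_{x\in(0,r_k]}f(x)/x$ gives no usable control, because $C_k$ can be arbitrarily close to $1$ at every scale (a single steep segment near $0$ forces this), and then the bound from Proposition~\ref{p:basic} is weak; there is nothing in your recursion that forces $C_k$ to decrease or to approach the optimizer $C_*=\tfrac{3D-1+\sqrt{3-3D^2}}{2}$. The paper's proof circumvents precisely this by first replacing $f$ by $F=\min(f,2Bx)$, where $B$ is (approximately) the supremum of $f(x)/x$ over \emph{hard points only}; by Lemma~\ref{l:hardpoints} this replacement does not change the total drop, and $F$ satisfies $F(x)\le 2Bx$ globally, so Proposition~\ref{p:basic} can be applied with $C=\min(2B,1)$ at exactly the right value. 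Your sketch has no substitute for this mechanism. Finally, the scale $u$ is found in the paper not by iterating $\sim1/\delta$ times but by maximizing the perturbed ratio $\phi(x)=f(x)/x+\delta\log x$ over hard points; this single perturbation is the source of both the $\delta\log\delta$ error and the $2^{-1/\delta}$ lower cutoff for $u$, and it also guarantees that $u$ is not too small. Your ``bookkeeping of $O(\delta)$-type errors over $\sim 1/\delta$ scales'' does not reconstruct either of these features, so even if a recursion could be set up, the range of $u$ and the error term would not come out as claimed.
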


\begin{proof}
Let $H\su[0,a]$ be the set of hard points of $f$.
If $H=\emptyset$ then by Lemma~\ref{l:hardpoints},
$\T(f)=0$, so $u=a$ is clearly a good choice in
this case. So suppose that $H$ is nonempty.

First we briefly explain the idea of the proof in this
nontrivial case. For simplicity, suppose that $a=1$ and
$D=0$, which is the most interesting case anyway.
Assume that the maximum of $f(x)/x$ on  $H\cap (0,1]$ exists and is attained at $u$, and let $B$ be this
maximum. Since $u$ is a hard point, $f(x)\ge f(u)=Bu$
on $[u/2,u]$, and a calculation using that $f$ is $1$-Lipschitz shows that
that
\begin{equation} \label{eq:sketch}
f(x)> Bx \quad \text{if} \quad u'<x<u,
\quad\text{where }u'=\frac{1/2-B}{1-B}u.
\end{equation}
Let $F(x)=\min(f(x),2B x)$. Then it is not hard to show
(see below for details) that every
$p\in H\cap [0,u]$ is also a hard point of $F$
and that $F=f$ on $H\cap [0,u]$.
By Lemma~\ref{l:hardpoints} this implies that
$\T(f|[0,u])=\T(F|[0,u])$, so we can study
$F|[0,u]$ instead of $f|[0,u]$.
Let $v$ be the largest number in $[0,u)$ such that
$F(v)=Bv$. It follows from \eqref{eq:sketch} that also
$F(x)> Bx$ if $u'<x<u$,
so we must have $v\le u'$, and hence
\[
v-F(v)=v(1-B)\le u(1/2-B)
\]
and $F(x)>Bx$ on $(v,u)$. Since $F(x)\le 2Bx$,
for any hard point $y$ of $F$ we must have
$F(y)\le By$, and this implies that $F$ has no
hard point in $(v,u)$.
By Lemma~\ref{l:hardpoints} this implies that
$\T(F|[0,u])=\T(F|[0,v])$.
Again using that $F(x)\le 2Bx$, we can apply Proposition~\ref{p:basic}
on $[0,v]$ to obtain
$$
\T(f|[0,u])=\T(F|[0,u])=\T(F|[0,v])\le
\frac{(v-F(v))(2B-0)}{1+2\cdot 2B - 3\cdot 0} \le
\frac{u(1/2-B)2B}{1+4B}.
$$
Calculus shows that
$\frac{(1/2-B)2B}{1+4B}\le\frac{2-\sqrt 3}4=\Phi(0)$ for $B\in [0,1]$,
so we obtain $\T(f|[0,u])\le u\Phi(0)$.

Unfortunately, $f(x)/x$ may not have
a maximum on $H\cap (0,1]$ and, even if it does, we might get an $u$ which is too small.
To avoid these problems we replace $f(x)/x$
by $f(x)/x +\delta \log x$. Then we can show that $u$ exists, is not too small, and it still satisfies the claim of the proposition.

We now continue with the actual proof. Note that $H$ is a closed set, and let $h=\max H$.
By Lemma~\ref{l:hardpoints},
$\T(f)=\T(f|[0,h])$.

If $h<3a\Phi(D)$ then, applying Proposition~\ref{p:basic} on $[0,h]$ with $C=1$,
we get
$$
\T(f)=\T(f|[0,h])\le \frac{(h-f(h))(1-2D)}{3-3D}
\le
\frac{h}3 <a\Phi(D),
$$
so $u=a$ is a good choice in this case.

Therefore in the rest of the proof we can suppose that $h\ge 3a\Phi(D)$.
Let
$$
\phi(x)=\frac{f(x)}{x}+\delta\log x.
$$
(Recall that in this paper $\log$ denotes $\log_2$.)
Since $f$ is nonnegative and $1$-Lipschitz, $0\le f(x)/x\le 1$ on $(0,a]$, so
  for any $x\in(0,2^{-1/\delta}h)$ we have
  \begin{equation}\label{e:phi}
 \phi(x)=\frac{f(x)}{x}+\delta\log x < 1 +\delta \log(2^{-1/\delta}h)
\le \delta\log h \le \frac{f(h)}{h}+\delta\log h = \phi(h).
    \end{equation}

Now we claim that
\begin{equation}\label{e:u}
  \left(\exists u\in H\cap [2^{-1/\delta}h, h]\right)\
  (\forall x\in H\cap [\delta u, u])\
  \phi(x)\le \phi(u).
\end{equation}

To prove this we define a sequence $u_0>u_1>\ldots\in H$ inductively. Let $u_0=h$. Suppose that $u_n\in H$ is already defined.
Let $v\in H\cap[\delta u_n, u_n]$ be the largest number such that
$\phi(v)=\max_{H\cap [\delta u_n, u_n]}\phi$.
If $v=u_n$ then let $N=n$ and the procedure is terminated.

Otherwise letting $u_{n+1}=v$ we have $u_{n+1}<u_n$,
so the procedure can be continued.
Note that it follows from the construction that
$\phi(h)=\phi(u_0)\le\ldots\le \phi(u_n)$ and
$u_{n+2}<\delta u_n$ ($n=0,1,\ldots$).
Thus \eqref{e:phi} implies that the procedure must be terminated
in finitely many steps and \eqref{e:u} holds for $u=u_N$.

Let $u$ be chosen according to \eqref{e:u}.
Then, using that $h\ge 3a\Phi(D)$, we have
$u\ge 2^{-1/\delta}h\ge 2^{-1/\delta}\cdot 3a\Phi(D)$,
so the requirement $u\in[3a\Phi(D)2^{-1/\delta},a]$
is satisfied.
Thus it remains to prove \eqref{e:initiale}.

Let
$$
B=\frac{f(u)}{u}-\delta\log\delta.
$$
Since $u$ is chosen according to \eqref{e:u}, we have
\begin{equation}\label{e:Bx}
 (\forall x\in H\cap[\delta u,u]) \quad
 f(x)\le x\left(\frac{f(u)}{u}+\delta \log\frac{u}{x}\right)
 \le x\left(\frac{f(u)}{u}-\delta \log\delta\right) = Bx.
\end{equation}
Let $F(x)=\min(f(x),2Bx)$ ($x\in[0,u]$).

Now we claim that every $p\in H\cap[\delta u,u]$ is also a hard point of $F$.
Suppose, on the contrary, that $p\in H\cap[\delta u,u]$ is not a hard point of $F$.
Then there exists a $q\in[p/2,p]$ such that $F(q)<F(p)$.
By \eqref{e:Bx} we have $f(p)\le Bp\le 2Bp$, so by definition $F(p)=f(p)$,
and consequently we have
$$
F(q)<F(p)=f(p)\le Bp\le 2Bq,
$$
which implies that $f(q)=F(q)$. Thus $f(q)<f(p)$, so $p$ cannot be a hard point of $f$, which is
a contradiction.

Note that, by Lemma~\ref{l:hardpoints} and
since $F(p)=f(p)$ for any hard point of $F$,
the above claim
and the trivial estimate $\T(f|[0,\delta u])\le \delta u$
imply
\begin{equation}\label{e:fF}
\T(f|[0,v])\le \T(F|[0,v])+\delta u \qquad
\text{for any } v\in[0,u].
\end{equation}

First we consider the case when $f(u)/u<-\delta \log \delta$.

Then $B<-2\delta \log\delta$, and so
\[
0\le F(x)\le 2Bx < (-4 \delta\log\delta) x \quad\text{on }[0,u].
\]
If $-4\delta\log\delta >1$ then,
since $\Phi(D)\ge 0$ for $D\le 1/2$,
the righthand-side of \eqref{e:initiale} is
larger than $u$.
Since clearly $\T(g)\le u$ for
any $1$-Lipschitz function $g:[0,u]\to\R$
we are done if $-4\delta\log\delta >1$. So we may suppose that $-4\delta\log\delta \le 1$.
By Proposition~\ref{p:basic} applied to $F$, with $a=u, C=-4\delta\log\delta$ and $D=0$,
 we obtain
$$
\T(F)\le
u\cdot \frac{-4 \delta\log\delta}{1-8 \delta\log\delta}
< -4 u\delta\log\delta.
$$
By \eqref{e:fF} (applied to $v=u$) this implies that
\begin{equation}\label{e:large eps}
\T(f|[0,u])\le -4 u\delta\log\delta +\delta u.
\end{equation}

Since $D\le 1/2$, we have $\Phi(D)\ge 0$, so
\eqref{e:large eps}
implies \eqref{e:initiale}, which completes
the proof in the case when $f(u)/u<-\delta \log \delta$.

So in the rest of the proof we may assume that
\begin{equation}\label{e:fudelta}
f(u)/u\ge -\delta \log \delta.
\end{equation}

Since $\delta<1/2$ this also implies that $f(u)/u>\delta$.
Putting this together with the fact that $u$ was chosen according to \eqref{e:u}, and with the inequality $\log y\le y-1$, we get that
if $x\in H\cap[\delta u, u)$, then
\begin{equation}\label{e:f(x)f(u)}
f(x)\le x\frac{f(u)}{u}+x\delta\log\frac{u}{x}<
x\frac{f(u)}{u}+x\frac{f(u)}{u}\left(\frac{u}{x}-1\right)
= f(u).
\end{equation}
Since $u$ is a hard point,  $f(x)\ge f(u)$
on $[u/2,u]$, and so \eqref{e:f(x)f(u)} implies
that $H\cap[u/2,u)=\emptyset$.

Again because $u$ is a hard point, $f(u/2)\ge f(u)$. Using this, $\delta<1/2$
and the fact that $f$ is $1$-Lipschitz, we get
$$
B=\frac{f(u)}{u}-\delta \log\delta <
\frac{f(u/2)}{u}+\frac{1}{2}\le 1.
$$
Using again that $f$ is $1$-Lipschitz and $f(u/2)\ge f(u)$,
we get
$$
f(x)\ge
f\left(\frac{u}{2}\right)-\left(\frac{u}{2}-x\right)\ge
f(u)-\left(\frac{u}{2}-x\right) \quad (x\in [0,u/2]).
$$
Thus
\begin{equation}\label{e:newnumberedline}
f(x)\ge f(u)-\left(\frac{u}{2}-x\right)>Bx
\quad\textrm{if}\quad
\frac{\frac{u}{2}-f(u)}{1-B} < x \le \frac{u}{2}.
\end{equation}
Let $v_0= \frac{u/2-f(u)}{1-B}$.
Note that $f(x)>Bx$ also holds on the closed interval $[v_0,u/2]$ unless
$f(v_0)=Bv_0$.
The definition $B=\frac{f(u)}{u}-\delta \log\delta$
and the assumption \eqref{e:fudelta} imply that
$B\le 2f(u)/u$, hence $v_0\le u/2$.
Let $v=\max\{x\in[0,u/2]\ :\ f(x)=Bx\}$
(the maximum over a nonempty compact set).
By \eqref{e:newnumberedline} we have $v\le v_0$
and $f(x)>Bx$ on $(v,u/2]$.
By \eqref{e:Bx},
this implies that
$H \cap [\delta u,u]\cap (v, u/2)=\emptyset$.
Since above we obtained $H\cap[u/2,u)=\emptyset$
we get
$H\cap(v,u)\su[0,\delta u]$.
Hence, using Lemma~\ref{l:hardpoints}
and the trivial estimate $T(f|[0,\de u))\le \de u$, we get
\begin{equation}\label{e:Tuv}
\T(f|[0,u]) \le \T(f|[0,v]) +\delta u.
\end{equation}

Since $v\le v_0=\frac{u/2-f(u)}{1-B}$ and $f(v)=Bv$,
$$
2(v-f(v))=2(1-B)v\le u-2f(u) =
u\left(1-2\frac{f(u)}{u}\right)=
u(1-2B-2\delta\log\delta).
$$
Let $C=\min(2B,1)$. We have just seen that
\[
2(v-f(v)) \le u(1-C-2\delta\log\delta).
\]
Note also that $D\le f(u)/u=B+\delta\log\delta<B$, and so $D\le C/2$ since we assumed that $D\le 1/2$. Then $Dx\le F(x)\le Cx$ on
$[0,v]\su [0,u]$, so we can apply Proposition~\ref{p:basic} to get
$$
\T(F|[0,v])\le \frac{(v-f(v))(C-2D)}{1+2C-3D}\le
\frac{u(1-C-2\delta\log\delta)(C/2-D)}{1+2C-3D}.
$$
Note that $\frac{C/2-D}{1+2C-3D}< 1$. Using calculus, we get that
$\frac{(1-C)(C/2-D)}{1+2C-3D}\le\Phi(D)$ for $C\in [2D,1]$.
Therefore
$$
\T(F|[0,v])< u(\Phi(D)-2\delta\log\delta).
$$

Combining the above inequality with \eqref{e:fF} and
\eqref{e:Tuv},
we get \eqref{e:initiale}.
\end{proof}

\subsection{Stability results}
\label{subsec:stability}

The results of this subsection are only needed for the proof of Theorem \ref{thm:full-distance-set}. Moreover, to get the bound $\hdim(\Delta(A))\ge 37/54$ whenever $\hdim(A)>1$, one only needs to consider the case $D=0$ below. While there is no conceptual difference between the cases $D=0$ and $D>0$, the calculations are easier in the former case, so the reader may want to assume that $D=0$ in a first reading.

In the $C=1$ special case of Proposition~\ref{p:basic}, we get that if
$D\in [-1,1/2]$ and $f:[0,1]\to\R$ is a  $1$-Lipschitz function such that
 $f(0)=0$ and
$f(x)\ge Dx$ on $[0,1]$, then $\T(f)\le (1-2D)/3$.
As we will see in Section \ref{s:sharpness},
and is not hard to check, this estimate is sharp: if
\[
f(x) =  \left\{ \begin{array}{ccc}
          x & \text{ if } & x\in [0,(D+1)/2] \\
          1+D-x & \text{ if } & x\in [(D+1)/2,1]
        \end{array} \right.,
\]
then $\T(f)=(1-2D)/3$. In this section we prove a quantitative stability result
(Proposition~\ref{p:stability})
for $D\in[0,1/3]$, stating that if $\T(f)$ is close to $(1-2D)/3$ then $f(x)$ must be close to the
above function when $x$ is not too far from $0$ or
from $1$. 

The general plan to get this result is the following.
Let $b=\min_{[1/2,1]} f$ and choose $a\in[1/2,1]$
such that $f(a)=b$. It is easy to see that $\T(f)=\T(f|[0,a])$,
so it is enough to study $f|[0,a]$ instead of $f$.
We need to get an
upper estimate on $T(f)$ when $f$ is not close enough to
the function defined in the previous paragraph.
This upper estimate will be obtained by
finding a point $p\in [0,a]$ such that
in the good partition in the definition of $T(f)$,
the points $a_n$ in $[p,a]$
can be chosen such that
$\min_{[a_n,a_{n-1}]}f = f(a_{n-1})$, and so
for these indices the sum of the terms
$f(a_n)-\min_{[a_n,a_{n-1}]}f$ is
$f(p)-f(a)$ or, in other words, the smallest possible.
Combining this with a near optimal good partition for $f|[0,p]$
guaranteed by Proposition~\ref{p:basic}, we get a near optimal
lower bound for $T(f)$ for all $f$ with such a special point
$p$ and value $f(p)$. These points $p$ will be called
simple points, and after proving the above described
near optimal upper estimate, most of the proof will be about
hunting a simple point such that the estimate we obtain
for $\T(f)$ is the upper estimate we claim.

First we collect some assumptions and define precisely
the above mentioned notion of simple points.
\begin{definition}\label{d:simple}
  Suppose that
 \begin{equation}\label{e:conditions on f}
  \begin{aligned}
    \ a\in (0,1],\ b\in \R,\ D\in[0,1/2), \
   f:[0,a]\to\R \text{ is $1$-Lipschitz, }\\
   f(0)=0,\
    f(x)\ge Dx \ (x\in [0,a]) \
  \text{ and }
    \min_{[a/2,a]} f=f(a)=b.
  \end{aligned}
 \end{equation}
  A point $p\in[0,a]$ is called \emph{simple} if there exists a finite
  sequence $p=p_0<p_1<\ldots<p_k=a$ such that
  \begin{equation}\label{e:simple condition}
    \frac{p_i}{p_{i-1}} \le 2 \quad \text{ and }
    \quad f(p_i)=\min_{[p_{i-1},p_i]} f \qquad (i=1,\ldots,k).
  \end{equation}
\end{definition}

\begin{lemma}\label{l:ifsimple}
  If \eqref{e:conditions on f} holds and $p\in[0,a]$ is a simple point then
  $$
  \T(f)\le \al p+ (1- \al) f(p) -b,
  \text{ where }
  \al = \frac{1-2D}{3(1-D)}.
  $$
\end{lemma}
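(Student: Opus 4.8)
The plan is to produce a single good partition $(a_n)_{n=0}^\infty$ of $[0,a]$ whose total drop $\T(f,(a_n))$ is at most $\al p+(1-\al)f(p)-b$ (up to an arbitrarily small error, which is then sent to $0$). I would build it by concatenating a partition of the ``upper'' interval $[p,a]$, dictated by the simple-point structure, with a near-optimal good partition of the ``lower'' interval $[0,p]$ supplied by Proposition~\ref{p:basic}.

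\emph{Upper part.} Let $p=p_0<p_1<\cdots<p_k=a$ witness that $p$ is simple, as in \eqref{e:simple condition}, and read it backwards: set $a_n=p_{k-n}$ for $0\le n\le k$, so $a_0=a$ and $a_k=p$. The hypothesis $p_i/p_{i-1}\le 2$ is exactly $a_{n-1}/a_n\le 2$, and by \eqref{e:simple condition} each interval $[a_n,a_{n-1}]=[p_{k-n},p_{k-n+1}]$ is \emph{decreasing}, i.e. $\min_{[a_n,a_{n-1}]}f=f(a_{n-1})$. Hence the corresponding drop terms telescope:
\[
\sum_{n=1}^k \Bigl( f(a_n)-\min_{[a_n,a_{n-1}]}f \Bigr) = \sum_{i=1}^k \bigl( f(p_{i-1})-f(p_i) \bigr) = f(p)-f(a) = f(p)-b .
\]

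\emph{Lower part and gluing.} On $[0,p]$ one has $f(0)=0$ and $f$ $1$-Lipschitz, hence $f(x)\le x$, while $f(x)\ge Dx$ by hypothesis; since $D\in[0,1/2)$ the value $C=1$ meets the hypotheses $-1\le D<C\le 1$ and $C\ge 2D$ of Proposition~\ref{p:basic}. Applying it on $[0,p]$ with this $C$ and the given $D$ gives, for any $\eps>0$, a good partition $p=b_0>b_1>\cdots\to 0$ of $[0,p]$ with
\[
\T(f|_{[0,p]},(b_j)) \le \frac{(p-f(p))(1-2D)}{1+2\cdot 1-3D}+\eps = \al\,(p-f(p))+\eps ,
\]
since $\al=\frac{1-2D}{3(1-D)}$. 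Setting $a_{k+j}=b_j$ for $j\ge 1$ (note $b_0=p=a_k$) and using $b_0/b_1\le 2$ and $p_1/p_0\le 2$, the full sequence $(a_n)_{n\ge 0}$ is a good partition of $[0,a]$ with $a_n\to 0$, and its total drop is the sum of the two contributions above, so
\[
\T(f) \le \bigl(f(p)-b\bigr)+\al\,(p-f(p))+\eps = \al p+(1-\al)f(p)-b+\eps ;
\]
letting $\eps\to 0$ finishes the proof.

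I do not anticipate a genuine obstacle: the only points needing care are that the concatenated sequence really is a good partition — which reduces to the ratio condition at the seam $a_k=b_0=p$, immediate from the two inequalities just used — and that Proposition~\ref{p:basic} applies on $[0,p]$ with $C=1$, immediate from $f(0)=0$, the Lipschitz bound and $D<1/2$. All the analytic work has been concentrated in Proposition~\ref{p:basic}; the simple point merely allows one to reduce to it on $[0,p]$, the cost being the harmless telescoping term $f(p)-b$ forced by the ``decreasing'' intervals on $[p,a]$.
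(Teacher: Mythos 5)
Your proof is correct and follows the same route as the paper's: reverse the simple-point sequence to form the upper part of the partition (whose drop telescopes to $f(p)-b$), apply Proposition~\ref{p:basic} with $C=1$ to $f|[0,p]$ to obtain a near-optimal good partition of the lower part, and concatenate. The bookkeeping of the seam and the telescoping computation match the paper's argument.
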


\begin{proof}
  Applying Proposition~\ref{p:basic} to $f|[0,p]$ with $C=1$ we
  get $\T(f|[0,p])\le \al (p-f(p))$.
  Hence for any $\de>0$ there exists a good partition $(a_n)$ of $[0,p]$
  such that
  $$
  \T(f|[0,p],(a_n))\le \al (p-f(p)) + \de.
  $$
  Since $p$ is simple there exists a finite sequence
  $p=p_0<p_1<\ldots<p_k=a$ such that \eqref{e:simple condition} holds.

  For $n\le k$ let $a'_n=p_{k-n}$ and for $n>k$ let $a'_n=a_{n-k}$.
  Then
$( a'_n)$
    is a good partition of $[0,a]$ and
  \begin{eqnarray*}
    \T(f,( a'_n))
    &=& \T(f|[0,p],( a_n))
    +\sum_{i=1}^k f(p_{i-1})-f(p_i) \\
    &\le &  \al (p-f(p)) + \de + f(p) - f(a)
    = \al p+ (1- \al) f(p)-b +\de,
  \end{eqnarray*}
  which completes the proof.
\end{proof}

\begin{lemma}\label{l:trivi simple}
  Suppose that \eqref{e:conditions on f} holds and let $p\in [0,a]$.
  If
\begin{equation}\label{e:lesstrivi}
\text{ for every } z\in[p,a/2)
\text{ there exists } y \in (z,2z]
\text{ such that } f(y)\le f(z)
\end{equation}
then $p$ is simple.
\end{lemma}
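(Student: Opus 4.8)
The plan is to build the chain $p=p_0<p_1<\cdots<p_k=a$ required by Definition~\ref{d:simple} greedily, moving rightward from $p$. First note that we may assume $p>0$: if $p=0$ then \eqref{e:lesstrivi} fails already at $z=0$ (since $(0,0]=\varnothing$), so there is nothing to prove; and if $p=a$ we simply take $k=0$. Now suppose $p_i\in[p,a)$ has been chosen. If $p_i\ge a/2$, we finish by setting $p_{i+1}=a$: this is admissible because $a\le 2p_i$ and, since $[p_i,a]\subseteq[a/2,a]$, the last part of \eqref{e:conditions on f} gives $\min_{[p_i,a]}f=\min_{[a/2,a]}f=f(a)$. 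If $p_i<a/2$, we set
\[
p_{i+1}=\max\Bigl\{\,x\in[p_i,2p_i]\ :\ f(x)=\min\nolimits_{[p_i,2p_i]}f\,\Bigr\},
\]
the largest minimiser of $f$ over the doubled interval. Then $p_{i+1}\le 2p_i$, and since $[p_i,p_{i+1}]\subseteq[p_i,2p_i]$ we get $f(p_{i+1})=\min_{[p_i,p_{i+1}]}f$, so $(p_i,p_{i+1})$ fulfils \eqref{e:simple condition}.

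Hypothesis \eqref{e:lesstrivi} enters exactly to ensure that each such step makes strict progress. Applying it with $z=p_i\in[p,a/2)$ produces $y\in(p_i,2p_i]$ with $f(y)\le f(p_i)$, so $\min_{[p_i,2p_i]}f\le f(p_i)$; were the minimum attained only at $p_i$, this would force $f(y)=f(p_i)=\min_{[p_i,2p_i]}f$, exhibiting a minimiser strictly to the right of $p_i$, a contradiction. Hence $p_{i+1}>p_i$, and the sequence we generate is strictly increasing.

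The one genuine obstacle is to see that the construction halts after finitely many steps. Suppose it did not; then $p_i<a/2$ for all $i$, the $p_i$ increase strictly, and $p_i\nearrow L$ for some $L\in(0,a/2]$. I would derive a contradiction from two observations. On one hand, $p_i\in[p_i,2p_i]$ gives $f(p_{i+1})=\min_{[p_i,2p_i]}f\le f(p_i)$, so $(f(p_i))_i$ is non-increasing; as $f$ is continuous and $p_i\to L$, its limit equals $f(L)$, whence $f(p_{i+1})\ge f(L)$ for every $i$. On the other hand, for $i$ large we have $L<2p_i$, while always $p_{i+1}<L$, so $L\in(p_{i+1},2p_i]$; but $p_{i+1}$ being the \emph{largest} minimiser on $[p_i,2p_i]$ forces $f>\min_{[p_i,2p_i]}f=f(p_{i+1})$ on all of $(p_{i+1},2p_i]$, in particular $f(L)>f(p_{i+1})$. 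These two inequalities are incompatible, so the greedy procedure terminates, yielding the desired finite chain and showing that $p$ is simple.
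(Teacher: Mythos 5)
Your proof is correct and follows essentially the same greedy construction as the paper. Your definition of the next point ($p_{i+1}$ as the largest minimiser of $f$ over $[p_i,2p_i]$) is in fact equivalent to the paper's (the largest $x\in[p_n,2p_n]$ with $f(x)=\min_{[p_n,x]}f$): the paper's $p_{n+1}$ is a priori at least as large, but if it were strictly larger than the largest minimiser $S$, then $\min_{[p_n,p_{n+1}]}f\le\min_{[p_n,S]}f=m$, forcing $f(p_{n+1})=m$ and hence $p_{n+1}\le S$, a contradiction. The one genuine divergence is the termination argument. You assume non-termination, extract the limit $L=\lim p_i\le a/2$, and derive incompatible inequalities $f(p_{i+1})\ge f(L)$ and $f(L)>f(p_{i+1})$. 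The paper instead notes directly that, by maximality of $p_{n+1}$, one has $p_{n+2}\ge\min(2p_n,a)$: the sequence doubles every two steps, so it halts in finitely many (and gives an explicit bound on the number of steps). Your compactness-style contradiction is perfectly valid, just longer; the paper's observation is cleaner and quantitative. Your explicit dispatch of the degenerate case $p=0$, where the hypothesis \eqref{e:lesstrivi} is vacuously false at $z=0$, is a small point of care the paper leaves implicit.
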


\begin{proof}
  Let $p_0=p$. Suppose that $n\ge 0$ and
   $p_0<\ldots<p_n$ are defined such that
  \eqref{e:simple condition} holds for $k=n$.
  If $p_n\ge a/2$ then let $p_{n+1}=a$ and we are done.
  Otherwise, let $p_{n+1}\in[p_n,2p_n]$ be the largest number such that
  $f(p_{n+1})=\min_{[p_n,p_{n+1}]} f$.
  By \eqref{e:lesstrivi} we also have $p_{n+1}>p_n$.
  It remains to check that the procedure terminates, which follows
  from the simple observation that $p_{n+2}\ge \min(2 p_n,a)$ by definition.
\end{proof}

\begin{lemma}\label{l:thensimple}
  Suppose that \eqref{e:conditions on f} holds.
  If $p\in[a/2,a]$, or if $p\in[0,a/2]$ and $f(p)\ge -2p+a+b$,
  then $p$ is a simple point.
\end{lemma}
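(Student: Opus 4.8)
The goal is to show that any $p$ in the upper half $[a/2,a]$, or any $p$ in the lower half $[0,a/2]$ satisfying $f(p) \ge -2p + a + b$, is a simple point. By Lemma~\ref{l:trivi simple}, it suffices to verify condition \eqref{e:lesstrivi}: for every $z \in [p,a/2)$ there is $y \in (z,2z]$ with $f(y) \le f(z)$. So the plan is to reduce to checking \eqref{e:lesstrivi}, and the work is concentrated entirely on establishing that condition.

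First I would dispose of the case $p \in [a/2,a]$: here the interval $[p,a/2)$ is empty (since $p \ge a/2$), so \eqref{e:lesstrivi} holds vacuously and $p$ is simple by Lemma~\ref{l:trivi simple}. For the remaining case $p \in [0,a/2]$ with $f(p) \ge -2p+a+b$, fix $z \in [p,a/2)$; I need to produce the point $y$. The idea is to exploit the hypothesis $\min_{[a/2,a]} f = f(a) = b$ from \eqref{e:conditions on f} together with the $1$-Lipschitz bound. Observe $2z \in [2p, a)$, and $2z \ge a/2$ exactly when $z \ge a/4$; the two sub-cases $2z < a/2$ and $2z \ge a/2$ may need slightly different treatment, but in both one wants to compare $f$ on $[z, 2z]$ with $f(z)$. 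The key inequality is that if $f(y) > f(z)$ for \emph{all} $y \in (z, 2z]$, then in particular $f(2z) > f(z)$, and more relevantly one can iterate: suppose no valid $y$ exists, so $f$ exceeds $f(z)$ on all of $(z,2z]$. Then I would chain Lipschitz estimates to bound $f(z)$ from above in terms of the value $b = f(a)$, and compare with the assumed lower bound $f(p) \ge -2p+a+b$ propagated to $z$ via $f(z) \ge f(p) - (z-p)$ (Lipschitz), deriving a contradiction when $z < a/2$.

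More concretely, the natural approach is: assume for contradiction that \eqref{e:lesstrivi} fails at some $z$, i.e. $f(y) > f(z)$ for every $y \in (z,2z]$. Walk from $z$ toward $a$ in doubling steps $z, 2z, 4z, \dots$ until reaching the interval $[a/2,a]$; along the way, using that at each scale we may have to drop, but actually the failure at $z$ only controls $(z,2z]$ — so instead I would look at the \emph{last} scale: since $2z < a \le 2a/2$ and eventually the doubling ladder from $z$ lands in $[a/2, a]$, pick the point $w \in [a/2, a]$ on this ladder closest to $z$; between $z$ and $w$ the total Lipschitz drop is at most $w - z < a$. Then $f(w) \ge f(z) - (w-z)$ is not directly useful; rather, $b = f(a) \ge f(w) - (a - w)$ combined with a careful use of the Lipschitz bound and the hypothesis $f(p) \ge -2p+a+b$ gives the contradiction. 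The cleanest route: from $f(z) \ge f(p) - (z - p) \ge (-2p+a+b) - (z-p) = a + b - p - z > a + b - 2z$ (using $p \le z$, wait — need $p + z < 2z$ i.e. $p < z$; handle $p = z$ separately or note $p \le z$ gives $-p - z \ge -2z$), so $f(z) > a + b - 2z \ge b$ (since $2z < a$). On the other hand, since $z < a/2$, the point $a/2$ satisfies $f(a/2) \ge b$ too, but that's consistent. The actual contradiction must come from combining failure of \eqref{e:lesstrivi} (forcing $f$ to stay high on a dyadic tail) with $f(a) = \min_{[a/2,a]} f = b$, which forces $f$ to eventually come down.

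\textbf{Main obstacle.} The delicate point is organizing the doubling ladder from $z$ up to the scale $a$ and showing that the failure of \eqref{e:lesstrivi} at $z$ (which a priori only constrains the single dyadic block $(z,2z]$) propagates to a genuine contradiction with $\min_{[a/2,a]} f = f(a) = b$ and the lower bound at $p$. I expect the correct argument does not chain through many scales but rather isolates the single relevant comparison: the hypothesis $f(p) \ge -2p+a+b$ is precisely calibrated so that the "V-shaped extremal function" $x \mapsto \min(x, a+b-x)$ (shifted appropriately) stays below $f$, and for such $f$ the dyadic descent from $z \le a/2$ down... no — \eqref{e:lesstrivi} looks \emph{upward} from $z$. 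The resolution is that for $y$ near $2z$, if $2z$ is still below $a/2$ we use induction/repetition, and the base case $2z \in [a/2,a)$ uses $f(2z) \ge \min_{[a/2,a]} f = b$ but we need $f(2z) \le f(z)$; here $f(z) > b$ from the computation above while $f(2z)$ could equal $b$ (attained since the min on $[a/2,a]$ is $b = f(a)$, but maybe not at $2z$). So the genuinely careful step is: among $y \in (z, 2z] \cap [a/2, a]$ — nonempty when $z \ge a/4$ — is there one with $f(y) \le f(z)$? Since $f(z) > b$ and $f$ attains values $\le f(z)$ somewhere in $[a/2,a]$ (namely near $a$ where it equals $b$), one slides $y$ and uses continuity — but $a$ might not be $\le 2z$. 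This is where I'd need to be most careful, likely splitting on whether $z \ge a/4$ and, when $z < a/4$, iterating the doubling argument a bounded number of times, each step preserving "$f(\text{current point}) > b$" until landing in $[a/4, a/2)$, then finishing. I would present it as a finite induction on the number of doublings needed to exit $[0,a/2)$.
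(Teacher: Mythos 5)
Your reduction to Lemma~\ref{l:trivi simple} and disposal of the case $p\in[a/2,a]$ (vacuously, since $[p,a/2)$ is empty) match the paper. For $p\in[0,a/2]$ you also correctly propagate the hypothesis to get $f(z)\ge a+b-2z$ for $z\in[p,a/2)$. But from there the proposal goes astray: you talk about iterating a doubling ladder, worry about the subcases $2z<a/2$ versus $2z\ge a/2$, and end with an unresolved concern about whether $a\le 2z$ (``This is where I'd need to be most careful''). None of that is necessary, and the argument you sketch is not actually completed.

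The missing ingredient is the \emph{upper} Lipschitz bound from the endpoint $a$: since $f$ is $1$-Lipschitz and $f(a)=b$, one has $f(y)\le f(a)+(a-y)=-y+a+b$ for every $y\in[0,a]$, not only near $a$. Taking $y=2z$ (legal because $z<a/2$ gives $2z<a$) yields $f(2z)\le -2z+a+b$. Combined with the lower bound $f(z)\ge -2z+a+b$ you already derived, this gives $f(2z)\le f(z)$, so \eqref{e:lesstrivi} holds with $y=2z$ in a single step for every $z\in[p,a/2)$ --- no induction, no case split on where $2z$ lands, and no need to locate a point of value $\le f(z)$ inside $[a/2,2z]$. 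In short: you have one side of the two-sided Lipschitz sandwich but not the other, and as written the proposal does not close.
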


\begin{proof}
  The case $p\in[a/2,a]$ is clear, so suppose that
  $p\in[0,a/2]$ and $f(p)\ge -2p+a+b$.
  Then the $1$-Lipschitz property of $f$ implies that for any $x\in[p,a]$
  we also have $f(x)\ge -2x+a+b$.
  Since $f$ is $1$-Lipschitz and $f(a)=b$ we have $f(y)\le -y+a+b$
  for any $y\in [0,a]$.
  Thus $f(x)\ge -2x+a+b\ge f(2x)$ for any $x\in [p,a/2]$,
  so Lemma~\ref{l:trivi simple} completes the proof.
\end{proof}

\begin{lemma}\label{l:1-a+2b-2D}
  Condition \eqref{e:conditions on f} implies that $1-a+2b-2D\ge 0$.
\end{lemma}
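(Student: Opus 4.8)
The plan is to observe that this is an immediate consequence of the Frostman-type lower bound $f(x)\ge Dx$, and that none of the finer structure in \eqref{e:conditions on f} (Lipschitz continuity, $f(0)=0$, or the minimality $\min_{[a/2,a]}f=f(a)$) is actually needed for this particular estimate. First I would rewrite the desired inequality $1-a+2b-2D\ge 0$ in the equivalent form $2b\ge a+2D-1$. Applying the hypothesis $f(x)\ge Dx$ at the point $x=a$ and using $f(a)=b$ gives $b\ge Da$. Substituting this lower bound for $b$ yields
\[
1-a+2b-2D \;\ge\; 1-a+2Da-2D \;=\; (1-a)(1-2D).
\]
Since $a\in(0,1]$ we have $1-a\ge 0$, and since $D\in[0,1/2)$ we have $1-2D>0$; hence the product on the right is nonnegative, which is exactly the claim.

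There is no genuine obstacle here — the lemma is a one-line consequence of the two inequalities $f(a)\ge Da$ (from $f(x)\ge Dx$) and $a\le 1$, together with $D<1/2$. The only small point worth stating explicitly in the write-up is the factorization $1-a-2D+2Da=(1-a)(1-2D)$, after which the sign of each factor is clear from the ranges of $a$ and $D$ fixed in \eqref{e:conditions on f}.
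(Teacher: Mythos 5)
Your proof is correct and is essentially identical to the paper's: both use $b=f(a)\ge Da$, substitute, and factor $1-a+2Da-2D=(1-a)(1-2D)\ge 0$ using $a\le 1$ and $D<1/2$.
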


\begin{proof}
Note that
$b=f(a)\ge Da$, so
$$
1-a+2b-2D \ge 1 -a + 2aD - 2D = (1-a)(1-2D) \ge 0.
$$
\end{proof}

\begin{lemma}\label{l:lower}
  If \eqref{e:conditions on f} holds  and
  $\T(f)> \frac{1-2D}3-\de$ for some
  $\de\in (0,a/3)$
  then
 \[
 f(x) > x - 3\de(1-D) \qquad \text{ on } [0,t_0],
 \text{ where } t_0=\frac{a+b}{3} +\de(1-D).
 \]
\end{lemma}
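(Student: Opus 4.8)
The plan is to prove the contrapositive: assuming there is $x_0\in[0,t_0]$ with $f(x_0)\le x_0-3\de(1-D)$, we will derive $\T(f)\le \frac{1-2D}3-\de$, contradicting the hypothesis. (Observe first that $\de<a/3$ forces $t_0<a$, so the statement is meaningful and $x_0\in[0,a)$.) The whole argument is funnelled through the simple-point estimate of Lemma~\ref{l:ifsimple}: if $p$ is a simple point then $\T(f)\le \al p+(1-\al)f(p)-b$ with $\al=\frac{1-2D}{3(1-D)}$, so it suffices to exhibit a simple $p$ for which this right-hand side is at most $\frac{1-2D}3-\de$.

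First I record two elementary facts. Since $f$ is $1$-Lipschitz, for every $x\in[x_0,a]$ we have $f(x)\le f(x_0)+(x-x_0)\le x-3\de(1-D)$; call this $(\star)$. And since $x_0\le t_0=\frac{a+b}3+\de(1-D)$ rearranges to $3x_0\le a+b+3\de(1-D)$, we get $f(x_0)\le x_0-3\de(1-D)\le -2x_0+a+b$; call this $(\star\star)$. Throughout I will also use Lemma~\ref{l:1-a+2b-2D} in the equivalent form $a-2b\le 1-2D$ and the identity $(1-\al)\cdot 3(1-D)=2-D$.

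If $x_0\ge a/2$, then $x_0$ is simple by Lemma~\ref{l:thensimple}; substituting $f(x_0)\le x_0-3\de(1-D)$ into Lemma~\ref{l:ifsimple} gives $\T(f)\le x_0-(2-D)\de-b$, and bounding $x_0-b\le \frac{a-2b}3+\de(1-D)\le \frac{1-2D}3+\de(1-D)$ yields $\T(f)\le\frac{1-2D}3-\de$. If $x_0<a/2$, let $p=\max S$ where $S=\{x\in[0,a/2]:f(x)\le -2x+a+b\}$; by $(\star\star)$ this set is nonempty, and it is closed (being the intersection of $[0,a/2]$ with the preimage of a closed set under a continuous map), so $p$ exists and $x_0\le p\le a/2$. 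This $p$ is simple: if $p=a/2$ this is Lemma~\ref{l:thensimple} (case $[a/2,a]$), while if $p<a/2$ then maximality and continuity force $f(p)=-2p+a+b$, so Lemma~\ref{l:thensimple} applies again. In the sub-case $p<a/2$, combining $f(p)=-2p+a+b$ with $(\star)$ (legitimate since $p\ge x_0$) gives the key inequality $p\ge \frac{a+b}3+\de(1-D)=t_0$; since the coefficient $3\al-2=-\frac1{1-D}$ of $p$ in Lemma~\ref{l:ifsimple}'s bound is negative, we may replace $p$ by $t_0$, and a direct computation shows $(3\al-2)t_0+(1-\al)a-\al b=\frac{a-2b}3-\de\le \frac{1-2D}3-\de$. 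In the sub-case $p=a/2$ we have $f(a/2)=b$ (using also $f(a/2)\ge\min_{[a/2,a]}f=b$), so Lemma~\ref{l:ifsimple} gives $\T(f)\le\al(a/2-b)$; applying $(\star)$ at $a/2$ gives $b\le a/2-3\de(1-D)$, which together with $b\ge Da$ forces $\de\le\frac{a(1-2D)}{6(1-D)}$, and then $\al(a/2-b)\le\al\cdot\frac{a(1-2D)}2=\frac{a(1-2D)^2}{6(1-D)}\le\frac{1-2D}3-\de$, the last step reducing to $a\le 1$.

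The step I expect to require the most care is the case $x_0<a/2$: the obvious candidate $x_0$ need not be a simple point, so one must pass to the auxiliary point $p$, and it is not a priori clear that the resulting estimate is still sharp enough. The resolution is that $(\star)$ propagates the drop of $f$ upward from $x_0$ to $p$ (respectively to $a/2$), which is exactly what produces the inequality $p\ge t_0$ in the generic sub-case and the constraint $\de\le\frac{a(1-2D)}{6(1-D)}$ in the degenerate sub-case $p=a/2$. Once these are in hand, only routine algebra with $\al=\frac{1-2D}{3(1-D)}$ and $a-2b\le1-2D$ remains.
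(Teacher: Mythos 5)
Your proof is correct, and it rests on the same pillars as the paper's: Lemma~\ref{l:ifsimple}, Lemma~\ref{l:thensimple}, Lemma~\ref{l:1-a+2b-2D}, and the auxiliary line $y=-2x+a+b$. The difference is purely organizational: you argue by contrapositive, assuming some $x_0\le t_0$ violates the bound and hunting for a simple point whose Lemma~\ref{l:ifsimple} estimate forces $\T(f)\le\frac{1-2D}{3}-\de$. This makes the case split ($x_0\ge a/2$ versus $x_0<a/2$, then $p<a/2$ versus $p=a/2$) unavoidable. The paper avoids it entirely by going forward: it picks \emph{any} $t\in(0,a]$ with $f(t)=-2t+a+b$ (guaranteed by the intermediate value theorem since $f(0)<a+b$ and $f(a)\ge -a+b$), observes it is simple by Lemma~\ref{l:thensimple}, and applies Lemma~\ref{l:ifsimple} to it; the hypothesis $\T(f)>\frac{1-2D}{3}-\de$ together with Lemma~\ref{l:1-a+2b-2D} then forces $t<t_0$, after which two applications of the Lipschitz property yield $f(x)>x-3\de(1-D)$ on all of $[0,t_0]$. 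Your case $x_0\ge a/2$ and the degenerate sub-case $p=a/2$ (with the computation reducing to $a\le 1$) are both correct, but they are extra work the direct argument simply never generates.
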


\begin{proof}
 First note that $\de<a/3$ implies that $t_0<a$.
  Since $f(0)<-2\cdot 0+a+b$ and $f(a)\ge -2\cdot a + a +b$ there exists
  a $t\in(0,a]$ such that $f(t)=-2t+a+b$.
 By Lemma~\ref{l:thensimple}, $t$ is a simple point,
  so writing $\al = \frac{1-2D}{3(1-D)}$ and using Lemma~\ref{l:ifsimple},
   we get
\begin{align*}
  \T(f) &\le \al t + (1-\al) f(t) - b \\
  &= \al t + (1-\al) (-2t+a+b) - b \\
  &= \frac{-t}{1-D}+\frac{2-D}{3(1-D)}(a+b)-b.
\end{align*}
Combining this with the assumption
  $\T(f)> \frac{1-2D}3 - \de$ and multiplying
  through by $3(1-D)$,  we get
\[
3t < (2-D)(a+b)-3(1-D)b-(1-2D)(1-D)+3\de(1-D),
\]
which can be rewritten as
\[
3t < a+b+ (1-D)(3\de - (1-a+2b-2D)).
\]
By Lemma~\ref{l:1-a+2b-2D}, this implies
$t < \frac{a+b}{3} +\delta(1-D)=t_0$.
Using this and the $1$-Lipschitz property of $f$, we obtain
\[
f(t_0) \ge
f(t)-(t_0-t) >f(t)-2(t_0-t) =a+b-2t_0=t_0-3\de(1-D).
\]
Using again that $f$ is $1$-Lipschitz, this gives the claim.
\end{proof}

\begin{lemma}\label{l:newthensimple}
  Suppose that \eqref{e:conditions on f} holds,
  $0\le p\le \frac{a+b-v}{2} < u \le a$, $f(u)=v$
 and $f(x)\ge v$ on $[p,\frac{a+b-v}{2}]$.
If $v\ge u/2$ or $f(p)=-2p+u+v$,
 then $p$ is simple.
\end{lemma}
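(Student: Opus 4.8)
The plan is to reduce the statement to Lemma~\ref{l:trivi simple}: it suffices to check that for every $z\in[p,a/2)$ there is some $y\in(z,2z]$ with $f(y)\le f(z)$, and then $p$ is simple. Before verifying this I would record a few consequences of \eqref{e:conditions on f}. Write $w=\tfrac{a+b-v}{2}$, so that $0\le p\le w<u\le a$ and $f\ge v$ on $[p,w]$. Since $f$ is $1$-Lipschitz with $f(0)=0$ we have $f(x)\le x$ on $[0,a]$; as $f(p)\ge v$ this forces $p\ge v$. Also $v=f(u)\ge Du\ge 0$ because $D\ge 0$ and $u>w\ge 0$, so $p\ge v\ge 0$, and in fact $p>0$ under either alternative of the hypothesis (from $p\ge v\ge u/2>0$ when $v\ge u/2$, and from $3p\ge u+v>0$, using $f(p)\le p$, when $f(p)=-2p+u+v$). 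Finally $w\le a/2$: if $u\ge a/2$ then $v=f(u)\ge \min_{[a/2,a]} f=b$, so $w\le a/2$; while if $u<a/2$ then trivially $w<u<a/2$.

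With these observations in hand I would split the range $[p,a/2)$ into $[p,w]$ and $(w,a/2)$ and treat three cases. For $z\in(w,a/2)$, take $y=2z$ (note $2z<a$): the $1$-Lipschitz estimates $f(2z)\le b+(a-2z)$ (from $f(a)=b$) and $f(z)\ge v-(z-w)=\tfrac{a+b+v}{2}-z$ (from $f(w)\ge v$) combine to give $f(2z)\le f(z)$, since $(a+b)-2z\le \tfrac{a+b+v}{2}-z$ is precisely the inequality $z\ge w$. For $z\in[p,w]$ with $z\ge u/2$, take $y=u$: then $z<u\le 2z$, so $u\in(z,2z]$, and $f(z)\ge v=f(u)$. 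For $z\in[p,w]$ with $z<u/2$, note that $p\le z<u/2$ together with $p\ge v$ forces $v<u/2$, so the alternative $v\ge u/2$ fails and $f(p)=-2p+u+v$ must hold; propagating this by $1$-Lipschitzness gives $f(x)\ge f(p)-(x-p)\ge -2x+u+v$ for all $x\ge p$, and taking $y=2z$ (with $2z<u\le a$) we get $f(2z)\le v+(u-2z)=-2z+u+v\le f(z)$. In every case the required $y$ has been produced, so Lemma~\ref{l:trivi simple} applies (using $p>0$) and $p$ is simple.

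The step I expect to require the most care is the case $z\in(w,a/2)$: here $z$ lies outside the interval $[p,w]$ on which we are handed a lower bound for $f$, so one must combine the Lipschitz bound coming from the right endpoint (via $f(a)=b$) with the one coming from $f(w)\ge v$, and observe that the threshold $w=\tfrac{a+b-v}{2}$ is exactly the value at which these two bounds meet, making $f(2z)\le f(z)$. The only other point worth flagging is that the dichotomy in the two $[p,w]$ cases relies on the inequality $p\ge v$ --- itself a consequence of $f(0)=0$ --- which is what pushes us into the branch $f(p)=-2p+u+v$ precisely when $z<u/2$, so that the hypothesis of the lemma gets used exactly where it is needed.
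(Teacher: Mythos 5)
Your proof is correct and follows essentially the same route as the paper's: reduce to Lemma~\ref{l:trivi simple} and then split $[p,a/2)$ into the same three ranges, $(w,a/2)$, $[\max(p,u/2),w]$, and $[p,u/2)$, using exactly the same witnesses $y=2z$, $y=u$, $y=2z$ respectively, and the same Lipschitz comparisons. The only cosmetic differences are that you establish $p\ge v$ as a preliminary observation (the paper derives $v<u/2$ in the third case directly from $v\le f(z)\le z<u/2$, while you route it through $p\ge v$), and you note $p>0$ — a point Lemma~\ref{l:trivi simple} does not actually require, since it is stated for $p\in[0,a]$.
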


\begin{proof}
It is useful to note that by the $1$-Lipschitz property
of $f$, the assumptions $u\le a$, $f(u)=v$ and $f(a)=b$ imply that
$u+v\le a+b$,
 and so $\frac{u}{2}\le\frac{a+b-v}{2}$.

By Lemma~\ref{l:trivi simple} it is enough to check
\eqref{e:lesstrivi}. So let $z\in[p,a/2)$.
We distinguish three cases.

First suppose that $z\ge \frac{a+b-v}{2}$.
Then, using that $f(\frac{a+b-v}{2})\ge v$,
$f$ is $1$-Lipschitz, $2z<a$ and $f(a)=b$, we get
$$
f(z)\ge v-\left(z-\frac{a+b-v}2\right)
\ge  v-2\left(z-\frac{a+b-v}2\right)= -2z+a+b\ge f(2z).
$$
Therefore \eqref{e:lesstrivi} holds in this case.

Now suppose that $z\in[\frac{u}{2}, \frac{a+b-v}{2}]$.
Since we consider only $z\in[p,a/2)$ we also have
$z\in [p, \frac{a+b-v}{2}]$.
Then $f(z)\ge v$,
$u\in(z,2z]$ and $f(u)=v\le f(z)$,
so  \eqref{e:lesstrivi} holds in this case as well.

Finally, suppose that $z\in[p,\frac{u}{2})$.
  Then $v \le f(z)\le  z < u/2$, hence we cannot
  have $v \ge u/2$, so we must have $f(p)=-2p+u+v$.
  Using that $f$ is $1$-Lipschitz and $z\ge p$,
  this implies $f(z)\ge -2z+u+v$.
Since $f$ is $1$-Lipschitz and $f(u)=v$ we have
$f(x)\le u+v-x$ on $[0,u]$.
Thus $f(2z)\le u+v-2z\le f(z)$, which completes the proof.
\end{proof}

\begin{lemma}\label{l:middle}
  If \eqref{e:conditions on f} holds  and
  $\T(f)> \frac{1-2D}3-\de$ for some
  $\de\in (0,a/3)$
then
  \[f(x)> \frac{a+b}3-2\de(1-D) \quad \text{ on } [t_0,2t_0-6\delta(1-D)],
  \text{  where }
  t_0=\frac{a+b}{3} +\delta(1-D).
 \]
\end{lemma}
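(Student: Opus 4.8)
The plan is to argue by contradiction, using Lemma~\ref{l:lower} to pin down the behaviour of $f$ near $t_0$ and then producing a simple point that forces $\T(f)$ to be too small.

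First I would reformulate. Writing $v_0=\tfrac{a+b}{3}-2\de(1-D)$, one has the convenient relations $t_0=v_0+3\de(1-D)$ and $2t_0-6\de(1-D)=2v_0$, so the assertion to prove is exactly that $f>v_0$ on $[t_0,2v_0]$. If $2v_0<t_0$ this interval is empty and there is nothing to prove, so we may assume $v_0\ge 3\de(1-D)>0$. I would also record once and for all that $b=\min_{[a/2,a]}f=f(a)\le f(a/2)\le a/2$ by the $1$-Lipschitz property, which gives $2v_0=\tfrac{2(a+b)}{3}-4\de(1-D)<a$.

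Now suppose, for contradiction, that $f(w)\le v_0$ for some $w\in[t_0,2v_0]$. The hypotheses here are precisely those of Lemma~\ref{l:lower}, so $f(t_0)>t_0-3\de(1-D)=v_0$; hence by the intermediate value theorem there is a point $u_*\in(t_0,2v_0]$ with $f(u_*)=v_0$, and in particular $u_*\le 2v_0$, i.e. $v_0\ge u_*/2$. Next I would set $p_*=\inf\{p\ge 0: f\ge v_0 \text{ on }[p,t_0]\}$. This infimum is taken over a nonempty set (which contains $t_0$); since $f(0)=0<v_0$ we get $p_*>0$, while $f(t_0)>v_0$ forces $p_*<t_0$; and by continuity together with the minimality of $p_*$ one has $f(p_*)=v_0$ and $f\ge v_0$ on $[p_*,t_0]$. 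The heart of the argument is then to apply Lemma~\ref{l:newthensimple} with $p_*$, $u_*$, $v_0$ in the roles of $p$, $u$, $v$: since $\tfrac{a+b-v_0}{2}=t_0$, all hypotheses hold, namely $0<p_*<t_0<u_*<a$, $f(u_*)=v_0$, $f\ge v_0$ on $[p_*,t_0]$, and $v_0\ge u_*/2$. Thus $p_*$ is a simple point, and Lemma~\ref{l:ifsimple} gives $\T(f)\le \al p_*+(1-\al)v_0-b$ with $\al=\tfrac{1-2D}{3(1-D)}$. Using $p_*<t_0=v_0+3\de(1-D)$, $\al>0$, and the identity $3\al-2=-1/(1-D)$, one computes
\[
\T(f)\le \al p_*+(1-\al)v_0-b<\al t_0+(1-\al)v_0-b=v_0-b+3\al\,\de(1-D)=\tfrac{a-2b}{3}-\de .
\]
Finally $b\ge Da$ and $a\le 1$ with $1-2D>0$ give $a-2b\le a(1-2D)\le 1-2D$, so $\T(f)<\tfrac{1-2D}{3}-\de$, contradicting the assumption; this proves $f>v_0$ on $[t_0,2v_0]$, which is the claim.

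The only genuine obstacle is guessing the right simple point. One has to notice that the "return level'' should be taken to be exactly $v_0$ (this is forced by the requirement $\tfrac{a+b-v_0}{2}=t_0$), that the point $u_*$ where $f$ comes back down to that level automatically satisfies $u_*\le 2v_0$ so that the easy "$v\ge u/2$'' branch of Lemma~\ref{l:newthensimple} is available, and that the relevant left endpoint is the point $p_*$ where $f$ last rises to level $v_0$. Once this configuration is identified the estimate is essentially forced, and — as in Lemma~\ref{l:lower} — all the $\de$-dependent terms cancel exactly, so no slack is lost; the remaining computations are entirely routine.
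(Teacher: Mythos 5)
Your proof is correct and follows essentially the same route as the paper's: invoke Lemma~\ref{l:lower} to get $f(t_0)>v_0$, suppose the claim fails to produce $u_*$ with $f(u_*)=v_0$, locate a point $p$ with $f(p)=v_0$ and $f\ge v_0$ on $[p,t_0]$, apply Lemma~\ref{l:newthensimple} (via the $v\ge u/2$ alternative) and then Lemma~\ref{l:ifsimple}, and close with the same algebraic bound $a-2b\le 1-2D$ (which is exactly the content of Lemma~\ref{l:1-a+2b-2D}). The only cosmetic differences are that you define $p_*$ as an infimum of admissible left endpoints rather than as the largest point in $[0,t_0)$ where $f=v_0$ (both satisfy the needed properties), and you dispose of the degenerate case by observing directly when the interval $[t_0,2v_0]$ is empty rather than via the case $v_0<0$.
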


\begin{proof}
  Let $v=\frac{a+b}3-2\de(1-D)$.
  If $v<0$ then the claim is clear, so we can suppose that $v\ge 0$.
  By Lemma~\ref{l:lower}, $f(t_0)>v$. 
  Thus if the claim is false then there exists a 
  $u\in (t_0,2t_0-6\delta(1-D)]$ such that $f(u)=v$.

By \eqref{e:conditions on f},
we have $b\le \frac{a}{2}$, which implies
\[
2t_0-6\de(1-D)=\frac{2(a+b)}{3}-4\de(1-D)<a.
\]
Since $f(0)\le v<f(t_0)$ we also have a largest $p\in [0,t_0)$ such
  that $f(p)=v$.
  Then $f(x)\ge v$ on $[p,t_0]$.
  Since $\frac{a+b-v}2=t_0$ and $u/2\le t_0-3\de(1-D)=v$, all the
  assumptions of Lemma~\ref{l:newthensimple} hold, so we get that
  $p$ is simple.

  Then by Lemma~\ref{l:ifsimple} we have
  $\T(f)\le \al p + (1-\al) v - b$, where $\al = \frac{1-2D}{3(1-D)}$.
  Since $p<t_0=v+3\delta(1-D)$, this implies that $\T(f)\le v+(1-2D)\de-b$.
  Combining this with the assumption $\T(f)> \frac{1-2D}3-\de$ we get
  \[
  v> b+\frac{1-2D}3-2\de(1-D).
  \]
  Note that Lemma~\ref{l:1-a+2b-2D} implies that $b+\frac{1-2D}3\ge \frac{a+b}{3}$,
  so we obtain $ v> \frac{a+b}3-2\de(1-D)$,
  which is a contradiction.
\end{proof}

From the last lemma and the Lipschitz property of $f$ one can easily derive a good lower estimate also on
$[2t_0-6\de (1-D),a]$. However, the next lemma will lead to an even better (and, as we will see later, sharp) estimate on the right part of $[0,a]$.
\begin{lemma}\label{l:loweruv}
  Suppose that \eqref{e:conditions on f} holds,
 $$
 \T(f) > \frac{1-2D}3-\de,\
   \de\in (0,a/3),\
  u \in (a/2, a],
  u \ge 2v + 6\de(1-D),\
  \text{ and }
  f(u)=v.
 $$
  Then
  $$
  u+v> 1+D -3\de\frac{1+D}{1-2D}.
  $$
\end{lemma}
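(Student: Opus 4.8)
The plan is to build a single simple point $p$, bound $\T(f)$ from above via Lemmas~\ref{l:newthensimple} and~\ref{l:ifsimple}, and compare with the hypothesis $\T(f)>\frac{1-2D}3-\de$. We may assume the right-hand side of the claimed inequality is positive, i.e.\ $\de<\frac{1-2D}3$ (otherwise $u+v>0\ge$ RHS and there is nothing to prove). Record first the elementary bounds: since $f$ is $1$-Lipschitz with $f(a)=b$ we have $u+v=u+f(u)\le a+b$; since $u\in(a/2,a]$ and $b=\min_{[a/2,a]}f$ we have $v=f(u)\ge b$, whence also $2u+v>a+b$; and $v\ge Du\ge 0$, while $\de>0$ together with $u\ge 2v+6\de(1-D)$ gives $v<u/2$. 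Moreover, applying Proposition~\ref{p:basic} with $C=1$ and using $a\le1$ gives $\T(f)\le\frac{a(1-2D)}3$, so the hypothesis forces $a>1-\frac{3\de}{1-2D}$ and hence $b\ge Da>D\left(1-\frac{3\de}{1-2D}\right)$; in particular, for $\de$ small, $a$ and $a+b$ are bounded away from $0$.

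Write $t_1=\frac{a+b-v}2$; by the above, $u/2\le t_1<u\le a$. The crucial geometric fact is that $f\ge v$ on $[u/2,t_1]$. To obtain it I would first note $v\le\frac{a+b}3-2\de(1-D)$, since otherwise $3v+6\de(1-D)>a+b\ge u+v$, contradicting $u\ge 2v+6\de(1-D)$. Now combine Lemma~\ref{l:lower}, which gives $f(x)>x-3\de(1-D)\ge v$ as soon as $x\ge v+3\de(1-D)$ (and $u/2\ge v+3\de(1-D)$ is precisely our hypothesis), with Lemma~\ref{l:middle}, which gives $f>\frac{a+b}3-2\de(1-D)\ge v$ on $[t_0,2t_0-6\de(1-D)]$ where $t_0=\frac{a+b}3+\de(1-D)$: one checks that, for $\de$ small, $[u/2,t_1]$ is covered by the union of the two ranges where these lower bounds apply, so $f\ge v$ on $[u/2,t_1]$. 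This verification is where the near-extremality of $f$ forced by $\T(f)>\frac{1-2D}3-\de$ (through the auxiliary bounds $a\approx1$, $b\approx D$) is really used, and I expect the bookkeeping of these interval inclusions — including the borderline range of $\de$ — to be the main technical obstacle.

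Finally, set $h(x)=f(x)+2x-u-v$. Then $h(0)=-(u+v)<0$ and $h(u/2)=f(u/2)-v\ge0$, so $h$ has a largest zero $p\in(0,u/2]$; thus $f(p)=-2p+u+v$ and $h\ge0$ on $[p,u/2]$, i.e.\ $f(x)\ge-2x+u+v\ge v$ there, so (with the previous paragraph) $f\ge v$ on $[p,t_1]$. Since $v<u/2$, Lemma~\ref{l:newthensimple} applies with this $p$ and the given $u,v$, so $p$ is simple, and Lemma~\ref{l:ifsimple} gives
\[
\T(f)\le\al p+(1-\al)(-2p+u+v)-b=\frac{-p}{1-D}+(1-\al)(u+v)-b,
\]
since $3\al-2=-\frac1{1-D}$ for $\al=\frac{1-2D}{3(1-D)}$. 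As $f$ is $1$-Lipschitz with $f(0)=0$ we have $f(p)\le p$, i.e.\ $-2p+u+v\le p$, hence $p\ge\frac{u+v}3$; the coefficient of $p$ above being negative and $(1-\al)-\frac1{3(1-D)}=\frac13$, we get $\T(f)\le\frac{u+v}3-b$. Comparing with $\T(f)>\frac{1-2D}3-\de$ and inserting $b>D\left(1-\frac{3\de}{1-2D}\right)$ yields
\[
u+v>1-2D-3\de+3b>1+D-3\de-\frac{9D\de}{1-2D}=1+D-3\de\,\frac{1+D}{1-2D},
\]
which is the claim.
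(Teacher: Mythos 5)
Your plan — find $p$ with $f(p)=-2p+u+v$, show it is simple via Lemma~\ref{l:newthensimple}, apply Lemma~\ref{l:ifsimple}, use $p\ge(u+v)/3$ to reach $\T(f)\le\frac{u+v}{3}-b$, then compare with the hypothesis — is exactly the paper's plan, and your closing algebra (via $b>D(1-\frac{3\de}{1-2D})$ rather than the paper's equivalent $b\ge\frac{D}{D+1}(u+v)$) is correct. The genuine gap is precisely where you flag it: establishing $f\ge v$ on $[u/2,t_1]$ with $t_1=\frac{a+b-v}{2}$.

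The two tools you invoke cover only $[v+3\de(1-D),\,2t_0-6\de(1-D)]$, and the needed inclusion $t_1\le 2t_0-6\de(1-D)$ unwinds to $24\de(1-D)\le a+b+3v$. Since $a+b+3v$ is only bounded below by about $1+D$ (take $D=0$, $v,b\approx 0$, $a\approx 1-3\de$), this already fails for $\de$ slightly above $1/27$, while the hypotheses still permit $\de$ up to roughly $1/9$ in that configuration, and the trivial $u+v\ge u>a/2$ does not rescue the conclusion there. So appealing to Lemma~\ref{l:middle} is the wrong instrument for this segment — it is not a matter of tighter bookkeeping. The paper avoids it: Lemma~\ref{l:lower} at $x=t_0$ gives $2t_0+f(t_0)>3t_0-3\de(1-D)=a+b$, and then the $1$-Lipschitz property yields $2x+f(x)\ge 2t_0+f(t_0)>a+b$ on $[t_0,a]$, hence $f(x)>a+b-2x\ge a+b-2t_1=v$ on all of $[t_0,t_1]$, with no smallness restriction on $\de$. (It also checks $t_0\le t_1$, which follows from $u\ge 2v+6\de(1-D)$ and $u+v\le a+b$.) For the segment $[p,t_0]$ the paper combines Lemma~\ref{l:lower} with the Lipschitz bound $2x+f(x)\ge 2p+f(p)=u+v$; your alternative of taking $p$ to be the largest zero of $f(x)+2x-u-v$ on $(0,u/2]$, so that $f\ge -2x+u+v\ge v$ directly on $[p,u/2]$, is an equally valid way to handle that part.
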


\begin{proof}
 Since $f(0)<-2\cdot 0+u+v$ and $f(a)\ge -2\cdot a + u+v$ there exists
 a $p\in(0,a]$ such that $f(p)=-2p+u+v$.
First we prove that $p$ is a simple point.
To get this, by Lemma~\ref{l:newthensimple}, it is enough to check that
$p\le\frac{a+b-v}2<u$ and  $f(x) \ge v$ on $[p,\frac{a+b-v}2]$.

Since $u\in(a/2,a]$, $v=f(u)$ and $b=\min_{[a/2,a]} f$,
we have $b\le v$, so $\frac{a+b-v}2\le a/2<u$.

Note (as in Lemma~\ref{l:newthensimple}) that $u+v\le a+b$.
By Lemma~\ref{l:lower}, we have $f(t_0)>t_0-3\de(1-D)$, where $t_0=\frac{a+b}{3} +\delta(1-D)$.
Then
\begin{equation}\label{e:2t_0+f(t_0)}
2t_0+f(t_0)>3t_0-3\de(1-D)=a+b\ge u+v=2p+f(p).
\end{equation}
  Since $f$ is $1$-Lipschitz this implies that $p< t_0$.
  Using this, $u+v\le a+b$ and finally
  the assumption $u\ge 2v+6\de(1-D)$, we get
\begin{align*}
  p< t_0 &= \frac{a+b}3+\de(1-D)\\
  &=\frac{a+b}2-\frac{a+b}6+\de(1-D)\\
  &\le \frac{a+b}2-\frac{u+v}6+\de(1-D)\le \frac{a+b-v}2.
\end{align*}

  On $[0,t_0]$ we have $f(x)-x> -3\de(1-D)$ by Lemma~\ref{l:lower},
  on $[p,a]$ we have $2x+f(x)\ge 2p + f(p) = u+v$
  by the $1$-Lipschitz property of $f$.
   Taking the linear combination of these inequalities with weights $2/3$
  and $1/3$, we get
  $$
  f(x)> \frac{u+v}3 - 2\de(1-D) \quad \text{on } [p,t_0].
  $$
  By the assumption $u \ge 2v + 6\de(1-D)$, this gives $f(x)\ge v$
  on $[p,t_0]$.

  Using that $f$ is $1$-Lipschitz and then \eqref{e:2t_0+f(t_0)},
  we get that on $[t_0,a]$ we have $2x+f(x)\ge 2t_0 +f(t_0)>a+b$,
  which implies that $f(x)\ge v$ also on $[t_0,\frac{a+b-v}2]$.

  Therefore, by Lemma~\ref{l:newthensimple}, $p$ is indeed a simple point. Now Lemma~\ref{l:ifsimple} gives
\begin{align*}
 \T(f) &\le \al p + (1-\al)f(p)-b\\
& =\al p + (1-\al)(-2p+u+v)-b\\
& =(3\al - 2) p +(1-\al)(u+v)-b.
\end{align*}
Recalling that $\al=\frac{1-2D}{3(1-D)}$,
it is easy to check that $D<1$ implies that $3\al-2 <0$.
 The $1$-Lipschitz property of $f$ and $f(0)=0$ imply that
 $0\le p-f(p)=3p-(u+v)$, so $p\ge\frac{u+v}{3}$.
Using these facts, the last displayed equation yields
$$
\T(f)\le  \left(\frac{3\al-2}{3}+1-\al\right)(u+v)-b
=\frac{u+v}{3}-b.
$$
Combining this with the assumption $\T(f)>\frac{1-2D}{3}-\de$
we get $u+v> 1-2D-3\de+3b$.
Note that $u+v\le a+b$
and $b=f(a)\ge Da$
imply that $b \ge \frac{D}{D+1}(u+v)$. Combining these facts, we conclude that
$$
u+v> 1-2D-3\de+3b \ge 1-2D-3\de + \frac{3D}{D+1}(u+v),
$$
which implies (using also that $D<1/2$) the claim.
\end{proof}

The following proposition provides a global quantitative estimate for functions $f:[0,1]\to\R$ for which $\T(f)$ is close to the maximum possible value.
\begin{prop}\label{p:stability}
Fix $D\in[0,1/3]$, $\de\in(0,1/21]$ and let
$f:[0,1]\to\R$ be a $1$-Lipschitz function
such that $f(0)=0$, $f(x)\ge Dx$ on $[0,1]$ and
$\T(f) > \frac{1-2D}{3}-\de$.
Let
\begin{equation} \label{eq:t_1}
t_1= \frac{1+D}{3} - \de\left(\frac{1+D}{1-2D}-(1-D)\right).
\end{equation}

Then
\begin{align}
  \label{e:left}
         x-3\de(1-D) \ < \ & f(x) \  \le x \qquad \text{ on }  [0,t_1] \\
  \label{e:middle}
  t_1-3\de(1-D) \ < \ & f(x)
         \qquad \text{ on }  [t_1,2t_1-6\de(1-D)] \qquad \text{and} \\
  \label{e:right}
  3t_1-x-3\de(1-D) \ < \ & f(x) \ < \ 1+D-x + 3\de\frac{1-D}{1-2D}
   \qquad  \text{ on } [2t_1,1].
\end{align}
\end{prop}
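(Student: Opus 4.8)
The plan is to reduce to the setting of \eqref{e:conditions on f} via the procedure sketched at the beginning of this subsection, apply Lemmas~\ref{l:lower}, \ref{l:middle} and \ref{l:loweruv} (which contain the real content), and then reconcile the quantity $t_0=\frac{a+b}{3}+\de(1-D)$ appearing in those lemmas with the quantity $t_1$ from \eqref{eq:t_1}.

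\emph{Reduction.} Let $b'=\min_{[1/2,1]}f$ and pick $a'\in[1/2,1]$ with $f(a')=b'$. Extending a good partition of $[0,a']$ by the single point $1$ (legitimate since $1/a'\le 2$ and $\min_{[a',1]}f=f(a')$) and, conversely, truncating a good partition of $[0,1]$ at a point just above $a'$, one sees $\T(f)=\T(f|[0,a'])$. If $\min_{[a'/2,a']}f<f(a')$, replace $a'$ by a minimiser of $f$ over $[a'/2,a']$; the same argument keeps $\T$ unchanged. Iterating and passing to the limit (which is positive since $\T(f)\ge\frac{1-2D}{3}-\de>0$) produces $a\in(0,1]$ and $b:=f(a)$ satisfying \eqref{e:conditions on f}, with $\T(f|[0,a])=\T(f)>\frac{1-2D}{3}-\de$. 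Applying Proposition~\ref{p:basic} with $C=1$ to $f|[0,a]$ in both forms of \eqref{e:basict}, together with the hypothesis on $\T(f)$, yields
\[
a>1-\tfrac{3\de}{1-2D}\qquad\text{and}\qquad a-b>(1-D)\bigl(1-\tfrac{3\de}{1-2D}\bigr).
\]
Combining these with $b\ge Da$ and $a\le1$ gives $1+D-3\de\tfrac{1+D}{1-2D}<a+b<1+D+3\de\tfrac{1-D}{1-2D}$; in particular $t_0>t_1$ and, by \eqref{eq:t_1}, $a+b>3t_1-3\de(1-D)$. Finally, for $D\le1/3$ and $\de\le1/21$ one checks elementarily that $4t_1>1\ge a$ (so $2t_1>a/2$), that $t_1\ge6\de(1-D)$ (so the interval in \eqref{e:middle} is nondegenerate), and that $\de<a/3$ (so Lemmas~\ref{l:lower}, \ref{l:middle} and \ref{l:loweruv} apply).

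\emph{Proof of \eqref{e:left} and \eqref{e:middle}.} The bound $f(x)\le x$ is immediate from $1$-Lipschitzness and $f(0)=0$, and $f(x)>x-3\de(1-D)$ on $[0,t_1]\subseteq[0,t_0]$ is Lemma~\ref{l:lower}; this gives \eqref{e:left}. For \eqref{e:middle}, note $[t_1,2t_1-6\de(1-D)]\subseteq[0,t_0]\cup[t_0,2t_0-6\de(1-D)]$ (using $t_1\le t_0$): on the first piece Lemma~\ref{l:lower} gives $f(x)>x-3\de(1-D)\ge t_1-3\de(1-D)$, and on the second Lemma~\ref{l:middle} gives $f(x)>\frac{a+b}{3}-2\de(1-D)$, which exceeds $t_1-3\de(1-D)$ precisely because $a+b>1+D-3\de\frac{1+D}{1-2D}$.

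\emph{Proof of \eqref{e:right}.} For the upper bound, take $x\in[2t_1,1]$: if $x\le a$ then $f(x)\le b+(a-x)=(a+b)-x<1+D-x+3\de\frac{1-D}{1-2D}$ by the upper bound on $a+b$; if $x>a$ then $f(x)\le b+(x-a)$, and $b+(x-a)<1+D-x+3\de\frac{1-D}{1-2D}$ rearranges to $2x<(1+D)+(a-b)+3\de\frac{1-D}{1-2D}$, whose right side exceeds $(1+D)+(1-D)=2\ge2x$ by the lower bound on $a-b$. For the lower bound, suppose $f(x_0)\le3t_1-x_0-3\de(1-D)$ for some $x_0\in[2t_1,1]$, i.e.\ $x_0+f(x_0)\le1+D-3\de\frac{1+D}{1-2D}$ by \eqref{eq:t_1}. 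Since $x_0\ge2t_1>a/2$, either $x_0>a$, and then $f(x_0)\ge b-(x_0-a)=(a+b)-x_0>1+D-3\de\frac{1+D}{1-2D}-x_0$, a contradiction; or $x_0\in(a/2,a]$, and then we apply Lemma~\ref{l:loweruv} with $u=x_0$, $v=f(x_0)$: if $u\ge2v+6\de(1-D)$ the lemma gives $u+v>1+D-3\de\frac{1+D}{1-2D}$, contradicting the assumption, while if $u<2v+6\de(1-D)$ then $f(x_0)>\frac{x_0}{2}-3\de(1-D)$, which together with the assumption forces $\frac{3x_0}{2}<3t_1$, again contradicting $x_0\ge2t_1$.

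\emph{Main difficulty.} The substantive work is already packaged into Lemmas~\ref{l:lower}, \ref{l:middle} and \ref{l:loweruv}; the step likeliest to require care is the reduction to \eqref{e:conditions on f} and, above all, extracting from near-maximality of $\T(f)$ the sharp facts that $a$ lies within $\tfrac{3\de}{1-2D}$ of $1$ and $a-b$ within the same distance of $1-D$. These are exactly what make $t_0>t_1$ and $2t_1>a/2$ — and hence the region decomposition and the case analysis on $x_0$ — valid throughout the stated range $D\in[0,1/3]$, $\de\le1/21$.
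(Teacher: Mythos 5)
Your proof is correct and follows essentially the same route as the paper: reduce to $f|[0,a]$ satisfying \eqref{e:conditions on f}, extract near-extremality of $a$ and $a-b$ from Proposition~\ref{p:basic} applied with $C=1$, then derive \eqref{e:left}--\eqref{e:right} from Lemmas~\ref{l:lower}, \ref{l:middle} and \ref{l:loweruv} after reconciling $t_0=\tfrac{a+b}{3}+\de(1-D)$ with $t_1$. The only superficial differences are that the paper establishes $\min_{[a/2,a]}f=f(a)$ by a single direct contradiction (any would-be minimizer $a'$ lies in $[a/2,1/2)$, so $\T(f|[0,a'])\le\tfrac12\cdot\tfrac{1-2D}{3}$ is already too small) rather than your iterative argument, and the paper gets the lower bound in \eqref{e:right} by evaluating at the intersection point $2t_1$ of the two lines supplied by Lemma~\ref{l:loweruv} and invoking the Lipschitz property, whereas you case-split on a putative counterexample $x_0$.
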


\begin{proof}
Let $b=\min_{[1/2,1]} f$ and choose $a\in[1/2,1]$ such that
$f(a)=b$.
Then it is easy to see that $\T(f)=\T(f|[0,a])$.
So combining the assumption $\T(f) > \frac{1-2D}{3}-\de$
and Proposition~\ref{p:basic} for $f|[0,a]$ and $C=1$,
and then using $b=f(a)\ge Da$,
we get
\begin{equation}\label{e:ab}
\frac{1-2D}{3}-\de < \T(f) =\T(f|[0,a]) \le
\frac{(a-b)(1-2D)}{3(1-D)} \le a\frac{1-2D}{3}.
\end{equation}
This implies

\begin{equation}\label{e:a}
a> 1-\frac{3\de}{1-2D}
\end{equation}
and so
\begin{equation}\label{e:a+b}
a+b\ge a+Da > 1 + D - 3\de\frac{1+D}{1-2D}.
\end{equation}
By \eqref{e:ab},
\[
a-b> 1-D -3\de\frac{1-D}{1-2D}.
\]
Since $f$ is $1$-Lipschitz this implies that
$$
f(1)\le b+1-a< D+3\de\frac{1-D}{1-2D},
$$
which (using again that $f$ is $1$-Lipschitz) yields the upper estimate of
\eqref{e:right} on $[0,1]$.

By definition we have $\min_{[1/2,a]}f=f(a)=b$, but
in order to apply our lemmas to $f|[0,a]$ we have to show
$\min_{[a/2,a]}f=f(a)$.
Suppose then that $\min_{[a/2,a]}f<f(a)$. Then there exists an $a'\in[a/2,1/2)$
  such that  $\min_{[a/2,a]}f=f(a')$. Using Proposition~\ref{p:basic}
applied to $f|[0,a']$ and $C=1$  we get
  $$
  \frac{1-2D}{3}-\de < \T(f) =\T(f|[0,a])=\T(f|[0,a'])
  \le a' \frac{1-2D}{3} \le \frac12\cdot\frac{1-2D}{3},
  $$
  which is impossible, since we assumed $D\le 1/3$ and $\de\le 1/21$.

Therefore \eqref{e:conditions on f} holds for $f|[0,a]$.
Note that \eqref{e:a+b} implies that $t_0> t_1$,
where $t_0=\frac{a+b}3+\delta(1-D)$ (as in Lemma~\ref{l:middle}).

By \eqref{e:a}, $D\le 1/3$ and $\de\le 1/21$,
we get $a>4/7$, so $\de<a/3$ holds.
Then applying Lemmas~\ref{l:lower} and \ref{l:middle} to
$f|[0,a]$ and using that $f$ is $1$-Lipschitz
we get the lower estimate
of \eqref{e:left} and \eqref{e:middle}. The upper estimate
of \eqref{e:left} is clear.

It remains to prove the lower estimate of
\eqref{e:right}.
Lemma~\ref{l:loweruv} (for $f|[0,a]$) gives that
every point of the graph of $f|(a/2,a]$ must be above
either the $y=1+D-x-3\de\frac{1+D}{1-2D}$ line, or the
$y=\frac{x}{2}-3\de(1-D)$ line.
These two lines intersect at $(2t_1,t_1-3\de(1-D))$. On the other hand, a calculation using $\de\le 1/21$, $D\le 1/3$,
 $a\le 1$ and \eqref{e:a}
shows that $a/2\le 1/2 < 2t_1 \le 1-\frac{3\de}{1-2D}< a$. We deduce that $f(2t_1)> t_1-3\de(1-D)$.
Using the $1$-Lipschitz property of $f$, this gives
the lower estimate of \eqref{e:right}.
\end{proof}

\begin{remark}
  Let $D\in[0,1/3]$, and let
$f:[0,1]\to\R$ be a $1$-Lipschitz function
  such that $f(0)=0$, $f(x)\ge Dx$ on $[0,1]$
  and $\T(f)\ge \frac{1-2D}3$.
Letting $\delta\to 0^+$ in Proposition~\ref{p:stability}, we get
that $f(x)=x$ on $[0,\frac{1+D}3]$,
  $f(x)\ge \frac{1+D}{3}$ on $[\frac{1+D}3, \frac{2(1+D)}3]$,
  and $f(x)=1+D-x$ on $[\frac{2(1+D)}3,1]$.
  It is easy to see that, conversely, $\T(f)=\frac{1-2D}3$ for any such $f$.
  Recall that by the $C=1$ special case of Proposition~\ref{p:basic},
  we have  $\T(f)\le \frac{1-2D}3$ for any
  $1$-Lipschitz function $f:[0,1]\to\R$
  such that $f(0)=0$, $f(x)\ge Dx$ on $[0,1]$.
  Therefore the above observation gives a characterization of those
  functions for which we have equality in Proposition~\ref{p:basic} when $C=1$.
\end{remark}

The following corollary can be seen as a version of Proposition \ref{p:stability} that is closer to the kind of estimates we will need in the proof of Theorem \ref{thm:full-distance-set}.
\begin{corollary}\label{c:37/54}
Let $D\in[0,1/3)$ and
$$
\Lambda(D)=\frac{(1+D)(37-50D+60D^2)}{18(3-4D+5D^2)}
\ge \Lambda(0)=\frac{37}{54}=0.6851851\ldots.
$$
Then there exist $\eta>0$ and
$\xi\in(2/3,1]$ (depending continuously on $D$)
such that $\Lambda(D)=\xi(1-2\eta)$ and the following holds.

If $f:[0,1]\to\R$ is a $1$-Lipschitz function such that
$f(0)=0$ and $f(x)\ge Dx$ on $[0,1]$ then
$$
\T(f) > 1-\Lambda(D) \quad \Longrightarrow
\quad f(x) \ge \frac{x}{3}-\eta\xi \text{ on } [0,\xi].
$$
\end{corollary}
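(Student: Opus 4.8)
The plan is to derive the corollary from the quantitative stability result Proposition~\ref{p:stability}, with its parameter $\delta$ calibrated to $\Lambda(D)$. First I would put
\[
\delta \;:=\; \frac{(1+D)(1-2D)}{18\,(3-4D+5D^2)},
\]
and check by a direct computation that $\Lambda(D)=\tfrac{2(1+D)}{3}+\delta$, equivalently $1-\Lambda(D)=\tfrac{1-2D}{3}-\delta$. Since $D\in[0,1/3)$ forces $1-2D>0$ and $3-4D+5D^2>0$, we have $\delta>0$, while $\delta\le\tfrac1{54}<\tfrac1{21}$ is equivalent to $2-3D+7D^2\ge0$, a quadratic with negative discriminant. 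Hence Proposition~\ref{p:stability} applies with this $\delta$: whenever $f$ is as in the statement and $\T(f)>1-\Lambda(D)=\tfrac{1-2D}{3}-\delta$, the lower bounds \eqref{e:left}, \eqref{e:middle}, \eqref{e:right} hold; it will be convenient to use the form $t_1=\tfrac{1+D}{3}-\delta\,\tfrac{2D(2-D)}{1-2D}$ of the constant $t_1$ in \eqref{eq:t_1}.

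Next I would take $\xi$ to be the largest value for which the comparison on the right piece $[2t_1,\xi]$ can work, namely
\[
\xi \;:=\; \frac{18\,t_1-18\delta(1-D)-3\Lambda(D)}{5}\qquad\Bigl(=\tfrac{4(1+D)}{5}-\tfrac{\delta(21+12D)}{5(1-2D)}\Bigr),
\]
and set $\eta:=\tfrac12\bigl(1-\Lambda(D)/\xi\bigr)$, so that $\Lambda(D)=\xi(1-2\eta)$ by construction. A short computation gives $\xi-\Lambda(D)=\tfrac{2(1+D)}{15}-\tfrac{2\delta(13+D)}{5(1-2D)}$, which after clearing denominators is a positive multiple of $(1-2D)(1-3D)$ and hence $>0$ on $[0,1/3)$; this gives $\eta>0$. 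The bounds $2/3<\xi\le1$ reduce to two further rational inequalities in $D$ that hold throughout $[0,1/3)$, and $\delta,\xi,\eta$ depend continuously on $D$ because the denominators $3-4D+5D^2$, $1-2D$, $\xi$ stay bounded away from $0$. The identity that makes this choice of $\Lambda(D)$ the correct one is that, with the above $\delta$, the slack $\eta\xi=\tfrac12(\xi-\Lambda(D))$ works out to equal exactly $\delta(1-3D)$ — the amount dictated by the left piece.

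Finally I would check $f(x)\ge\tfrac x3-\eta\xi$ on $[0,\xi]$ by splitting into $[0,t_1]$, $[t_1,\,2t_1-6\delta(1-D)]$, $[2t_1-6\delta(1-D),\,2t_1]$ and $[2t_1,\xi]$, whose union is $[0,\xi]$ since $\xi>2t_1$. On $[0,t_1]$ I would use the hypothesis $f(x)\ge Dx$ for $x\le\tfrac{3\eta\xi}{1-3D}=3\delta$ and the bound $f(x)>x-3\delta(1-D)$ of \eqref{e:left} for $x\ge\tfrac32\bigl(3\delta(1-D)-\eta\xi\bigr)=3\delta$; these abut at $x=3\delta<t_1$, so they cover $[0,t_1]$. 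On $[t_1,2t_1-6\delta(1-D)]$ the bound $f(x)>t_1-3\delta(1-D)$ of \eqref{e:middle} dominates $\tfrac x3-\eta\xi$ since $t_1\ge3\delta(1-D)$. On the short gap $[2t_1-6\delta(1-D),2t_1]$, which Proposition~\ref{p:stability} does not cover directly, combining \eqref{e:middle} at the left endpoint, \eqref{e:right} at the right endpoint and the $1$-Lipschitz property gives $f(x)>t_1-6\delta(1-D)$, which again dominates $\tfrac x3-\eta\xi$ because $6\delta(1-D)\le\tfrac{t_1}{3}$ (with equality at $D=0$, where the remaining room is supplied by $\eta\xi>0$). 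On $[2t_1,\xi]$ the bound $f(x)>3t_1-x-3\delta(1-D)$ of \eqref{e:right} is decreasing in $x$ while $\tfrac x3-\eta\xi$ is increasing, so it suffices to verify the inequality at $x=\xi$, where it holds by the choice of $\xi$.

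The argument is entirely elementary, so the real work is bookkeeping: $\Lambda(D)$ must be chosen as large as possible — equivalently $\delta$ as large as possible — subject to all four comparisons above holding simultaneously, and confirming this comes down to a handful of rational inequalities in $D$ on $[0,1/3)$, each of which clears denominators to a quadratic or cubic with no root in the interval. Every one of these inequalities is an equality at $D=0$, which is precisely why $\Lambda(0)=37/54$ is the value that appears.
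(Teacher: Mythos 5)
Your proposal is structurally identical to the paper's proof: you pick the same $\delta = \frac{(1+D)(1-2D)}{18(3-4D+5D^2)}$, and a short computation confirms that your $\xi=\frac{18t_1-18\delta(1-D)-3\Lambda(D)}{5}$ and the paper's $\xi=\frac{(1+D)(13-20D+24D^2)}{6(3-4D+5D^2)}$ are the same number, with $\eta\xi = \delta(1-3D)$ in both cases; the four-interval comparison of $f$ against $x/3-\eta\xi$ is also the same, except on the gap $[2t_1-6\delta(1-D),2t_1]$ where you use the constant Lipschitz bound $f>t_1-6\delta(1-D)$ from both endpoints instead of the piecewise function $g$ — a harmless simplification. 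One slip: your claim that $\delta\le\tfrac1{54}$ (equivalent to $2-3D+7D^2\ge0$) is false — expanding gives $\delta\le\tfrac1{54}\Leftrightarrow D(11D-1)\ge0$, which fails for $D\in(0,1/11)$; the correct universal bound is $\delta<\tfrac1{36}$ (equivalent to $1-2D+9D^2>0$, as in the paper), which still gives the needed $\delta\le\tfrac1{21}$ for Proposition~\ref{p:stability}.
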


\begin{proof}
  Let
  $$
  \de=\frac{(1+D)(1-2D)}{18(3-4D+5D^2)}
  \in\left(0,\frac1{36}\right).
  $$
  Then $1-\Lambda(D)=\frac{1-2D}3-\de$. Let $t_1$ be the number given by Proposition~\ref{p:stability}. By the hypothesis $\T(f)> 1-\Lambda(D)$,
  Proposition~\ref{p:stability} implies that \eqref{e:left},  \eqref{e:middle}
  and \eqref{e:right} hold.

  Let
  \begin{align*}
  \xi&=\frac34\left(\de(1-3D)+1+D-3\de\frac{1+D}{1-2D}\right) =
  \frac{(1+D)(13-20D+24D^2)}{6(3-4D+5D^2)}\in (2/3,1),\\
  \eta&=\frac{\delta(1-3D)}{\xi}=
  \frac{(1-2D)(1-3D)}{3(13-20D+24D^2)}>0.
  \end{align*}
  Then the three lines $y=x-3\de(1-D)$, $y=x/3-\eta\xi$ and $y=Dx$ meet
  at $(3\de,3D\de)$. Thus, since $D<1/3$, on $[0,3\de]$ we have
  $f(x)\ge Dx\ge x/3-\eta\xi$ and, using \eqref{e:left}, on $[3\de,t_1]$ we have
  $f(x) \ge x-3\de(1-D) \ge x/3-\eta\xi$.

  One can also check that the lines $y= x/3-\eta\xi$ and
  $y=3t_1-x-3\de(1-D)$ intersect at $x=\xi$.
  Thus, by \eqref{e:right}, on $[2t_1,\xi]$ we also have
  $f(x) > 3t_1-x-3\de(1-D) \ge  x/3-\eta\xi$.

It remains to check $f(x)\ge x/3-\eta\xi$ on
$[t_1,2t_1]$.
By \eqref{e:right}, $f(2t_1)> t_1-3\delta(1-D)$.
Hence \eqref{e:middle} and the $1$-Lipschitz property
of $f$ imply that on $[t_1,2t_1]$ we have
$f(x)> g(x)-3\delta(1-D)$, where
\[
g(x)= \begin{cases}
   t_1 & \text{ on } [t_1,2t_1-6\de(1-D)] \\
   3t_1 -6\de(1-D) -x & \text{ on } [2t_1-6\de(1-D),2t_1-3\de(1-D)] \\
   x-t_1 & \text{ on } [2t_1-3\de(1-D),2t_1]
     \end{cases}.
\]

Now we claim that
\begin{equation}\label{e:gx0}
g(x_0)-3\delta(1-D) \ge x_0/3-\eta\xi, \qquad
\text{where } x_0=2t_1-3\de(1-D).
\end{equation}
Indeed, using the definition of $g$ and $x_0$ and
the equation $\eta\xi=(1-3D)\de$, we obtain that the
left-hand side of \eqref{e:gx0} is
$t_1 - 6\de (1-D)$, the right-hand side is
$2t_1/3 -\de (2-4D)$, so it is enough to prove that
$t_1/3 \ge \de(4-2D)$.
It is straightforward
to check that this last inequality follows
from the definition \eqref{eq:t_1} of $t_1$, $D\in[0,1/3)$
and $\de\in(0,1/36)$.

Note that the function $g(x)-3\de(1-D)$ has slope
$0$ or $-1$ on $[t_1,x_0]$ and it has slope $1$ on
$[x_0,2t_1]$, while $x/3-\eta\xi$ has slope $1/3$.
Thus \eqref{e:gx0} implies that
$g(x)-3\delta(1-D) \ge x/3-\eta\xi$ on $[t_1,2t_1]$,
which completes the proof.
\end{proof}

\subsection{Total drop for $\tau$-good partitions}

In this subsection we show that for small $\tau$,
allowing only $\tau$-good partitions
(recall Definition~\ref{d:many}) does not change
too much the smallest possible total drop, see
Corollary~\ref{c:tau} below.
We begin with a lemma that will allow us to obtain $\tau$-good partitions from partitions that satisfy a weaker property, with a controlled change in the total  drop.
\begin{lemma}\label{l:tau}
Let $f:[0,a]\to\R$ be a $1$-Lipschitz function and
$(a_n)$ be a good partition of $[0,a]$.
Suppose that $\tau>0$, $K>1$ is an integer,
$(1+\tau)^K<2$ and
$a_{n-K}/a_n\ge 2$ for every $n\ge K$.
Then
$$
\T_\tau(f) < \T(f,(a_n)) + 6K(K-1) \tau a .
$$
\end{lemma}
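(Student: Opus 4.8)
The plan is to start from the good partition $(a_n)$ and thin it out, keeping roughly every $K$-th term, so that consecutive surviving terms have ratio $\ge (1+\tau)^K$... wait, that gives a ratio that is too large. Rather, the point is the opposite: a good partition can \emph{fail} to be $\tau$-good only by having some consecutive ratios $a_{n-1}/a_n$ too close to $1$ (in $[1,1+\tau)$). Whenever we see a block of such ``nearly stationary'' steps, we want to \emph{delete} intermediate terms from it so that the surviving gaps all have ratio $\ge 1+\tau$, while not creating any gap of ratio $\ge 2$ (which is guaranteed by the hypothesis $a_{n-K}/a_n \ge 2$ together with $(1+\tau)^K < 2$: we never need to skip more than $K-1$ consecutive indices before the ratio exceeds $1+\tau$, and skipping at most $K-1$ indices multiplies the ratio by less than $(1+\tau)^{K-1} < 2$). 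So first I would make precise the deletion procedure: scanning from $a_0$ downward, from a kept point $a_m$ we keep the next index $n>m$ with $a_m/a_n \ge 1+\tau$; the hypothesis forces $n - m \le K$, hence $a_m / a_n < 2$, so the thinned sequence $(a'_k)$ is again a good partition, and now it is $\tau$-good.

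The second and main step is to bound the increase in total drop caused by the deletions. The quantity $\T(f,(a_n))$ is a sum over intervals $[a_n, a_{n-1}]$ of $f(a_n) - \min_{[a_n,a_{n-1}]} f \ge 0$. When we merge a block of $\le K$ original intervals $[a_n,a_{n-1}],\dots,[a_{n'},a_{n'-1}]$ (with $n' - n \le K - 1$, say the merged interval is $[a_{n'}, a_{n-1}] = [a'_k, a'_{k-1}]$) into one, the new term $f(a'_k) - \min_{[a'_k, a'_{k-1}]} f$ replaces the sum of the old terms. The difference between new and old is controlled by the $1$-Lipschitz property: each original term $f(a_i) - \min_{[a_i,a_{i-1}]}f$ lies between $0$ and $a_{i-1}-a_i$, and telescoping plus Lipschitz bounds the discrepancy on a merged block by a constant times $(a_{n-1} - a_{n'})$, the length of the merged interval. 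More carefully, on a block of $L \le K$ consecutive original subintervals spanning an interval of length $\ell_{\text{block}}$, replacing the $L$ original nonnegative terms by the single merged term changes the contribution by at most $O(L)\cdot \ell_{\text{block}}$; and since within such a block $\ell_{\text{block}} \le (a_{n-1}/a_{n'} - 1) a_{n'} < ((1+\tau)^{K} - 1) a_{n'} < K\tau \cdot a_{n'} \cdot (\text{small})$, using $(1+\tau)^K - 1 \le 2K\tau$ (valid since $(1+\tau)^K<2$), the per-block overhead is $O(K)\cdot K\tau\, a_{n'}$.

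The third step is to sum these block contributions over the whole partition. Here I would use that the $a_{n'}$ at the left endpoints of successive blocks are themselves geometrically decreasing: because within any window of $K$ indices the ratio already reaches $2$, the left endpoints of the blocks, taken every other block, decrease by a factor $\ge 2$, so $\sum_{\text{blocks}} a_{n'} \le 2 \sum_{j\ge 0} a \cdot 2^{-j}\cdot(\text{at most }2\text{ blocks per factor-2 window}) \le 4a$ or so. Multiplying the per-block bound $O(K)\cdot K\tau\, a_{n'}$ by this geometric sum gives a total overhead $\le 6K(K-1)\tau a$ after tracking the constants honestly (the factor $K(K-1)$ rather than $K^2$ coming from the fact that a block of length $1$ — i.e. a step that is already $\tau$-good — contributes nothing, so only the $\le K-1$ \emph{extra} merged subintervals in each block cost anything, and each costs $\le 6\tau a_{n'}$ summed against the geometric series). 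Finally, $\T_\tau(f) \le \T(f,(a'_k)) \le \T(f,(a_n)) + 6K(K-1)\tau a$, with strict inequality as in the statement since all the geometric estimates are strict. The main obstacle is purely bookkeeping: getting the constant down to exactly $6K(K-1)$ requires being careful that (a) only $K-1$ subintervals per block are ``extra,'' (b) each extra subinterval within a block of left endpoint $a_{n'}$ has length $< 6\tau a_{n'}$ — here one uses $(1+\tau)^K-1<2K\tau$ and that each such subinterval sits inside $[a_{n'},2a_{n'})$ — and (c) the left endpoints $a_{n'}$ sum to at most $a$ after exploiting the factor-$2$ drop every $K$ steps; no single estimate is deep, but the constants must be chased simultaneously.
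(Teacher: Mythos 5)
Your proposal has a genuine gap at the very first step. You propose to \emph{delete} intermediate points: from a kept point $a_m$, keep the smallest $n>m$ with $a_m/a_n\ge 1+\tau$, and you claim that the resulting sequence is still a good partition because $a_m/a_n<2$. But this ratio bound does not follow. What you actually know at the stopping time is that $a_m/a_{n-1}<1+\tau$ (else you would have stopped earlier), and then the single step from $a_{n-1}$ to $a_n$ can have ratio anywhere in $[1,2]$ since $(a_n)$ is merely a good partition. So $a_m/a_n$ can be as large as (just under) $2(1+\tau)>2$. A concrete failure: take $\tau=0.05$, $K=8$ (so $(1+\tau)^K<2$), $a_m=1$, $a_{m+1}=0.97$, $a_{m+2}=0.49$; then $a_m/a_{m+1}\approx 1.03<1+\tau$, so you skip $a_{m+1}$, and you stop at $n=m+2$ with $a_m/a_{m+2}\approx 2.04>2$. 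The thinned sequence is therefore not a good partition, so the quantity $\T(f,(a'_k))$ you want to compare against $\T_\tau(f)$ is not even admissible. Your justification ``skipping at most $K-1$ indices multiplies the ratio by less than $(1+\tau)^{K-1}$'' confuses the cumulative ratio over the skipped steps (which is indeed $<1+\tau$) with the ratio after the last, unskipped step.

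There is also a secondary but revealing problem with your cost analysis: deleting a partition point can only \emph{decrease} the total drop. If $m_1=\min_{[a_{m+1},a_m]}f$ and $m_2=\min_{[a_m,a_{m-1}]}f$, then merging the two intervals changes the contribution by $\max(m_1,m_2)-f(a_m)\le 0$. So your per-block overhead $O(K)\cdot K\tau\,a_{n'}$ is not measuring a real cost; the deletion step is free, and the constants-chasing you describe is misplaced effort. This is consistent with the real gap being elsewhere (the good-partition property, not the drop estimate).

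The paper's proof takes a completely different tack that avoids both issues: it does not delete any points. It fixes the block endpoints $a_{iK}$ and $a_{(i+1)K}$ and \emph{perturbs} the $K-1$ interior points within each block, replacing the ratios $\beta_j=a_{iK+j-1}/a_{iK+j}$ by new ratios $\beta'_j\in[1+\tau,2]$ with the same product $\ge 2$. Since the product is fixed, the endpoints stay in place; since each $\beta'_j\le 2$, the new sequence is automatically a good (indeed $\tau$-good) partition; and each interior point is displaced by at most $a_{iK}((1+\tau)^K-1)$, which is $O(K\tau)$ times the block width. Because moving a single partition point by $\eta$ changes $\T$ by at most $2\eta$ (the $1$-Lipschitz estimate), and the block widths telescope, summing over blocks gives exactly $6K(K-1)\tau a$. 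If you wish to salvage a deletion-based argument, note that deletion alone cannot control the upper ratio bound; the paper's companion Lemma~\ref{l:merging} uses deletions to achieve the spacing hypothesis $a_{n-3}/a_n>2$, but then still relies on the perturbation Lemma~\ref{l:tau} (with $K=3$) to convert that to a $\tau$-good partition.
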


\begin{proof}
Fix $i\in \N_0$ and consider the numbers $\beta_j=a_{iK+j-1}/a_{iK+j}$ ($j=1,\ldots,K$).
Then $\beta_1\cdots\beta_K=a_{iK}/a_{iK+K}\ge 2$ and for every $j$ we have $1<\beta_j\le 2$. The goal is to make every $\beta_j$ at least $1+\tau$ so that each of them remains at most $2$, the product $\beta_1\cdots \beta_K$ stays fixed, and the numbers $a_{iK+j}$ are changed by only a small amount.

So let $\beta'_j=1+\tau$ if $\beta_j\le 1+\tau$, and to get the remaining
$\beta'_j$'s decrease some of the corresponding $\beta_j$'s (and choose $\beta'_j=\beta_j$ for the rest), so that still $\beta'_j\ge 1+\tau$ and $\beta'_1\cdots\beta'_K=\beta_1\cdots\beta_K$; this is possible since $(1+\tau)^K<2$.
Then
 let $a'_{iK}=a_{iK}$ and
for each $j=1,\ldots, K$
let $a'_{iK+j}=a_{iK}/(\beta'_1\cdots \beta'_j)$. Note
that
 $a'_{iK+K}=a_{iK+K}$,
$1+\tau\le a'_{iK+j-1}/a'_{iK+j} \le 2$ for every $j=1,\ldots,K$
and each $a_{iK+j}$ was multiplied by a factor between $(1+\tau)^{-K}$ and $(1+\tau)^{K}$ to get $a'_{iK+j}$. This implies that for every $j=1,\ldots,K-1$,
\begin{equation}\label{e:translate}
  |a'_{iK+j}-a_{iK+j}|\le a_{iK} ((1+\tau)^{K}-1) \le
  2(a_{iK}-a_{(i+1)K})((1+\tau)^{K}-1).
\end{equation}

Note that $(a'_n)_{n=0}^{\infty}$ obtained by applying this procedure for every
$i\in\N_0$
is a $\tau$-good partition of $[0,a]$.

Let $\tau_0=2^{1/K}-1$. Since $\frac{(1+x)^K-1}{x}$ is increasing on $(0,\infty)$ (being a polynomial with positive coefficients) and
$(1+\tau)^K< 2$, we have
$$
\frac{(1+\tau)^K-1}{\tau}<
\frac{(1+\tau_0)^K-1}{\tau_0}=
\frac{1}{2^{1/K}-1} \le \frac{K}{\ln 2}<\frac{3K}{2},
$$
where we used the inequality
$e^t-1 \ge t$.
Thus
$$
(1+\tau)^K-1< \frac{3K\tau}2.
$$
Combining this with \eqref{e:translate}
 and $a'_{iK}=a_{iK}$, then
adding up, we get
\begin{eqnarray*}
  \sum_{n=0}^{\infty}|a'_n-a_n| <
  \left(\sum_{i=0}^{\infty}(K-1)\cdot 2(a_{iK}-a_{(i+1)K})
                    \frac{3K\tau}{2}\right) \\
  = (K-1)(a_0-\lim_{n\to\infty}a_n)\cdot 3K\tau
  =   3K(K-1)\tau a.
\end{eqnarray*}
Since $f$ is $1$-Lipschitz, changing one $a_n$ by $\eta$
can change $\T(f,(a_n))$ by at most $2\eta$, so the above inequality
implies
$$
\T(f,(a'_n))< \T(f,(a_n)) + 6K(K-1) \tau a,
$$
which completes the proof of the lemma.
\end{proof}

The next lemma shows that we can replace an arbitrary good partition by one satisfying the assumptions of Lemma \ref{l:tau}, without increasing the total drop.

\begin{lemma}\label{l:merging}
For any $\de>0$ and
$1$-Lipschitz function $f:[0,a]\to\R$
there exists a good  partition $(a'_n)$ such that
$a'_{n-3}/a'_n > 2$ for every $n\ge 3$ and
$\T(f,(a'_n))\le \T(f)+\delta$.
\end{lemma}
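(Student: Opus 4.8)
The plan is to start from a good partition $(a_n)$ of $[0,a]$ that is already almost optimal, say $\T(f,(a_n))\le\T(f)+\delta$ (such a partition exists because $\T(f)$ is an infimum and, by the Lipschitz bound on each term, $0\le\T(f)\le a<\infty$), and then to \emph{sparsify} it by a greedy merging procedure. So it suffices to show: given any good partition $(a_n)$, one can pass to a subsequence $(a'_n)$ which is still a good partition, satisfies $a'_{n-3}/a'_n>2$ for $n\ge 3$, and has $\T(f,(a'_n))\le\T(f,(a_n))$.

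I would construct $(a'_n)$ greedily: put $a'_0=a_0$ and, having defined $a'_m=a_{k_m}$, let $a'_{m+1}=a_{k_{m+1}}$ where $k_{m+1}$ is the \emph{largest} index $k$ with $a_k\ge a'_m/2$. This index exists (the $a_k$ tend to $0$) and satisfies $k_{m+1}>k_m$, because $a_{k_m+1}\ge a_{k_m}/2$ by goodness of $(a_n)$. By construction $1<a'_m/a'_{m+1}\le 2$, so $(a'_n)$ is again a good partition. Moreover, maximality of $k_{m+1}$ forces $a_{k_{m+1}+1}<a'_m/2$, and since $k_{m+2}\ge k_{m+1}+1$ we get $a'_{m+2}=a_{k_{m+2}}\le a_{k_{m+1}+1}<a'_m/2$. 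Hence $a'_{n-2}/a'_n>2$ for every $n\ge 2$, which in particular gives the required bound $a'_{n-3}/a'_n>2$ for $n\ge 3$ and re-confirms $a'_n\to 0$.

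What remains is the total-drop inequality, and the crux is the elementary fact that \emph{deleting a single point of a partition never increases the total drop}. Indeed, if a partition contains consecutive points $p<b<q$ (so that $[p,b]$ and $[b,q]$ are among its intervals) and we delete $b$, the two terms $f(b)-\min_{[b,q]}f$ and $f(p)-\min_{[p,b]}f$ are replaced by the single term $f(p)-\min_{[p,q]}f$; since $\min_{[p,q]}f=\min(\min_{[p,b]}f,\min_{[b,q]}f)$, the resulting decrease equals $f(b)-\max(\min_{[p,b]}f,\min_{[b,q]}f)\ge 0$, because both of those minima are $\le f(b)$. Now the passage from $(a_n)$ to $(a'_n)$ amounts to deleting, within each ``gap'' between consecutive kept points $a'_m=a_{k_m}$ and $a'_{m+1}=a_{k_{m+1}}$, the finitely many points $a_{k_m+1},\dots,a_{k_{m+1}-1}$. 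Deleting these one at a time only ever modifies terms attached to subintervals of $[a'_{m+1},a'_m]$, so distinct gaps do not interact; applying the single-deletion bound repeatedly shows that the contribution of $[a'_{m+1},a'_m]$ to $\T(f,(a'_n))$ is at most its original contribution to $\T(f,(a_n))$. Summing over $m$ yields $\T(f,(a'_n))\le\T(f,(a_n))\le\T(f)+\delta$.

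I do not anticipate a genuine obstacle here; the two points that demand a little care are (a) verifying that a single deletion affects only the two terms adjacent to the deleted point — so that the merging can be analyzed gap by gap — and (b) checking that choosing $k_{m+1}$ as the \emph{largest} admissible index simultaneously keeps every ratio $\le 2$ and makes every product of two consecutive ratios strictly exceed $2$; both are immediate from the construction.
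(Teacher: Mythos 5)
Your proof is correct and takes a genuinely different route from the paper's. The paper first enlarges the initial near-optimal partition by inserting, in each interval $[a_n,a_{n-1}]$, an interior minimizer of $f$, so that every interval becomes either increasing or decreasing; it then merges adjacent intervals via a case analysis on their types, declining to merge a decreasing interval followed by an increasing one (because the resulting interval would no longer be monotone), and finally argues that such unmergeable pairs cannot occur twice in a row, which yields $a'_{n-3}/a'_n>2$. Your approach bypasses the monotonicity bookkeeping entirely: the single observation that deleting one point of a partition cannot increase $\T(f,(a_n))$ --- because the change equals $f(b)-\max(\min_{[p,b]}f,\min_{[b,q]}f)\ge 0$ --- makes the greedy subsequence construction immediate. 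Both are valid, but yours is cleaner, uses a more general monotonicity fact, and in fact proves the stronger conclusion $a'_{n-2}/a'_n>2$ for all $n\ge 2$ (which trivially implies $a'_{n-3}/a'_n>2$). Two small remarks: the deletion lemma is applied to intermediate sequences that need not be good partitions, but $\T(f,\cdot)$ is defined for arbitrary partitions, so this is fine; and when passing from finitely many single deletions per gap to the full sparsification you are really deleting infinitely many points, so one should note that $\T(f,(a_n'))$ still agrees term-by-term with the limit of the finite-stage drops (this is immediate since each term of $\T(f,(a'_n))$ is determined by one gap, but it deserves a sentence).
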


\begin{proof}
It is enough to show that
for any good partition $(a_n)$
there exists a good  partition $(a'_n)$ such that
$a'_{n-3}/a'_n > 2$ for every $n\ge 3$ and
$\T(f,(a'_n))\le \T(f,(a_n))$.

First we claim that we can suppose that every interval
$[a_n,a_{n-1}]$ is increasing or decreasing
(recall Definition~\ref{d:many}).
Indeed, for each $n\ge 1$
if on the interval $[a_n,a_{n-1}]$ the minimum of $f$ is
taken at $p\in(a_n,a_{n-1})$ then inserting $p$ to the partition
(in between $a_{n-1}$ and $a_n$) we get a new good partition
such that $[a_n,p]$ is decreasing and $[p,a_{n-1}]$ is increasing and
it is easy to see that $\T(f,(a_n))$ is not changed.

Suppose that $a_{n-2}/a_n<2$ $(n\ge 2)$.
If $[a_n,a_{n-1}]$ and $[a_{n-1},a_{n-2}]$ are both increasing
or both decreasing,  then by merging these intervals we get an interval of the same type,
and $\T(f,(a_n))$ remains unchanged.
If $[a_n,a_{n-1}]$ is increasing and $[a_{n-1},a_{n-2}]$
is decreasing then after merging the two intervals
the minimum of $f$ on $[a_n,a_{n-2}]$ is still
achieved at one of the endpoints of the interval,
and  $\T(f,(a_n))$ does not increase.

Applying the above merging procedure inductively (starting with $n=2$) whenever possible, we get a good partition $(a'_n)$ such that
whenever $a'_{n-2}/a'_n<2$ $(n\ge 2)$ then $[a'_n,a'_{n-1}]$ is decreasing and $[a'_{n-1},a'_{n-2}]$
is increasing. Since this cannot happen for both $n$ and
$n-1$ we get that $a'_{n-2}/a'_n\ge 2$ or
$a'_{n-3}/a'_{n-1}\ge 2$ for any $n\ge 3$, which clearly
implies that $a'_{n-3}/a'_n> 2$.
\end{proof}

\begin{corollary}\label{c:tau}
For any $1$-Lipschitz function $f:[0,a]\to\R$ and
any $0<\tau<1$,
$$
\T_\tau(f)\le \T(f) + 36 \tau a.
$$
\end{corollary}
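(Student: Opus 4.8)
The plan is to combine Lemmas~\ref{l:tau} and \ref{l:merging} with the choice $K=3$, after disposing of large $\tau$ by a trivial bound. First I would record the elementary estimate $\T(f,(a_n))\le a$ valid for every good partition $(a_n)$ of $[0,a]$ and every $1$-Lipschitz $f$: indeed, for $x\in[a_n,a_{n-1}]$ the Lipschitz property gives $f(x)\ge f(a_n)-(a_{n-1}-a_n)$, so $f(a_n)-\min_{[a_n,a_{n-1}]}f\le a_{n-1}-a_n$, and summing telescopes to $a_0-\lim_n a_n=a$. The same bound holds for any $\tau$-good partition, and for $0<\tau<1$ such partitions exist (e.g. $a_n=a(1+\tau)^{-n}$, using $1+\tau\le 2$), so $\T_\tau(f)\le a$. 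Also $\T(f)\ge 0$, since each summand in a total drop is nonnegative.

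If $\tau\ge 1/36$, then $36\tau a\ge a\ge\T_\tau(f)$, and combined with $\T(f)\ge 0$ the asserted inequality $\T_\tau(f)\le\T(f)+36\tau a$ holds trivially. Hence I may assume $\tau<1/36$; in particular $(1+\tau)^3\le (37/36)^3<2$.

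Now fix $\delta>0$ and apply Lemma~\ref{l:merging} to obtain a good partition $(a'_n)$ of $[0,a]$ with $a'_{n-3}/a'_n>2$ for every $n\ge 3$ and $\T(f,(a'_n))\le\T(f)+\delta$. The integer $K=3$ satisfies $K>1$, $(1+\tau)^K<2$, and $a'_{n-K}/a'_n\ge 2$ for every $n\ge K$, so Lemma~\ref{l:tau} applies and yields
\[
\T_\tau(f) < \T(f,(a'_n)) + 6\cdot 3\cdot 2\,\tau a \le \T(f)+\delta+36\tau a.
\]
Letting $\delta\to 0^+$ gives $\T_\tau(f)\le\T(f)+36\tau a$, as desired.

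The only point requiring any care is the case split on $\tau$: Lemma~\ref{l:tau} needs $(1+\tau)^K<2$, which fails for every admissible $K\ge 3$ once $\tau$ is not small (Lemma~\ref{l:merging} only provides the gap condition for $K=3$, so one cannot compensate by enlarging $K$), and this is exactly why the crude bound $\T_\tau(f)\le a$ is needed to cover $\tau\ge 1/36$. Everything else is a direct substitution of $K=3$ into the two preceding lemmas.
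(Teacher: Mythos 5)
Your proof is correct and follows essentially the same route as the paper's: handle large $\tau$ by the trivial bound $0\le\T_\tau(f)\le a$, then for small $\tau$ combine Lemma~\ref{l:merging} with Lemma~\ref{l:tau} at $K=3$. The only cosmetic difference is the cutoff for the trivial case (you use $\tau\ge 1/36$, the paper uses $\tau\ge 2^{1/3}-1$); both thresholds work, and you correctly verify $(1+\tau)^3<2$ in your range.
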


\begin{proof}
Note that for any $1$-Lipschitz function $f:[0,a]\to\R$
and any partition $(a_n)$ of $[0,a]$, by definition, we have
$0\le \T(f,(a_n))\le a$.
Thus the claim  holds trivially if
$\tau\ge \root 3 \of 2 - 1$.
Otherwise we can apply Lemma~\ref{l:merging}, and then
Lemma~\ref{l:tau} (for $K=3$).
\end{proof}

\subsection{Discretizing the estimates}

Recall from Definition \ref{def:integer-partition} the notion of $\tau$-good partition of an integer interval $(0,\ell]$, and the notation $\M(\sigma,(N_i))$. Sometimes we refer to these as \emph{integer} partitions for emphasis. Note that the requirement
\eqref{e:integertau}
for a $\tau$-good integer partition slightly
differs from the requirement
$1+\tau\le a_{k-1}/a_k \le 2$ for a $\tau$-good
partition (see Definition~\ref{d:many}), which
is equivalent to $\tau a_k \le a_{k-1}-a_k\le a_k$.
These two notions are connected by the following
lemma.

\begin{lemma}\label{l:discretizing}
  Assume that $L\le \ell$ are positive integers.  Let $f:[0,L/\ell]\to\R$ be a $1$-Lipschitz function and let $(a_n)$ be a
  $(2\tau)$-good partition of $[0,L/\ell]$.
  Then there exists a $\tau$-good integer partition
  $0=N_0<\ldots<N_q=L$ of $(0,L]$ such that
  $$
  \sum_{j=0}^{q-1} f(N_j/\ell)-\min_{[N_j/\ell,N_{j+1}/\ell]} f \le \T(f,(a_n))
  +O_{\tau}(\log \ell/\ell).
  $$
\end{lemma}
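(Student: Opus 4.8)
The plan is to discretize the given $(2\tau)$-good partition $(a_n)$ of $[0,L/\ell]$ by rounding each $a_n$ to a nearby integer multiple of $1/\ell$, keeping only those rounded values that are distinct and lie in $(0,L]$, and then to check that (i) the resulting integer sequence is a $\tau$-good partition of $(0,L]$ and (ii) the associated discretized total drop exceeds $\T(f,(a_n))$ by at most $O_\tau(\log\ell/\ell)$. First I would note that since $(a_n)$ is a good partition of $[0,L/\ell]$ with $a_0=L/\ell$ and $a_n\to 0$, and since the ratios satisfy $1+2\tau\le a_{n-1}/a_n\le 2$, the sequence decreases geometrically, so only $q=O_\tau(\log(L/\ell)/\ldots)=O_\tau(\log \ell)$ of the $a_n$ exceed $1/\ell$; all the tail below scale $1/\ell$ contributes a total drop of at most $O(1/\ell)$ to $\T(f,(a_n))$ because $f$ is $1$-Lipschitz, and it can simply be discarded (equivalently, merged into the smallest surviving interval). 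So I would fix the finitely many scales $a_0>a_1>\cdots>a_{q-1}\ge 1/\ell > a_q > \cdots$, and define $N_j$ for $j=0,\ldots,q-1$ to be $\lfloor \ell a_j\rceil$ (rounding to the nearest integer), with $N_0=L$ by construction, and $N_q=0$.

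Next I would verify the $\tau$-good integer partition property \eqref{e:integertau}, i.e. $\tau N_{j} \le N_{j-1}-N_{j}\le N_{j}+1$, where I am indexing so that $N_0=L > N_1 > \cdots > N_q = 0$ (note the paper's integer partitions are increasing while the Lipschitz ones are decreasing, so there is a harmless relabeling $N_j \leftrightarrow N_{q-j}$ at the end). The point is that $|N_j - \ell a_j|\le 1/2$, so $N_{j-1}-N_j = \ell(a_{j-1}-a_j) + O(1)$; since $a_{j-1}-a_j \ge 2\tau a_j$ and $\ell a_j \gtrsim 1$ for $j\le q-1$, we get $N_{j-1}-N_j \ge 2\tau \ell a_j - 1 \ge \tau N_j$ once $\ell a_j$ is larger than an absolute constant times $1/\tau$; the finitely many (at most $O_\tau(1)$) exceptional scales near $1/\ell$ can be absorbed by merging consecutive intervals, which only decreases the total drop and can only improve the lower bound \eqref{e:integertau} at the cost of at most an additive $O_\tau(1/\ell)$ per merge, hence $O_\tau(\log\ell/\ell)$ total. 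The upper bound $N_{j-1}-N_j\le N_j+1$ follows from $a_{j-1}\le 2a_j$ after rounding, giving $N_{j-1}\le 2N_j+1$.

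Finally, for the drop comparison: each term $f(N_j/\ell) - \min_{[N_j/\ell, N_{j-1}/\ell]} f$ differs from $f(a_j) - \min_{[a_j,a_{j-1}]} f$ by at most $O(1/\ell)$, because moving an endpoint by $|a_j - N_j/\ell|\le 1/(2\ell)$ changes $f$ at that endpoint by $\le 1/(2\ell)$ (Lipschitz) and changes the minimum over the interval by $\le 1/\ell$ as well (the interval moves by $O(1/\ell)$ and $f$ is $1$-Lipschitz). Summing over the $q = O_\tau(\log\ell)$ surviving intervals gives a total discrepancy $O_\tau(\log\ell/\ell)$, and adding the $O(1/\ell)$ from the discarded tail and the merges gives the claimed bound. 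The main obstacle is the bookkeeping near scale $1/\ell$: the rounded values could collide or violate \eqref{e:integertau} when $a_j$ is within a constant factor of $1/\ell$, so one must handle that boundary region by an explicit merging argument and check that it costs only $O_\tau(\log\ell/\ell)$ — this is routine but requires care to keep both the lower ratio bound $\tau N_j$ and the total-drop estimate under control simultaneously.
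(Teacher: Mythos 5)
Your overall strategy — round each $a_n$ to a multiple of $1/\ell$, show the rounded sequence is a $\tau$-good integer partition, and bound the change in total drop via the Lipschitz property and the logarithmic count of scales — is exactly the paper's. But the paper uses the floor $\lfloor\ell a_n\rfloor$ and simply takes the distinct values of the resulting sequence, and that choice is not cosmetic: it makes the partition constraints hold automatically, whereas your nearest-integer rounding with a hard truncation at scale $1/\ell$ creates two problems that your proposed patches don't actually resolve.

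First, the bottom of the partition. A $\tau$-good integer partition $0=N_0<N_1<\ldots<N_q=L$ must satisfy $N_1-N_0\le N_0+1=1$, so the smallest nonzero entry must be exactly $1$. With the paper's floor, this comes for free: applying $\lfloor\ell a_n\rfloor-\lfloor\ell a_{n+1}\rfloor\le\lfloor\ell a_{n+1}\rfloor+1$ at the index where $\lfloor\ell a_{n+1}\rfloor$ first becomes $0$ forces the last nonzero value to be $1$. In your scheme you cut the sequence off at the last $a_{q-1}\ge 1/\ell$ and set $N_q=0$ by fiat, so the smallest nonzero entry is $\lfloor\ell a_{q-1}\rceil$, which can be $2$ (e.g.\ $a_{q-1}=1.6/\ell$), violating the constraint. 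Merging cannot fix this — merging deletes points, and deleting the $2$ makes the gap at $0$ even larger. The fix would be to insert additional points (a short chain starting from $1$), which is precisely what retaining the distinct values of $\lfloor\ell a_n\rfloor$ for all $n$ does implicitly.

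Second, the lower ratio bound. You propose to absorb the $O_\tau(1)$ scales with $\ell a_j\lesssim 1/\tau$ by merging consecutive intervals, claiming this ``can only improve the lower bound.'' That is true, but merging can simultaneously destroy the upper bound $N_{j+1}-N_j\le N_j+1$: merging two intervals each of ratio up to $2$ gives an interval of ratio up to $4$. You never verify the upper bound survives the merges. The paper avoids this entirely with a one-line observation: if $\tau\lfloor\ell a_{n+1}\rfloor\le 1$ then the lower bound $\tau N_j\le N_{j+1}-N_j$ is trivial (any positive integer gap works), so no surgery near the bottom is needed at all. So your proposal is close in spirit, but as written the verification that the output is a $\tau$-good integer partition has a genuine hole at both ends of the exceptional range, and the remedy you suggest (merging) is the wrong operation.
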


\begin{proof}
Let $N_0<\ldots<N_q$ be the values taken by the sequence
  $\lfloor \ell a_n \rfloor$. Since $a_n\to 0$ we get $N_0=0$. Thus $0=N_0<\ldots<N_q=L$ is an integer partition of $(0,L]$.

Using that $(a_n)$ is a good partition we get
$$
\lfloor \ell a_n \rfloor \le \ell a_n \le 2 \ell a_{n+1} <
2\lfloor \ell a_{n+1} \rfloor + 2,
$$
hence $\lfloor \ell a_n \rfloor - \lfloor \ell a_{n+1} \rfloor \le
\lfloor \ell a_{n+1} \rfloor + 1$.
Thus to prove that $(N_j)$ is a $\tau$-good integer partition of $(0,L]$
it is enough to show that
$\lfloor \ell a_n \rfloor - \lfloor \ell a_{n+1} \rfloor \ge
\tau \lfloor \ell a_{n+1} \rfloor$ if
$\lfloor \ell a_n \rfloor > \lfloor \ell a_{n+1} \rfloor$.
This is clear if $\tau\lfloor \ell a_{n+1} \rfloor\le 1$.
Otherwise, using also that $(a_n)$ is a $(2\tau)$-good partition, we get
$$
\lfloor \ell a_n \rfloor - \lfloor \ell a_{n+1} \rfloor >
\ell a_n - \ell a_{n+1} - 1
\ge 2\tau \ell a_{n+1} - 1 \ge
2\tau \lfloor \ell a_{n+1} \rfloor -1
> \tau \lfloor \ell a_{n+1} \rfloor.
$$

Let $K=\max\{ n: \ell a_n \ge 1\}$. Since $(a_n)$ is $(2\tau)$-good and $\ell a_0=L$,
$(1+2\tau)^{K}\le L\le \ell$, and so $K\le \log\ell /\log(1+2\tau)$.
Let $a'_n=\lfloor \ell a_n \rfloor / \ell$.
Since $f$ is $1$-Lipschitz
and $|a'_n - a_n|<1/ \ell$,
we deduce that
\[
\sum_{n=1}^{K+1} f(a'_n)-\min_{[a'_n,a'_{n-1}]}f
\le  \sum_{n=1}^{K+1} \left(f(a_n)-\min_{[a_n,a_{n-1}]}f + 2/\ell\right) \le \T(f,(a_n)) + 2(K+1)/\ell.
\]

By definition $\ell a_{K+1}<1$,
hence $a'_{K+1}=0$.
Thus
$$
\sum_{j=0}^{q-1} f(N_j/\ell)-
     \min_{[N_j/\ell,N_{j+1}/\ell]} f =
\sum_{n=1}^{K+1} f(a'_n)-
     \min_{[a'_n,a'_{n-1}]}f
\le \T(f,(a_n)) +O_{\tau}(\log\ell /\ell),
$$
which completes the proof.
\end{proof}

The following lemma will help us translate the results for Lipschitz functions to results for $[-1,1]$-sequences.
\begin{lemma}\label{l:sigmatof}
Let $\gamma,\Gamma\in[-1,1]$, $\tau\in(0,1/2)$,
$\zeta\in(0,1)$ and
let $\sigma\in [-1,1]^\ell$ satisfy
\[
\gamma j - \zeta\ell \le
\sigma_1 +\ldots + \sigma_j \le
\Gamma j + \zeta\ell  \quad (1\le j\le \ell).
\]

Then there exists a piecewise linear $1$-Lipschitz
function $f:[0,1]\to\R$ such that
\begin{enumerate}[(i)]
\item \label{i:fdef}
$f(j/\ell)=\frac{1}{\ell}(\sigma_1+\ldots+\sigma_j)$
if $\sqrt\zeta \ell \le j \le \ell$,
\item \label{i:estimates}
    $(\gamma-\sqrt\zeta) x \le f(x) \le
       (\Gamma +\sqrt\zeta) x$ on $[0,1]$ and
\item \label{i:MT}
  for any integer $0<L\le \ell$,
$$
\frac{1}{\ell} \M_\tau(\sigma|(0,L]) \le
\T(f|[0,L/\ell])
+2\sqrt{\zeta}+ 144\tau + O_{\tau}(\log\ell /\ell).
$$
\end{enumerate}
\end{lemma}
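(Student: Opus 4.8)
The plan is to build $f$ explicitly from the partial-sum data of $\sigma$ and then read off the three properties. Put $j_0=\lceil\sqrt\zeta\,\ell\rceil$ and let $F(k/\ell)=\tfrac1\ell(\sigma_1+\cdots+\sigma_k)$ for $0\le k\le\ell$ (with $F(0)=0$) be the rescaled partial-sum function on the grid $\{0,\tfrac1\ell,\ldots,1\}$. I would take $f$ to be the piecewise linear interpolation of the points $(k/\ell,F(k/\ell))$ for $j_0\le k\le\ell$, extended to $[0,j_0/\ell]$ by the straight segment from $(0,0)$ to $(j_0/\ell,F(j_0/\ell))$. This gives (\ref{i:fdef}) at once; moreover the slope of $f$ on $[k/\ell,(k+1)/\ell]$ is $\sigma_{k+1}$ for $k\ge j_0$, and its slope on $[0,j_0/\ell]$ is $\tfrac1{j_0}(\sigma_1+\cdots+\sigma_{j_0})$, both in $[-1,1]$, so $f$ is piecewise linear and $1$-Lipschitz. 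For (\ref{i:estimates}) the only input is the elementary inequality $\zeta\le\sqrt\zeta\,x$ valid for $x\ge\sqrt\zeta$: at a grid point $k/\ell\ge j_0/\ell\ge\sqrt\zeta$ the hypothesis gives $f(k/\ell)=F(k/\ell)\ge\gamma(k/\ell)-\zeta\ge(\gamma-\sqrt\zeta)(k/\ell)$, and on $[0,j_0/\ell]$ the slope is $\ge\gamma-\zeta\ell/j_0\ge\gamma-\sqrt\zeta$; since $f$ is affine between consecutive grid points and $x\mapsto(\gamma-\sqrt\zeta)x$ is linear, the lower bound propagates to all of $[0,1]$, and the upper bound is symmetric. (The degenerate cases $j_0\in\{0,\ell\}$, where one of the two pieces is trivial, are handled in the same way.)

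For (\ref{i:MT}), fix an integer $0<L\le\ell$. Since $\M_\tau(\sigma|(0,L])$ is a \emph{minimum} over $\tau$-good integer partitions of $(0,L]$, it suffices to produce one candidate partition with small $\M(\sigma|(0,L],\cdot)$. I would first apply Corollary~\ref{c:tau} with $2\tau$ in place of $\tau$ (legitimate as $\tau<1/2$) to the $1$-Lipschitz function $f|[0,L/\ell]$, which yields $\T_{2\tau}(f|[0,L/\ell])\le\T(f|[0,L/\ell])+72\tau$ (using $L/\ell\le1$); hence one can choose a $(2\tau)$-good partition $(a_n)$ of $[0,L/\ell]$ with $\T(f|[0,L/\ell],(a_n))\le\T(f|[0,L/\ell])+72\tau+\eps'$ for arbitrarily small $\eps'>0$. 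Then feeding $(a_n)$ into Lemma~\ref{l:discretizing} produces a $\tau$-good integer partition $0=N_0<\cdots<N_q=L$ of $(0,L]$ whose \emph{sampled total drop} satisfies
\[
\sum_{j=0}^{q-1}\Big(f(N_j/\ell)-\min_{[N_j/\ell,\,N_{j+1}/\ell]}f\Big)\ \le\ \T(f|[0,L/\ell],(a_n))+O_\tau(\log\ell/\ell).
\]
This $(N_j)$ will be the candidate partition; it remains to compare $\tfrac1\ell\M(\sigma|(0,L],(N_j))=\tfrac1\ell\sum_{j=0}^{q-1}\mathcal{S}(\sigma|(N_j,N_{j+1}])$ with the left-hand side above.

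Let $j_*$ be the smallest index with $N_{j_*}\ge j_0$. For $j\ge j_*$ the interval $[N_j/\ell,N_{j+1}/\ell]$ lies in $[j_0/\ell,1]$, where $f$ is precisely the piecewise linear interpolation of $F$ with breakpoints at the grid; so the minimum of $f$ over this interval is attained at a grid point, and unwinding the definitions of $F$ and $\mathcal{S}$ (Definition~\ref{def:integer-partition}) gives the \emph{exact} identity $f(N_j/\ell)-\min_{[N_j/\ell,N_{j+1}/\ell]}f=\tfrac1\ell\mathcal{S}(\sigma|(N_j,N_{j+1}])$. For $j<j_*$ the blocks $(N_j,N_{j+1}]$ lie inside $(0,N_{j_*}]$, and the constraint $N_{j+1}\le 2N_j+1$ of a $\tau$-good integer partition forces $N_{j_*}\le 2N_{j_*-1}+1<2j_0$; with the trivial bound $\mathcal{S}(\sigma|I)\le|I|$, the contribution of these blocks is at most $2j_0/\ell\le2\sqrt\zeta+2/\ell$. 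As every term of the sampled total drop is nonnegative, putting back the discarded $j<j_*$ terms only helps, and we obtain
\[
\tfrac1\ell\M(\sigma|(0,L],(N_j))\ \le\ 2\sqrt\zeta+\tfrac2\ell+\sum_{j=0}^{q-1}\Big(f(N_j/\ell)-\min_{[N_j/\ell,N_{j+1}/\ell]}f\Big)\ \le\ \T(f|[0,L/\ell])+2\sqrt\zeta+72\tau+\eps'+O_\tau(\log\ell/\ell).
\]
Since $(N_j)$ is a $\tau$-good integer partition of $(0,L]$ we have $\M_\tau(\sigma|(0,L])\le\M(\sigma|(0,L],(N_j))$; letting $\eps'\to0$ and using $72\le144$ then gives (\ref{i:MT}). (If $L<j_0$ there is no such $j_*$ and the whole sum $\tfrac1\ell\sum_j\mathcal{S}$ is bounded crudely by $L/\ell<\sqrt\zeta+1/\ell$, which is even smaller.) I expect the bookkeeping around the initial segment $(0,j_0]$ — the only place where $f$ cannot be made to track the partial sums of $\sigma$ — to be the one step requiring real care; once the construction of $f$ is fixed, everything else is routine and follows from Corollary~\ref{c:tau} and Lemma~\ref{l:discretizing}.
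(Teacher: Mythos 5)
Your proof is correct and, for part (iii), takes a genuinely different and somewhat cleaner route than the paper. The construction of $f$ and the verification of (\ref{i:fdef})--(\ref{i:estimates}) essentially match the paper's (the paper places the breakpoint at $\sqrt\zeta$ rather than at $\lceil\sqrt\zeta\ell\rceil/\ell$, a cosmetic difference). For (\ref{i:MT}), however, the paper introduces an auxiliary function $f_1$ interpolating \emph{all} partial sums of $\sigma$, splices a near-optimal $(2\tau)$-good partition of the initial segment $[0,a_N]$ (with $a_N\ge\sqrt\zeta$) into the given partition $(a_n)$, bounds the added contribution by a second application of Proposition~\ref{p:basic} and Corollary~\ref{c:tau}, and only then feeds $f_1$ and the spliced partition into Lemma~\ref{l:discretizing}, using that the sampled total drop of $f_1$ equals $\tfrac1\ell\M(\sigma,\cdot)$ exactly. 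You avoid the auxiliary $f_1$ and the splicing altogether: you apply Lemma~\ref{l:discretizing} directly to $f$ and $(a_n)$, observe that for blocks $(N_j,N_{j+1}]$ lying in $[j_0,\ell]$ the sampled drop of $f$ coincides exactly with $\tfrac1\ell\mathcal{S}(\sigma|(N_j,N_{j+1}])$ (since $f$ is a grid interpolation there and the minimum is attained at a grid point), and control the blocks below $j_0$ by the trivial estimate $\mathcal{S}(\sigma|I)\le|I|$ together with the $\tau$-good constraint $N_{j_*}\le 2N_{j_*-1}+1<2j_0$. This invokes Corollary~\ref{c:tau} only once rather than twice, which is why you end up with $72\tau$ instead of the paper's $144\tau$ (a harmless strengthening, as the extra $2/\ell$ you pick up is absorbed into the $O_\tau(\log\ell/\ell)$ term). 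The edge cases ($L<j_0$, $j_0=\ell$) that you flag are handled correctly by the trivial bounds you indicate.
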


\begin{proof}
Let $f_1:[0,1]\to\R$ be the piecewise linear function which is linear on each interval $[j/\ell,(j+1)/\ell]$, and at the points $j/\ell$ takes the values
\[
f_1(j/\ell) = \frac{1}{\ell}(\sigma_1+\ldots+\sigma_j)
\qquad (j=0,1,\ldots,\ell).
\]
Since $\sigma_i\in [-1,1]$, this is a $1$-Lipschitz function. Moreover, it follows from the assumption on $\sigma$ that
\[
\gamma x - \zeta \le f_1(x) \le \Gamma x + \zeta
\quad
(x\in[0,1]),
\]
and so
\[
(\gamma - \sqrt\zeta)x \le
f_1(x) \le (\Gamma + \sqrt\zeta)x
\quad
\text{ for } x\in [\sqrt\zeta,1].
\]
Let $f$ agree with $f_1$
on $[\sqrt\zeta,1]$, $f(0)=0$ and let $f$ be
linear on $[0,\sqrt\zeta]$.
Then $f:[0,1]\to\R$ is also a piecewise linear $1$-Lipschitz function and \eqref{i:fdef} and
\eqref{i:estimates} hold.

Therefore it remains to prove \eqref{i:MT}.
Let $0<L\le \ell$ be an integer.
By Corollary~\ref{c:tau} we have
$\T_{2\tau}(f)\le \T(f)+72\tau$.
Thus it is enough to show that for any
$\de>0$ and
$(2\tau)$-good partition $(a_n)$ of $[0,L/\ell]$
there exists a $\tau$-good
integer partition $\mathcal{P}$ of $\sigma|(0,L]$ such
that
\begin{equation}\label{e:1/l M enough}
\frac{1}{\ell} \M(\sigma|(0,L], \mathcal{P}) \le
\T(f|[0,L/\ell], (a_n)) + 2\sqrt\zeta + 72\tau +
O_{\tau}(\log\ell / \ell)
 +\de.
\end{equation}

Let $N$ be the largest index such that
$a_N\ge \sqrt\zeta$.
Then $a_N\le 2a_{N+1} < 2\sqrt\zeta$.
By applying Corollary~\ref{c:tau} and
Proposition~\ref{p:basic} to
$f_1|[0,a_N]$
with $D=-1$ and $C=1$,
we get
$$
\T_{2\tau}(f_1|[0,a_N])
\le \T(f_1|[0,a_N]) + 72\tau
\le \frac{(a_N-f(a_N))\cdot 3}{6} + 72\tau
\le 2\sqrt\zeta + 72\tau.
$$
Hence for any $\delta>0$ there exists a
a $(2\tau)$-good partition $(b_n)$
of $[0,a_N]$ such that
$\T(f_1|[0,a_N],(b_n))\le 2\sqrt\zeta + 72\tau + \delta$.

Let $a'_n=a_n$ if $n\le N$ and $a'_n=b_{n-N}$
otherwise.
Then $(a'_n)$
is a $(2\tau)$-good partition of $[0,L/\ell]$
such that
\begin{equation}\label{e:Tf_1 and Tf}
\T(f_1|[0,L/\ell],(a'_n))\le
\T(f|[0,L/\ell], (a_n))
+2\sqrt\zeta + 72\tau + \delta.
\end{equation}

Applying Lemma~\ref{l:discretizing} for $f_1|[0,L/\ell]$
and the $(2\tau)$-good partition $(a'_n)$ of $[0,L/\ell]$,
we get a $\tau$-good integer partition
$0=N_0<\ldots<N_q=L$ of $(0,L]$ such that
$$
\sum_{j=0}^{q-1} f_1(N_j/\ell)-\min_{[N_j/\ell,N_{j+1}/\ell]} f_1 \le
\T(f_1|[0,L/\ell],(a_n'))
  +O_{\tau}(\log \ell/\ell).
$$
Noting that the left-hand side of the above
expression is exactly
$\frac{1}{\ell}\M(\sigma,(N_j))$,
and the right-hand side is at most the right-hand side
of \eqref{e:1/l M enough} by \eqref{e:Tf_1 and Tf},
the proof is complete.
\end{proof}

The next proposition is a version of Proposition \ref{p:basic} for sequences, and will play a central role in the proof of Theorem \ref{thm:main}.
\begin{prop} \label{prop:combinatorial}
For any $\gamma,\Gamma\in [-1,1]$,
$\tau\in (0,1/2)$, $\zeta\in(0,1)$ such that $\gamma\le\Gamma$ and $2\gamma\le \Gamma$, the following holds.

Let $\sigma\in [-1,1]^\ell$ satisfy
\[
\gamma j - \zeta\ell \le
\sigma_1 +\ldots + \sigma_j \le
\Gamma j + \zeta\ell  \quad(1\le j\le \ell).
\]
Then
\[
\frac{1}{\ell} \M_\tau(\sigma) \le
\frac{(1-\gamma)(\Gamma-2\gamma)}{1+2\Gamma-3\gamma}
+14\sqrt{\zeta}
+ 144\tau + O_{\tau}(\log\ell/\ell).
\]
\end{prop}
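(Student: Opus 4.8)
The plan is to deduce this discrete statement from the continuous estimate of Proposition~\ref{p:basic} by passing to Lipschitz functions via Lemma~\ref{l:sigmatof}. First I would apply Lemma~\ref{l:sigmatof} to the given sequence $\sigma$, obtaining a piecewise linear $1$-Lipschitz function $f\colon[0,1]\to\R$ with $f(0)=0$, satisfying $(\gamma-\sqrt\zeta)x\le f(x)\le(\Gamma+\sqrt\zeta)x$ on $[0,1]$ and (taking $L=\ell$ in part (iii))
\[
\frac1\ell\,\M_\tau(\sigma)\le \T(f)+2\sqrt\zeta+144\tau+O_\tau(\log\ell/\ell).
\]
Since $f$ is $1$-Lipschitz with $f(0)=0$, we also have $-x\le f(x)\le x$, so $f$ satisfies the hypotheses of Proposition~\ref{p:basic} on $[0,1]$ with
\[
D_0=\max(-1,\gamma-\sqrt\zeta),\qquad C_0=\min(1,\Gamma+\sqrt\zeta);
\]
indeed $-1\le D_0$ and $C_0\le1$ by definition, and the assumptions $\gamma\le\Gamma$, $2\gamma\le\Gamma$, $\gamma,\Gamma\in[-1,1]$ and $\zeta\in(0,1)$ force $D_0<C_0$ and $C_0\ge 2D_0$ (a short case check according to whether the clippings at $\pm1$ are active; e.g. $2\gamma\le\Gamma\le1$ gives $\gamma\le 1/2$, hence $2D_0\le1$ when $C_0=1$). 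Proposition~\ref{p:basic} then yields $\T(f)\le h(D_0,C_0)$, where $h(D,C):=\frac{(1-D)(C-2D)}{1+2C-3D}$.

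It remains to compare $h(D_0,C_0)$ with the target value $h(\gamma,\Gamma)$, using that $D_0\in[\gamma-\sqrt\zeta,\gamma]$ and $C_0\in[\Gamma,\Gamma+\sqrt\zeta]$. A direct computation gives
\[
\partial_C h=\frac{1-D^2}{(1+2C-3D)^2}\ge 0,\qquad \partial_D h=\frac{-2\,(w^2+w+1-D^2)}{(1+2C-3D)^2},\quad w:=C-2D,
\]
so $h$ is non-decreasing in $C$. The crucial point is that on the parameter range relevant here one has $C\ge D$: the inequality $\gamma\le\Gamma$ propagates to $D_0\le\gamma\le\Gamma\le C_0$ and to all the intermediate points used below. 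Together with $2\Gamma\ge 3\gamma$ (which follows from $\gamma\le\Gamma$ when $\gamma<0$ and from $2\gamma\le\Gamma$ when $\gamma\ge0$), the condition $C\ge D$ gives $Q:=1+2C-3D\ge 1$, and moreover $Q-w=1+C-D\ge1$ and $Q+w=1+3C-5D\ge0$, so $|w|\le Q$. From $Q\ge1$ and $|w|\le Q$ one reads off $\partial_C h\le 1$ and $|\partial_D h|\le 4+2/Q+2/Q^2\le 8$ on the relevant segments. Hence, using $\partial_C h\ge0$ for the first step,
\[
h(D_0,C_0)\le h(D_0,\Gamma+\sqrt\zeta)\le h(\gamma,\Gamma+\sqrt\zeta)+8\sqrt\zeta\le h(\gamma,\Gamma)+9\sqrt\zeta.
\]
Combining this with the first displayed inequality and $\T(f)\le h(D_0,C_0)$ gives
\[
\frac1\ell\,\M_\tau(\sigma)\le \frac{(1-\gamma)(\Gamma-2\gamma)}{1+2\Gamma-3\gamma}+11\sqrt\zeta+144\tau+O_\tau(\log\ell/\ell),
\]
which is stronger than the claimed bound.

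The main obstacle is the last comparison step. A crude Lipschitz estimate for $h$ over the full parameter box $\{-1\le D\le1,\ \max(2D,-1)\le C\le1\}$ is useless, because there $1+2C-3D$ can be as small as $1/2$ and the derivatives of $h$ blow up, so one would obtain an error term far larger than $14\sqrt\zeta$. What rescues the argument is the observation that the hypothesis $\gamma\le\Gamma$ (equivalently, that the partial sums of $\sigma$ do not grow, on average, faster than a fixed linear rate) confines all the points in play to the subregion $C\ge D$, where the two inequalities $1+2C-3D\ge1$ and $|C-2D|\le 1+2C-3D$ hold; these are exactly what keeps $\partial_C h$ and $\partial_D h$ bounded by absolute constants. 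Some additional care is needed in the boundary cases where $\gamma$ or $\Gamma$ is near $\pm1$ and one of the clippings defining $D_0,C_0$ is active, but the monotonicity of $h$ in $C$ and the same two inequalities handle these uniformly.
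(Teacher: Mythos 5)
Your proof is correct and follows essentially the same route as the paper: apply Lemma~\ref{l:sigmatof} to pass to a Lipschitz function $f$, invoke Proposition~\ref{p:basic} with $D_0=\max(-1,\gamma-\sqrt\zeta)$, $C_0=\min(1,\Gamma+\sqrt\zeta)$, and then compare $h(D_0,C_0)$ with $h(\gamma,\Gamma)$. The only difference is in this last comparison: you bound the partial derivatives of $h$ along a polygonal path (using $Q\ge 1$ and $|w|\le Q$ in the relevant region), whereas the paper simply bounds the numerator from above by $(1-\gamma+\sqrt\zeta)(\Gamma-2\gamma+3\sqrt\zeta)$ and the denominator from below by $1+2\Gamma-3\gamma$ and expands; both are routine, and your version even yields a marginally better constant ($11\sqrt\zeta$ vs.\ $14\sqrt\zeta$).
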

\begin{proof}
Let $f$ be the function provided by
Lemma~\ref{l:sigmatof}.
Applying Proposition \ref{p:basic} to
$f$ with $D=\max(-1,\gamma-\sqrt{\zeta})$,
$C=\min(1,\Gamma+\sqrt{\zeta})$,
and using that $D\in [\gamma-\sqrt{\zeta},\gamma]$ and $C\in [\Gamma,\Gamma+\sqrt{\zeta}]$,
then using that $1+2\Gamma-3\gamma\ge 1$, $\gamma,\Gamma\in[-1,1]$ and $\zeta\in(0,1)$,
we obtain
\begin{align*}
\T(f) &\le
\frac{(1-\gamma+\sqrt{\zeta})
      (\Gamma-2\gamma+3\sqrt{\zeta})}
    {1+2\Gamma-3\gamma}
 \\
&\le
\frac{(1-\gamma)(\Gamma-2\gamma)}{1+2\Gamma-3\gamma}
+12\sqrt\zeta.
\end{align*}
By applying \eqref{i:MT} of Lemma~\ref{l:sigmatof} for $L=\ell$
we get the desired inequality.
\end{proof}

The following proposition will be used (only) in the proof of Theorem \ref{thm:packing}; it is essentially a consequence of Proposition \ref{p:initial},
\begin{prop} \label{prop:combinatorial-packing}
For any $\gamma, \tau \in (0,1/2)$,
$\zeta\in (0,\gamma^2]$, $\delta>0$  there is $\eta=\eta(\delta,\gamma)>0$ such that the following holds
for any positive integer $\ell$.

Let $\sigma\in [-1,1]^\ell$ satisfy
\[
\gamma j - \zeta\ell \le
\sigma_1 +\ldots + \sigma_j \quad (1\le j\le\ell).
\]

Then there exists an integer $L\in [\eta \ell,\ell]$ such that
\[
\frac{1}{L} \M_\tau(\sigma|(0,L]) \le
\Phi(\gamma)
+\frac{1}{\eta}\left(O(\sqrt\zeta) + O(\tau) + O_\tau(\log \ell / \ell) \right) +\delta.
\]
where
$$
\Phi(x)=\frac{2-x-\sqrt{3-3x^2}}{4}.
$$
\end{prop}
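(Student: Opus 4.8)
The plan is to transfer the problem to the realm of $1$-Lipschitz functions, where Proposition~\ref{p:initial} applies, and then to discretize. Fix $\gamma,\tau,\zeta,\delta$ as in the statement. We may assume $\delta<1$, since otherwise the conclusion is trivial because $\M_\tau(\sigma|(0,L])\le L$ for every $L$ (as $\mathcal{S}(\sigma'|I)\le|I|$ for any $[-1,1]$-sequence $\sigma'$). Choose $\delta_0=\delta_0(\delta)\in(0,1/2)$ small enough that $\delta_0(2-4\log\delta_0)\le\delta$ (possible since that quantity tends to $0$ as $\delta_0\to0^+$), and set
\[
\eta:=3\Phi(\gamma)\,2^{-1/\delta_0}>0 .
\]
Since $\Phi(\gamma)$ depends only on $\gamma$ and $\delta_0$ only on $\delta$, the number $\eta$ depends only on $\gamma,\delta$; and since $\Phi$ is positive and decreasing on $[0,1/2]$, we have $0<\eta\le 3\Phi(0)<1$.

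Next I would dispose of small $\ell$. If $\ell=1$, then $\zeta\ell=\zeta\le\gamma^2<\gamma$, so the hypothesis forces $\sigma_1>\gamma-\zeta>0$, hence $\M_\tau(\sigma|(0,1])=\max(0,-\sigma_1)=0$ and $L=1$ works. If $2\le\ell\le 2/\eta$, take $L=\ell\in[\eta\ell,\ell]$; then $\tfrac1\ell\M_\tau(\sigma|(0,\ell])\le1$, while $g(x):=(\log x)/x$ attains its minimum on $[2,2/\eta]$ at an endpoint and $\tfrac1\eta\min(g(2),g(2/\eta))\ge1$ by a direct computation using $\eta\le 3\Phi(0)$; hence, once the implied constant in $O_\tau(\log\ell/\ell)$ is taken $\ge1$, the asserted bound already holds. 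From now on we assume $\ell>2/\eta$.

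Now the main case. Apply Lemma~\ref{l:sigmatof} with $\Gamma=1$ (legitimate: $\sigma_1+\cdots+\sigma_j\le j\le 1\cdot j+\zeta\ell$, and the lower bound is the hypothesis) to get a piecewise linear $1$-Lipschitz function $f:[0,1]\to\R$ with $f(0)=0$, $(\gamma-\sqrt\zeta)x\le f(x)\le(1+\sqrt\zeta)x$ on $[0,1]$, and
\[
\tfrac1\ell\M_\tau(\sigma|(0,L'])\le\T(f|[0,L'/\ell])+2\sqrt\zeta+144\tau+O_\tau(\log\ell/\ell)\qquad(0<L'\le\ell\text{ integer}).
\]
As $\zeta\le\gamma^2$, the number $D:=\gamma-\sqrt\zeta$ lies in $[0,1/2)$ and $f(x)\ge Dx$, so Proposition~\ref{p:initial} applied on $[0,1]$ with this $D$ and with $\delta_0$ yields $u\in[3\Phi(D)2^{-1/\delta_0},1]$ with $\T(f|[0,u])<u\bigl(\Phi(D)+\delta\bigr)$. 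Since $\Phi$ is decreasing and $\tfrac14$-Lipschitz on $[0,1/2]$ and $D\le\gamma$, we get $\Phi(\gamma)\le\Phi(D)\le\Phi(\gamma)+\tfrac14\sqrt\zeta$, whence $u\ge 3\Phi(\gamma)2^{-1/\delta_0}=\eta$ and $u\ell\ge\eta\ell>2$. Put $L:=\lceil u\ell\rceil$, so that $L\in[\eta\ell,\ell]$, $u\le L/\ell\le u+1/\ell$, and $(L/\ell)/u\le1+1/(u\ell)<2$. Because this last ratio is below $2$, prepending $L/\ell$ to any near-optimal good partition of $[0,u]$ produces a good partition of $[0,L/\ell]$, so $\T(f|[0,L/\ell])\le\T(f|[0,u])+(L/\ell-u)\le\T(f|[0,u])+1/\ell$ by the Lipschitz property. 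Feeding this into Lemma~\ref{l:sigmatof} with $L'=L$, and using $\tfrac\ell L\le\tfrac1u\le\tfrac1\eta$, $u\ell\ge\eta\ell$, $\tfrac1{\eta\ell}\le\tfrac1\eta\tfrac{\log\ell}{\ell}$, $\tfrac1\eta\ge1$ and $\Phi(D)\le\Phi(\gamma)+\tfrac14\sqrt\zeta$, we obtain
\begin{align*}
\tfrac1L\M_\tau(\sigma|(0,L])
&\le\tfrac\ell L\T(f|[0,L/\ell])+\tfrac1\eta\bigl(2\sqrt\zeta+144\tau+O_\tau(\tfrac{\log\ell}{\ell})\bigr)\\
&\le\tfrac1u\bigl(u(\Phi(D)+\delta)+\tfrac1\ell\bigr)+\tfrac1\eta\bigl(2\sqrt\zeta+144\tau+O_\tau(\tfrac{\log\ell}{\ell})\bigr)\\
&\le\Phi(\gamma)+\delta+\tfrac1\eta\bigl(O(\sqrt\zeta)+O(\tau)+O_\tau(\tfrac{\log\ell}{\ell})\bigr),
\end{align*}
which is exactly the claimed estimate.

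The main obstacle, as the argument makes clear, is not conceptual but organizational: one must ensure $\eta$ depends only on $\gamma$ and $\delta$, which forces replacing $\Phi(D)$ and the lower endpoint $3\Phi(D)2^{-1/\delta_0}$ by bounds in terms of $\Phi(\gamma)$, not $\Phi(D)$; and one must isolate the range of small $\ell$, where the would-be length $L\approx u\ell$ is not large enough to discretize profitably, handling it by the trivial bound. The genuine work is done once and for all in Proposition~\ref{p:initial} (equivalently, in the elementary Proposition~\ref{p:basic}); everything else is translation through Lemma~\ref{l:sigmatof} together with the crude comparison of $\T(f)$ on $[0,u]$ and on a marginally larger interval.
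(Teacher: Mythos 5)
Your proof is correct and follows essentially the same route as the paper's: set $\eta=3\Phi(\gamma)2^{-1/\wt\delta}$ with $\wt\delta$ chosen so that $\wt\delta(2-4\log\wt\delta)\le\delta$, pass through Lemma~\ref{l:sigmatof} to a $1$-Lipschitz $f$, apply Proposition~\ref{p:initial} with $D=\gamma-\sqrt\zeta$, round the resulting $u$ up to an integer multiple of $1/\ell$ to get $L$, and feed $\T(f|[0,L/\ell])\le\T(f|[0,u])+1/\ell$ back into Lemma~\ref{l:sigmatof}(iii). The only cosmetic differences are that you bound $\Phi(D)-\Phi(\gamma)$ via the $\tfrac14$-Lipschitz property of $\Phi$ while the paper does it by comparing $\sqrt{3-3D^2}$ with $\sqrt{3-3\gamma^2}$ directly, and you dispose of the case $\ell\lesssim 1/\eta$ separately (which the paper folds into the $O_\tau(\log\ell/\ell)$ term); both are harmless.
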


\begin{proof}
Let $f$ be the function provided by
Lemma~\ref{l:sigmatof} for $\Gamma=1$.
Choose $\wt{\delta}\in(0,1/2)$ such that $\wt{\delta}(2-4\log\wt{\delta})<\delta$
and let $\eta=3\Phi(\gamma)2^{-1/\wt{\delta}}>0$.

Let  $D=\gamma-\sqrt{\zeta}\ge 0$. Note that $\Phi$ is decreasing on $[0,1/2]$, so
$3\Phi(D)2^{-1/\wt{\delta}}\ge 3\Phi(\gamma)2^{-1/\wt{\delta}} = \eta$. Using this
and applying Proposition \ref{p:initial},
we obtain a $u\in[\eta,1]$ such that
\begin{eqnarray*}
\frac{1}{u} \T(f|[0,u]) & < &
\Phi(D)+\delta =
\frac{2-D-\sqrt{3-3D^2}}{4} + \delta  \\
&\le &
\frac{2-(\gamma-\sqrt\zeta)-\sqrt{3-3\gamma^2}}{4}  +\delta
= \Phi(\gamma)+\sqrt\zeta/4 +\delta.
\end{eqnarray*}

Let $L$ be the smallest integer such that $u\le L/\ell$.
Then clearly $L\in[\eta \ell, \ell]$.

It is easy to see that for any $1$-Lipschitz function
$g:[0,a]\to\R$ and any $0<u_1<u_2\le a$ we have
$\T(g|[0,u_2])\le \T(g|[0,u_1]) + (u_2-u_1)$.
Thus
$$
\frac{1}{u}\T(f|[0,L/\ell]) \le
\frac{1}{u}(\T(f|[0,u])+1/\ell)
\le \Phi(\gamma)+\sqrt\zeta/4 +\delta+\frac{1}{\ell u}.
$$
Combining this with \eqref{i:MT} of Lemma~\ref{l:sigmatof},
we conclude
\begin{align*}
\frac{1}{L} \M_\tau(\sigma|(0,L])
&\le
\frac{1}{u} \cdot \frac{1}{\ell} \M_\tau(\sigma|(0,L]) \\
& \le
\frac{1}{u}\left(\T(f|[0,L/\ell]) + 2\sqrt\zeta + 144\tau + O_\tau(\log \ell / \ell) \right) \\
&\le
\Phi(\gamma)+\sqrt\zeta/4 +\delta+\frac{1}{\ell\eta}
+\frac{1}{\eta}\left( 2\sqrt\zeta + 144\tau + O_\tau(\log \ell / \ell) \right) \\
&\le
\Phi(\gamma) +\frac{1}{\eta}\left(O(\sqrt\zeta) + O(\tau) + O_\tau(\log \ell / \ell) \right) +\delta.
\end{align*}
\end{proof}

Finally, we get a version for sequences and integer partitions of Corollary \ref{c:37/54}, which will be applied to prove Theorem \ref{thm:full-distance-set}.
\begin{prop} \label{prop:combinatorial-wolff}
Let $\tau\in(0,1/2)$, $\gamma\in (0,1/3)$, $\zeta\in(0,\gamma^2)$ and
\[
\Lambda(x) = \frac{(1+x)(37-50x+60x^2)}{18(3-4x+5x^2)}
\qquad (x\in [0,1/3]).
\]
Then there exist $\eta>0,\xi\in (2/3,1]$ (depending on $\gamma-\sqrt\zeta$)
such that
\[
 \xi (1-2\eta)  = \Lambda(\gamma-\sqrt\zeta)\ge \Lambda(\gamma)-\sqrt\zeta
\]
and the following holds:

For any sequence $(\sigma_1,\ldots,\sigma_\ell) \in [-1,1]^\ell$ such that
\[
\sigma_1+ \ldots +\sigma_j \ge  \gamma j - \zeta \ell
\qquad (j=1,\ldots,\ell),
\]
one of the following alternatives is satisfied:
\begin{enumerate}[(i)]
\item
\[
\frac{1}{\xi\ell} \max_{j=1}^{\xi \ell} \sum_{i=1}^j \left(1/3-\sigma_i\right) \le \eta + 2\sqrt\zeta.
\]
\item
\[
\frac{1}{\ell} \M_\tau(\sigma) \le 1- \Lambda(\gamma)
+3\sqrt{\zeta}+ 144\tau + O_{\tau}(\log\ell /\ell).
\]
\end{enumerate}
\end{prop}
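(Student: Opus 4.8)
The plan is to deduce this statement from Corollary \ref{c:37/54} in essentially the same way that Propositions \ref{prop:combinatorial} and \ref{prop:combinatorial-packing} are deduced from Propositions \ref{p:basic} and \ref{p:initial}, respectively, using Lemma \ref{l:sigmatof} as the bridge between $[-1,1]$-sequences and Lipschitz functions. First I would set $D=\gamma-\sqrt\zeta$; since $\gamma\in(0,1/3)$ and $\zeta\in(0,\gamma^2)$ we have $D\in[0,1/3)$, so Corollary \ref{c:37/54} applies and provides $\eta>0$ and $\xi\in(2/3,1]$ (depending continuously on $D=\gamma-\sqrt\zeta$) with $\xi(1-2\eta)=\Lambda(\gamma-\sqrt\zeta)$. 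The inequality $\Lambda(\gamma-\sqrt\zeta)\ge\Lambda(\gamma)-\sqrt\zeta$ follows because $\Lambda$ is Lipschitz with constant at most $1$ on $[0,1/3]$ (a direct calculus check on the explicit rational function), which handles the first displayed claim.

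Next, apply Lemma \ref{l:sigmatof} with $\Gamma=1$ (and the given $\gamma,\tau,\zeta,\sigma$) to obtain a piecewise linear $1$-Lipschitz function $f:[0,1]\to\R$ with $f(0)=0$, satisfying $f(x)\ge(\gamma-\sqrt\zeta)x=Dx$ on $[0,1]$ by part \eqref{i:estimates}, and such that $f(j/\ell)=\tfrac1\ell(\sigma_1+\cdots+\sigma_j)$ for $\sqrt\zeta\,\ell\le j\le\ell$ by part \eqref{i:fdef}. Now split into the two alternatives of Corollary \ref{c:37/54}. If $\T(f)\le 1-\Lambda(D)$, then part \eqref{i:MT} of Lemma \ref{l:sigmatof} with $L=\ell$ gives $\tfrac1\ell\M_\tau(\sigma)\le\T(f)+2\sqrt\zeta+144\tau+O_\tau(\log\ell/\ell)$; since $1-\Lambda(D)=1-\Lambda(\gamma-\sqrt\zeta)\le 1-\Lambda(\gamma)+\sqrt\zeta$, this yields alternative (ii) (after absorbing the extra $\sqrt\zeta$ into the $3\sqrt\zeta$ term). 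If instead $\T(f)>1-\Lambda(D)$, Corollary \ref{c:37/54} gives $f(x)\ge x/3-\eta\xi$ on $[0,\xi]$; evaluating at the points $x=j/\ell$ for $\sqrt\zeta\,\ell\le j\le\xi\ell$ and using $f(j/\ell)=\tfrac1\ell\sum_{i\le j}\sigma_i$ gives $\tfrac1\ell\sum_{i=1}^j(1/3-\sigma_i)\le\eta\xi$ there; for the smaller range $j<\sqrt\zeta\,\ell$ one uses the trivial bound $|\sum_{i=1}^j(1/3-\sigma_i)|\le (4/3)j\le 2\sqrt\zeta\,\ell$, so dividing by $\xi\ell$ (and using $\xi>2/3$) produces alternative (i).

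The one genuinely new bookkeeping point — compared with the analogous Propositions \ref{prop:combinatorial} and \ref{prop:combinatorial-packing} — is that Corollary \ref{c:37/54} is a \emph{dichotomy}: its two conclusions are ``$\T(f)$ large'' versus a pointwise lower bound, whereas the earlier propositions have a single clean conclusion. So the extra care is simply to propagate this dichotomy correctly through Lemma \ref{l:sigmatof}, matching the ``large $\T(f)$'' branch to alternative (ii) and the ``pointwise bound'' branch to alternative (i), and to check that the error terms accumulated at each step ($2\sqrt\zeta$ and $144\tau$ from Lemma \ref{l:sigmatof}\eqref{i:MT}, $\sqrt\zeta$ from the Lipschitz bound on $\Lambda$, $2\sqrt\zeta$ from the short-range estimate in (i)) all fit inside the advertised $3\sqrt\zeta$, $144\tau$, $O_\tau(\log\ell/\ell)$, $2\sqrt\zeta$ budgets. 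I expect no real obstacle here: the substantive work — the optimization for Lipschitz functions and the stability analysis — has already been done in Proposition \ref{p:basic} and Corollary \ref{c:37/54}, and this proposition is the routine translation of that work into the discrete setting, just as Propositions \ref{prop:combinatorial} and \ref{prop:combinatorial-packing} are. The main thing to be careful about is ensuring the parameter $\xi$ appearing in alternative (i) is exactly the one produced by Corollary \ref{c:37/54} so that the ranges of summation line up, and that $\xi\ell$ is interpreted as (say) $\lfloor\xi\ell\rfloor$ consistently, the resulting rounding being absorbed into the $O_\tau(\log\ell/\ell)$ term.
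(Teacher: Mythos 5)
Your proposal is correct and follows essentially the same route as the paper's proof: set $D=\gamma-\sqrt\zeta$, invoke Corollary \ref{c:37/54} to get $\eta,\xi$, build $f$ via Lemma \ref{l:sigmatof} with $\Gamma=1$, and translate the dichotomy of the corollary into the two alternatives. The only cosmetic difference is that the paper runs the argument in contrapositive form (assume (i) fails, produce a point $x_0=j/\ell\in[\sqrt\zeta,\xi]$ where $f(x_0)<x_0/3-\eta\xi$, and then deduce $\T(f)\le 1-\Lambda(D)$ and hence (ii) directly, without ever needing to re-derive (i)), whereas you split on the value of $\T(f)$ and then have to spend a line verifying (i) in the ``large drop'' branch, including the short-range $j<\sqrt\zeta\ell$ estimate; both organizations are logically equivalent and your error bookkeeping ($2\sqrt\zeta+144\tau$ from Lemma \ref{l:sigmatof}(iii), $\sqrt\zeta$ from $\Lambda'\le 1$, $2\sqrt\zeta$ from the short range) matches the constants in the statement.
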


\begin{proof}
We begin by noting that $\Lambda(\gamma-\sqrt\zeta)\ge \Lambda(\gamma)-\sqrt\zeta$ since $\Lambda'(x)\le 1$ on $[0,1/3]$.

Let $\eta>0,\xi\in (2/3,1]$ be the numbers given in Corollary \ref{c:37/54} for $D=\gamma-\sqrt{\zeta}\in (0,1/3)$. Suppose that (i) is false, so there is a
$j\le \xi\ell$ such that
$$
\frac{1}{\xi\ell} \sum_{i=1}^j
\left(1/3-\sigma_i\right) > \eta +
2\sqrt\zeta,
$$
and therefore
\[
\frac{1}{\ell}(\sigma_1+\ldots+\sigma_j) <
\frac{j}{3\ell} - \eta \xi
- 2 \xi  \sqrt\zeta.
\]
Note that the left-hand side is at least $-j/\ell$ and that, since $\eta>0$ and $\xi\ge 2/3$, we have
$\frac34(\eta+2\sqrt\zeta)\xi > \sqrt\zeta$. This implies that $j/\ell > \sqrt{\zeta}$.

Let $f$ be the function provided by
Lemma~\ref{l:sigmatof} for $\Gamma=1$,
and let $x_0=j/\ell$.
Then $x_0\in [\sqrt\zeta,\xi]$ and
$f:[0,1]\to\R$ is a $1$-Lipschitz function
such that $f(0)=0$, $f(x)\ge (\gamma-\sqrt{\zeta})x$
on $[0,1]$, $f(x_0)<x_0/3 - \eta \xi
- 2\xi \sqrt\zeta\le x_0/3-\eta\xi$,
and
\begin{equation}\label{e:MT}
\frac{1}{\ell} \M_\tau(\sigma) \le
\T(f)
+2\sqrt{\zeta}+ 144\tau + O_{\tau}(\log\ell /\ell).
\end{equation}
Applying Corollary~\ref{c:37/54} to
$f$, we get
\begin{align*}
\T(f) \le 1-\Lambda(\gamma-\sqrt{\zeta}) \le 1- \Lambda(\gamma)+\sqrt{\zeta}.
\end{align*}
Combining this with \eqref{e:MT} we obtain (ii),
which completes the proof.
\end{proof}

\section{Proofs of main theorems}
\label{sec:proofs}

\subsection{Proof of Theorem \ref{thm:main}}
\label{subsec:proof-of-theorem-main}

In this section we prove Theorem \ref{thm:main}. Write
\[
\psi(s,u)= \frac{s(2+u-2s)}{2+2u-3s},
\]
and recall that $\chi(s,u)=\min(\psi(s,u),1)$ for $0\le s\le u\le 2$ and $s<2$, and moreover $\chi(s,u)=1$ if and only if $u\le 2s-1$ (which forces $s\ge 1$).

The next proposition encapsulates some preliminary reductions towards the proof of Theorem \ref{thm:main}. We first explain how to deduce the theorem from the proposition; the rest of the section is then devoted to the proof of the proposition.
\begin{prop} \label{prop:main}
For every $0<s\le u\le 2$ with $u>2s-1$, the following holds.

Let $\mu\in\cP([0,1)^2)$ satisfy $\cE_s(\mu)<\infty$ and $\ubdim(\supp(\mu))\le u$.  If $B\subset [0,1)^2$ is a compact set disjoint from $\supp(\mu)$ with $\hdim(B)>\min(1,2-s)$, then
\[
\sup_{y\in B} \hdim(\Delta_y (\supp\mu)) \ge \psi(s,u).
\]
\end{prop}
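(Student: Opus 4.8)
The plan is to combine the box-counting estimate of Proposition~\ref{prop:lower-bound-box-counting} with the decomposition into regular measures (Corollary~\ref{cor:bourgain}), the smallness of bad projection sets (Lemma~\ref{lem:badx}), the radial projection input (Proposition~\ref{prop:radial}), and the combinatorial optimization of Proposition~\ref{prop:combinatorial}, together with a standard mass-distribution / pigeonholing argument to upgrade box-counting lower bounds to Hausdorff dimension lower bounds. Throughout, $T,\e,\tau$ are fixed (and sent to their limits only at the very end), and one works at scales $2^{-T\ell}$ with $\ell\to\infty$.

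\medskip

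\noindent\emph{Step 1: reduce to a single regular measure and extract the exponents $\gamma,\Gamma$.}
Fix $\mu$ with $\cE_s(\mu)<\infty$ and $\ubdim(\supp\mu)\le u$; fix a compact $B$ disjoint from $\supp\mu$ with $\hdim(B)>\min(1,2-s)$. Choose $u'>\max(1,2-s)$ slightly less than $u$ so that $\hdim(B)>u'$, and pick a measure $\nu\in\cP(B)$ with $\cE_{u'}(\nu)<\infty$ (Frostman). Let $\kappa=\kappa(\mu,\nu)$ be the constant from Proposition~\ref{prop:radial}. Now fix $\ell$ large and apply Corollary~\ref{cor:bourgain} to $R_\ell\mu$ (with $d=2$, $m=T\ell$): this yields pairwise disjoint $2^{-T\ell}$-sets $X_1,\dots,X_N\subset\supp_{\mathsf d}(\mu)$ with $\mu(\cup X_i)\ge 1-2^{-\e T\ell}$, each $\mu_{X_i}$ being $\sigma(i)$-regular for some $\sigma(i)\in[-1,1]^\ell$. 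Crucially: the energy hypothesis $\cE_s(\mu)<\infty$ forces, via Lemma~\ref{lem:energy} and the definition of regularity, that for \emph{most} of the $\mu$-mass the corresponding $\sigma(i)$ satisfies $\sigma_1+\dots+\sigma_j\ge (s-1)j-\zeta\ell$ for all $j$ (otherwise Lemma~\ref{lem:energy-regular} would make $\cE_s$ of that piece too large, and one pigeonholes); similarly the box-counting bound $\ubdim(\supp\mu)\le u$ forces, via \eqref{eq:box-counting-regular}, an upper bound $\sigma_1+\dots+\sigma_j\le (u-1)j+\zeta\ell$. So after discarding a set of small $\mu$-measure we may restrict attention to indices $i$ with both one-sided bounds, i.e. $\gamma=s-1$, $\Gamma=u-1$; note $u>2s-1$ gives $\Gamma>2\gamma$ and $u\ge s$ gives $\Gamma\ge\gamma$, exactly the hypotheses of Proposition~\ref{prop:combinatorial}.

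\medskip

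\noindent\emph{Step 2: choose the pin $y$ via radial projections and Fubini.}
Using Lemma~\ref{lem:badx}, for each $x$ the set $\bad''_{\ell_0}(\mu,x)\subset S^1$ has measure $\lesssim 2^{-\e'\ell_0}$, which is $\le\kappa$ once $\ell_0=\ell_0(T,\e,\tau,\kappa)$ is large. Form the set
\[
\Theta=\{(x,\theta): \theta\in \bad''_{\ell_0}(\mu,x)\}\subset[0,1)^2\times S^1,
\]
so that $(\mu\times\mathcal H^1)(\Theta)\le\kappa$. By Proposition~\ref{prop:radial},
\[
(\mu\times\nu)\{(x,y): P_y(x)\notin\Theta_x\}>\tfrac23.
\]
By Fubini there is a set of $y$ of positive $\nu$-measure — in particular some $y\in B$, and in fact $\nu$-many — such that the ``good direction'' set $G_y=\{x: \theta(x,y)\notin\bad''_{\ell_0}(\mu,x)\}$ has $\mu(G_y)>\tfrac13$, say. (In the full Theorem~\ref{thm:main} one wants the exceptional $y$ to have dimension $\le\max(1,2-s)$; here the Proposition only asserts the sup over $B$, so it suffices to produce one good $y\in B$.) Intersecting with the large-mass set from Step 1, we get a set $A_0\subset\supp_{\mathsf d}(\mu)$ with $\mu(A_0)>\tfrac14$ consisting of points that are both ``good'' and lie in one of the favorable $X_i$'s.

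\medskip

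\noindent\emph{Step 3: apply the box-counting estimate and the combinatorial bound.}
Pigeonhole over $i$: there is a single $X_i$ with $\mu_{X_i}(A_0)$ bounded below (by something like $\mu(A_0)-2^{-\e T\ell}$, using Corollary~\ref{cor:bourgain}(i), so $\ge\tfrac15$ for $\ell$ large), with $\rho:=\mu_{X_i}$ being $\sigma$-regular for $\sigma=\sigma(i)$ satisfying the two-sided bounds of Step 1, and with $\dist(y,\supp\rho)\ge\e$ after a harmless initial truncation. Set $A=A_0\cap X_i$. For every $x\in A$ one has, unwinding the definitions \eqref{eq:def-badprime}–\eqref{eq:def-badprimeprime} and choosing $\wt x=x$, that $\theta(x,y)\notin\bad_{\beta\ell\ssto\ell}(\rho,x)$ with $\beta=\e$. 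Apply Proposition~\ref{prop:lower-bound-box-counting}:
\[
\frac{\log\cN(\Delta_y A,\ell)}{T\ell}\ge 1-\frac{\M_\tau(\sigma)}{\ell}-\error,
\]
with $\error=2\e+o_{T,\e}(1)+O_{T,\e,\tau}(\log^2\ell/\ell)+O_\tau(\log\ell)\log(1/\rho(A))/\ell$, and since $\rho(A)\ge\tfrac15$ the last term is $O_\tau(\log\ell/\ell)$. Now feed $\sigma$ (with $\gamma=s-1$, $\Gamma=u-1$, $\zeta$ as in Step 1) into Proposition~\ref{prop:combinatorial}:
\[
\frac{\M_\tau(\sigma)}{\ell}\le \frac{(1-\gamma)(\Gamma-2\gamma)}{1+2\Gamma-3\gamma}+14\sqrt\zeta+144\tau+O_\tau(\log\ell/\ell).
\]
A direct calculation with $\gamma=s-1,\Gamma=u-1$ gives $\dfrac{(1-\gamma)(\Gamma-2\gamma)}{1+2\Gamma-3\gamma}=\dfrac{(2-s)(u-2s+1)}{2+2u-3s}=1-\psi(s,u)$. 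Hence
\[
\frac{\log\cN(\Delta_y A,\ell)}{T\ell}\ge \psi(s,u)-2\e-14\sqrt\zeta-144\tau-o_{T,\e}(1)-O_{T,\e,\tau}(\log\ell/\ell).
\]

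\medskip

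\noindent\emph{Step 4: from box-counting to Hausdorff dimension, and conclude.}
The quantity above is a lower bound for $\tfrac1{T\ell}\log\cN(\Delta_y A,\ell)$ for one $2^{-T\ell}$-scale, and $A$ depends on $\ell$; to get Hausdorff dimension one runs a by-now-standard argument. One possibility: note that the construction in Corollary~\ref{cor:bourgain}, Lemma~\ref{lem:badx} and Proposition~\ref{prop:radial} can all be carried out compatibly for a fixed $y$ across a sequence $\ell\to\infty$ (the set $\Theta$ in Step 2 is already defined with a tail union over all $\ell\ge\ell_0$ built into $\bad''$), so the same good $y$ works at every scale; then a mass-pigeonholing / Frostman-type argument (of the type used in \cite{Shmerkin17} and \cite{Orponen17b}, and as indicated in the discussion preceding Proposition~\ref{prop:entropy-of-pinned-dist-measures}) upgrades the uniform box-counting lower bound on the sets $R_\ell(\Delta_y(\supp\mu))$ to the bound $\hdim(\Delta_y(\supp\mu))\ge \psi(s,u)-2\e-14\sqrt\zeta-144\tau-o_{T,\e}(1)$. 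Finally let $\ell\to\infty$ (to kill the $O(\log\ell/\ell)$ and $O(\log^2\ell/\ell)$ terms), then $\zeta\to0$, then $T\to\infty,\e\to0^+,\tau\to0^+$; since the left-hand side does not depend on these parameters we obtain $\sup_{y\in B}\hdim(\Delta_y(\supp\mu))\ge\psi(s,u)$, as claimed.

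\medskip

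\noindent\emph{Main obstacle.}
The genuinely delicate points are (a) making precise the pigeonholing in Step 1 — one must argue that the set of $i$ whose regular structure \emph{violates} the lower bound $\sigma_1+\dots+\sigma_j\ge(s-1)j-\zeta\ell$ carries total $\mu$-mass $o(1)$, which requires comparing $\cE_s(\mu)$ with the sum of $\mu(X_i)\cE_s(\mu_{X_i})$-type quantities and invoking Lemma~\ref{lem:energy-regular} quantitatively; and (b) the passage from a scale-by-scale box-counting bound with an $\ell$-dependent subset $A=A(\ell)$ to an honest Hausdorff-dimension statement for the fixed set $\Delta_y(\supp\mu)$. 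Point (b) is where one must be careful that the ``good'' event (good $X_i$, good direction, large-mass subset) can be arranged simultaneously for infinitely many $\ell$ with the same $y$, which is exactly why $\bad''$ is defined as a union over all large $\ell$ and why Corollary~\ref{cor:bourgain} is stated with a canonical (deterministic) choice of the family $(X_i)$. Everything else is bookkeeping of error terms, all of which are of the admissible forms $o_{T,\e}(1)$, $O(\sqrt\zeta)$, $O(\tau)$, or $O_\tau(\log\ell/\ell)$.
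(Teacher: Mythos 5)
The structure, tools, and error bookkeeping are all the right ones, and Steps~1--3 faithfully reproduce the paper's main mechanism. But there is a genuine gap in the passage from box counting to Hausdorff dimension, and the ``Main obstacle'' you flag in Step~4 misdiagnoses where the difficulty lies.

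\textbf{The gap.} You prove a lower bound on $\cN(\Delta_y A,\ell)$ for a single, $\ell$-dependent set $A=A_0\cap X_i$, where $A_0$ is the good-direction set and $X_i$ is chosen by pigeonholing so that $\mu_{X_i}(A_0)$ is large. This gives a lower bound on the lower box dimension of $\Delta_y(\supp\mu)$, not its Hausdorff dimension, and no amount of ``the same $y$ works at every scale'' repairs this: lower box dimension can strictly exceed Hausdorff dimension for exactly the kind of sets one might produce here. What Lemma~\ref{lem:calculation-dimension} actually requires is that the box-counting bound hold for \emph{every} Borel subset $F'\subset \Delta_y(A_1)$ with $\Delta_y\mu_{A_1}(F')\ge\ell^{-2}$ --- equivalently, for every $A_2\subset A_1$ with $\mu_{A_1}(A_2)\ge\ell^{-2}$. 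The paper achieves this by reversing the order of your pigeonhole: one first fixes $y$ and $A_1$ (your $A_0$) once and for all, and then, \emph{for each such $A_2$}, applies Corollary~\ref{cor:bourgain}(i) with $A=A_2$ (not $A=A_0$) to extract a regular piece $X$ with $(R_\ell\mu)_X(A_2)\gtrsim\ell^{-2}$. The set $X$ is allowed to depend on $A_2$; the error terms survive because Proposition~\ref{prop:lower-bound-box-counting} is robust in $\rho(A)$ down to polynomial size (this is precisely the content of Proposition~\ref{prop:good-bound-from-below}, which you do not use). Your Step~4 claims that a mass-pigeonholing argument ``upgrades the uniform box-counting lower bound'' and that the delicate point is arranging the good event simultaneously across scales --- but the definition of $\bad''$ already takes care of the scale uniformity, whereas the subset robustness is never established for your choice of $A$.

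\textbf{A minor imprecision in Step~1.} You say the energy bound forces the lower bound $\sigma_1+\dots+\sigma_j\ge(s-1)j-\zeta\ell$ ``for most of the $\mu$-mass,'' and suggest discarding the bad indices. In fact it holds for \emph{every} $X_i$ from Corollary~\ref{cor:bourgain}, and the correct mechanism is not that a large $\cE_s(\mu_{X_i})$ would contradict $\cE_s(\mu)<\infty$ after a pigeonhole (it would not, since $\mu(X_i)$ could in principle be tiny); rather, one uses $\cE_s(\mu_{X_i})\le\mu(X_i)^{-2}\cE_s(\mu)$ together with the lower bound $\mu(X_i)\ge2^{-o_{T,\e}(1)T\ell}$ from Corollary~\ref{cor:bourgain}(ii). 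The conclusion is the same, but your stated reasoning would not give it. This is worth being precise about because it is the same logic --- control the normalization factor $\mu(X)^{-2}$ by a subexponential bound --- that makes the correct version of Step~3 close.

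\textbf{What to change.} Set $A_1=A_0$ after Step~2. Invoke Lemma~\ref{lem:calculation-dimension} with $F=\Delta_y(A_1)$, $\rho=\Delta_y(\mu_{A_1})$. Reduce to $A_2=R_\ell A_2\subset R_\ell(A_1)$, apply Corollary~\ref{cor:bourgain}(i) with $A=A_2$ to get $X$ with $(R_\ell\mu)_X(A_2)\ge\ell^{-2}/2$, check (as you already do) that the resulting $\sigma$ satisfies the two-sided bounds, then apply Proposition~\ref{prop:lower-bound-box-counting} to $\rho=(R_\ell\mu)_X$ and this $A_2$. Everything else in your write-up then goes through as stated.
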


\begin{proof}[Proof of Theorem \ref{thm:main} (assuming Proposition \ref{prop:main})]
We proceed by contradiction. Assume, then,  that there exists a Borel set $A\subset\R^2$ such that $0< s\le \hdim(A)\le \pdim(A)\le u\le 2$ and
\[
\hdim \{ y\in\R^2 : \hdim(\Delta_y A) < \chi(s,u) \} > \max(1,2-s).
\]
By countable stability of Hausdorff dimension, there are $\eta>0$ and a set $B\subset\R^2$ with $\hdim(B)>\max(1,2-s)$ such that
\begin{equation} \label{eq:small-dist-set-for-contradiction}
\hdim(\Delta_y A) < \chi(s,u) -\eta \quad\text{for all }y\in B.
\end{equation}
Since $\hdim(\Delta_y A)$ does not increase if we replace $A$ by any subset, every Borel set of dimension $s>0$ contains compact subsets of positive $s'$-dimensional Hausdorff measure for all $0<s'<s$, and $\chi(s,u)$ is continuous, at the price of replacing $\eta$ by $\eta/2$ we may assume that  in \eqref{eq:small-dist-set-for-contradiction} the set $A$ is compact and of positive $s$-dimensional Hausdorff measure. In turn, a routine verification shows that if $A$ is compact, then the set
\[
\{ y: \hdim(\Delta_y A) < \chi(s,u)-\eta/2   \}
\]
is Borel. Hence in \eqref{eq:small-dist-set-for-contradiction} we may also assume that $B$ is Borel.

Recall that $\chi(s,u)= 1$ if and only if  $u\le 2s-1$ (and $s\ge 1$ in this case). Hence, if $u\le 2s-1$, then we can pick $0< s'\le u'\le 2$ such that $u'\ge u$, $1\le s'\le s$, $u'>2s'-1$, and $\psi(s',u')>1-\eta/2$. This shows that in \eqref{eq:small-dist-set-for-contradiction}, we may further assume that $u>2s-1$ and replace $\chi(s,u)$ by $\psi(s,u)$ (with $\eta/2$ in place of $\eta$).

Let $\mu\in\cP(\R^2)$ be an $s$-Frostman measure on $A$, i.e. $\mu$ is a Radon measure supported on $A$ and $\mu(B(x,r)) \le C r^s$ for all $x\in \R^2$, $r>0$, where $C$ is independent of $x$ (recall that we assumed that $A$ has positive $s$-dimensional Hausdorff measure). By assumption, $\pdim(\supp(\mu)) \le u$. Using that packing dimension is equal to the modified upper box counting dimension (see e.g. \cite[Proposition 3.8]{Falconer14}), and that $\ubdim(A_0)=\ubdim(\overline{A}_0)$, we see that for every $\delta>0$ there is a compact set $A_0\subset A$ of positive $\mu$-measure such that $\ubdim(A_0)\le \min(u+\delta,2)$.

We can then find disjoint compact subsets $B'\subset B, A'\subset A_0$ such that still $\mu(A')>0$, $\hdim(B')>\max(1,2-s)$. Then (provided $\delta$ was taken small enough in terms of $s,u,\eta$)
\[
\hdim(\Delta_y A') < \psi(s-\delta,\min(u+\delta,2)) - \eta/2 \quad\text{for all }y\in B'.
\]
This inequality is preserved under (joint) scaling and translation of $\mu, A',B'$, so it holds in particular for some compact $A', B'\subset [0,1)^2$.  Since $\mu_{A'}(B(x,r)) \le C' r^s$ for some constant $C'>0$, we can check that $\cE_{s-\delta}(\mu_{A'})<\infty$. Since $\supp(\mu_{A'})\subset A'$, this contradicts Proposition \ref{prop:main} applied to $\mu_{A'}$ and $B'$, with $s-\delta$, $\min(u+\delta,2)$ in place of $s, u$ (provided $\delta$ was taken small enough in terms of $s,u,\hdim(B')$).
\end{proof}

In order to bound the Hausdorff dimension of $\Delta_y A$ from below, we will use the following standard criterion; although it is well known, we include the short proof for completeness.
\begin{lemma} \label{lem:calculation-dimension}
Let $F\subset\R^d$ be a Borel set and let $\rho\in\cP(\R^d)$ give full mass to $F$. Suppose that there are $M_0\in\N_{\ge 2}$ and $s>0$ such that for any $M\ge M_0$ and any Borel subset $F'\subset F$ with $\rho(F')> M^{-2}$, the number $\cN(F',M)$ of cubes in $\cD_M$ hitting $F'$ is at least $2^{s T M}$. Then $\mathcal{H}^s(F)\gtrsim_{T,d} 1$ and in particular $\hdim(F)\ge s$.
\end{lemma}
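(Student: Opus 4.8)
The plan is to prove Lemma \ref{lem:calculation-dimension} via a standard mass-distribution / pigeonholing argument that converts the box-counting hypothesis into an upper bound on $\rho$-mass of dyadic cubes, and then apply the mass distribution principle. The main obstacle is that the box-counting lower bound only holds for subsets $F'$ of moderately large mass, so one cannot directly say "every cube in $\cD_M$ has small mass"; instead one must first discard the cubes of anomalously large mass and show they carry only a small fraction of the total mass, uniformly over scales.

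First I would fix $M\ge M_0$ and consider the collection $\cG_M$ of cubes $Q\in\cD_M$ with $\rho(Q)>2^{-(s-\e_0)TM}$ for a small $\e_0>0$ to be chosen. The key claim is that $\rho\left(\bigcup_{Q\in\cG_M} Q\right)$ is small once $M$ is large: indeed, if this mass exceeded $M^{-2}$, then $F':=F\cap\bigcup_{Q\in\cG_M}Q$ would be a Borel subset of $F$ with $\rho(F')>M^{-2}$, so by hypothesis $\cN(F',M)\ge 2^{sTM}$; but every cube hitting $F'$ lies in $\cG_M$ (since $F'$ is a union of such cubes up to $\rho$-null sets — more carefully, one takes $F'$ to be exactly the union of those cubes, which is Borel and has $\rho$-full intersection with $F$ on them), and each contributes mass $>2^{-(s-\e_0)TM}$, giving total mass $>2^{sTM}\cdot 2^{-(s-\e_0)TM}=2^{\e_0 TM}\to\infty$, contradicting $\rho\le 1$ for $M$ large enough (depending on $\e_0,T$). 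Hence $\rho\left(\bigcup_{Q\in\cG_M}Q\right)\le M^{-2}$ for all $M\ge M_1=M_1(s,\e_0,T)$, and enlarging $M_0$ we may assume $M_0\ge M_1$.

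Next I would iterate this over dyadic scales to get a good set of full measure. Let $G=\bigcap_{M\ge M_0}\left([0,1)^d\setminus\bigcup_{Q\in\cG_M}Q\right)$; since $\sum_{M\ge M_0}M^{-2}<\infty$ but we need full measure, instead one works with a fixed tail: for any $x$ outside a $\rho$-null set and all sufficiently large $M$, $x$ is not in any cube of $\cG_M$ — actually it is cleaner to argue directly. Given any cube $Q_0\in\cD_j$ with $\rho(Q_0)>0$ and $j\ge M_0$, we have $\rho(Q_0)\le 2^{-(s-\e_0)Tj}$ up to a set of cubes of total mass $\le j^{-2}$; summing the geometric-type series, $\mathcal{H}^{s-\e_0}$ of any set carrying positive $\rho$-mass is controlled. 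Concretely, to bound $\mathcal{H}^{s-\e_0}(F)$ from below: take any countable cover of $F$ by sets $U_i$ of diameter $\le \delta$; replace each $U_i$ by a dyadic cube $Q_i\in\cD_{j_i}$ (at the scale with $2^{-Tj_i}\approx \text{diam}(U_i)$) containing it, losing only a bounded factor; then $\rho(U_i)\le\rho(Q_i)$, and discarding the cubes in $\cG_{j_i}$ — whose total $\rho$-mass over all relevant scales is $\le\sum_{j\ge M_0}j^{-2}=:c_0$, which we make $<1/2$ by enlarging $M_0$ — the remaining cubes satisfy $\rho(Q_i)\lesssim_T \text{diam}(U_i)^{s-\e_0}$. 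Hence $\sum_i\text{diam}(U_i)^{s-\e_0}\gtrsim_T\sum_i\rho(Q_i)\ge \rho(F)-c_0\ge 1/2$, giving $\mathcal{H}^{s-\e_0}(F)\gtrsim_{T,d}1$.

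Finally, I would remove the $\e_0$: the above shows $\mathcal{H}^{s-\e_0}(F)\gtrsim_{T,d}1$ for every $\e_0>0$ small, with the implied constant depending on $\e_0$; but in fact running the argument with $\e_0$ replaced by the scale-dependent gain is cleaner — alternatively, since $\mathcal{H}^{s-\e_0}(F)=\infty$ would already follow if the constant didn't degenerate, one instead notes that $\hdim(F)\ge s-\e_0$ for all $\e_0>0$ gives $\hdim(F)\ge s$, and a slightly more careful bookkeeping (keeping the cubes of $\cG_M$ defined with threshold $2^{-sTM}M^{2}$ rather than $2^{-(s-\e_0)TM}$, so that the discarded mass is bounded by $\sum M^{-2}$ directly and the surviving cubes satisfy $\rho(Q)\le M^2 2^{-sTM}$, whose polynomial factor is absorbed since $M^2 = \text{diam}(Q)^{-o(1)}$ is not quite enough — so one does need the $\e_0$ for the Hausdorff measure statement but can let it be arbitrarily small). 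I expect the bookkeeping of the discarded-mass series across scales, and getting the clean statement $\mathcal H^s(F)\gtrsim 1$ rather than merely $\hdim(F)\ge s$, to be the only delicate point; everything else is routine.
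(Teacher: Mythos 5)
Your route --- derive a Frostman-type bound on the $\rho$-mass of dyadic cubes at each scale and then invoke the mass-distribution principle --- is genuinely different from the paper's. The paper instead pigeonholes the given cover across scales: group the covering balls $B(x_i,r_i)$ by scale into sets $F_M$ (radii $\approx 2^{-TM}$), observe that some $M\ge M_0$ must have $\rho(F_M)>M^{-2}$ since $\sum_{M\ge 2}M^{-2}<1$, apply the hypothesis to $F\cap F_M$ to get $\gtrsim_{d} 2^{sTM}$ balls at scale $2^{-TM}$, and sum $r_i^s$. Your approach is viable in outline, but as written it only yields $\hdim F\ge s$, not the stated $\mathcal{H}^s(F)\gtrsim_{T,d}1$: you insert a slack $\e_0>0$ into the threshold, obtain $\mathcal{H}^{s-\e_0}(F)\gtrsim 1$ for each fixed $\e_0>0$, and this cannot be passed to $\e_0=0$ (a set may have $\mathcal{H}^{s-\e}(F)=\infty$ for every $\e>0$ while $\mathcal{H}^s(F)=0$). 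Your alternative threshold $M^2 2^{-sTM}$ fails for the reason you yourself spot.

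The repair is to take the threshold $2^{-sTM}$ with no slack at all. Let $\cG_M=\{Q\in\cD_M:\rho(Q)>2^{-sTM}\}$ for $M\ge M_0$. If $\rho\bigl(\bigcup\cG_M\bigr)>M^{-2}$ then for $F'=F\cap\bigcup\cG_M$ we have $\rho(F')>M^{-2}$, so the hypothesis gives $|\cG_M|\ge\cN(F',M)\ge 2^{sTM}$, hence $\rho\bigl(\bigcup\cG_M\bigr)>|\cG_M|\,2^{-sTM}\ge 1$, already a contradiction (you do not need $2^{\e_0 TM}$ to diverge; exceeding $1$ suffices). Thus $\rho\bigl(\bigcup\cG_M\bigr)\le M^{-2}$ for every $M\ge M_0$, and $B:=\bigcup_{M\ge M_0}\bigcup\cG_M$ has $\rho(B)\le\sum_{M\ge 2}M^{-2}<1$, with no need to enlarge $M_0$. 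Given a cover of $F$ by sets $U_i$ with $r_i:=\mathrm{diam}(U_i)\le 2^{-TM_0}$, choose $j_i\ge M_0$ with $2^{-Tj_i}\le r_i<2^{-T(j_i-1)}$ and cover $U_i$ by $O_{T,d}(1)$ cubes $Q\in\cD_{j_i}$. Each such $Q$ either lies in $B$ (so contributes nothing to $\rho(\,\cdot\setminus B)$) or satisfies $\rho(Q)\le 2^{-sTj_i}\le r_i^s$; hence $\rho(U_i\setminus B)\lesssim_{T,d}r_i^s$, and summing, $\sum_i r_i^s\gtrsim_{T,d}\rho(F\setminus B)\ge 1-\sum_{M\ge 2}M^{-2}>0$, giving $\mathcal{H}^s(F)\gtrsim_{T,d}1$ as required.
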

\begin{proof}
Let $\{ B(x_i,r_i)\}$ be a cover of $F$ where $r_i \le 2^{-T M_0}$ for all $i$. Our goal is to estimate $\sum_i r_i^s$ from below.

Write $F_M$ for the union of all the $B(x_i,r_i)$ for which $2^{- T(M+1)} \le r_i \le 2^{-TM}$. Pigeonholing, there is $M\ge M_0$ such that $\rho(F_M)> M^{-2}$. By assumption, one needs at least $2^{sTM}$ cubes in $\cD_M$ to cover $F_M$. It follows that the number of balls making up $F_M$ is $\gtrsim_{d,T} 2^{sTM}$, so that $\sum_i r_i^s \gtrsim_{d,T} 2^{s T M}  2^{-s T M}=1$. This gives the claim.
\end{proof}

We now begin the proof of Proposition \ref{prop:main}. Since $\mu, s,u$ are fixed, any (possibly implicit) constants appearing in the proof may depend on them. Let $\nu$ be a measure supported on $B$ with finite $u'$-energy where $u'>\max(1,2-s)$.  Let $\kappa=\kappa(\mu,\nu)>0$ be the number given by Proposition \ref{prop:radial}. We will show that (under the assumptions of the proposition) there exists $y\in B$ (possibly depending on $T,\e,\tau$) such that
\begin{equation} \label{eq:lower-bound-dim-to-prove}
\hdim(\Delta_y (\supp(\mu))) > \psi(s,u) - o_{T,\e,\tau}(1).
\end{equation}
Recall that $o_{T,\e,\tau}(1)$ stands for a function of $T,\e,\tau$ which tends to $0$ as $T\to\infty$ and $\e,\tau\to 0^+$. We will henceforth assume that $T,\e,\tau$ are given, and that the integer $\ell_0$ is chosen large enough in terms of $T,\e,\tau$ so that all the claimed inequalities hold. As a first instance of this, apply Lemma \ref{lem:badx} to get  that
\begin{align*}
|\bad''_{\ell_0}(\mu,x)| &\le \kappa  \text{ for all } x\in\supp_{\mathsf{d}}(\mu)
\end{align*}
provided $\ell_0$ was taken large enough (in terms of $T,\e,\tau$).

One can easily check that, given $\nu\in\cP([0,1)^2)$ and $j,k\in\N$, the set  $\{ (x,\theta): \theta\in \bad(\nu,x,j,k)\}$ is Borel (recall Definition \ref{def:bad-projections}). It follows that the set
\begin{equation} \label{eq:Borel-set}
\Theta = \{ (x,\theta): x\in\supp_{\mathsf{d}}(\mu), \theta\in \bad''_{\ell_0}(\mu,x) \}
\end{equation}
is Borel. Hence, applying Proposition \ref{prop:radial}, and using Fubini and the fact that $\mu$ is a Radon measure, we obtain a compact set $A_1\subset \supp_{\mathsf{d}}(\mu)\subset\supp(\mu)$ with $\mu(A_1)>2/3$ and a point $y\in \supp(\nu)\subset B$ such that
\begin{equation} \label{eq:good-projection}
P_y(x) \notin \bad''_{\ell_0}(\mu,x) \text{ for all } x\in A_1.
\end{equation}
Making $\e$ smaller (in terms of $\dist(B,\supp(\mu))$ only) and $\ell_0$ larger, we may assume that
\begin{equation} \label{eq:separation}
 \dist(B,A_1) \ge \e + \sqrt{2} \cdot 2^{-\ell_0}.
\end{equation}
We will show that, in fact, $\hdim(\Delta_y(A_1))\ge \psi(s,u)-o_{T,\e,\tau}(1)$, which clearly implies \eqref{eq:lower-bound-dim-to-prove}. To do this, our aim is to apply Lemma \ref{lem:calculation-dimension} with $F=\Delta_y(A_1)$, $\rho=\Delta_y(\mu_{A_1})$.  Note that if $\rho(F')\ge \ell^{-2}$, then $A_2=\Delta_y^{-1}(F')$ satisfies that $\mu_{A_1}(A_2)=\rho(F')\ge \ell^{-2}$. Hence, in order to complete the proof of Proposition \ref{prop:main}, it is enough to establish the following.

\textbf{Claim}. If the Borel set $A_2\subset [0,1)^2$ satisfies $\mu_{A_1}(A_2) \ge \ell^{-2}$ with $\ell\ge \ell_0$, where $\ell_0$ is taken sufficiently large in terms of $T,\e,\tau$, then
\begin{equation} \label{eq:lower-bound-covering-number}
\log \cN(\Delta_y A_2,\ell) \ge (\psi(s,u)-o_{T,\e,\tau}(1))T\ell.
\end{equation}

Fix, then, $A_2$ as above. Since the set $\Delta_y(R_\ell A_2)$ is contained in the $(\sqrt{2}\cdot 2^{-T\ell})$-neighborhood of $\Delta_y A_2$, the numbers $\log \cN(\Delta_y A_2,\ell)$ and $\log \cN(\Delta_y  R_\ell A_2,\ell)$ differ by at most a constant. Hence we can, and do, assume that $A_2=R_\ell A_2$ from now on. Moreover, we may assume that $A_2\subset R_\ell(A_1)$, since whenever $A_2=R_\ell A_2$ and $\mu_{A_1}(A_2) \ge \ell^{-2}$, the same holds for $A_2 \cap R_\ell(A_1)$.

Consider the sets given by Corollary \ref{cor:bourgain} applied to $R_{\ell}\mu$. Applying the corollary with $A=A_2$, and using that
\[
2^{-\e T\ell}\ll  \tfrac{2}{3}\ell^{-2} \le  \mu(A_1)\mu_{A_1}(A_2) \le \mu(A_2)=R_\ell\mu(A_2)
\]
for large enough $\ell$, we can find a further $2^{-T\ell}$-set $X$ such that, setting $\rho=(R_\ell\mu)_X$,
\begin{enumerate}[(\rm i)]
\item $\rho(A_2) \ge \ell^{-2}/2$.
\item $R_\ell\mu(X) \ge 2^{-o_{T,\e}(1) T\ell}$ and therefore, using that $\cE_s(R_\ell\mu) \lesssim_T \cE_s(\mu)$ by  Lemma \ref{lem:energy},
\[
\cE_s(\rho) \le (R_\ell\mu(X))^{-2}\cE_s(R_\ell\mu) \lesssim_T 2^{o_{T,\e}(1) T\ell} \cE_s(\mu) \lesssim 2^{o_{T,\e}(1) T\ell}.
\]
\item $\rho$ is $\sigma$-regular for some sequence $\sigma=(\sigma_1,\ldots,\sigma_\ell)$, $\sigma_j\in [-1,1]$.
\item $X$ is contained in $R_\ell\supp_{\mathsf{d}}(\mu)$.
\end{enumerate}

By Lemma \ref{lem:energy-regular} and (ii), (iii) above,  and assuming that $\ell_0$ was taken large enough in terms of $T$, we have
\begin{equation} \label{eq:sigma-lower}
  \sum_{i=1}^j \sigma_i \ge (s-1)j - \ell o_{T,\e}(1) \qquad(j=1,\ldots,\ell).
\end{equation}
On the other hand, we have assumed that $\ubdim(\supp(\mu))\le u$, so that $\mathcal{N}(\supp(\mu),j) \le O_\e(1) 2^{(u+\e)T j}$ for all $j\in\N$. By (iv) above, this also holds for $X$ in place $\supp(\mu)$ if $j\le \ell$. On the other hand, using that $\rho$ is $\sigma$-regular as in \eqref{eq:box-counting-regular}, we get
\[
\mathcal{N}(X,j) = |\cD_j(\rho)| \ge 2^{T(\sigma_1+1)} \cdots 2^{T(\sigma_j+1)}  \quad (1\le j\le  \ell).
\]
Combining these estimates, we deduce that $2^{T(\sigma_1+\ldots+\sigma_j+j)} \le  O_\e(1) 2^{(u+\e)T j}$, and hence
\begin{equation} \label{eq:sigma-upper}
  \sum_{i=1}^j \sigma_i  \le \frac{O_\e(1)}{T} + (u-1+\eps)j \le
(u-1)j+\ell o_{T,\eps} (1) \qquad (j=1,\ldots,\ell),
\end{equation}
provided $\ell_0$ was taken large enough in terms of $\e$.

Combining \eqref{eq:sigma-lower} and \eqref{eq:sigma-upper}, we see that the assumptions of Proposition \ref{prop:combinatorial} are satisfied with $\gamma=s-1$, $\Gamma=u-1$, and $\zeta=o_{T,\e}(1)$. After another short calculation, and starting with $\ell_0$ large enough in terms of $\tau$, we deduce that
\begin{equation} \label{eq:good-partition}
\frac{1}{\ell} \M_\tau(\sigma) \le  1 - \psi(s,u) + o_{T,\e,\tau}(1).
\end{equation}

Recall from \eqref{eq:good-projection} that if $x\in  A_1$, then $\theta(x,y)\notin \bad''_{\ell_0}(\mu,x)$. Hence, according to the definition of the sets $\bad'_{\ell_0\ssto\ell}(R_\ell\mu,x)$ and $\bad''_{\ell_0}(\mu,x)$ in \eqref{eq:def-badprime} and \eqref{eq:def-badprimeprime} respectively, we have $\theta(x,y)\notin \bad'_{\e\ell\ssto\ell}(R_\ell\mu,x)=\bad_{\e\ell\ssto\ell}(\rho,x)$ for all $x\in A_1\cap X$. Since we have assumed that $A_2\subset R_\ell(A_1)$, the hypotheses of Proposition \ref{prop:lower-bound-box-counting} are met by $\rho$ and $A_2$, with $\beta=\eps$ (the separation assumption follows from \eqref{eq:separation}). Recalling (i), we see that if $\ell_0$ was taken even larger in terms of $T,\e,\tau$ we can make the error term in Proposition \ref{prop:lower-bound-box-counting} equal to $o_{T,\e,\tau}(1)$. In light of  \eqref{eq:good-partition}, Proposition \ref{prop:lower-bound-box-counting} gives exactly \eqref{eq:lower-bound-covering-number}.

This completes the proof of the claim and, with it, of Proposition \ref{prop:main} and Theorem \ref{thm:main}.

\subsection{Proof of Theorem \ref{thm:packing}}
\label{subsec:proof-of-theorem-packing}

In this section we prove Theorem \ref{thm:packing}. The proof goes along the same lines as the proof of Theorem \ref{thm:main}, except that we rely on Proposition \ref{prop:combinatorial-packing} instead of Proposition \ref{prop:combinatorial} to choose the scales in the multi-scale decomposition. The need to deal with two different scales $2^{-TL}$ and $2^{-T\ell}$ also creates some additional challenges. Write
\[
\psi(s) = \frac{1+s+\sqrt{3s(2-s)}}{4}.
\]
(This should not be confused with the function $\psi(s,u)$ from \S\ref{subsec:proof-of-theorem-main}.) The next proposition contains the core of Theorem \ref{thm:packing}. 

\begin{prop} \label{prop:packing}
For every $1<s<3/2$, the following holds.

Let $\mu\in\cP([0,1)^2)$ satisfy $\cE_s(\mu)<\infty$.  If $B\subset [0,1)^2$ is a compact set disjoint from $\supp(\mu)$ with $\hdim(B)>1$, then
\[
\sup_{y\in B} \ubdim(\Delta_y (\supp\mu)) \ge \psi(s).
\]
\end{prop}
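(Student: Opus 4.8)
The plan is to follow the same architecture as the proof of Proposition~\ref{prop:main}, but replacing Proposition~\ref{prop:combinatorial} by the packing-dimension combinatorial estimate in Proposition~\ref{prop:combinatorial-packing}, and tracking the two scales $2^{-TL}$ and $2^{-T\ell}$ carefully. Fix $s\in(1,3/2)$, a measure $\nu$ on $B$ with finite $u'$-energy for some $u'\in(1,2)$ close enough to $1$ (so that $u'>\max(1,2-s)=1$ since $s>1$), and let $\kappa=\kappa(\mu,\nu)$ be the constant from Proposition~\ref{prop:radial}. Using Lemma~\ref{lem:badx} pick $\ell_0$ large (in terms of $T,\e,\tau$) so that $|\bad''_{\ell_0}(\mu,x)|\le\kappa$ for all $x\in\supp_{\mathsf d}(\mu)$; then the Borel set $\Theta$ in \eqref{eq:Borel-set} has small $(\mu\times\mathcal H^1)$-measure, so Proposition~\ref{prop:radial} together with Fubini produces a compact $A_1\subset\supp_{\mathsf d}(\mu)$ with $\mu(A_1)>2/3$ and a point $y\in B$ with $P_y(x)\notin\bad''_{\ell_0}(\mu,x)$ for all $x\in A_1$, and we may also demand the separation \eqref{eq:separation}. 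The goal is then to show $\ubdim(\Delta_y(A_1))\ge\psi(s)-o_{T,\e,\tau}(1)$; letting $T\to\infty,\e,\tau\to0^+$ gives the proposition.

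The key difference from Theorem~\ref{thm:main} is that upper box dimension only requires a single good scale, so rather than running Lemma~\ref{lem:calculation-dimension} we argue directly: for each large $\ell\ge\ell_0$ we will find an integer $L=L(\ell)\in[\eta\ell,\ell]$ (with $\eta=\eta(\delta,\gamma)$ from Proposition~\ref{prop:combinatorial-packing}) such that $\log\cN(\Delta_y A_1,L)\ge(\psi(s)-o_{T,\e,\tau}(1))TL$, which bounds $\ubdim(\Delta_y A_1)$ from below as desired since $L\to\infty$ with $\ell$. To produce the regular structure, apply Corollary~\ref{cor:bourgain} to $R_\ell\mu$ with $A=R_\ell A_1$ (note $\mu(A_1)>2/3\gg 2^{-\e T\ell}$); this yields a $2^{-T\ell}$-set $X\subset R_\ell\supp_{\mathsf d}(\mu)$ with $(R_\ell\mu)_X(R_\ell A_1)\ge\mu(A_1)-2^{-\e T\ell}>1/2$, with $R_\ell\mu(X)\ge 2^{-o_{T,\e}(1)T\ell}$, and $\rho:=(R_\ell\mu)_X$ being $\sigma$-regular for some $\sigma\in[-1,1]^\ell$. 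By Lemma~\ref{lem:energy-regular} and the energy bound $\cE_s(\rho)\le(R_\ell\mu(X))^{-2}\cE_s(R_\ell\mu)\lesssim 2^{o_{T,\e}(1)T\ell}$, exactly as in \eqref{eq:sigma-lower} we get $\sum_{i=1}^j\sigma_i\ge(s-1)j-\ell\,o_{T,\e}(1)$ for all $j\le\ell$; here we do \emph{not} need an upper bound on the partial sums, which is why no packing-dimension hypothesis on $\supp(\mu)$ appears. Thus the hypothesis of Proposition~\ref{prop:combinatorial-packing} holds with $\gamma=s-1\in(0,1/2)$ and $\zeta=o_{T,\e}(1)$; the proposition gives an integer $L\in[\eta\ell,\ell]$ with
\[
\tfrac{1}{L}\M_\tau(\sigma|(0,L])\le\Phi(s-1)+o_{T,\e,\tau}(1),
\]
where $\Phi(x)=\tfrac14(2-x-\sqrt{3-3x^2})$; a short computation shows $1-\Phi(s-1)=\psi(s)=\tfrac14(1+s+\sqrt{3s(2-s)})$.

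Finally, we feed this into Proposition~\ref{prop:lower-bound-box-counting}, but applied at the scale $L$ rather than $\ell$: note that $R_L\rho$ is $(\sigma_1,\dots,\sigma_L)$-regular, and that for $x\in A_1\cap X$ we have $\theta(x,y)\notin\bad''_{\ell_0}(\mu,x)$, hence $\theta(x,y)\notin\bad'_{\e\ell\ssto\ell}(R_\ell\mu,x)=\bad_{\e\ell\ssto\ell}(\rho,x)$, and since $\beta L\le \e\ell$ for suitable $\beta$ (using $L\le\ell$) one checks $\theta(x,y)\notin\bad_{\beta L\ssto L}(R_L\rho,x)$ — the one genuinely delicate point, requiring care that the bad-projection sets at scale $L$ embed into those at scale $\ell$, which follows from Definition~\ref{def:bad-projections} since every pair $(j,k)$ relevant at the shorter scale is also relevant at the longer one once $\beta$ is chosen with $\beta\e\ge$ (the relevant constant). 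Taking $A=R_L A_1$ and using that $\rho(A_1\cap X)>1/2$ so the error term is $o_{T,\e,\tau}(1)$, Proposition~\ref{prop:lower-bound-box-counting} yields
\[
\frac{\log\cN(\Delta_y(R_LA_1),L)}{TL}\ge 1-\frac{\M_\tau(\sigma|(0,L])}{L}-o_{T,\e,\tau}(1)\ge\psi(s)-o_{T,\e,\tau}(1),
\]
and since $\cN(\Delta_y A_1,L)$ and $\cN(\Delta_y R_LA_1,L)$ differ by at most a constant, letting $\ell\to\infty$ gives $\ubdim(\Delta_y(\supp\mu))\ge\ubdim(\Delta_y A_1)\ge\psi(s)-o_{T,\e,\tau}(1)$; letting $T\to\infty$, $\e,\tau\to0^+$ finishes the proof. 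The main obstacle I anticipate is precisely the bookkeeping in this last step: reconciling the scale $L$ at which the covering estimate is extracted with the scale $\ell$ at which the bad-projection sets and the regular decomposition were constructed, and verifying that the set of ``good'' directions survives the transition from scale $\ell$ to scale $L$ with only an $o_{T,\e,\tau}(1)$ loss.
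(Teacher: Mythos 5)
Your overall architecture matches the paper's proof: a single good scale $L$ suffices for upper box dimension, so one applies Corollary~\ref{cor:bourgain} to $R_\ell\mu$, uses only the one-sided bound $\sum_{i\le j}\sigma_i\ge(s-1)j-\zeta\ell$ (no packing-dimension hypothesis needed), invokes Proposition~\ref{prop:combinatorial-packing} with $\gamma=s-1$ to produce $L\in[\eta\ell,\ell]$ with small normalized $\M_\tau(\sigma|(0,L])$, verifies $1-\Phi(s-1)=\psi(s)$, and feeds everything into Proposition~\ref{prop:lower-bound-box-counting} at scale $L$. That is precisely what the paper does.

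However, the delicate bookkeeping step you flag is where the proposal actually has a gap. You apply Proposition~\ref{prop:lower-bound-box-counting} with $A=R_L A_1$, but the directional hypothesis then has to be checked for every $x\in R_L A_1\cap\supp_{\mathsf d}(R_L\rho)$: for such an $x$ one knows $\cD_L(x)\cap A_1\ne\emptyset$ (from $x\in R_L A_1$) and $\cD_L(x)\cap X\ne\emptyset$ (from positive $\rho$-mass), but \emph{not} that $\cD_L(x)\cap A_1\cap X\ne\emptyset$ --- and the good-direction chain $\theta(\wt x,y)\notin\bad_{\beta L\ssto L}(R_L\rho,\wt x)$ is only established for $\wt x\in A_1\cap X$. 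The paper avoids this by taking $A=R_L(A_1\cap X)$, which trivially supplies the required $\wt x$ and still has $R_L\rho(R_L(A_1\cap X))\ge\rho(R_\ell A_1\cap X)=\rho(R_\ell A_1)\ge 1/2$ (using that $X$ is a $2^{-T\ell}$-set and $L\le\ell$, so $R_\ell A_1\cap X\subset R_L(A_1\cap X)$). Relatedly, your ``$\rho(A_1\cap X)>1/2$'' should read $\rho(R_\ell A_1)\ge 1/2$ (since $\rho$ is a $2^{-T\ell}$-measure and $A_1\cap X$ need not be a $2^{-T\ell}$-set), and the inequality ``$\beta L\le\e\ell$'' is reversed: the inclusion $\bad_{\e\ell\ssto\ell}(\rho,\cdot)\supset\bad_{\beta L\ssto L}(\rho,\cdot)$ requires $\e\ell\le\beta L$ together with $L\le\ell$, which is why the paper sets $\beta=\e/\eta$ (so $\beta L\ge(\e/\eta)\eta\ell=\e\ell$) and assumes $\e<\eta$ so $\beta<1$. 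These are fixable bookkeeping corrections, but as written the verification of the hypothesis of Proposition~\ref{prop:lower-bound-box-counting} does not go through.
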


\begin{proof}[Proof of Theorem \ref{thm:packing} (assuming Proposition \ref{prop:packing})]
Reasoning as in the deduction of Theorem \ref{thm:main} from Proposition \ref{prop:main}, we get that if $U$ is a Borel subset of $\R^2$ with $\hdim(U)\ge t\in (1,3/2)$, then
\begin{equation} \label{eq:claim-box-dim}
\hdim\{ y\in\R^2: \ubdim(\Delta_y(U)) < \psi(t) \} \le 1.
\end{equation}
The reason we need to go via box dimension is that the map $y\mapsto \ubdim\Delta_y(U)$ is Borel if $U$ is compact, while it is unclear whether the map $y\mapsto \pdim\Delta_y(U)$ is Borel, since it was proved in \cite{MattilaMauldin97} that packing dimension is \emph{not} a Borel function of the set if one considers the Hausdorff metric on the compact subsets of $\R$.

Now suppose the claim of Theorem \ref{thm:packing} does not hold. Then we can find a Borel set $A\subset\R^2$ with $\hdim(A)=s\in (1,3/2)$ and $\eta>0$ such that
\[
\hdim\{ y\in \R^2: \pdim(\Delta_y(A)) < \psi (s)-\eta \} > 1.
\]
Let $\nu$ a Frostman measure on $A$ of exponent $t\in (1,s)$, sufficiently close to $s$ that $\psi(t)\ge \psi(s)-\eta$, and note that
\begin{equation} \label{eq:claim-packing-for-contradiction}
\hdim\{ y\in \R^2: \pdim(\Delta_y(\supp(\nu))) < \psi (t) \} > 1.
\end{equation}
Fix a countable basis $(U_i)$ of open sets of $\supp(\nu)$ (in the relative topology). Note that $\hdim(U_i) \ge t$ for all $i$ since $\nu$ is a Frostman measure. Hence, from \eqref{eq:claim-box-dim} we get that $\hdim(E)\le 1$, where
\[
E = \{ y \in\R^2: \ubdim(\Delta_y(U_i)) < \psi(t) \text{ for some } i\}.
\]
Fix $y\in\R^2\setminus E$. Let $(F_j)$ be a countable cover of $\Delta_y(\supp(\nu))$. By Baire's Theorem, some $\Delta_y^{-1}(\overline{F}_j)$ has nonempty interior in $\supp(\nu)$, and hence contains some $U_i$. By the definition of $E$,
\[
\ubdim(F_j) = \ubdim(\overline{F}_j) \ge \ubdim(\Delta_y(U_i)) \ge \psi(t).
\]
By the characterization of packing dimension as modified upper box counting dimension (\cite[Proposition 3.8]{Falconer14}), we conclude that $\pdim(\Delta_y(\supp(\nu)) \ge \psi(t)$ whenever $y\in\R^2\setminus E$. Since $\hdim(E)\le 1$, this contradicts \eqref{eq:claim-packing-for-contradiction}, finishing the proof.
\end{proof}

We now start the proof of Proposition \ref{prop:packing}. Let $\nu$ be a measure supported on $B$ with finite $u$-energy for some $u>1$, and let $\kappa=\kappa(\mu,\nu)>0$ be the number given by Proposition \ref{prop:radial}.  Apply Lemma \ref{lem:badx} to obtain the bound
\begin{align*}
|\bad''_{\ell_0}(\mu,x)| &\le \kappa  \text{ for all } x\in\supp_{\mathsf{d}}(\mu)
\end{align*}
provided $\ell_0$ was taken large enough in terms of $T,\e,\tau$. Recall that the set $\Theta$ in Equation \eqref{eq:Borel-set} is Borel. Applying Proposition \ref{prop:radial} to $\Theta$ and Fubini, we obtain a compact set $A\subset \supp_{\mathsf{d}}(\mu)$ with $\mu(A)> 2/3$ and a point $y\in B$ such that
\begin{equation} \label{eq:directions-good}
P_y(x) \notin \bad''_{\ell_0}(\mu,x) \text{ for all } x\in A.
\end{equation}

Fix a number $\delta>0$. We will show that
\[
\ubdim(\Delta_y A) \ge \psi(s)- \error_{T,\e,\tau}(\delta),
\]
where $\error_{T,\e,\tau}(\delta)$ can be made arbitrarily small by first taking $\delta$ small enough, and then taking $T$ large enough and $\e,\tau$ small enough, all in terms of $\delta$. This error term may also depend on $s$.

Fix a large integer $\ell \gg \ell_0$. We claim that it is enough to find a scale $L\in [\ell_0,\ell]$, tending to infinity with $\ell$, such that
\begin{equation} \label{eq:box-counting-to-show}
\frac{\log\mathcal{N}(\Delta_y(R_L A),L)}{TL} \ge \psi(s) - \error_{T,\e,\tau}(\delta),
\end{equation}
where the error term has property detailed above. Indeed, since $\Delta_y(R_L A)$ is contained in the $O(2^{-TL})$-neighborhood of $\Delta_y(A)$, this implies the corresponding lower bound for $\ubdim(\Delta_y(A))$.

Apply Corollary \ref{cor:bourgain} to $R_{\ell}\mu$.  Taking $\ell$ large enough that $2^{-\e T\ell}\ll 2/3<\mu(A)\le \mu(R_\ell A)$, there is a $2^{-T\ell}$-set $X$  such that, setting $\rho=(R_\ell\mu)_{X}$,
\begin{enumerate}[(\rm i)]
\item $\rho(R_\ell A) \ge 1/2$.
\item $R_\ell\mu(X)   \ge 2^{-o_{T,\e}(1) T\ell}$ whence, as we saw in the proof of Proposition \ref{prop:main},
\[
\cE_s(\rho) \lesssim_T 2^{o_{T,\e}(1) T\ell}.
\]
\item $\rho$ is $\sigma$-regular for some sequence $\sigma=(\sigma_1,\ldots,\sigma_\ell)$, $\sigma_j\in [-1,1]$.
\end{enumerate}
Note that, provided $\ell_0$ was taken large enough, \eqref{eq:sigma-lower} still holds, since it only depends on (ii) and (iii). We are then in the setting of Proposition \ref{prop:combinatorial-packing} with $\gamma=s-1$, $\zeta=o_{T,\e}(1)$. Let $\eta=\eta(\delta,s-1)>0$ be the number given by the proposition; we underline that, since $\delta$ is chosen before $T,\e,\tau$, the number $\eta$ is also independent of $T,\e,\tau$ (it is useful to keep in mind that $\eta$ does depend on $\delta$). A short calculation shows that $1-\psi(s)=\Phi(s-1)$. From now we assume that $\ell$ is taken large enough (in terms of $\delta$ and $s$) that $\eta\ell \ge \ell_0$. Then, applying Proposition \ref{prop:combinatorial-packing} and making $\ell$ even larger, we get an integer $L \in [\eta\ell,\ell]\subset [\ell_0,\ell]$ such that
\begin{equation} \label{eq:good-partition-initial-segment}
\frac{1}{L} \M_\tau(\sigma|(0,L]) \le 1-\psi(s) + \eta^{-1} o_{T,\e,\tau}(1) +\delta.
\end{equation}

Note that $R_L\rho$ is $(\sigma_1,\ldots,\sigma_L)$-regular. Also, if $x\in A\cap X$, then
\begin{align*}
\theta(x,y) &\notin \bad'_{\e\ell\ssto\ell}(R_\ell\mu,x) & \text{(by \eqref{eq:directions-good} and \eqref{eq:def-badprimeprime})}\\
& = \bad_{\e\ell\ssto\ell}(\rho,x) & \text{(by  \eqref{eq:def-badprime}, since $X$ came from Cor. \ref{cor:bourgain})}  \\
& \supset \bad_{(\e/\eta) L\ssto L}(\rho,x) & \text{(by Def.  \ref{def:bad-projections}, $\e\ell\le (\e/\eta)L$ and $L\le \ell$)}
 \\
& = \bad_{(\e/\eta) L\ssto L}(R_L\rho,x) & \text{(by Def.  \ref{def:bad-projections})}.
\end{align*}
Note that $x\mapsto \bad_{(\e/\eta) L\ssto L}(R_L \rho,x)$ is constant on each square of $\cD_L(R_L\rho)$. Hence, for each $x\in R_L(A\cap X)$ there is $\wt{x}\in A\cap X\subset R_L(A\cap X)$ such that $\theta(\wt{x},y)\notin \bad_{(\e/\eta)L\ssto L}(R_L\rho,\wt{x})$. Assume $\e<\eta$. If $\eps$ was taken small enough and $\ell$  large enough that $\dist(B,\supp(\mu))\ge \e +  \sqrt{2}\cdot 2^{-\ell_0}$, then all the hypotheses of Proposition \ref{prop:lower-bound-box-counting} are satisfied for $R_L\rho$, $R_L(A\cap  X)$ and $L$ in place of $\rho, A$ and $\ell$, with $\beta=\e/\eta$. Using (i) above (which implies $R_L\rho(R_L A) \ge 1/2$) and the bound $L\ge\eta\ell$, the error term in Proposition \ref{prop:lower-bound-box-counting} can be bounded by
\[
 \frac{2\eps}{\eta} +  o_{T,\e}(1) + O_{T,\e,\tau}\left(\frac{\log^2(\eta \ell)}{\eta\ell}\right) .
\]
Making $\ell$ large enough in terms of $T,\e,\tau,\delta$ and $s$, this error term can be made $o_{T,\e}(1)+2\eps \eta^{-1}$. Hence Proposition \ref{prop:lower-bound-box-counting} together with \eqref{eq:good-partition-initial-segment} ensure that \eqref{eq:box-counting-to-show} holds, with the error behaving as claimed.

Since $L\ge \eta \ell$ and $\ell$ is arbitrarily large, $L$ is also arbitrarily large. Hence we have shown that \eqref{eq:box-counting-to-show} holds for arbitrarily large $L$ and, as explained above, this completes the proof of Proposition \ref{prop:packing} and, with it, of Theorem \ref{thm:packing}.

\subsection{Proof of Theorem \ref{thm:full-distance-set}}

In this section we prove Theorem \ref{thm:full-distance-set}. Throughout this section, we let $\Delta:\R^4\to\R$, $(x,y)\mapsto |x-y|$. We start by recalling a more quantitative version of the Mattila-Wolff bound \eqref{eq:bound-wolff}.

\begin{theorem} \label{thm:mattila-wolff}
Suppose $\mu_1,\mu_2\in\cP([0,1)^2)$ have $\e$-separated supports. If $\cE_{4/3}(\mu_1)<+\infty$, $\cE_{4/3+\e}(\mu_2)<\infty$, then $\Delta(\mu_1\times\mu_2)$ has an $L^2$ density, and
\[
\|\Delta(\mu_1\times \mu_2)\|_2^2  \lesssim_\e \cE_{4/3}(\mu_1) \cE_{4/3+\e}(\mu_2).
\]
\end{theorem}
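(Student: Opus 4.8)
The plan is to run the classical Mattila--Wolff Fourier-analytic argument (see \cite{Mattila87, Wolff99}), keeping track of the dependence of all constants on the energies of $\mu_1$ and $\mu_2$, and paying special attention to the bilinearization. Write $\delta = \Delta(\mu_1\times\mu_2)$; by the $\e$-separation this is a probability measure supported in a fixed compact subinterval of $(0,\infty)$, and $\widehat{\delta}$ is a bounded continuous function. It therefore suffices to prove the quantitative estimate
\[
\int_{\R} |\widehat{\delta}(u)|^2\, du \ \lesssim_\e\ \cE_{4/3}(\mu_1)\,\cE_{4/3+\e}(\mu_2);
\]
indeed, if the right-hand side is finite then $\widehat{\delta}\in L^2(\R)$, and since the Fourier transform is an isometric bijection of $L^2(\R)$ while $\delta\in\mathcal{S}'(\R)$, it follows that $\delta$ has an $L^2$ density with $\|\delta\|_2^2 = \|\widehat{\delta}\|_2^2$. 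It is convenient to fix a smooth cutoff $\chi$ that equals $1$ on a neighbourhood of $\supp(\delta)$ and is compactly supported in $(0,\infty)$, and to work with $\chi\delta$ throughout.

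Next I would invoke Mattila's identity. Setting $\nu = \mu_1 * \widetilde{\mu_2}$, so that $\widehat{\nu} = \widehat{\mu_1}\,\overline{\widehat{\mu_2}}$, one has $\widehat{\delta}(u) = \int_{\R^2} e^{-2\pi i u |z|}\,d\nu(z)$, and expanding $e^{-2\pi i u|\cdot|}$ via the Fourier transform of circular arc-length measure yields, for $|u|\gtrsim 1$,
\[
\widehat{\delta}(u) \ =\ c\,|u|^{1/2}\int_{S^1} \widehat{\mu_1}(u\theta)\,\overline{\widehat{\mu_2}(u\theta)}\,d\theta\ +\ \mathcal{R}(u),
\]
where $\mathcal{R}(u)$ collects the off-circle (principal value) and low-frequency contributions. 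By Cauchy--Schwarz on $S^1$,
\[
\left| \int_{S^1} \widehat{\mu_1}(u\theta)\,\overline{\widehat{\mu_2}(u\theta)}\,d\theta \right|^2 \ \le\ \sigma_1(u)\,\sigma_2(u), \qquad \text{where}\quad \sigma_i(u) := \int_{S^1} |\widehat{\mu_i}(u\theta)|^2\,d\theta ,
\]
so the main contribution to $\int |\widehat{\delta}(u)|^2\,du$ is $\lesssim \int_1^\infty u\,\sigma_1(u)\,\sigma_2(u)\,du$. The remainder $\int |\mathcal{R}(u)|^2\,du$ is, by the decay of the off-circle kernel and the standard regularization of \cite{Mattila87}, dominated by integrals of the form $\int_{\R^2} |\widehat{\mu_1}(\xi)|^2|\widehat{\mu_2}(\xi)|^2 |\xi|^{-a}\,d\xi$ with $0<a<2$; bounding $|\widehat{\mu_2}|\le 1$ and splitting at $|\xi|=1$, these are $\lesssim 1 + \cE_{4/3}(\mu_1)$, which is harmless (this, together with the cutoff $\chi$, is where the $\e$-separation is used).

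The decisive input is Wolff's circular $L^2$-averaging estimate \cite{Wolff99}, in its effective form: for a finite measure $\rho$ on $\R^2$ with $\cE_s(\rho)<\infty$, $s\in(1,2)$, one has $\sigma(\rho)(u) := \int_{S^1}|\widehat{\rho}(u\theta)|^2\,d\theta \lesssim_{s,\e'} \cE_s(\rho)\,u^{-\beta(s)+\e'}$, where $\beta$ is increasing and $\beta(4/3)=1$ --- this last fact being exactly the numerology behind the threshold $4/3$ in \eqref{eq:bound-wolff}. Crucially this concerns a \emph{single} measure, so it applies to $\mu_1$ with $s=4/3$ and, separately, to $\mu_2$ with $s=4/3+\e$, giving
\[
\int_1^\infty u\,\sigma_1(u)\,\sigma_2(u)\,du \ \lesssim_{\e,\e'}\ \cE_{4/3}(\mu_1)\,\cE_{4/3+\e}(\mu_2)\int_1^\infty u^{\,1-\beta(4/3)-\beta(4/3+\e)+2\e'}\,du .
\]
Since $\beta(4/3)=1$ and $\beta(4/3+\e)>1$, choosing $\e'$ small enough in terms of $\e$ makes the exponent strictly less than $-1$, so the last integral is $O_\e(1)$. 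Combined with the bound on the remainder, this proves the theorem.

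I expect the main obstacle, relative to the classical single-measure distance-set theorem, to be the \emph{bilinearization}: one must check that Mattila's identity genuinely separates $\mu_1$ and $\mu_2$ on each circle $\{|\xi|=u\}$, so that a single application of Cauchy--Schwarz reduces everything to the unilinear spherical averages $\sigma_1,\sigma_2$ --- otherwise one would be forced to use $L^4$ or $L^\infty$ spherical estimates, which are unavailable. The second delicate point is the endpoint $s=4/3$, where the $u$-integral only converges logarithmically; this is precisely why the hypothesis asks for $\cE_{4/3+\e}(\mu_2)<\infty$ (slightly above critical) rather than $\cE_{4/3}(\mu_2)<\infty$, and why the implied constant must blow up as $\e\to 0^+$. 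The remaining ingredients --- the off-circle kernel estimates, the regularization making the identity rigorous, and the cutoff constants --- are routine bookkeeping, all depending only on $\e$ and the finite energies.
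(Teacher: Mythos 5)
Your overall plan follows the same Mattila--Wolff Fourier-analytic route as the paper, and the bilinear reduction via Cauchy--Schwarz on $S^1$ is correct. However there is a genuine gap coming from a mis-remembered exponent, and it breaks the final convergence.

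You assert that Wolff's circular averaging bound takes the form $\sigma(\rho)(u)\lesssim_{s,\e'}\cE_s(\rho)\,u^{-\beta(s)+\e'}$ with $\beta(4/3)=1$. The actual decay exponent is $\beta(s)\approx s/2$: Wolff's theorem (as recorded in the paper) reads $\boldsymbol{\sigma}_{\beta/2}(\mu)\lesssim_{\beta,\e}\cE_{\beta+\e}(\mu)$, i.e. $\sigma(\mu,u)\lesssim u^{-\beta/2}\cE_{\beta+\e}(\mu)$, so for $s=4/3$ the decay is $u^{-2/3+o(1)}$, not $u^{-1+o(1)}$. (The ``$4/3$ threshold'' in Falconer's problem comes from pairing decay $2/3$ with the exponent $2-\alpha=2/3$ in Mattila's criterion, not from decay $1$.) With the correct exponent, your final integral becomes
\[
\int_1^\infty u\,\sigma_1(u)\,\sigma_2(u)\,du\ \lesssim\ \cE_{4/3}(\mu_1)\,\cE_{4/3+\e}(\mu_2)\int_1^\infty u^{\,1-2/3-2/3+O(\e)}\,du = \int_1^\infty u^{-1/3+O(\e)}\,du,
\]
which diverges. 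So applying Wolff's \emph{pointwise} decay to both spherical averages cannot close the argument, no matter how $\e'$ is chosen.

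The correct device, which is exactly what the paper uses via Mattila's \cite[Corollary 4.9]{Mattila87}, is to treat $\mu_1$ by the exact Fourier identity for the energy rather than a pointwise decay: $\cE_{4/3}(\mu_1)\approx\int_0^\infty u^{1/3}\sigma_1(u)\,du$. Then
\[
\int_1^\infty u\,\sigma_1(u)\,\sigma_2(u)\,du \ \le\ \Bigl(\sup_{u\ge1} u^{2/3}\sigma_2(u)\Bigr)\int_0^\infty u^{1/3}\sigma_1(u)\,du \ \approx\ \boldsymbol{\sigma}_{2/3}(\mu_2)\,\cE_{4/3}(\mu_1),
\]
and Wolff's bound, now applied \emph{only} to $\mu_2$, gives $\boldsymbol{\sigma}_{2/3}(\mu_2)\lesssim_\e\cE_{4/3+\e}(\mu_2)$. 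This also explains the asymmetry in the hypotheses that you noticed ($\cE_{4/3}$ for $\mu_1$ but $\cE_{4/3+\e}$ for $\mu_2$): only $\mu_2$ pays the $\e$-loss in Wolff's theorem, while $\mu_1$ contributes through the exact energy identity with no loss.
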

\begin{proof}
Given $\mu\in\cP([0,1)^2)$, let
\begin{align*}
\boldsymbol{\sigma}(\mu,r) &= \int_{S^1} |\widehat{\mu}(\theta r)|^2 \,d\theta,\\
\boldsymbol{\sigma}_\alpha(\mu) &= \sup \{ r^\alpha \boldsymbol{\sigma}(\mu,r) : r> 0 \}.
\end{align*}
Mattila \cite[Corollary 4.9]{Mattila87} proved that
\[
\|\Delta(\mu_1\times \mu_2)\|_2^2 \lesssim_\e \cE_\alpha(\mu_1) \boldsymbol{\sigma}_{2-\alpha}(\mu_2).
\]
We remark that in \cite{Mattila87} this is proved for a weighted version of the distance measure (see \cite[Eq. (4.1)]{Mattila87}), but the weight $u^{-1/2}$ lies in the interval $[(\sqrt{2})^{-1/2}, \e^{-1/2}]$ by our assumption that the supports of $\mu_1,\mu_2$ are $\e$-separated and contained in $[0,1)^2$. Later Wolff \cite[Theorem 1]{Wolff99} proved that for any $\mu\in\cP([0,1)^2)$,
\[
\boldsymbol{\sigma}_{\beta/2}(\mu) \lesssim_{\beta,\e} \cE_{\beta+\e}(\mu),
\]
and this is sharp up to the $\e$ when $\beta\in (1,2)$. See also \cite[Chapters 15 and 16]{Mattila15} for an exposition of these arguments. Combining these estimates with $\alpha=\beta=4/3$ yields the claim.
\end{proof}

In the proof we will also require the following well-known lemma, whose proof we include for completeness.
\begin{lemma} \label{lem:box-counting-from-L2-norm}
Let $f\in L^2(\R)$ satisfy $\int f dx=1$. Then $\cN(\supp(f),L) \ge 2^{T L}/\|f\|_2^2$ for all $L\in\N$.
\end{lemma}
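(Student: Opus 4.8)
The plan is a direct double application of the Cauchy--Schwarz inequality, exploiting that the normalization $\int f\,dx = 1$ forces $f$ to have substantial $L^2$ mass on the intervals of $\cD_L$ that meet its support.

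First I would set $N := \cN(\supp(f),L)$ and recall that each element of $\cD_L$ is a half-open interval of length $2^{-TL}$. Expanding the integral over this partition,
\[
1 = \int_\R f\,dx = \sum_{I\in\cD_L} \int_I f\,dx.
\]
If $I\in\cD_L$ satisfies $\int_I f\,dx\neq 0$, then $f$ is not a.e.\ zero on $I$, and since $\R\setminus\supp(f)$ is an open set on which $f=0$ a.e., the interval $I$ cannot be contained in it; hence $I\cap\supp(f)\neq\varnothing$. Thus at most $N$ terms in the sum above are nonzero. For each such $I$, Cauchy--Schwarz gives
\[
\left|\int_I f\,dx\right| \le |I|^{1/2}\left(\int_I |f|^2\,dx\right)^{1/2} = 2^{-TL/2}\left(\int_I|f|^2\,dx\right)^{1/2}.
\]

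Next I would apply Cauchy--Schwarz a second time, to the sum over the (at most $N$) intervals that contribute:
\[
1 \le 2^{-TL/2}\sum_{I:\,\int_I f\neq 0}\left(\int_I|f|^2\,dx\right)^{1/2} \le 2^{-TL/2}\,N^{1/2}\left(\sum_{I\in\cD_L}\int_I|f|^2\,dx\right)^{1/2} = 2^{-TL/2}\,N^{1/2}\,\|f\|_2.
\]
Rearranging yields $2^{TL}\le N\|f\|_2^2$, which is exactly the asserted bound $\cN(\supp(f),L)\ge 2^{TL}/\|f\|_2^2$. There is no genuine obstacle here; the only step warranting a line of justification is the observation that intervals disjoint from $\supp(f)$ contribute nothing to $\sum_I\int_I f$, which is immediate from the definition of the support of an $L^2$ function.
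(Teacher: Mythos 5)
Your proof is correct and takes essentially the same approach as the paper, which applies Cauchy--Schwarz to the sum over $\cD_L$ and Jensen on each interval; you perform the two applications in the opposite order (Cauchy--Schwarz on each interval first, then on the sum over the at most $N$ contributing intervals), but the argument is the same.
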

\begin{proof}
Using Cauchy-Schwarz and Jensen's inequality, we estimate
\begin{align*}
1 &= \left( \sum_{I\in\cD_L} \int_I f \right)^2 \\
&\le \cN(\supp(f),L) \sum_{I\in\cD_L} \left(\int_I f\right)^2 \\
&\le \cN(\supp(f),L) \sum_{I\in\cD_L} 2^{-T L} \int_I f^2\\
&= 2^{-T L} \cN(\supp(f),L) \|f\|_2^2.
\end{align*}
\end{proof}

\begin{proof}[Proof of Theorem \ref{thm:full-distance-set}]
 As usual fix $T\gg 1, \e,\tau\ll 1$. Let $\Lambda(x)$ be the function defined in Proposition \ref{prop:combinatorial-wolff}. A calculation shows that
\[
\Lambda(s-1) =\frac{s(147-170s+60s^2)}{18(12-14s+5s^2)}.
\]
As $\Lambda$ is continuous, it is enough to show that if $A\subset\R^2$ is a Borel set with $\hdim(A)>s$, then $\hdim(\Delta(A\times A))\ge \Lambda(s-1)$. It is enough to consider the case in which $A$ is bounded. After translating and rescaling $A$, we may further assume that $A\subset [0,1)^2$.

Let $\mu_1,\mu_2\in\cP([0,1)^2)$ be measures supported on $A$ such that $\cE_{s}(\mu_1),\cE_{s}(\mu_2)<\infty$, and their supports are $(2\e)$-separated (making $\e$ smaller if needed). Any implicit constants arising in the proof may depend on $\mu_1$, $\mu_2$ and $s$.

Let $\kappa_1,\kappa_2>0$ be the numbers given by Proposition \ref{prop:radial} applied to $\mu_1,\mu_2$ and $\mu_2,\mu_1$ in place of $\mu,\nu$ respectively, and set $\kappa=\min(\kappa_1,\kappa_2)$.

Pick $\ell_0$ large enough in terms of $T,\e,\tau$ that, invoking Lemma \ref{lem:badx},
\[
\bad''_{\ell_0}(\mu_i,x) \le \kappa \text{ for all } x\in\supp_{\mathsf{d}}(\mu_i),\quad i=1,2.
\]
Let
\[
\Theta_i = \{ (x,\theta): x\in\supp_{\mathsf{d}}(\mu_i), \theta\in \bad''_{\ell_0}(\mu_i,x)\}, \quad i=1,2.
\]
Applying Proposition \ref{prop:radial} first with  $\mu_1,\mu_2$ and $\Theta_1$ in place of $\mu,\nu,\Theta$ and then with $\mu_2,\mu_1$ and $\Theta_2$ in place of $\mu,\nu,\Theta$, we get that there exists a compact set $G\subset\supp_{\mathsf{d}}(\mu_1)\times\supp_{\mathsf{d}}(\mu_2)$ such that $(\mu_1\times\mu_2)(G)>1/3$ and
\begin{equation} \label{eq:def-G}
\theta(x,y)\not\in \bad''_{\ell_0}(\mu_1,x)\,\text{ and } \,\theta(y,x)\not\in \bad''_{\ell_0}(\mu_2,y)\quad\text{for all }(x,y)\in G.
\end{equation}
We write $\mu=\mu_1\times\mu_2$ from now on. Denote $s_0=\Lambda(s-1)$. Our goal is to show that
\[
\hdim(\Delta(G)) \ge s_0.
\]
Since $\Delta(G)\subset \Delta(A\times A)$, this will establish the theorem. In turn, since a Borel set $F'\subset\R$ satisfies $(\Delta\mu_G)(F')> \ell^{-2}$ if and only if $B=\Delta^{-1}(F')$ satisfies $\mu_G(B)> \ell^{-2}$, according to Lemma \ref{lem:calculation-dimension}, in order to complete the proof it is enough to prove the following claim.

\textbf{Claim}. The following holds if $\ell$ is large enough in terms of $\mu,T,\e,\tau$: if $B$ is a Borel subset of $[0,1)^2\times [0,1)^2$ such that $\mu_G(B) > \ell^{-2}$, then
\begin{equation} \label{eq:claim-full-dist-thm}
\log\cN(\Delta(B),\ell) \ge T\ell (s_0-o_{T,\e,\tau}(1)).
\end{equation}

We start the proof of the claim. Firstly, replacing $B$ by a compact subset of almost the same measure we may assume that $B$ is compact. We may assume also that $B\subset G$. Note that $\mu(B) = \mu_G(B)\mu(G)\ge \ell^{-2}/3$.

Let $(X_k^{(i)})_{k=1}^{N_i}$ be the $2^{-T\ell}$-sets given by Corollary \ref{cor:bourgain} applied to  $R_\ell(\mu_i)$. Note that we have a disjoint union
\begin{equation} \label{eq:decomposition-into-regular}
\supp_{\mathsf{d}}(R_\ell\mu_i)= \left( \bigcup_{k=1}^{N_i} X_k^{(i)} \right) \cup \wt{X}^{(i)},\quad\text{where }\mu_i(\wt{X}^{(i)}) =R_\ell\mu_i(\wt{X}^{(i)}) \le 2^{-\e T\ell}.
\end{equation}

Write $\rho_k^{(i)} =  (R_\ell\mu_i)_{X_k^{(i)}}$. Note that $\rho_k^{(i)}$ is  $\sigma_k^{(i)}$-regular for some $\sigma_k^{(i)} \in [-1,1]^\ell$; in particular, it is a $2^{-T\ell}$-measure. Also, by Lemma \ref{lem:energy} and Corollary \ref{cor:bourgain}(ii), and using our assumption that $\cE_{s}(\mu_i)<\infty$,
\[
\cE_{s}(\rho_k^{(i)}) \le \left(R_\ell\mu_i(X_k^{(i)})\right)^{-2} \cE_{s}(R_\ell\mu_i) \lesssim_T 2^{o_{T,\e}(1)T\ell}.
\]
Hence, using Lemma \ref{lem:energy-regular} and increasing the value of $\ell_0$ again,  any $\sigma=\sigma_k^{(i)}$ satisfies
\begin{equation} \label{eq:sigma-positive}
\sigma_1+\ldots+\sigma_j \ge (s-1)j - \zeta\ell  \qquad(j=1,\ldots,\ell),
\end{equation}
where $\zeta=o_{T,\e}(1)$. By starting with appropriate $T,\e$, we may assume that $\zeta<(s-1)^2$.

If $\rho$ is $\sigma$-regular, we write
\[
\D(\rho) =  1 - \frac{1}{\ell} \M_\tau(\sigma).
\]
Let $F_i\subset\supp_{\mathsf{d}}(R_\ell\mu_i)$ be union of the sets $X_k^{(i)}$ over all $k$ such that $\D(\rho_k^{(i)})\ge s_0-\delta$, where
\begin{equation} \label{eq:def-delta}
\delta = 3\sqrt{\zeta} + 145\tau.
\end{equation}
Note that, since the $X_k^{(i)}$ are $2^{-T\ell}$-sets, then so if $F_i$.

Consider two (non mutually exclusive) cases:
\begin{enumerate}[(a)]
\item Either $\mu_{B}(F_1\times\R^2)\ge 1/3$ or $\mu_{B}(\R^2 \times F_2)\ge 1/3$ (or both).
\item $\mu_{B}((\R^2\setminus F_1)\times (\R^2\setminus F_2))\ge  1/3$.
\end{enumerate}
Roughly speaking, in the first case we will argue as in the proof of Theorem \ref{thm:main}, while in case (b) we will appeal to Proposition \ref{prop:combinatorial-wolff}.

Assume then that (a) holds. Without loss of generality, suppose $\mu_{B}(F_1\times\R^2)\ge 1/3$. Instead of showing that \eqref{eq:claim-full-dist-thm} holds directly for $B$, we will show that it holds for the set
\[
B' = \bigcup_{y\in [0,1)} (R_\ell B_y \times \{y\})
\]
where, for the rest of this section, given $A\subset\R^2\times\R^2$ we denote its ``horizontal'' sections by $A_y = \{ x: (x,y)\in A\}$ (for $y\in\R^2$). In other words, to form $B'$ we make each horizontal fiber of $B$ into a union of squares in $\cD_\ell$. One can check that $B'$ is Borel (in fact, $\sigma$-compact). Since $B\subset B'\subset R_\ell B$, the numbers $\cN(\Delta(B'),\ell)$ and $\cN(\Delta(B),\ell)$ differ by at most a multiplicative constant so that proving \eqref{eq:claim-full-dist-thm} for $B'$ implies it also for $B$. Since we are assuming that $B\subset G$, we have that $B'\subset G'$, where $G'$ is defined analogously to $B'$.

Using Fubini, that $F_1 = R_\ell F_1$, our definition of $B'$, the assumption $\mu_{B}(F_1\times\R^2)\ge 1/3$, and the fact that $\mu(B)\ge \ell^{-2}/3$, we get
\begin{align*}
(R_\ell \mu_1 \times \mu_2) ((F_1\times\R^2) \cap B') &=
\int R_\ell \mu_1 (F_1 \cap B'_y) \,d\mu_2 (y) \\ &=
\int  \mu_1 (F_1 \cap R_\ell B_y) \,d\mu_2 (y) \\ &=
\mu ( (F_1 \times \R^2) \cap B' ) \\ &\ge
\mu_B (F_1 \times \R^2) \mu (B) \ge
\ell^{-2} / 9.
\end{align*}

Applying \eqref{eq:decomposition-into-regular} with $i=1$, we can decompose
\[
R_\ell\mu_1 = (R_\ell\mu_1)|_{\wt{X}^{(1)}} + \sum_{k=1}^{N_1} \mu_1(X_k^{(1)}) \rho_k^{(1)}.
\]
Hence, using that $R_\ell \mu_1(\widetilde X^{(1)}) \le 2^{-\eps T\ell}$ and taking $\ell$ large enough, there exists $k$ such that
\[
(\rho_k^{(1)}\times \mu_2)( (F_1\times\R^2) \cap B') \ge \ell^{-2}/9 - 2^{-\e T\ell} \ge \ell^{-2}/10.
\]
By the definition of $F_1$, we must have $\D(\rho_k^{(1)})\ge s_0-\delta$, and  $\supp_{\mathsf{d}}(\rho_k^{(1)})\subset F_1$. By Fubini, we can find $y\in\supp_{\mathsf{d}}(\mu_2)$ such that
\begin{equation} \label{eq:B'_y-large-measure}
\rho_k^{(1)} (B'_y) \ge \ell^{-2}/10.
\end{equation}
Since $B'\subset G'\subset R_\ell G$, we know that if $x\in B'_y$ then there exists $\wt{x}\in \cD_\ell(x)$ such that $(\wt{x},y)\in G$. By \eqref{eq:def-G}, this implies that $\theta(\wt{x},y)\notin \bad''_{\ell_0}(\mu_1,\wt{x})$.  Recalling the definitions \eqref{eq:def-badprime}, \eqref{eq:def-badprimeprime}, we have shown that the hypotheses of Proposition \ref{prop:lower-bound-box-counting} hold for $\rho_k^{(1)}$ and $B'_y$, with $\beta=\eps$ (the separation between $y$ and $\supp(\rho_k^{(1)})$ follows from the fact that the supports of $\mu_1$ and $\mu_2$ are $(2\e)$-separated, making $\ell$ larger again). Recalling \eqref{eq:B'_y-large-measure}, we see that the error term in Proposition \ref{prop:lower-bound-box-counting} can be made $\le o_{T,\e,\tau}(1)$ by making $\ell$ even larger.  Applying the proposition, and recalling that $\D(\rho_k^{(1)}) \ge s_0-\delta$, where $\delta=o_{T,\e,\tau}(1)$ was defined in \eqref{eq:def-delta}, we conclude that (for this fixed value of $y$)
\[
\log \cN(\Delta_y(B'_y),\ell) \ge T\ell(s_0-o_{T,\e,\tau}(1)),
\]
and hence the same lower bound holds for $\log\cN(\Delta(B'),\ell)$. This concludes the proof of the claim in case (a).

We now consider case (b). Since $\cN(\Delta(B),\ell)$ and $\cN(\Delta(R_\ell B),\ell)$ differ by at most a multiplicative constant, it is enough to prove \eqref{eq:claim-full-dist-thm} for $R_\ell B$ in place of $B$. It follows from  the assumption of case (b), the decomposition \eqref{eq:decomposition-into-regular} for both $\mu_1, \mu_2$,  and the definitions of the sets $F_i$ that
\begin{align*}
\frac{1}{3}\mu(B) &\le \mu\left( \big((\R^2\setminus F_1)\times (\R^2\setminus F_2)\big) \cap B\right)  \\
&\le 2\cdot 2^{-\e T\ell} + \sum_{(k_1,k_2):\D(\rho_{k_i}^{(i)})<s_0-\delta} R_\ell\mu_1(X_{k_1}^{(1)}) R_\ell\mu_2(X_{k_2}^{(2)}) \left( \rho_{k_1}^{(1)}\times  \rho_{k_2}^{(2)} \right)(R_\ell B)  .
\end{align*}
Hence, using that $\mu(B)\ge \ell^{-2}/3$, we can find $k_1,k_2$ such that $\D(\rho_{k_i}^{(i)})<s_0-\delta$ for $i=1,2$, and
\begin{equation} \label{eq:measure-Bprime-large}
\left(\rho_{k_1}^{(1)}\times \rho_{k_2}^{(2)}   \right)(R_\ell B) \ge \frac{1}{3}\mu(B) - 2\cdot 2^{-\e T\ell} \ge \frac{1}{10}\ell^{-2},
\end{equation}
if $\ell$ is large enough in terms of $\e$.

Write $\rho_{k_i}^{(i)} = \rho'_i$ for simplicity. In light of \eqref{eq:sigma-positive} and our earlier assumption $\zeta<(s-1)^2$, the hypothesis of Proposition  \ref{prop:combinatorial-wolff} holds for the sequences $\sigma$ arising from both $\rho'_1$ and $\rho'_2$, with $\gamma=s-1$. Let $\eta>0, \xi\in (2/3,1]$ be the numbers given in the proposition (they depend on $s-1-\sqrt{\zeta}$) . Since $\D(\rho'_i)<\Lambda(s-1)-\delta$, where $\delta$ was defined in \eqref{eq:def-delta}, if we take $\ell$ sufficiently large, then the alternative (i) in Proposition \ref{prop:combinatorial-wolff} must hold.

Let $\rho''_i=R_{\lfloor \xi \ell \rfloor}(\rho'_i)$. Note that if $\rho'_i$ is $(\sigma_1,\ldots,\sigma_{\ell})$-regular, then $\rho''_i$ is $(\sigma_1,\ldots,\sigma_{\lfloor \xi\ell\rfloor})$-regular.  Using Lemma \ref{lem:energy-regular} (with $\lfloor \xi\ell \rfloor$ in place of $\ell$), recalling that $\zeta=o_{T,\e,\tau}(1)$ and that the alternative (i) in Proposition \ref{prop:combinatorial-wolff} holds, and making $\ell$ larger if needed, we get
\[
\log\cE_{4/3}(\rho''_i) \le \xi T\ell(\eta+o_{T,\e,\tau}(1)) \quad (i=1,2).
\]
On the other hand, we see from Lemma \ref{lem:energy} that
\[
\cE_{4/3+\e}(\rho''_i) \lesssim_{T,\e} 2^{\e \xi T\ell} \cE_{4/3}(\rho''_i).
\]
We apply Theorem \ref{thm:mattila-wolff} (together with the last two displayed equations) to get
\begin{align*}
\|\Delta(\rho''_1\times \rho''_2)\|_2^2  & \lesssim_{T,\e} 2^{\e \xi T\ell} \cE_{4/3}(\rho''_1) \cE_{4/3}(\rho''_2)\\
&\le 2^{o_{T,\e,\tau}(1) \xi T\ell} 2^{2 \eta\xi T\ell}.
\end{align*}
It follows from \eqref{eq:measure-Bprime-large} that $(\rho''_1\times\rho''_2)(R_{\lfloor \xi \ell\rfloor}B)\ge \ell^{-2}/10$. We deduce that, for $\ell$ large enough,
\[
\log \left\|\Delta\big( \left(\rho''_1 \times \rho''_2\right)_{R_{\lfloor \xi \ell\rfloor}B} \big)\right\|_2^2  \le \xi T\ell(2\eta+o_{T,\e,\tau}(1)).
\]
Applying Lemma \ref{lem:box-counting-from-L2-norm} to $f=\Delta\big( \left(\rho''_1 \times \rho''_2\right)_{R_{\lfloor \xi \ell\rfloor}B} \big)$ and $L=\lfloor \xi\ell\rfloor$, we conclude that
\begin{align*}
\log\cN(\Delta(B),\ell) &\ge \log\cN(\Delta(B),\lfloor \xi \ell\rfloor) \\
&\gtrsim \log\cN(\Delta(R_{\lfloor \xi \ell\rfloor} B),\lfloor \xi \ell\rfloor)
\ge \xi T\ell(1-2\eta-o_{T,\e,\tau}(1))
\end{align*}
for $\ell$ sufficiently large. Since $\xi(1-2\eta)\ge s_0-\sqrt{\zeta}$ by Proposition \ref{prop:combinatorial-wolff} and $\zeta=o_{T,\e,\tau}(1)$, this concludes the proof of case (b) of the claim, which completes the proof of Theorem \ref{thm:full-distance-set}.
\end{proof}

\section{Sharpness of the results}
\label{s:sharpness}

It is natural to ask what parts of our approach are sharp and which are not.  In this section we show that  the results of Section \ref{sec:combinatorial} are sharp, up to error terms. Hence, if the main results are not sharp (which seems likely), this is not due to the estimates for $\M_\tau(\sigma)$, but rather to the fact that Proposition \ref{prop:lower-bound-box-counting} (which connects the value of $\M_\tau(\sigma)$ to the size of distance sets) is itself not sharp. 

We begin by showing that Proposition~\ref{p:basic} is sharp for
all parameter values (and even the value of $f(a)$ can be chosen
as an arbitrary $b\in[Da,Ca]$). This is illustrated by the following functions.

First consider the case when $C=1$.
Let $x_0=y_0=0$,
$x_1=y_1=y_2=\frac{(1+D)(a-b)}{3(1-D)}$,
$x_2=2x_1$,
$x_3=\frac{a-b}{1-D}$, $y_3=Dx_3$, $x_4=a$ and $y_4=b$,
let $f(x_i)=y_i$ $(i=0,\ldots,4)$ and let $f$ be linear
on every interval $[x_{i-1},x_i]$.
(See Figure~\ref{fig:sharp} for $D=0$,
$a=1$ and $b=0$ and note that in the most important $b=Da$ case $x_3=x_4$, so the graph consists of only
three linear segments.)
It is clear that
$f(a)=b$ and
$ Dx\le f(x) \le  Cx$
on $[0,a]$.
It is easy to check that $f$ is $1$-Lipschitz.
The fact that the first inequality of \eqref{e:basict}
holds with equality (and for $b=aD$ also the second one)
follows from the observation that the set of hard
points of $f$ is $[x_2,x_3]$  (recall Definition \ref{def-hardpoint}),  Lemma~\ref{l:hardpoints}, and a straightforward calculation.

\begin{figure}
\includegraphics[width=0.9\textwidth]{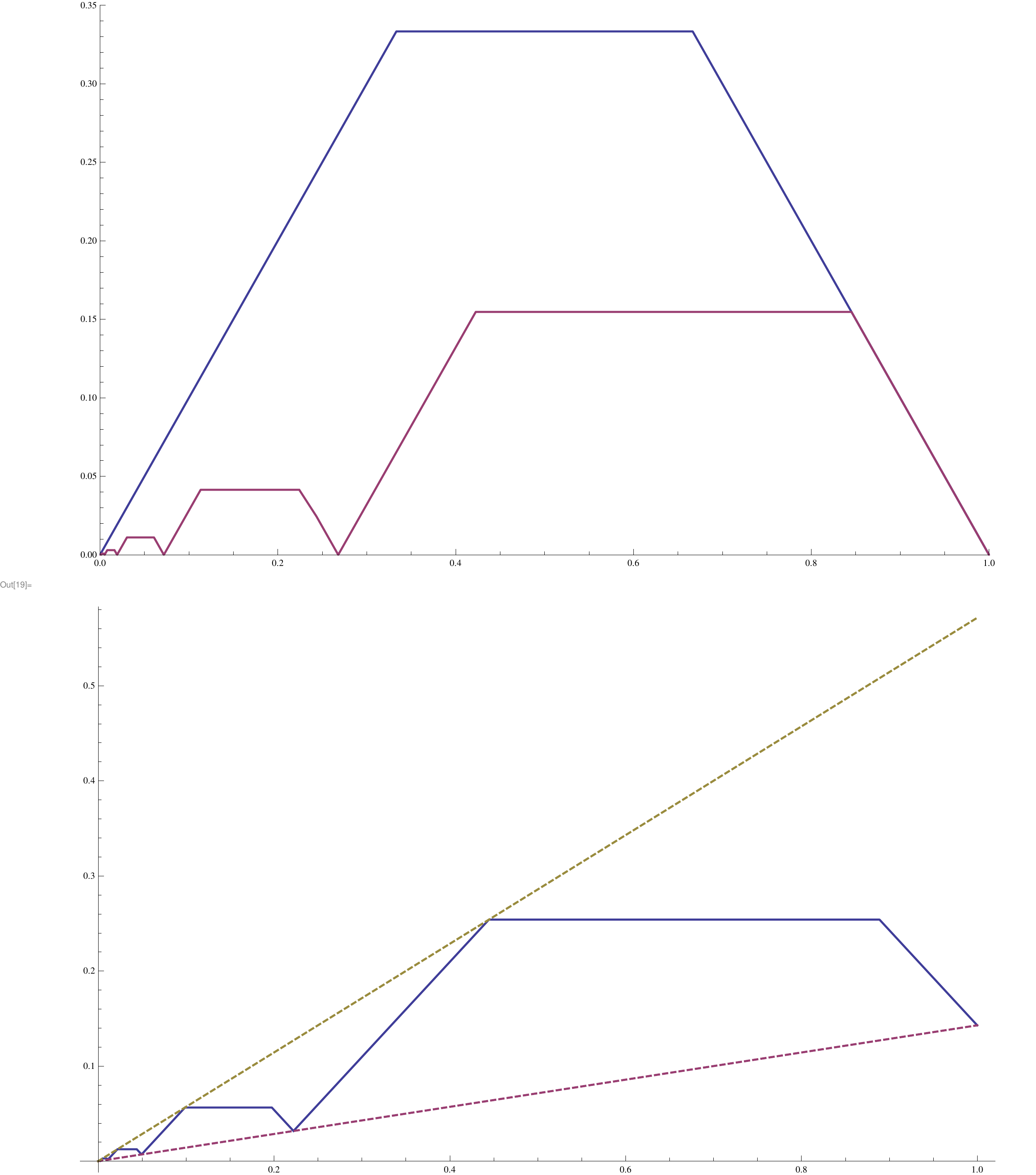}
\caption{These are graphs of functions that
witness the sharpness of Propositions~\ref{p:basic}
and \ref{p:initial} for $a=1$, $f(a)=D$ and
various parameters $C$ and $D$.
In the top graph, the larger function is for Proposition~\ref{p:basic} with $D=0$, $C=1$,
and the smaller function is for
Proposition~\ref{p:basic} with
$D=0$, $C=(\sqrt{3}-1)/2$
and also for Proposition~\ref{p:initial} with $D=0$.
The graph on the bottom shows the function that witnesses the sharpness of Proposition \ref{p:basic} with $D=1/7$ and $C=4/7$
and of Proposition~\ref{p:basic} with $D=1/7$,
together with the dashed lines
$y=x/7$ and $y=4x/7$.}
\label{fig:sharp}
\end{figure}

Now we consider the case when $C<1$.
Let $q=\frac{(1+D)(1-C)}{(1-D)(2+C)}$.
For $k=0,1,\ldots$ let
$$
x_{3k}=\frac{a-b}{1-C}q^k, \
 \quad
x_{3k+1}=\frac{a-b}{1-D}q^k, \
 \quad
x_{3k+2}=2x_{3k+3}, \
$$
$y_{3k}=Cx_{3k}$, $y_{3k+1}=Dx_{3k+1}$
and $y_{3k+2}=y_{3k+3}$.
Let $f(0)=0$, $f(a)=b$, $f(x_j)=y_j$ ($j=1,2,\ldots$)
and let $f$ be linear on $[x_1,a]$ and on each
$[x_{j+1},x_j]$.
(See Figure~\ref{fig:sharp} for $a=1$, $b=D=0$, $C=(\sqrt{3}-1)/2$, and for
$a=1$, $b=D=1/7$, $C=4/7$, and note again that
because of $b=Da$, the segment $[x_1,a]$ is degenerated in both cases.)
Again, it is clear that
$f(a)=b$ and
$Cx\le f(x) \le Dx$ on $[0,a]$, and
it is easy to check that $f$ is $1$-Lipschitz.
Now, observing that the set of hard points is
$\cup_{k=1}^\infty [x_{3k+2},x_{3k+1}]$,
Lemma~\ref{l:hardpoints} and another straightforward calculation
show that indeed
$\T(f)=\frac{(a-f(a))(C-2D)}{1+2C-3D}$.

Proposition~\ref{p:initial}  is also sharp up to the error term:
  for any given
  $D\in[0,1/2)$,
  we construct an $f$ that satisfies
  the conditions and for which for any $u\in(0,a]$ we have
  $\T(f|[0,u])\ge u\Phi(D)$.

  Recall that at the end of the proof of Proposition \ref{p:initial} we claimed
 that $\frac{(1-C)(C/2-D)}{1+2C-3D}\le\Phi(D)$ on $[2D,1]$.
 The function $\Phi(D)$ was of course chosen so that this
 is sharp, in fact for
 $C=\frac{3D-1+\sqrt{3-3D^2}}{2}\in
 ( 2D, 1 )$
 we have equality. Let $C$ be chosen this way, and
 let $f$ be the function we obtained above when we showed the sharpness of Proposition~\ref{p:basic} for these values of $C$ and $D$.
 (See Figure~\ref{fig:sharp} for $a=1$, $b=D=0$, and for
$a=1$, $b=D=1/7$.)
 As it was already mentioned above,
 the set of hard points of $f$ is
 $\cup_{k=1}^\infty [x_{3k+2},x_{3k+1}]$.
 By Lemma~\ref{l:hardpoints}, this implies that if we
 want to minimize $\T(f|[0,u]) / u$, then $u$ must be
 of the form $u=x_{3k+2}$.
 Lemma~\ref{l:hardpoints} and a simple calculation shows
 that for every $k$ we get
$$
\frac{\T(f|[0,x_{3k+2}])}{x_{3k+2}}=
\frac{(1-C)(C/2-D)}{1+2C-3D}=\Phi(D),
$$
which establishes the claimed sharpness.

Now we show that the lower estimates in \eqref{e:left} and \eqref{e:right}
are sharp in Proposition~\ref{p:stability}: for any $D\in[0,1/3]$ and
$\de\in[0,1/12]$
we construct $1$-Lipschitz functions $f_1$ and $f_2$ on $[0,1]$
such that $f_i(x)\ge Dx$ on $[0,1]$, $f_i(0)=0$,
$\T(f_i)=\frac{1-2D}3-\de$ ($i=1,2$),
$f_1(x)=x-3\de(1-D)$ on $[3\de,t_1]$ and
$f_2(x)=3t_1-x-3\de(1-D)$ on $[2t_1,1-\frac{3\de}{1-2D}]$,
where $t_1$ is given by \eqref{eq:t_1}.

Let
\[
f_1(x) =
\begin{cases}
   \min(x,3\de(1+D)-x) & \text{ on } [0,3\de] \\
   \min(x-3\de(1-D),1+D-x) & \text{ on } [3\de,1]
     \end{cases}.
\]
Then $f_1(x)\ge Dx$ on $[0,1]$, and $f_1(x)=x-3\de(1-D)$ on
$[3\de,\frac{1+D}2+\frac{3\de(1-D)}2]\supset  [3\de,t_1]$. One can check that set of hard points of $f_1$ is
$[2\de(1+D),3\de]\cup [\tfrac{2}{3}(1+D+3\de(1-D)),1]$,
where $\de\in[0, 1/12]$ ensures that both intervals
have nonnegative length,
and so
Lemma~\ref{l:hardpoints} gives that $\T(f_1)=\frac{1-2D}3-\de$.

Now, let
\[
f_2(x) =
\begin{cases}
   \min(x,3t_1-x-3\de(1-D)) & \text{ on } [0,1-\frac{3\de}{1-2D}] \\
   Dx & \text{ on } [1-\frac{3\de}{1-2D},1]
     \end{cases}.
\]
Then $f_2(x)\ge Dx$ on $[0,1]$, and
\[
f_2(x)=3t_1-x-3\de(1-D) \text{ on } \left[\frac{3}{2}(t_1-\de(1-D)),1-\frac{3\de}{1-2D}\right]\supset \left[2t_1,1-\frac{3\de}{1-2D}\right].
\]
After checking that the set of hard points of $f_2$ is
$[2t_1-2\de(1-D),1-\frac{3\de}{1-2D}]$, Lemma~\ref{l:hardpoints}  yields $\T(f_2)=\frac{1-2D}3-\de$.

We claim that Corollary~\ref{c:37/54} is sharp in the following sense:
If $D\in[0,0.26]$, $\eta>0$, $\xi\in(0,1]$, $\Lambda_1=\xi(1-2\eta)$ and
 for every $1$-Lipschitz function $f:[0,1]\to \R$ such that $f(0)=0$ and
 $f(x)\ge Dx$ on $[0,1]$ we have
\begin{equation} \label{eq:drop-implies-large-f}
\T(f)\ge 1-\Lambda_1\Longrightarrow f(x) > \frac{x}{3}-\eta\xi \text{ on } [0,\xi],
\end{equation}
 then $\Lambda_1 < \Lambda(D)$.

Indeed, if $\eta> 1/3-D$ then $\Lambda_1=\xi(1-2\eta)< 1-2(1/3-D) = 1/3+2D$, which
 is less than $\Lambda(D)$ when $D\in[0,0.26]$.
So we can suppose that $\eta\le 1/3-D$.
Let $x_1=\xi(4/3-\eta)/(1+D)$,
$x_2=\min(x_1,1)$ and
\[
f_3(x) =
\begin{cases}
  \min(x,-x+
   x_2(1+D)) & \text{ on }
  [0, x_2] \\
   Dx & \text{ on } [ x_2,1]
     \end{cases}.
\]
Then $f_3$ is $1$-Lipschitz,
$f_3(0)=0$, $f_3(x)\ge Dx$ on $[0,1]$ and
$f_3(x)=-x+
 x_2(1+D)$
on $[x_0, x_2]$, where

\[
x_0=x_2\frac{1+D}{2}\le x_1\frac{1+D}{2}=
\xi(2/3-\eta/2)<\xi.
\]
It is easy to see that the set of hard points of $f_3$
is $[\frac23 x_2(1+D),x_2]$ and so by
Lemma~\ref{l:hardpoints} we have $\T(f_3)=x_2(1-2D)/3.$
The assumptions $\eta\le1/3-D$
 and $\xi\le 1$
imply that $\xi\le x_2$, hence
$\xi\in[x_0,x_2]$, and we have

\[
f_3(\xi)=-\xi+x_2(1+D)\le -\xi+x_1(1+D)=\xi/3-\eta\xi.
\]
Thus by our assumption \eqref{eq:drop-implies-large-f} we must have $x_2(1-2D)/3=\T(f_3) < 1-\Lambda_1$.
If $x_1\ge 1$ then $x_2=1$, so we obtain
$\Lambda_1 < 2(1+D)/3$.
It is easy to check that $\Lambda(x)\ge 2(1+x)/3$
on $[0,1/2]$, so in this case we obtained
$\Lambda_1 < \Lambda(D)$ as we claimed.
So we can suppose that $x_1<1$ and so $x_2=x_1$.
Then $x_2(1-2D)/3 < 1-\Lambda_1$ gives
\begin{equation}\label{e:f_3 at xi}
\xi(4/3-\eta)\frac{1-2D}{3(1+D)} < 1-\xi(1-2\eta).
\end{equation}
Let
\[
\de=\Lambda_1-\frac23(1+D)=\xi(1-2\eta)-\frac23(1+D).
\]
We can clearly suppose that
$\Lambda_1\ge \Lambda(D)$.
Since $\Lambda(D)\ge \frac23(1+D)$, we obtain
$\de\ge 0$.
We also have $\de\le 1/12$ since this is clear
if $\xi\le 3/4$ and follows from
\eqref{e:f_3 at xi} and the assumption
$\eta\le 1/3-D$ if $\xi>3/4$.
For this value of $\de$ let $f_1$
be the $1$-Lipschitz function defined above
(to show the sharpness of \eqref{e:left} of Proposition~\ref{p:stability}).
Then $f_1(0)=0$, $f_1(x)\ge Dx$ on $[0,1]$ and
$\T(f_1)=\frac{1-2D}3-\de=1-\Lambda_1$, so by \eqref{eq:drop-implies-large-f} we must have
$f_1(x)>\frac{x}3-\eta\xi$ on $[0,\xi]$.
Since $\xi\le 1$ and $\eta>0$ we have
$3\de=3\xi(1-2\eta)-2(1+D)\le 3\xi-2\le\xi$. Thus we get $f_1(3\de)>\de-\eta\xi$.
Since $f_1(3\de)=3\de D$ this gives $\eta\xi > \de(1-3D)$.

From the definition of $\de$ we get
$\xi=\de + 2\eta\xi + \frac{2}{3}(1+D)$.
Considering $\de$, $\xi$ and $\eta\xi$ as
variables and $D$ as a parameter,
substituting the above expression into
\eqref{e:f_3 at xi}, and then using that
$\eta\xi > \de(1-3D)$,
after some calculations one gets
$\de < \frac{(1+D)(1-2D)}{18(3-4D+5D^2)}$, which yields
$\Lambda_1=\de+\frac23(1+D)<\Lambda(D)$, as we claimed.

We remark that if $\eta=1/3-D, \xi=1,
\Lambda_1=\xi(1-2\eta)=1/3+2D$ then \eqref{eq:drop-implies-large-f} holds, since
then $f(x)\ge Dx$ on $[0,1]$ already implies that
 $f(x)\ge x/3 - \eta\xi$ on $[0,\xi]$. So in order to make
 Corollary~\ref{c:37/54} sharp for every $D\in[0,1/3)$, the function $\Lambda(D)$ has to
   replaced by $\max(\Lambda(D),1/3+2D)$, which is equal to $\Lambda(D)$ if and
   only if $D\le 0.2609\ldots$. However, this version would not improve any of our distance set estimates.

Finally, we claim that Propositions \ref{prop:combinatorial},
\ref{prop:combinatorial-packing} and \ref{prop:combinatorial-wolff}
are also sharp, up to the error terms.
Indeed, given a $1$-Lipschitz function $f:[0,1]\to\R$ and $\ell\in\N$,
let $\sigma=\sigma_{f,\ell}\in [-1,1]^\ell$ be the sequence
\[
\sigma_i=\ell\left(f(i/\ell)-f((i-1)/\ell)\right).
\]
It is not hard to show that for any positive integer $L\le \ell$ and
good integer partition $\cP$ of $(0,L]$ there exists a good partition
$(a_n)$ of $[0,L/\ell]$ such that
 $\T(f|[0,L/\ell],(a_n))\le \frac1\ell \M(\sigma|(0,L],\cP)+O(\log\ell/\ell)$,
   thus
   $$
   \T(f|[0,L/\ell])\le \frac1\ell \M_\tau(\sigma|(0,L]) +O(\log\ell/\ell).
   $$
Thus, starting with the functions defined in this section that witness the
     sharpness of Propositions~\ref{p:basic} and \ref{p:initial} and
     Corollary~\ref{c:37/54}, this way we get sequences that show
     the sharpness of Propositions \ref{prop:combinatorial},
     \ref{prop:combinatorial-packing} and \ref{prop:combinatorial-wolff},
     up to the error terms.

\bibliographystyle{plain}
\bibliography{distancesetsgeneral}

\end{document}